\numberwithin{equation}{section}
\DeclareMathAlphabet{\mathscr}{OT1}{pzc}%
                                 {m}{it}
\newtheorem{theorem}{Theorem}[section]
\newtheorem{lemma}[theorem]{Lemma}
\newtheorem{proposition}[theorem]{Proposition}
\newtheorem{corollary}[theorem]{Corollary}
\newtheorem{claim}[theorem]{Claim}
\theoremstyle{definition}
\newtheorem{definition}[theorem]{Definition}
\theoremstyle{remark}
\newcommand{\cK}{\mathcal{K}}
\newcommand{\threeplus}{$3^+$-cluster}
\newcommand{\fourplus}{$4^+$-cluster}
\newcommand{\threepluss}{$3^+$-cluster }
\newcommand{\fourpluss}{$4^+$-cluster }
\newcommand{\openthree}{\mathcal{K}_3^o}
\newcommand{\poorone}{D_1^p}
\begin{document}


\title{New Bounds on the Minimum Density of a Vertex Identifying Code for the Infinite Hexagonal Grid~\thanks{The research is part of the first author's honors project at the College of William and Mary and is supported by NSF CSUMS grant DMS-0703532.  The second author's research is also supported by NSF grant DMS-0852452.}}
\author{Ari Cukierman  \and Gexin Yu}
\date{\today}
\maketitle


\begin{abstract}

For a graph, $G$, and a vertex $v \in V(G)$, let $N[v]$ be the set of vertices adjacent to and including $v$.  A set $D \subseteq V(G)$ is a vertex identifying code if for any two distinct vertices $v_1, v_2 \in V(G)$, the vertex sets $N[v_1] \cap D$ and $N[v_2] \cap D$ are distinct and non-empty.  We consider the minimum density of a vertex identifying code for the infinite hexagonal grid.  In 2000, Cohen et al. constructed two codes with a density of $\frac{3}{7} \approx 0.428571$, and this remains the best known upper bound.  Until now, the best known lower bound was $\frac{12}{29} \approx 0.413793$ and was proved by Cranston and Yu in 2009.  We present three new codes with a density of $\frac{3}{7}$, and we improve the lower bound to $\frac{5}{12} \approx 0.416667$.
\end{abstract}

\section{Introduction}

The study of vertex identifying codes is motivated by the desire to detect failures efficiently in a multi-processor network.  Such a network can be modelled as an undirected graph, $G$, where $V(G)$ represents the set of processors and $E(G)$ represents the set of connections among processors.  Suppose we place detectors on a subset of these processors.  These detectors monitor all processors within a neighborhood of radius $r$ and send a signal to a central controller when a failure occurs.  We assume that no two failures occur simultaneously.  A signal from a detector, $d$, indicates that a processor in the $r$-neighborhood of $d$ has failed but provides no further information.  Now, any given processor, $p$, might be in the $r$-neighborhood of several detectors, $d_1$, $d_2$, $d_3$...  Then, when $p$ fails, the central controller receives signals from $d_1$, $d_2$, $d_3$...  Let us call $\{d_1, d_2, d_3, ... \}$ the trace of $p$ in $G$.  If each processor has a unique and non-empty trace, then the central controller can determine which processor failed simply by noting the detectors from which signals were received.  In this case, we call the subset of processors on which detectors were placed an identifying code.

Vertex identifying codes were first introduced in 1998 by Karpovsky, Chakrabarty and Levitin \cite{karpovsky}.  The processors of the preceding paragraph become the vertices of a graph, and the processors on which detectors have been placed become the vertex subset called a vertex identifying code.  In the example above, we considered detectors which monitor a neighborhood of radius $r$.  In this paper, we concern ourselves with the case in which $r=1$.  

Let $N_i(v)$ be the set of vertices at distance-$i$ from a vertex, $v$, and let $N[v] = N_1(v) \cup \{v\}$.

\begin{definition}
\label{VIC def}

Consider a graph, $G$.  A set $D \subseteq V(G)$ is a {\bf vertex identifying code} if

\begin{itemize}

\item[(i)] For all $v \in V(G)$, $N[v] \cap D \not = \emptyset$

\item[(ii)] For all $v_1,v_2 \in V(G)$ where $v_1 \not = v_2$, $N[v_1] \cap D \not = N[v_2] \cap D$

\end{itemize}

\end{definition}

From Definition \ref{VIC def}, we see that some graphs do not admit vertex identifying codes.  In particular, if $N[v_1] = N[v_2]$ for some distinct $v_1, v_2 \in V(G)$ then $G$ does not admit a vertex identifying code because $N[v_1] \cap D = N[v_2] \cap D$ for any $D \subseteq V(G)$.  On the other hand, if $N[v_1] \not = N[v_2]$ for all distinct $v_1, v_2 \in V(G)$ then $G$ admits a vertex identifying code because $V(G)$ is such a code.


Of particular interest are vertex identifying codes of minimal cardinality.  When dealing with infinite graphs, we consider instead the \emph{density} of a vertex identifying code, i.e., the ratio of the number of vertices in the code to the total number of vertices.  Let $G$ be an infinite graph, and let $D \subseteq V(G)$ be a vertex identifying code for $G$.  Then, for some $v \in V(G)$, the set of vertices in $D$ within distance-$k$ of $v$ is given by $\bigcup_{i=0}^{k} N_i(v) \cap D$.  Let $\sigma(D,G)$ be the density of $D$ in $G$.  Then,
\begin{equation}
\sigma(D,G) = \displaystyle\limsup_{k \to \infty} \frac{ \left | \bigcup_{i=0}^{k} N_i(v) \cap D \right | }{ \left | \bigcup_{i=0}^{k} N_i(v) \right | }
\end{equation}
Let $\sigma_0(G)$ be the minimum density of a vertex identifying code for $G$; that is,
\begin{equation}
\sigma_0(G) = \min_D\{ \sigma(D,G) \}
\end{equation}

Karpovsky et al. \cite{karpovsky} considered the minimum density of vertex identifying codes for the infinite triangular ($G_T$), square ($G_S$) and hexagonal ($G_H$) grids.  They showed $\sigma_0(G_T) = 1/4$.  In 1999, Cohen et al. \cite{cohen1999} proved $\sigma_0(G_S) \leq 7/20$, and, in 2005, Ben-Haim and Litsyn \cite{benhaim} completed the proof by showing $\sigma_0(G_S) \geq 7/20$.

We concern ourselves in this paper with $\sigma_0(G_H)$.  In 1998, Karpovsky et al. \cite{karpovsky} showed $\sigma_0(G_H) \geq 2/5 = 0.4$.  In 2000, Cohen et al. \cite{cohen2000} improved this result to $\sigma_0(G_H) \geq 16/39 \approx 0.410256$ and constructed two codes with a density of $3/7 \approx 0.428571$ implying $\sigma_0(G_H) \leq 3/7$.  In 2009, Cranston and Yu \cite{12/29} proved $\sigma_0(G_H) \geq 12/29 \approx 0.413793$.   For other results on identifying codes for the hexagonal grid, see \cite{martin, stanton}.

In this paper, we present three new codes with a density of $ 3/7$ and prove $\sigma_0(G_H) \geq 5/12 \approx 0.416667$.  In conclusion, it is now known that $5/12 \leq \sigma_0(G_H) \leq 3/7$.


\begin{figure}[h]
\begin{center}
\subfloat{\label{chaseconstruction}
\includegraphics[width=0.322\textwidth]{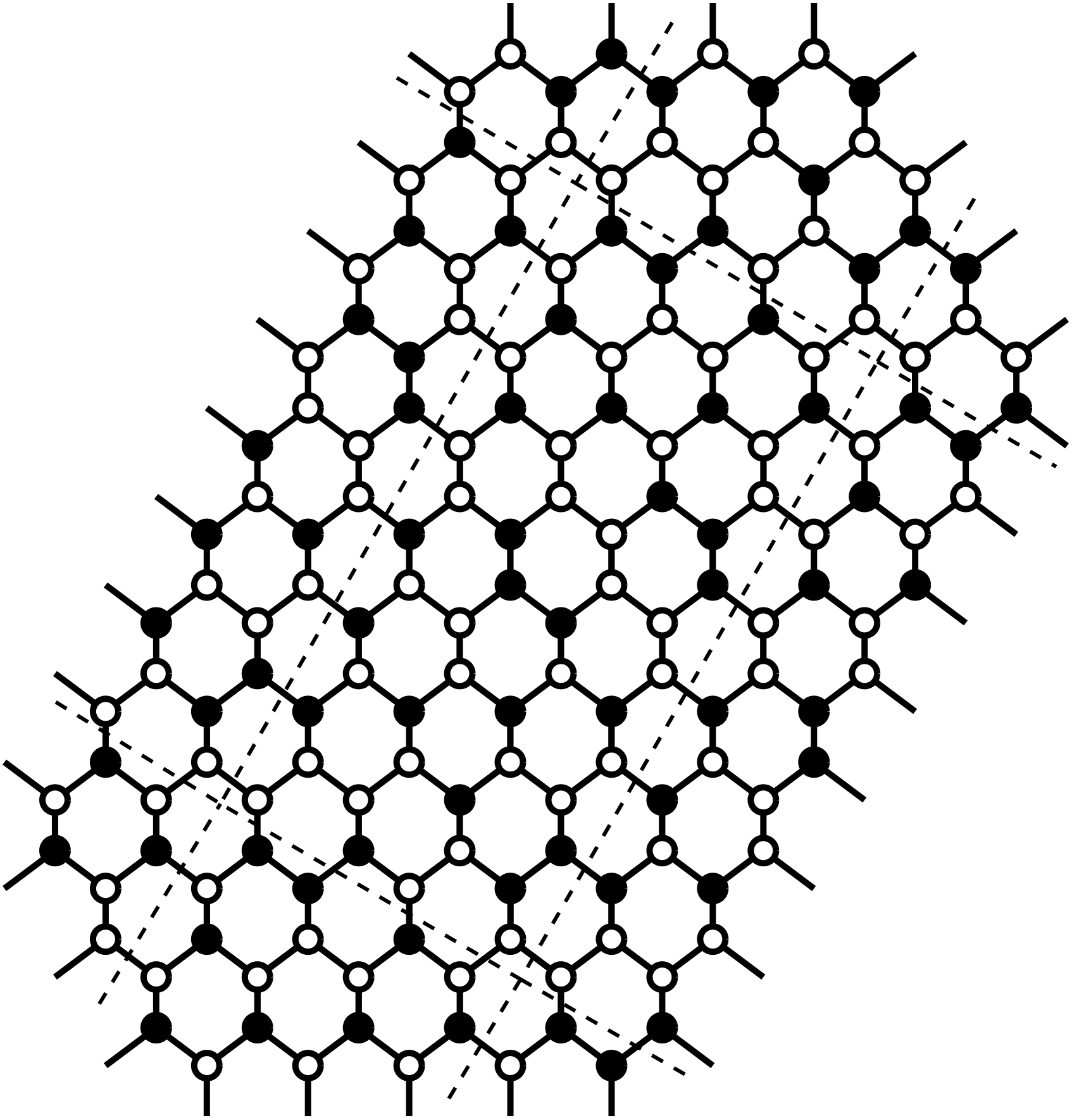}}
\subfloat{\label{chaseconstruction2}\includegraphics[width=0.368\textwidth]{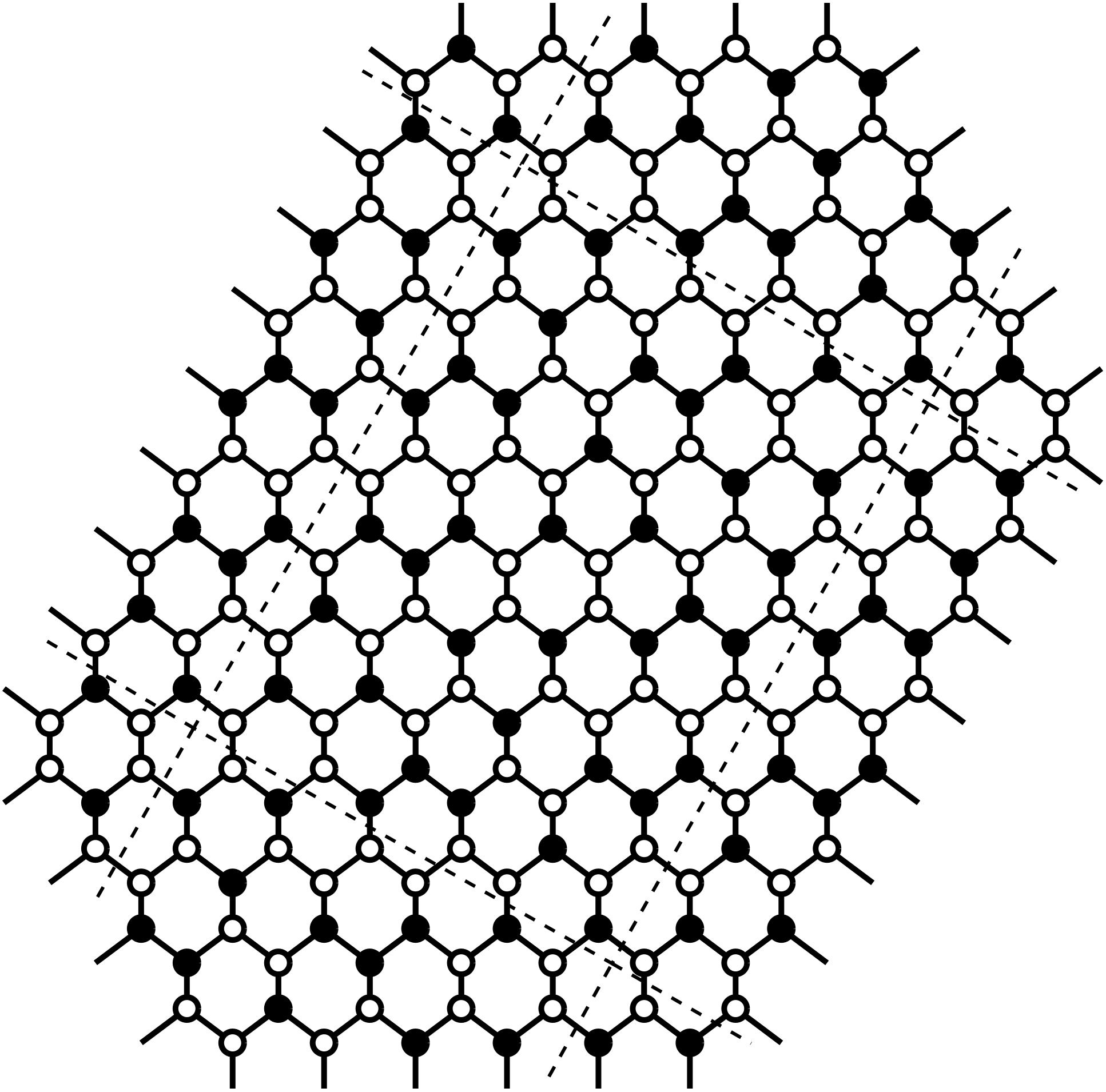}}
\subfloat{\label{jeffconstruction}\includegraphics[width=0.307\textwidth]{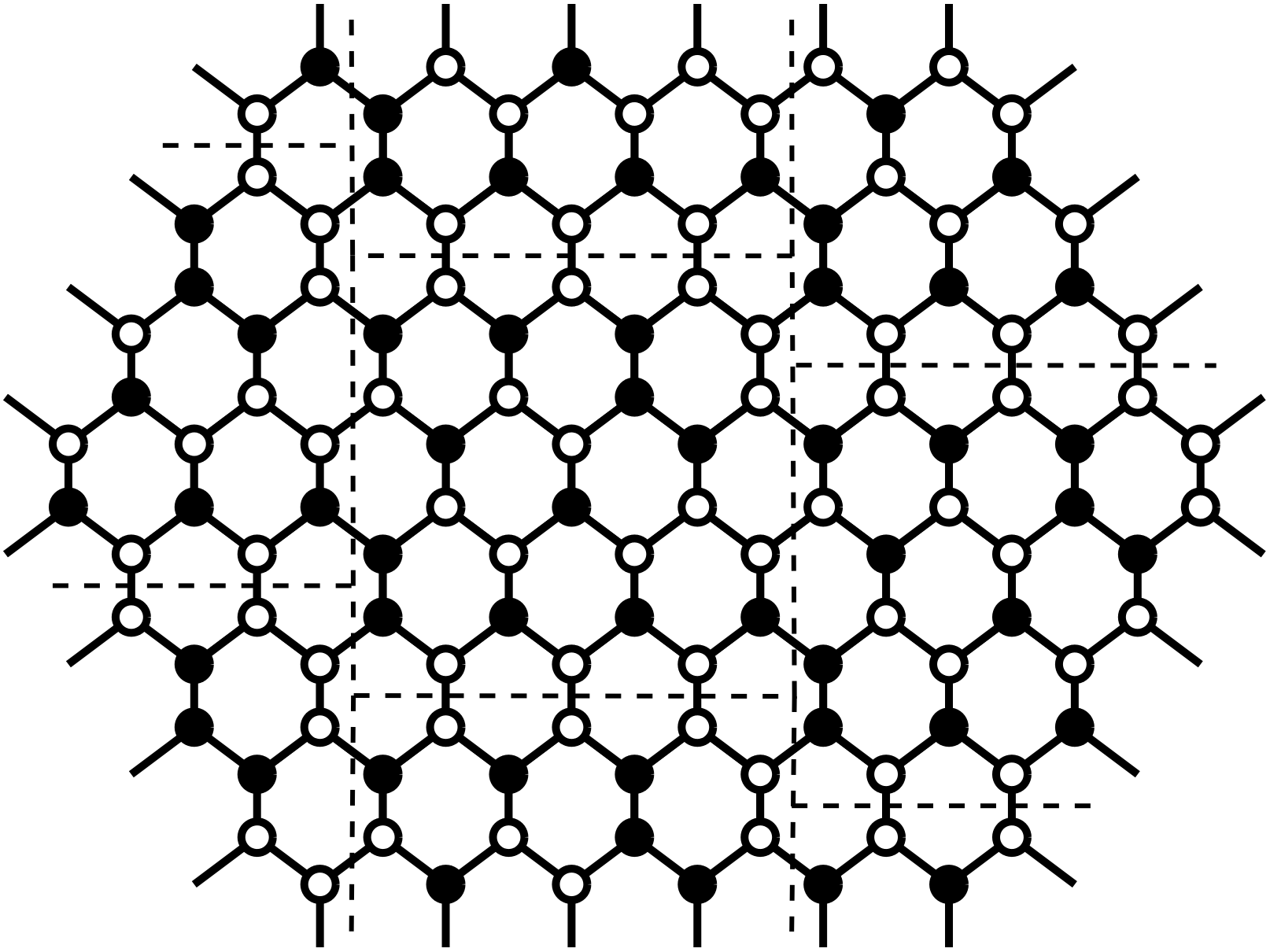}}
\end{center}
\caption{Three new codes with a density of $3/7$.  The solid vertices are in the code.}
\label{3codes}
\end{figure}

Suppose $\beta$ is an upper bound on $\sigma_0(G_H)$.  To prove this, we need only show the existence of a code, $D$, with $\sigma(D,G_H) \leq \beta$.  When constructing such codes, we usually look for tiling patterns.  Since the pattern repeats ad infinitum, the density of one tile is the density of the whole graph.  Figure \ref{3codes} shows three new codes for the infinite hexagonal grid with a density of $3/7$.

\begin{theorem}
\label{theorem}
The minimum density of a vertex identifying code for the infinite hexagonal grid is greater than or equal to $5/12$.
\end{theorem}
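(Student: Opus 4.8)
\emph{Setup.} Since $G_H$ is $3$-regular, $|N[u]|=4$ for every vertex $u$, so the plan is to run a discharging argument on the ball $B_k=\bigcup_{i=0}^{k}N_i(v_0)$ about a fixed vertex $v_0$ and then let $k\to\infty$ in the definition of $\sigma(D,G_H)$. Fix a vertex identifying code $D$, give each codeword an initial charge of $1$, and let every codeword send $\tfrac14$ to each of the four vertices of its closed neighbourhood. The charge of a vertex $v$ is then its \emph{share} $s(v)=\tfrac14\,|N[v]\cap D|$, and $\sum_{v\in B_k}s(v)=|D\cap B_k|+O(k)$ while $|B_k|=\Theta(k^2)$ since $G_H$ has quadratic growth. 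It therefore suffices to redistribute charge, using only moves between vertices at bounded distance, so that after redistribution every vertex of $B_k$ outside an $O(k)$ boundary layer has charge at least $\tfrac5{12}$; dividing by $|B_k|$ and letting $k\to\infty$ then gives $\sigma_0(G_H)\ge\tfrac5{12}$.

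\emph{Classifying vertices.} Condition (i) forces $|N[v]\cap D|\ge1$ for all $v$. Call $v$ \emph{poor} when $|N[v]\cap D|=1$; then $s(v)=\tfrac{3}{12}$, a deficit of $\tfrac16$ below the target, while every other vertex has $s(v)\ge\tfrac12$, a surplus of at least $\tfrac1{12}$, rising to $\tfrac14$ when $|N[v]\cap D|=3$ and to $\tfrac7{12}$ when $|N[v]\cap D|=4$. A basic consequence of condition (ii) is that a codeword is the unique codeword of at most one poor vertex, since otherwise two poor vertices would share the same singleton code-neighbourhood; so to each poor $v$ we attach its codeword $c(v)\in N[v]\cap D$, and this map is injective. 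Moreover, each vertex of $N[v]\setminus\{c(v)\}$ lies outside $D$ yet must be separated by (ii) from $v$ and from the others, which forces further codewords among the vertices at distance $2$ from $v$; these forced codewords create the surplus near $v$ that will be used to pay for it.

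\emph{Discharging and clustering.} The plan is to route charge from surplus vertices to poor vertices along bounded-length paths so that every poor vertex ends up with a net extra $\tfrac16$; the obstruction is that a single surplus vertex may lie near several poor vertices. To control this I would partition the poor vertices into \emph{clusters} according to mutual proximity --- isolated poor vertices and poor pairs on the one hand, the denser \threepluss and \fourpluss configurations on the other, among them the extremal ``open triangle'' pattern $\openthree$ --- and argue cluster by cluster. For each cluster type one enumerates, up to symmetry, the finitely many codeword configurations around it that are consistent with (i) and (ii); in each case one exhibits an explicit set of surplus vertices on the cluster's periphery whose surplus covers the cluster's total deficit, and one verifies that these peripheral sets can be taken pairwise disjoint over distinct clusters. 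Summing the resulting local inequalities over all clusters meeting $B_k$ yields $\sum_{v\in B_k}s(v)\ge\tfrac{5}{12}|B_k|-O(k)$, which is the theorem.

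\emph{Where the difficulty lies.} The heart of the argument, and the step that actually pins down the constant $\tfrac5{12}$, is the case analysis for the dense clusters: one must show that when several poor vertices are packed close together, condition (ii) is restrictive enough to force vertices with three or four nearby codewords onto the cluster's periphery, producing enough surplus to hand each poor vertex its $\tfrac16$. The configuration $\openthree$ is precisely a tight case, so the enumeration must be sharp enough to rule out anything worse; this is a finite but delicate local analysis and is where essentially all of the work is. By contrast, the passage to the infinite grid is routine, the charge crossing $\partial B_k$ being $O(k)=o(|B_k|)$.
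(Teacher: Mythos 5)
Your setup is a legitimate alternative formulation of the discharging framework: spreading each codeword's unit charge as $\frac14$ over its closed neighbourhood and measuring the share $s(v)=\frac14|N[v]\cap D|$ is linearly equivalent to the paper's convention (charge sits on codewords and Rule 1 pays $\frac{5}{12k}$ to each non-codeword with $k$ codeword neighbours), your boundary estimate $O(k)$ against $|B_k|=\Theta(k^2)$ correctly reduces the theorem to a local redistribution statement, and the injectivity observation drawn from condition (ii) is correct.

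The gap is that the local statement --- the only part of the argument that actually produces the constant $5/12$ --- is asserted rather than proved. ``One enumerates the finitely many codeword configurations\dots one exhibits an explicit set of surplus vertices\dots one verifies that these peripheral sets can be taken pairwise disjoint'' is a description of what a proof must accomplish, and it is precisely where the theorem lives: a poor vertex has deficit $\frac{2}{12}$ while a generic surplus vertex has only $\frac{1}{12}$ to spare, so each poor vertex must capture two units of surplus, and in the tight configurations this surplus is not available on any short, uncontested periphery. In the paper the analogous extremal objects (the very poor $1$-clusters) can only be paid by donors at distance $3$, and sometimes only through a two-step relay (Rules 6 and 7, in which a distant cluster pays an intermediate poor vertex that forwards the charge); establishing that such a donor always exists and is not simultaneously claimed elsewhere consumes roughly twenty structural lemmas and the bulk of the paper. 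So your scheme of ``disjoint peripheral surplus sets'' would fail exactly on the configurations that determine the constant. Note also that your borrowing of the paper's vocabulary does not match its definitions: there a cluster (hence a \threepluss or \fourplus, and the class $\openthree$) is a component of the subgraph induced by $D$, not a grouping of mutually close poor vertices, so none of the paper's lemmas can be invoked to fill the hole under your decomposition. As written, the proposal is a plausible plan, not a proof of the bound.
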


To prove Theorem \ref{theorem}, we employ the discharging method.  Let $D$ be an arbitrary vertex identifying code for $G_H$.  We assign 1 ``charge" to each vertex in $D$ which we then redistribute so that every vertex in $G_H$ retains at least $5/12$ charge.  The charge is redistributed in accordance with a set of ``Discharging Rules".  Since $D$ was chosen arbitrarily, we then conclude that $5/12$ is a lower bound on $\sigma_0(G_H)$.

As the proof of Theorem \ref{theorem} is rather lengthy, we include a sketch of the proof in Section \ref{sketch}.  In Section \ref{general}, we introduce several properties of vertex identifying codes for $G_H$ which we will reference throughout the paper.  Section \ref{terminology} is devoted to terminology and notations; the vast majority of relevant notions are defined here.  In Section \ref{lemmas}, we state several lemmas concerning the structure of vertex identifying codes for $G_H$.  However, we defer the proofs of these lemmas to Section \ref{deferred proofs}.  The main result of this paper, Theorem \ref{theorem}, is proved in Section \ref{mainresult}.  

For the rest of the paper, if not explicitly stated, $D$ is to be interpreted as a vertex identifying code for the infinite hexagonal grid.

\section{Sketch of the Proof}
\label{sketch}

As mentioned in the introduction, our proof of Theorem \ref{theorem} makes use of the discharging method.  We assign 1 charge to each vertex in $D$ and then redistribute this charge so that each vertex in $G_H$ retains at least $5/12$. 
To design the proper discharging rules, we start with the following (Rule 1 in Section 6):
\begin{center}
If a vertex, $v$, is not in $D$ and has $k$ neighbors in $D$, then $v$ receives $\frac{5}{12k}$ from each of these neighbors.
\end{center}
We can easily verify that Rule 1 suffices to allow each vertex in $G_H \setminus D$ to retain $5/12$ charge (Claim \ref{notinD claim}).  As a result, the remaining discharging rules are concerned exclusively with vertices in $D$.  Now, any vertex, $v$, in $D$ with a neighbor in $G_H \setminus D$ loses charge by Rule 1.  
We show in Section~\ref{mainresult} that only one type of vertex loses too much by Rule 1; we call such a vertex a {\em poor $1$-cluster} (Definition~\ref{poor1 def}).  Consequently, we must find charge to send to poor 1-clusters from nearby vertices.
We find that it is helpful to consider a cluster (Definition~\ref{cluster}) as a single entity.  Thus we first need to determine the surplus charge each cluster may have after Rule 1.   


We observe that some $1$-clusters may have surplus charge and that their surplus differs according to the neighbors they may have; for this reason we define \emph{non-poor $1$-clusters} (Definition \ref{nonpoor1 def}) and \emph{one-third vertices} (Definition \ref{onethird def}).  In Lemmas~\ref{nonpoor lem1}-\ref{nonpoor lem3}, we determine how many poor $1$-clusters can lie in the neighborhood of a non-poor $1$-cluster, and then in Rules 2, 3d and 3e, we design the appropriate discharging rules to distribute the surplus charge.    In Claim~\ref{nonpoor1 claim}, we show that non-poor $1$-clusters ultimately retain a charge of at least $5/12$. 

For \threeplus s, the situation is more complicated.   We first see a difference of surplus charge according to the distribution of vertices at distance-2 from a given \threeplus; for this reason we define \emph{open/closed $k$-clusters} (Definition \ref{open-closed}), \emph{crowded/uncrowded $k$-clusters} (Definition~\ref{crowded}) and the $P$-function (Definition \ref{p-function}).  These definitions allow us to distinguish among \threeplus s with varying amounts of surplus charge.  We will see in Section \ref{mainresult} that for very large $k$, a $k$-cluster can always afford to send charge to all nearby poor $1$-clusters.  Consequently, we are mostly concerned with $k$-clusters with $3\le k\le 6$.  In Lemmas \ref{closed3 lem}-\ref{k+8}, we determine the number of poor $1$-clusters that can lie in the neighborhood of a given $k$-cluster.  Discharging Rules 3a-3c are designed in accordance with these lemmas to send charge from $3^+$-clusters to poor 1-clusters lying in a distance-2 or distance-3 neighborhood.

Now, some poor 1-clusters do not lie in a neighborhood that receives charge by Rule 3.  We call these \emph{very poor $1$-clusters} (Definition \ref{vp def}), and we distinguish between two orientations: symmetric and asymmetric (Definition \ref{orientations}).  In Lemmas \ref{vpsym lem}, \ref{vp lem} and \ref{vpshoulders lem} we scan the neighborhood of a very poor 1-cluster for clusters with charge available for redistribution after Rule 3.  
Crucially, we find in Lemma \ref{vp lem} that if there is no other way to squeeze charge for a given very poor 1-cluster from a single nearby cluster, there must be \emph{type-$1$ paired $3$-clusters} or \emph{type-$2$ paired $3$-clusters} (Definition \ref{paired def}) in the neighborhood.
These are structures which tend to form in the extended neighborhood of an asymmetric very poor 1-cluster and which always have extra charge after Rule 3.  In order to reserve this extra charge for very poor $1$-clusters, several discharging rules make exceptions for type-$1$ and type-$2$ paired $3$-clusters.  That this creates no new deficiency of charge is proved in Section \ref{mainresult}.  We prove some properties of type-1 and type-2 paired 3-clusters in Lemmas \ref{type1paired lem} and \ref{type2paired lem}.  Discharging Rules 4-7 are designed in accordance with the above-mentioned lemmas to send charge to very poor 1-clusters.

On an additional note,  the structure of type-1 and type-2 paired $3$-clusters is very specific, and this forces us to introduce some very specific notions (for example, Definitions 4.5 and 4.6).  This is done so that our analysis can penetrate to the properties of individual vertices.  As a result, the proofing process is somewhat tedious though more or less straightforward.



\section{General Structural Properties}
\label{general}

\begin{definition}\label{cluster}
A component of the subgraph induced by $D$ is called a {\bf cluster}.  A cluster containing $k$ vertices is called a {\bf $\bf k$-cluster}; a cluster containing $k$ or more vertices is called a {\bf ${\bf k^+}$-cluster}.
Let $D_k$ be the set of all vertices in $k$-clusters; and let $\mathcal{K}_k$ be the set of all $k$-clusters.  Let $d_C(v)$ be the degree of a vertex, $v$, in a \threeplus, $C$; and let $\Delta(C) = \max \{d_C(v) : v \in C \}$.
\end{definition}


\begin{proposition}
\label{2clusters prop}

There exist no $2$-clusters.

\end{proposition}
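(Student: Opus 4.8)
The plan is to derive an immediate contradiction with condition (ii) of Definition \ref{VIC def}. Suppose, for contradiction, that a $2$-cluster $C$ exists, say $C = \{u,v\}$. By Definition \ref{cluster}, $C$ is a connected component of the subgraph of $G_H$ induced by $D$; since $|C| = 2$ and $C$ is connected, $u$ and $v$ must be adjacent in $G_H$.

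Next I would observe that neither $u$ nor $v$ can have a neighbor in $D$ other than the other vertex of $C$: if $w \in D$ were a neighbor of $u$ with $w \neq v$, then $w$ would belong to the same component of the induced subgraph as $u$, i.e.\ $w \in C$, contradicting $|C| = 2$; and symmetrically for $v$. Therefore $N[u] \cap D = \{u,v\}$ and $N[v] \cap D = \{u,v\}$, so $N[u] \cap D = N[v] \cap D$ even though $u \neq v$. This violates condition (ii) of Definition \ref{VIC def}, contradicting that $D$ is a vertex identifying code. Hence no $2$-cluster exists.

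There is essentially no obstacle here: the proposition is a direct consequence of the definitions, and the argument does not even use the fact that $G_H$ is $3$-regular. The only step deserving a moment's care is the structural observation that a size-$2$ component of the subgraph induced by $D$ forces its two vertices to be adjacent and to have no further neighbors in $D$; once that is noted, the rest is routine bookkeeping with the closed neighborhoods $N[u]$ and $N[v]$.
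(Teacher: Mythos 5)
Your proof is correct and follows the same argument as the paper: both vertices of a putative $2$-cluster have closed neighborhood trace $\{u,v\}$ in $D$, contradicting condition (ii) of Definition \ref{VIC def}. You simply spell out the (routine) structural observations that the paper leaves implicit.
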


\begin{proof}

Suppose by contradiction that there exists a 2-cluster, $C$, and let $V(C) = \{v,w\}$.  Then, $N[v] \cap D = \{v,w\}$ and $N[w] \cap D = \{v,w\}$.  Now, if $N[v] \cap D = N[w] \cap D$, then $v=w$ (Definition \ref{VIC def}), which is a contradiction.
\end{proof}

\begin{corollary}
\label{not in 3+}

If a vertex, $v$, is not in a \threeplus, then either $v$ is not in $D$ or $v$ is a $1$-cluster.

\end{corollary}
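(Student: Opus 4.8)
The plan is to obtain this as an immediate consequence of Proposition~\ref{2clusters prop}. The key observation is purely definitional: by Definition~\ref{cluster}, a cluster is a connected component of the subgraph induced by $D$, so every vertex lying in $D$ belongs to exactly one cluster, and that cluster has some size $k \ge 1$. Thus the statement reduces to a short case analysis on $k$.

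Concretely, I would argue as follows. Suppose $v \in D$ (if not, the first alternative of the conclusion holds and we are done), and let $C$ be the unique cluster containing $v$, with $|V(C)| = k$. If $k \ge 3$, then $C$ is a \threepluss containing $v$, contradicting the hypothesis that $v$ is not in a \threeplus. If $k = 2$, then $C$ is a $2$-cluster, contradicting Proposition~\ref{2clusters prop}. The only remaining possibility is $k = 1$, i.e.\ $v$ is a $1$-cluster, which is the second alternative of the conclusion.

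There is no genuine obstacle here; the proof is a one-line deduction once Proposition~\ref{2clusters prop} is in hand. The only point requiring a moment's care is making the case split exhaustive — that is, noting that membership in $D$ forces $v$ into a cluster of \emph{some} positive size, so that the three cases $k \ge 3$, $k = 2$, $k = 1$ genuinely cover everything.
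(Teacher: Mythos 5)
Your proof is correct and matches the paper's intent exactly: the paper states this as an unproved corollary of Proposition~\ref{2clusters prop}, and your case split on the size of the cluster containing $v$ is precisely the implicit argument. Nothing is missing.
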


\begin{proposition}
\label{force3cluster prop}

If a vertex not in $D$ has $2$ adjacent vertices not in $D$, then the remaining adjacent vertex is in a \threeplus.

\end{proposition}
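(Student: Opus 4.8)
The plan is to argue by contradiction, exploiting the identifying condition on the two vertices $v$ and one of its non-code neighbors. Let $v \notin D$ have neighbors $a, b, c$ in $G_H$ (the hexagonal grid is 3-regular), with $a, b \notin D$, and suppose for contradiction that $c$ is not in a \threeplus. By Corollary~\ref{not in 3+}, this means either $c \notin D$ or $c$ is a $1$-cluster. If $c \notin D$, then $N[v] \cap D = \emptyset$, contradicting part (i) of Definition~\ref{VIC def}. So $c$ must be a $1$-cluster, and in particular $c \in D$, giving $N[v] \cap D = \{c\}$.

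Next I would derive a contradiction from the identifying condition (ii) applied to $v$ together with the non-code neighbor $a$ (the same argument works with $b$). Since $a \notin D$, part (i) forces $a$ to have some neighbor in $D$; that neighbor is not $v$ (as $v \notin D$), so $a$ has a code-neighbor other than $v$. Now consider the pair $\{v, a\}$: we have $N[v] \cap D = \{c\}$, so for condition (ii) to hold we need $N[a] \cap D \neq \{c\}$. The key observation is that in $G_H$ the vertices $v$ and $a$ are adjacent, so $c$ — being a neighbor of $v$ — is at distance at most $2$ from $a$; I need to examine whether $c$ can be a neighbor of $a$. In the hexagonal grid two adjacent vertices $v, a$ have no common neighbor (the grid has girth $6$, hence is triangle-free), so $c \notin N[a]$. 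Therefore $N[a] \cap D$ is a nonempty subset of $D$ disjoint from $\{c\}$, hence automatically distinct from $N[v] \cap D = \{c\}$ — this by itself is consistent, so the contradiction must come from elsewhere.

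The actual contradiction should instead come from comparing $v$ with $c$ itself. We have $N[v] \cap D = \{c\}$. Consider $N[c] \cap D$: since $c$ is a $1$-cluster, none of $c$'s neighbors lie in $D$, so $N[c] \cap D = \{c\}$ as well. Thus $N[v] \cap D = \{c\} = N[c] \cap D$ with $v \neq c$, violating condition (ii) of Definition~\ref{VIC def}. This is the desired contradiction, so $c$ must lie in a \threeplus.

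I expect the main (though still minor) obstacle to be bookkeeping the case analysis from Corollary~\ref{not in 3+} cleanly and making sure the final clash is stated against the right pair of vertices — the temptation is to compare $v$ with $a$, but the clean contradiction is the pair $\{v, c\}$, using that a $1$-cluster $c$ has $N[c] \cap D = \{c\}$. Once that is identified, the proof is a two-line argument with no computation.
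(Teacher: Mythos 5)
Your proof is correct and follows essentially the same route as the paper's: apply Corollary~\ref{not in 3+} to reduce to the two cases $c \notin D$ and $c \in D_1$, kill the first with condition (i), and kill the second by observing $N[v]\cap D = N[c]\cap D = \{c\}$, which forces $v=c$ by condition (ii). The middle paragraph exploring the pair $\{v,a\}$ is a harmless detour; the final argument you settle on is exactly the paper's.
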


\begin{proof}

Consider a vertex, $v$, such that that $N_1(v) = \{ a, b, c\}$ and let $a,b,v \not \in D$.  Suppose by contradiction that $c \not \in D_{3^+}$.  Then, $c \not \in D$ or $c \in D_1$ (Corollary \ref{not in 3+}).  If $c \not \in D$, then $N[v] \cap D = \emptyset$, which is a contradiction (Definition \ref{VIC def}).  If $c \in D_1$, then $N[v] \cap D = N[c] \cap D = \{ c \}$; therefore, $c = v$ (Definition \ref{VIC def}), which is a contradiction.
\end{proof}


\begin{proposition}
\label{1clusters prop}

Each of the vertices adjacent to a $1$-cluster, $v$, has at least one adjacent vertex in $D \setminus \{v\}$.

\end{proposition}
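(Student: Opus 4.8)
The plan is to argue by contradiction, reducing the statement to the distinctness condition in the definition of a vertex identifying code, exactly in the spirit of the proof of Proposition~\ref{force3cluster prop}. Fix a $1$-cluster $v$ and let $u$ be any vertex adjacent to $v$. First I would record two facts: since $v$ is a $1$-cluster, $N[v]\cap D=\{v\}$; and since $v$ is a $1$-cluster, none of its neighbours lies in $D$, so in particular $u\notin D$. Because $G_H$ is $3$-regular, $u$ has exactly three neighbours, one of which is $v$; call the other two $a$ and $b$, both necessarily distinct from $v$.

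Now suppose, for contradiction, that $u$ has no neighbour in $D\setminus\{v\}$, i.e.\ that $a,b\notin D$. Then the only vertex of $N[u]=\{u,a,b,v\}$ lying in $D$ is $v$, so $N[u]\cap D=\{v\}=N[v]\cap D$. These sets are equal and non-empty, so condition (ii) of Definition~\ref{VIC def} forces $u=v$, contradicting $u\in N_1(v)$. Hence at least one of $a,b$ lies in $D$; since both are neighbours of $u$ different from $v$, such a vertex lies in $D\setminus\{v\}$, which is exactly what is claimed.

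I do not expect any genuine obstacle here: the only point requiring a little care is confirming that the neighbour of $u$ we exhibit is really distinct from $v$ (so that it witnesses an element of $D\setminus\{v\}$), and this is immediate from the $3$-regularity of the grid and the distinctness of a vertex's three neighbours. One could alternatively phrase the argument as an application of Corollary~\ref{not in 3+} to the vertex $u$ together with the defining property of a $1$-cluster, but the direct appeal to Definition~\ref{VIC def} is the cleanest route.
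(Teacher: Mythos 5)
Your proof is correct. The paper's own proof is shorter in form but identical in substance: it supposes $u$ has no neighbour in $D\setminus\{v\}$ and then invokes Proposition~\ref{force3cluster prop} (applied to $u$, which is not in $D$ and has two neighbours $a,b\notin D$) to conclude $v\in D_{3^+}$, contradicting $v\in D_1$. You instead unfold the argument hiding inside Proposition~\ref{force3cluster prop} and apply Definition~\ref{VIC def}(ii) directly, deriving $N[u]\cap D=N[v]\cap D=\{v\}$ and hence the absurdity $u=v$. Both contradictions are legitimate and rest on the same distinguishability condition; the paper's version buys brevity by reusing an already-proved proposition, while yours is self-contained and makes the role of the identifying-code axiom explicit. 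Your care in checking that the witnessing neighbour of $u$ is distinct from $v$ (via $3$-regularity) is correct and harmless, though the paper does not bother to spell it out.
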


\begin{proof}

Let $v \in D_1$, and let $u$ be an adjacent vertex.  Suppose by contradiction that $u$ has no adjacent vertices in $D \setminus \{v\}$.  Then, $v \in D_{3^+}$ (Proposition \ref{force3cluster prop}), which is a contradiction.
\end{proof}

\begin{proposition}
\label{leaves prop}

Each leaf of a \threeplus, $C$, has at least one distance-$2$ vertex in $D \setminus C$.

\end{proposition}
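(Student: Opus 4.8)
The plan is to argue by contradiction. Let $v$ be a leaf of the \threeplus\ $C$, let $u$ be its unique neighbour lying in $C$, and let $a,b$ be the other two neighbours of $v$ in $G_H$. First I would note that $a,b\notin D$: a neighbour of $v$ lying in $D$ would, being adjacent to $v\in D$, belong to the same component $C$, so that $v$ would have two neighbours in $C$, contradicting $d_C(v)=1$. Hence $N[v]\cap D=\{v,u\}$.

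The key step is to compare the traces of the two neighbours $a,b$ of $v$ that lie outside $D$. Since $G_H$ is triangle-free, neither of the other two neighbours of $a$ can also be a neighbour of $v$, so those two vertices lie in $N_2(v)$; likewise for $b$. Thus $N[a]\cap D=\{v\}\cup S_a$ and $N[b]\cap D=\{v\}\cup S_b$ with $S_a,S_b\subseteq N_2(v)\cap D$. Both of these sets are non-empty, and since $a\ne b$, Definition~\ref{VIC def}(ii) forces $N[a]\cap D\ne N[b]\cap D$, hence $S_a\ne S_b$. Therefore some vertex $w\in N_2(v)\cap D$ lies in exactly one of $S_a,S_b$; say $w\in S_a$, so that $w$ is adjacent to $a$. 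This $w$ is a distance-$2$ vertex of $v$ lying in $D$, and it remains only to verify that it can be chosen outside $C$.

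For that last step I would first dispose of small clusters. Suppose $w\in C$. Since $v$ has degree $1$ in $C$, any path in $C$ from $w$ to $u$ avoids $v$, and, lying inside $D$, it avoids $a$ as well; splicing it with the three-edge path from $w$ to $u$ through $a$ and $v$ produces a cycle of $G_H$, which by the girth bound has length at least $6$. Hence the $w$-to-$u$ path has at least three edges, which forces $|C|\ge 5$; so for $|C|\in\{3,4\}$ the witness $w$ is automatically in $D\setminus C$ and the proposition follows. The remaining case $|C|\ge 5$ --- in which $C$ may occupy five of the six vertices of the hexagonal face through the edge $vu$, i.e.\ a ``hexagon minus a vertex'' path with $v$ at one end --- is the step I expect to be the main obstacle: there one must look beyond that face, at the hexagonal faces incident to $a$ and to $b$ and at the identifying constraints imposed by Definition~\ref{VIC def} on the vertices of $N_2(v)$ and their neighbours, to locate a vertex of $N_2(v)\cap(D\setminus C)$. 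I would organise this as a short case analysis over the at most six vertices of $N_2(v)$.
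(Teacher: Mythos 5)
Your core step is the same as the paper's: compare the traces $N[a]\cap D$ and $N[b]\cap D$ of the two neighbours of the leaf $v$ that lie outside $D$, and use Definition~\ref{VIC def}(ii) to extract a vertex of $N_2(v)\cap D$ seen by exactly one of them. The paper's proof is exactly this comparison, except that it never separates out the possibility that the witness lies in $C$: it asserts outright that if $v$ had no distance-$2$ vertex in $D\setminus C$ then $N[u]\cap D=N[w]\cap D=\{v\}$, which tacitly assumes that no distance-$2$ vertex of $v$ belonging to $C$ is adjacent to $u$ or $w$. Your girth computation is precisely the justification of that tacit assumption, and it is complete and correct for $|C|\in\{3,4\}$ --- which are the cluster sizes for which the paper actually applies the proposition (the $3$-cluster and $4$-cluster lemmas and claims).

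As written, however, your proposal does not prove the statement: the proposition is asserted for every $3^+$-cluster, and you explicitly leave $|C|\ge 5$ unresolved. This is a genuine gap rather than a routine verification. If five vertices of a hexagonal face lie in $C$ and the sixth is the non-code neighbour $a$ of the leaf $v$ (for instance, the two leaves of a ``curved'' $5$-cluster are both adjacent to the missing face vertex, so each is at distance $2$ from the other through a vertex not in $D$), then the witness produced by your trace comparison can indeed lie in $C$, and comparing $N[a]\cap D$ with $N[b]\cap D$ yields nothing further: the two traces already differ because of that in-cluster vertex. Closing this case needs an argument of a different kind from the one you (and the paper) use, and it is not clear that the ``short case analysis over $N_2(v)$'' you gesture at will succeed; you would need either to supply it or to restrict the statement to $3$- and $4$-clusters, which would still suffice for all of the paper's applications.
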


\begin{proof}

Let $v$ be a leaf of a \threeplus, $C$.  Then, exactly 2 of the vertices adjacent to $v$ are not in $D$; let $u$ and $w$ be these vertices.  Suppose by contradiction that $v$ has no distance-2 vertices in $D \setminus C$.  Then, $N[u] \cap D = N[w] \cap D = \{v\}$; therefore, $u=w$ (Definition \ref{VIC def}), which is a contradiction.
\end{proof}




\section{Terminology and Notations}
\label{terminology}

We introduce the following convention which we will use throughout the paper.  Let $G$ be a graph, and suppose $D \subseteq V(G)$ is a vertex identifying code for $G$.  In the figures, we use a solid vertex to denote that a vertex is in $D$, and we use a hollow vertex to denote that a vertex is not in $D$.  The status of all other vertices is undetermined.  In Figure \ref{VICconvention}, for instance, $u \in D$ and $v \not \in D$, while the status of $w$ is undetermined.


\begin{figure}[h]
\begin{center}
\includegraphics[width=0.15\textwidth]{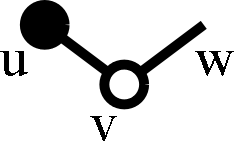}
\end{center}
\vspace{-15pt}
\caption{}
\label{VICconvention}
\end{figure}

\begin{definition}
\label{poor1 def}
A 1-cluster with exactly 3 distance-2 vertices in $D$ is called a {\bf poor $1$-cluster}.  Let $D_1^p$ be the set of all poor 1-clusters.  
\end{definition}

\begin{corollary}
\label{poor1 cor}
Each of the neighbors of a poor $1$-cluster, $v$, has exactly one neighbor in $D \setminus \{v\}$.
\end{corollary}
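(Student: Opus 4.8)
The plan is to combine the structural facts already established---chiefly Proposition~\ref{1clusters prop}---with the local geometry of $G_H$, where every vertex has degree $3$ and the girth is $6$.

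First I would fix a poor $1$-cluster $v$ and describe its neighbourhood explicitly. Since $v$ is a $1$-cluster, each of its three neighbours $u_1,u_2,u_3$ lies outside $D$. Because $G_H$ has girth $6$, the two neighbours of each $u_i$ other than $v$ are at distance exactly $2$ from $v$, and the three resulting pairs are pairwise disjoint (a shared neighbour would create a $4$-cycle, and $u_i$ cannot be adjacent to $u_j$ without creating a triangle). Hence $v$ has exactly six distance-$2$ vertices, partitioned into three pairs $P_1,P_2,P_3$, where $P_i$ consists of the two neighbours of $u_i$ other than $v$. The key translation to record here is that a vertex of $D\setminus\{v\}$ adjacent to $u_i$ is precisely a member of $P_i$ that lies in $D$.

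Next I would invoke Proposition~\ref{1clusters prop}: each $u_i$ has at least one neighbour in $D\setminus\{v\}$, so at least one member of each $P_i$ lies in $D$. Summing over $i=1,2,3$ (the $P_i$ being disjoint), the number of distance-$2$ vertices of $v$ in $D$ is at least $3$. But by Definition~\ref{poor1 def} this number is exactly $3$, forcing each $P_i$ to contain exactly one vertex of $D$. Translating back, each neighbour $u_i$ of $v$ has exactly one neighbour in $D\setminus\{v\}$, which is the assertion.

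I do not expect any real obstacle: the only point requiring a moment's care is confirming that the six distance-$2$ vertices are genuinely distinct, i.e.\ that no coincidences are forced, and this is immediate from the girth of $G_H$. Once the neighbourhood is pinned down, the counting argument is forced, so the proof is short.
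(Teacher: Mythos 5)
Your proposal is correct and matches the paper's proof, which simply cites Proposition~\ref{1clusters prop} and Definition~\ref{poor1 def}; you have just spelled out the implicit counting argument (three disjoint pairs of distance-$2$ vertices, each containing at least one and hence exactly one vertex of $D$). The extra care about the girth ensuring the six distance-$2$ vertices are distinct is a reasonable detail to make explicit, but it is the same argument.
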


\begin{proof}
This follows from Proposition \ref{1clusters prop} and Definition \ref{poor1 def}.
\end{proof}

\begin{definition}
\label{open-closed}
For $k \geq 3$, let $C$ be a $k$-cluster with $\Delta(C) = 2$.  If none of the non-leaf vertices of $C$ has a distance-2 vertex in $D \setminus C$, then $C$ is an {\bf open $k$-cluster}.  If at least one of the non-leaf vertices of $C$ has a distance-2 vertex, $v$, in $D \setminus C$, then $C$ is a {\bf closed $k$-cluster} and $v$ {\bf closes} $C$.
Let $D_k^o$ be the set of all vertices in open $k$-clusters and $D_k^c$ the set of all vertices in closed $k$-clusters; let $\mathcal{K}_k^o$ be the set of all open $k$-clusters and $\mathcal{K}_k^c$ the set of all closed $k$-clusters.
\end{definition}

\begin{definition}
\label{crowded}
If an open $k$-cluster, $C$, has exactly 2 distance-2 vertices in $D$, both of which are poor 1-clusters, then $C$ is {\bf uncrowded}.  Otherwise, $C$ is {\bf crowded}.
\end{definition}

\begin{definition}
\label{p-function}
For a given cluster, $C$, let $P(C) = \sum\limits_{v \in C} | N_2(v) \cap D \setminus C | $.
\end{definition}

%


\begin{definition}
\label{3cluster def}
Let $C$ be the $3$-cluster shown in Figure \ref{3-cluster}.  Vertices $a$ and $b$ are in the {\bf head positions} of $C$; $c$ and $e$ are in the {\bf shoulder positions}; $f$ and $g$ are in the {\bf arm positions}; $h$ and $m$ are in the {\bf hand positions}; $i$ and $k$ are in the {\bf foot positions}; $j$ is in the {\bf tail position}; and $n$ and $q$ are in the {\bf fin positions}.
If $q$ is not in $D$, then $b,d,e,g,k$ and $m$ are on the {\bf finless side} of $C$.  If $d$ is in $D$, then $b,e,g,k,m$ and $q$ are on the {\bf closed side} of $C$.
\end{definition}

\begin{figure}[h]
\setcaptionwidth{0.2\textwidth}
\begin{center}
\subfloat[3-Cluster]{\label{3-cluster}\includegraphics[width=.25\textwidth]{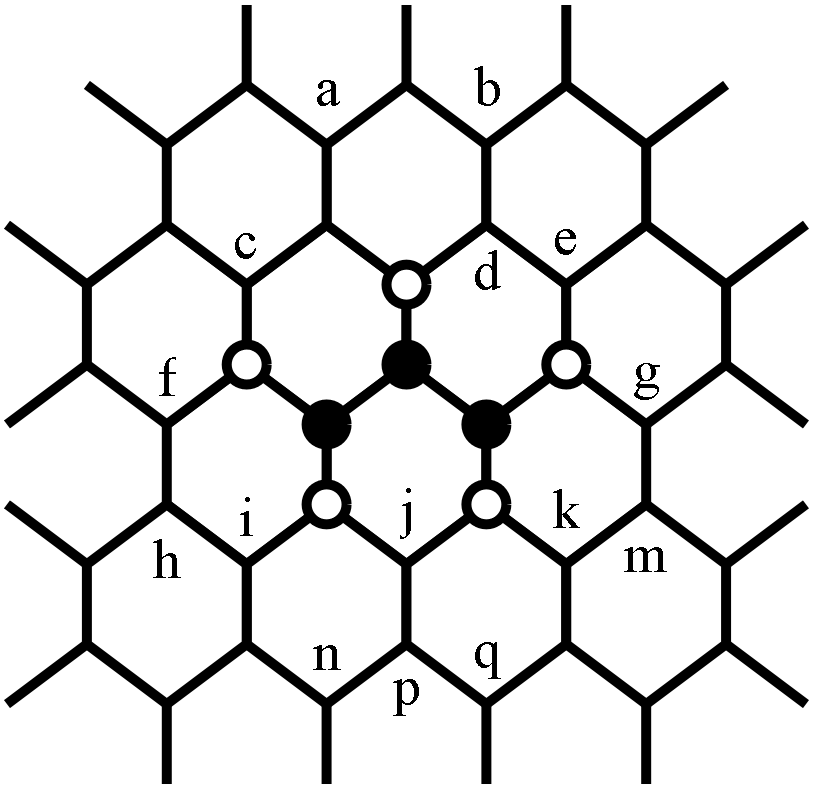}}
\hspace{1 cm}
\subfloat[Linear 4-Cluster]{\label{linear4}\includegraphics[width=0.275\textwidth]{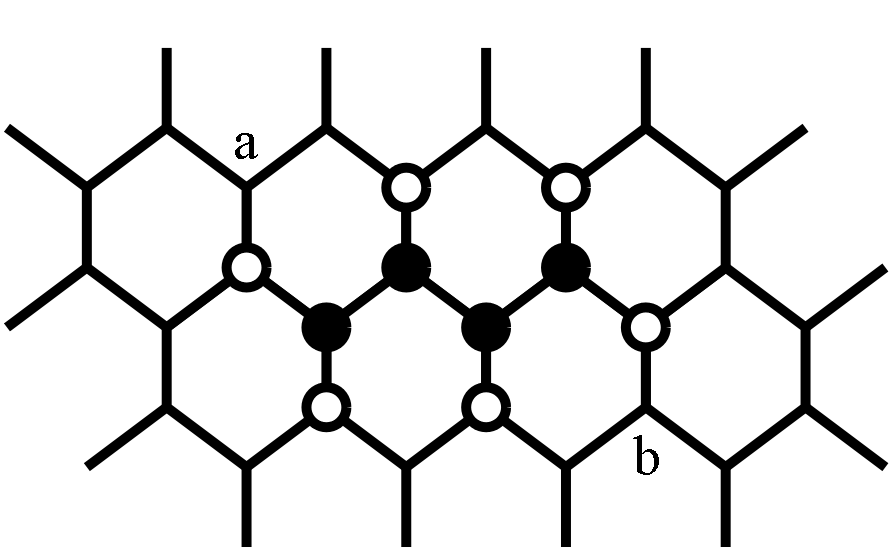}}
\hspace{1 cm}
\subfloat[Curved 4-Cluster]{\label{curved4}\includegraphics[width=0.2\textwidth]{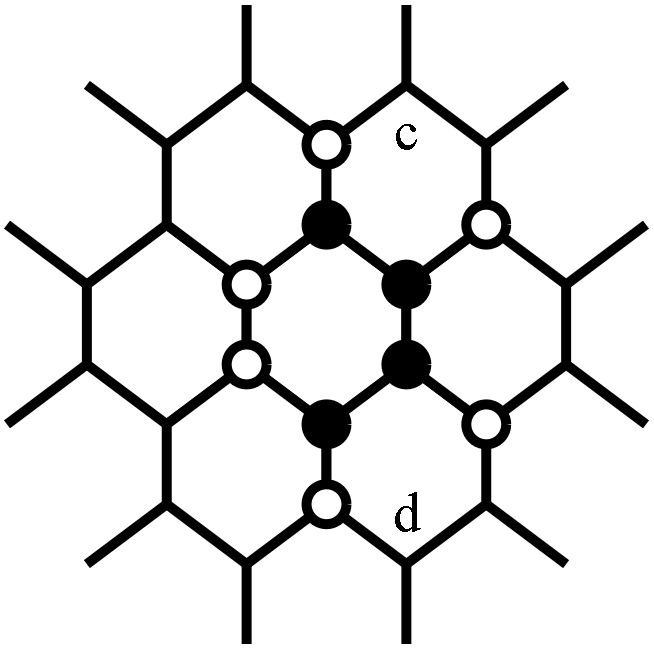}}
\end{center}
\caption{}
\label{linear and curved 4}
\end{figure}

\begin{definition}
\label{4cluster def}
Let $C$ be a 4-cluster with $\Delta(C) = 2$.  If the leaves of $C$ do not lie on the same 6-cycle, then $C$ is a {\bf linear $4$-cluster}.  Otherwise, $C$ is a {\bf curved $4$-cluster}.  Let $C_1$ be the linear 4-cluster shown in Figure \ref{linear4}.  Vertices $a$ and $b$ are in the {\bf one-turn positions} of $C_1$.  Let $C_2$ be the curved 4-cluster shown in Figure \ref{curved4}.  Vertices $c$ and $d$ are in the {\bf backwards positions} of $C_2$.

\end{definition}

\begin{definition}

A vertex, $v$, is {\bf distance-$k$ from a cluster}, $C$, if $k$ is the minimum distance from $v$ to any of the vertices of $C$.  If $k \leq \ell$, then $v$ is {\bf within distance-$\ell$} of $C$.
If a vertex, $v$, is within distance-3 of a cluster, $C$, then $v$ is {\bf nearby} $C$.

\end{definition}

\begin{figure}[h]
\setcaptionwidth{0.26\textwidth}
\begin{center}
\subfloat[Paired 3-Clusters]{\label{paired}\includegraphics[width=0.227\textwidth]{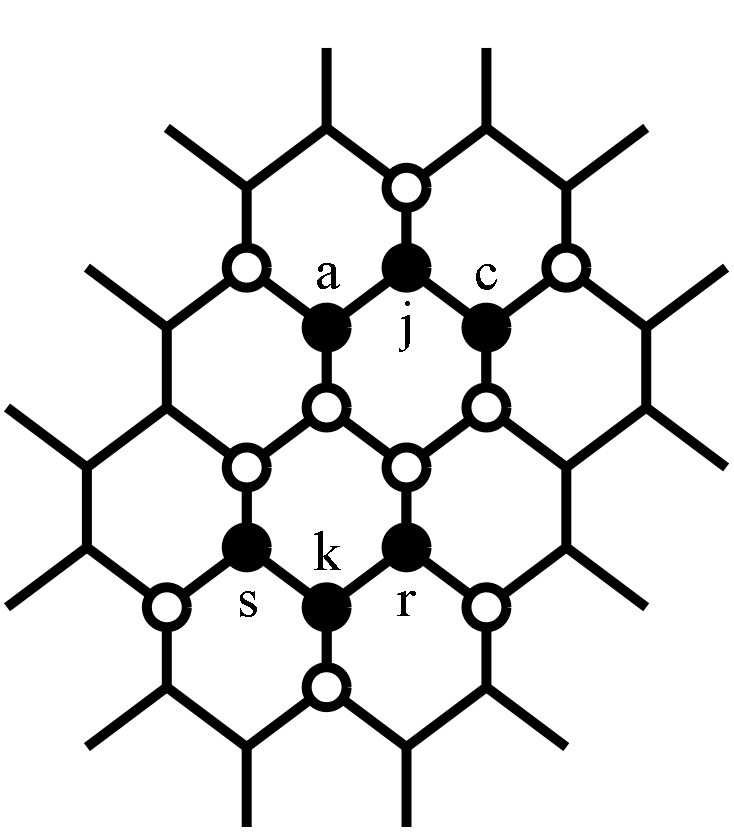}}
\hspace{0.5 cm}
\subfloat[Type-1 Paired 3-Clusters]{\label{type-1 paired}\includegraphics[width=0.217\textwidth]{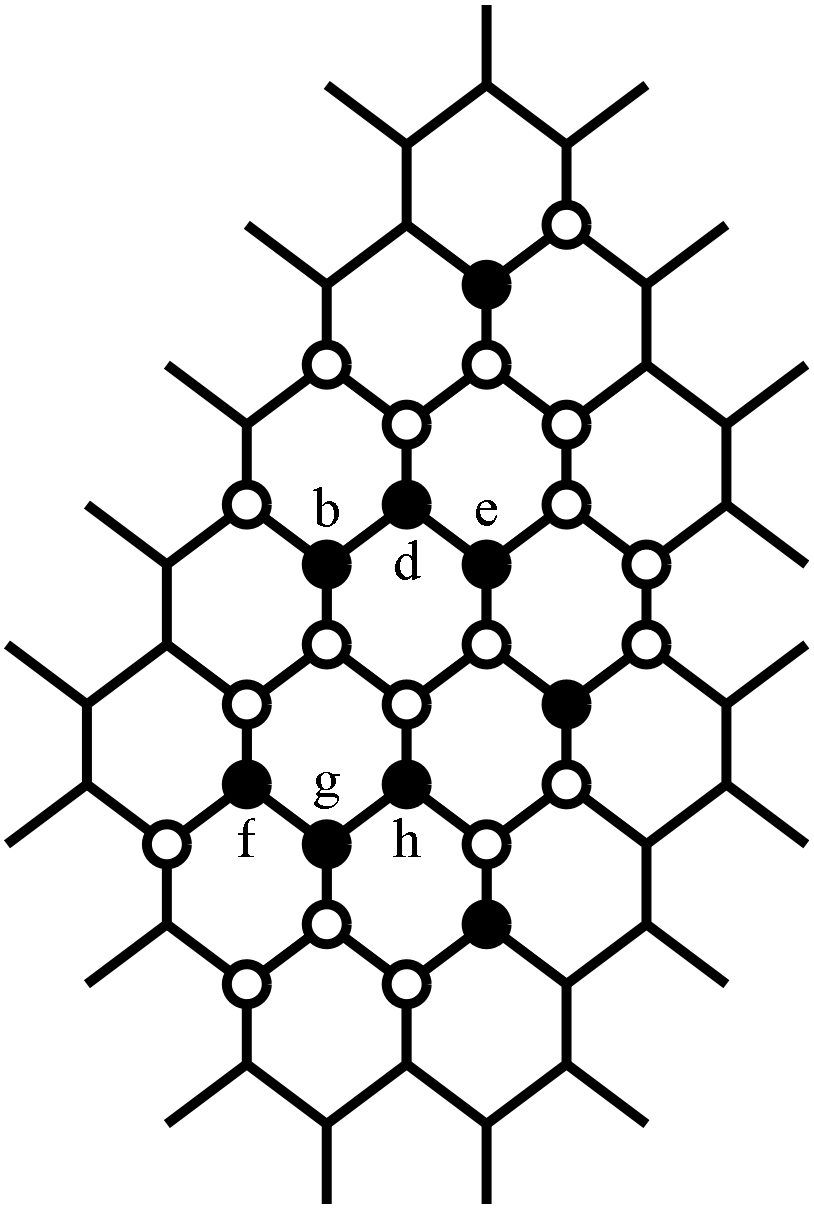}}
\hspace{0.5 cm}
\subfloat[Type-2 Paired 3-Clusters]{\label{type-2 paired}\includegraphics[width=0.227\textwidth]{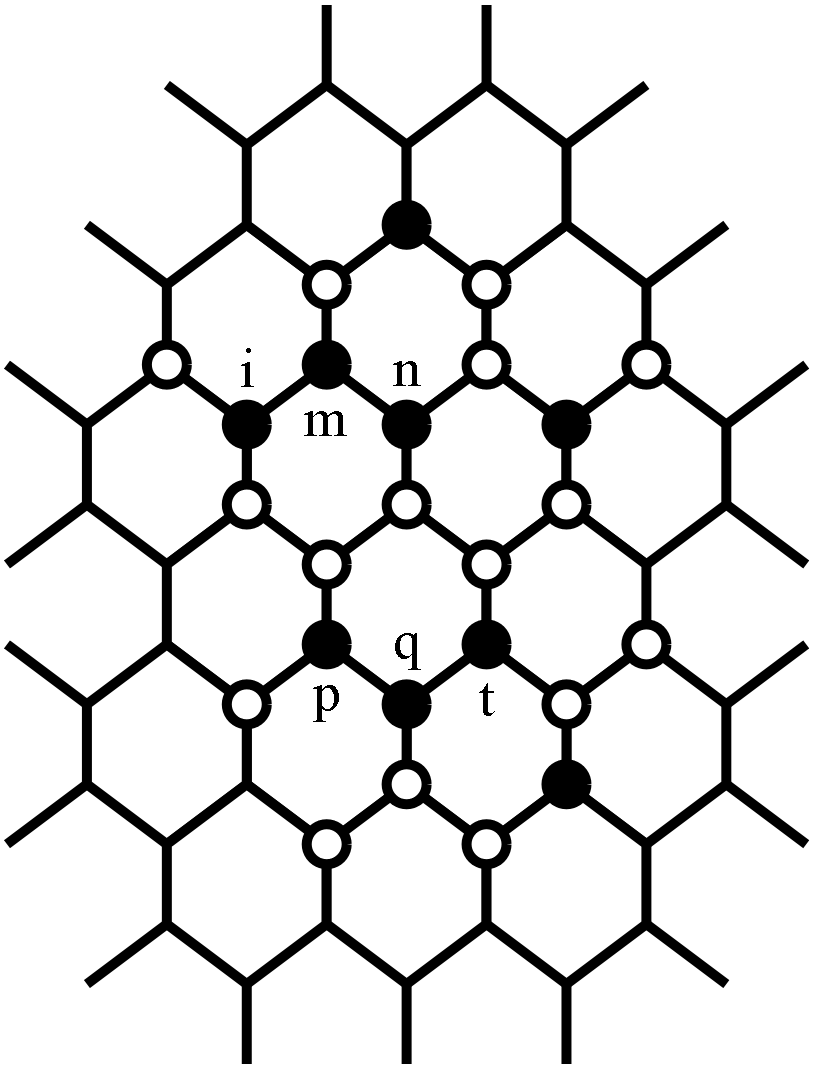}}
\end{center}
\caption{}
\label{type 1 and type 2 paired}
\end{figure}

\begin{definition}
\label{paired def}
Let $C_1$ be the 3-cluster described by $a$, $j$ and $c$ in Figure \ref{paired}, and let $C_2$ be the 3-cluster described by $s$, $k$ and $r$.  Then, $C_1$ and $C_2$ are {\bf paired 3-clusters}.
Let $C_3$ be the 3-cluster described by $b$, $d$ and $e$ in Figure \ref{type-1 paired}, and let $C_4$ be the 3-cluster described by $f$, $g$ and $h$.  Then, $C_3$ and $C_4$ are {\bf type-1 paired}, and $C_3$ is {\bf type-1 paired on top}.  
Let $C_5$ be the 3-cluster described by $i$, $m$ and $n$ in Figure \ref{type-2 paired}, and let $C_6$ be the 3-cluster described by $p$, $q$ and $t$.  Then, $C_5$ and $C_6$ are {\bf type-2 paired}.
\end{definition}

\begin{corollary}
\label{paired cor}
If a $3$-cluster, $C$, is type-$1$ paired, then $C$ is not type-$2$ paired, and vice versa.
\end{corollary}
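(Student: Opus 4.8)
The plan is to derive the corollary directly from Definition \ref{paired def}, by showing that the two local configurations it prescribes cannot both be centered on a single $3$-cluster. Since any two $3$-clusters of $G_H$ are carried onto one another by an automorphism of $G_H$, I may fix one copy of $C$ carrying the labels of Definition \ref{3cluster def} and ask which automorphic images of $C$ can serve as a type-$1$ paired $3$-cluster and which can serve as a type-$2$ paired one. If $C$ is type-$1$ paired, it is one of the clusters $C_3$ (``type-$1$ paired on top'') or $C_4$ of Figure \ref{type-1 paired}; reading the solid and hollow vertices off that figure pins down the status (in $D$ or not in $D$) of every vertex within distance $2$ of $C$ on the side of the pairing --- in particular the shoulder, arm, hand, foot and fin positions on that side, and the distance-$2$ vertices of $C$ that form the head and tail of the partner cluster. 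I would record this list of forced statuses, and then do the same using Figure \ref{type-2 paired} for the case that $C$ is type-$2$ paired.

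Assuming for contradiction that the fixed $C$ is both type-$1$ and type-$2$ paired, I would overlay the two forced configurations. Both are anchored at the same rigidly shaped $3$-cluster, so the comparison comes down to the few ways the type-$1$ picture and the type-$2$ picture can be aligned around $C$ --- up to the symmetry of a $3$-cluster, which is already partly broken by which cluster is ``type-$1$ paired on top'' and by the grid's reflections. For each alignment I would exhibit a single position in the neighborhood of $C$ --- I expect a fin position, or a distance-$2$ vertex serving as a head of the partner cluster --- whose status is forced to ``in $D$'' by one definition and to ``not in $D$'' by the other. That clash shows $C$ cannot be both; the ``vice versa'' half of the statement is the same assertion read in reverse, so no separate argument is needed.

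The obstacle here is bookkeeping rather than mathematics: one must be sure the enumeration of ways to fit the two figures onto the fixed copy of $C$ is complete, and that the conflicting vertex is genuinely \emph{forced} to a definite status by each of the two definitions rather than merely left undetermined in one of them. This is precisely why Definition \ref{3cluster def} names the individual positions of a $3$-cluster, and I would rely on that explicit list --- together with Proposition \ref{leaves prop} and the other structural facts of Section \ref{general} wherever a status must be deduced rather than read off --- rather than on the pictures alone.
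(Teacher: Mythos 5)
The paper offers no proof of this corollary at all: it is stated immediately after Definition \ref{paired def} and is meant to be read off Figures \ref{type-1 paired} and \ref{type-2 paired}, exactly in the way you describe. So your strategy --- fix one copy of $C$, record the vertex statuses forced by each of the two figures, enumerate the finitely many ways the two patterns can be aligned on $C$ (up to the symmetries of a $3$-cluster and of $G_H$), and exhibit a vertex forced into $D$ by one pattern and out of $D$ by the other --- is the intended justification, and it is the right one.

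That said, your write-up stops exactly where the content of the corollary begins. You never actually perform the overlay: no list of forced statuses is produced, the alignments are not enumerated, and no conflicting vertex is named --- you only predict that one will be ``a fin position, or a distance-$2$ vertex serving as a head of the partner cluster.'' Since the whole assertion reduces to this finite check, deferring it means the proposal is a plan rather than a proof. You also flag the right danger yourself: one must verify that the allegedly conflicting vertex is genuinely \emph{forced} to a definite status by both definitions (the figures leave many nearby vertices undetermined), and that the enumeration of alignments is complete. One simplification worth exploiting, visible from how the corollary is used later in the paper (Rule 4, Lemma \ref{type2paired lem}, and Claims \ref{open3 claim} and \ref{closed3 claim}): in a type-$2$ pairing one member is a closed $3$-cluster and the other is open, while the type-$1$ configuration concerns open $3$-clusters; this immediately disposes of some cases and shrinks the overlay you must check for the rest.
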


\begin{definition}
\label{stealable def}
A poor 1-cluster, $v$, is {\bf stealable} if $v$ is distance-3 from a \fourpluss and distance-2 from a \threeplus, $C$, such that if $C$ is an open 3-cluster, then
\begin{itemize}
\item[(i)] $v$ is not in a shoulder position;
\item[(ii)] if $v$ is in an arm position, then $C$ is neither type-1 nor type-2 paired.
\end{itemize}
\end{definition}

\begin{definition}
\label{nonpoor1 def}
A 1-cluster that is not poor is called a {\bf non-poor 1-cluster}.    Let $D_1^{np}$ be the set of all non-poor 1-clusters.  If 3 non-poor 1-clusters, $u$, $v$ and $w$, are adjacent to the same one-third vertex, then $u$, $v$ and $w$ are referred to as a {\bf group of non-poor $1$-clusters}.  A vertex, $v$, is {\bf distance-$k$ from a group of non-poor $1$-clusters}, $H$, if $v$ is distance-$k$ from any of the 1-clusters in $H$.
\end{definition}

\begin{definition}
\label{onethird def}
If a vertex, $v$, is not in $D$ and has 3 neighbors in $D$, then $v$ is called a {\bf one-third vertex}.
\end{definition}

\begin{definition}
\label{vp def}

If a poor 1-cluster, $v$, is neither distance-2 from a $3^+$-cluster or a non-poor 1-cluster nor distance-3 from a closed 3-cluster or $4^+$-cluster, then $v$ is called a {\bf very poor 1-cluster}.  Let $D_1^{vp}$ be the set of all very poor 1-clusters.

\end{definition}

\begin{figure}[h]
\setcaptionwidth{0.18\textwidth}
\begin{center}
\subfloat[Symmetric Orientation]{\label{vpsym}\includegraphics[width=0.13\textwidth]{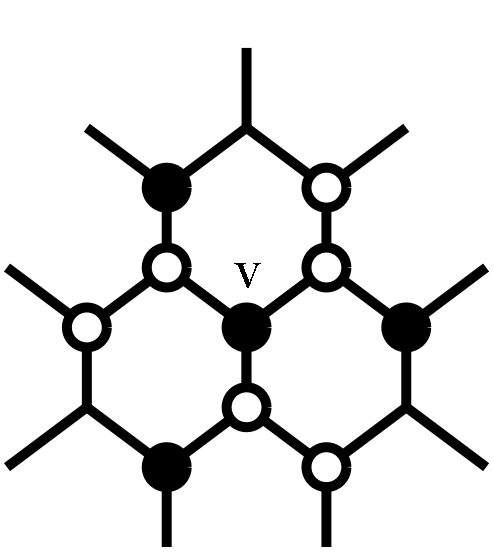}}
\hspace{3.3 cm}
\subfloat[Asymmetric Orientation]{\label{vpasym}\includegraphics[width=0.13\textwidth]{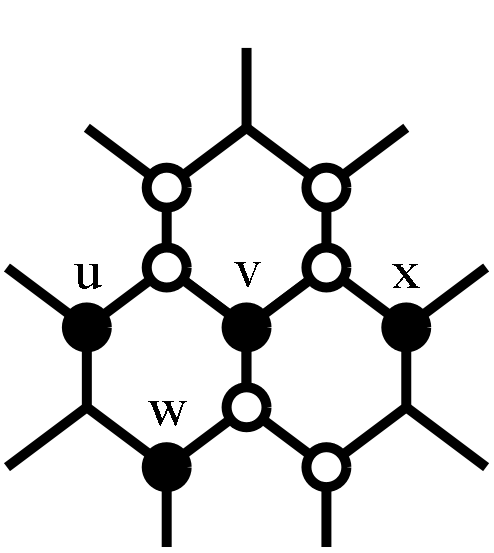}}
\end{center}
\caption{}
\label{verypoororientations}
\end{figure}

\begin{definition}
\label{orientations}

If a very poor 1-cluster, $v$, has 3 distance-2 vertices in $D$ which are all distance-4 from each other, then $v$ is in a {\bf symmetric orientation} (see Figure \ref{vpsym}).  A very poor 1-cluster which is not in a symmetric orientation is in an {\bf asymmetric orientation} (see Figure \ref{vpasym}).  The vertex $u$ in Figure \ref{vpasym} is in the {\bf u-position} of $v$, the vertex $w$ is in the {\bf w-position} of $v$, and the vertex $x$ is in the {\bf x-position} of $v$.

\end{definition}

\section{Structural Lemmas}
\label{lemmas}

In this section, we state several lemmas concerning the structure of a vertex identifying code for the infinite hexagonal grid.  As the primary purpose of these lemmas is to abridge the proof of Theorem \ref{theorem}, we defer the proofs of all lemmas to page \pageref{deferred proofs}.  Additionally, we defer the proofs of Proposition \ref{nonpoor prop1} and Corollaries \ref{vpasym cor2} and \ref{vpasym cor3}.

\begin{proposition}
\label{nonpoor prop1}

If a poor $1$-cluster, $v$, is distance-$2$ from exactly one of the $1$-clusters in a group of non-poor $1$-clusters, then $v$ is distance-$2$ from an open $3$-cluster or within distance-$3$ of a closed $3$-cluster or \fourplus.

\end{proposition}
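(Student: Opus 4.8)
The plan is to argue by a careful case analysis on the local structure around the poor $1$-cluster $v$ and the one-third vertex $t$ that witnesses the group of non-poor $1$-clusters. Write $N_1(v) = \{a_1, a_2, a_3\}$; by Corollary~\ref{poor1 cor} each $a_i$ has exactly one neighbor in $D \setminus \{v\}$, call it $b_i$, and the three vertices $b_1, b_2, b_3$ are precisely the three distance-$2$ vertices of $v$ that lie in $D$ (Definition~\ref{poor1 def}). By hypothesis exactly one of these, say $b_1$, is one of the three non-poor $1$-clusters in the group $H$ sharing the one-third vertex $t$; so $t$ is not in $D$, $t$ is adjacent to $b_1$ and to two further vertices $w_1, w_2 \in D$, and $b_1$ is a $1$-cluster, hence $t \notin N_1(b_1)\setminus\{t\}$ meeting $D$ only in $\{b_1, w_1, w_2\}$-style bookkeeping must be respected. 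The first step is to pin down where $t$ sits relative to $v$: since $b_1$ is at distance $2$ from $v$ along the path $v a_1 b_1$, and $t$ is adjacent to $b_1$ but $t \neq a_1$ (because $a_1 \notin D$ would force $t$'s neighbor count differently — more precisely $a_1$ already has its unique $D$-neighbor $b_1$, and $t$ needs three $D$-neighbors, so $t \neq a_1$), the vertex $t$ is at distance $3$ from $v$, lying "beyond" $b_1$.

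Next I would exploit the defining property of $t$ as a one-third vertex together with Proposition~\ref{1clusters prop} and Proposition~\ref{force3cluster prop} to locate $w_1$ and $w_2$ and determine their cluster membership. The key tension is that $b_1$ is forced to be a $1$-cluster (it is non-poor, but still a $1$-cluster), so none of its neighbors is in $D$; in particular the two neighbors of $b_1$ other than $a_1$ are not in $D$, and each of those, by Proposition~\ref{1clusters prop}, must have a neighbor in $D \setminus \{b_1\}$. Combined with the fact that $t$'s other two $D$-neighbors $w_1, w_2$ sit two steps from $b_1$ around the same hexagon, one of these neighbors-of-$b_1$ is $t$ itself, whose $D$-neighbors include $w_1, w_2$; I would then ask what cluster $w_1$ (or $w_2$) belongs to. If either $w_i$ lies in a $3^+$-cluster, I check whether it realizes distance-$2$ from $v$ to an open $3$-cluster or the within-distance-$3$ alternatives. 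If instead $w_1$ and $w_2$ are $1$-clusters, then $t$ together with $b_1, w_1, w_2$ and their forced empty neighborhoods over-constrains the grid: I would show that the only consistent completions either put $w_1, w_2$ into the desired $3^+$-cluster configuration after all (via repeated applications of Proposition~\ref{force3cluster prop} on the hollow vertices wedged between $v$'s neighborhood and $t$'s neighborhood), or create a second non-poor $1$-cluster at distance $2$ from $v$ belonging to $H$, contradicting the "exactly one" hypothesis, or violate Proposition~\ref{leaves prop} / Corollary~\ref{poor1 cor} somewhere.

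Concretely, the case split I anticipate is: (1) one of $w_1, w_2$ is in a $4^+$-cluster — then $v$ is within distance-$3$ of a $4^+$-cluster and we are done; (2) one of them is in a $3$-cluster $C$ — then either $C$ is closed (done, $v$ within distance-$3$ of a closed $3$-cluster) or $C$ is open, and I must check the open-$3$-cluster clause: since the connecting path from $v$ to $C$ goes through $a_1, b_1, t$ and then into $C$, the contact point of $C$ with $t$ is at distance $3$ from $v$, but the claim only promises distance-$2$ from an open $3$-cluster, so here I would need to trace one step further and show $C$ in fact reaches within distance $2$ of $v$ through a different vertex, or that $C$ being open forces $b_1$ or $b_2$ or $b_3$ (the other distance-$2$ vertices of $v$) to lie in $C$; (3) both $w_1, w_2$ are $1$-clusters — this is the residual case, handled by the over-constraining argument above, pushing the contradiction onto the "exactly one" or onto Definition~\ref{VIC def} via two hollow vertices acquiring identical traces.

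The main obstacle I expect is case (2)/(3): showing that when the naive path to a $3^+$-cluster has length $3$ rather than $2$, the hexagonal geometry nonetheless forces the stronger distance-$2$ conclusion (to an \emph{open} $3$-cluster specifically), as opposed to merely distance-$3$. This is where I would need to look two or three hexagons out and rule out, one configuration at a time, every way of filling the hollow cells between $v$ and $H$ without either (a) producing a closed $3$-cluster or $4^+$-cluster within distance $3$, (b) producing an open $3$-cluster touching $v$ at distance $2$, or (c) contradicting that $b_1$ is the \emph{unique} member of $H$ at distance $2$ from $v$. The bookkeeping is finite but delicate, and symmetry (the two neighbors $w_1, w_2$ of $t$ play interchangeable roles, and the two "other" neighbors of $v$, namely $a_2, a_3$, do too) should cut the number of genuinely distinct configurations down to a manageable handful.
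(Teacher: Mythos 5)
Your setup (the $a_i$, the $b_i$, and the location of the one-third vertex $t$ at distance $3$ from $v$) is fine, but the case split you build on it collapses. By Definition~\ref{nonpoor1 def} a group of non-poor $1$-clusters consists of three \emph{non-poor $1$-clusters} adjacent to a common one-third vertex, and by Definition~\ref{onethird def} those three are exactly the neighbors of $t$ in $D$; hence $w_1$ and $w_2$ are $1$-clusters by hypothesis, and your cases (1) and (2) are vacuous. The entire burden therefore falls on your case (3), which you only gesture at. Moreover, even if a $3^+$-cluster were located near $w_1$ or $w_2$, those vertices lie at distance $4$ from $v$ (they cannot be at distance $2$, since $v$ is distance-$2$ from exactly one member of $H$), so such a cluster would not by itself be within distance $3$ of $v$; and the ``exactly one'' hypothesis is a normalization of $v$'s position relative to $H$ (the two-member case is handled separately in Lemma~\ref{nonpoor lem1}), not something one derives a contradiction from.

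The paper's argument runs in the opposite direction: it works on the hexagon shared by $v$ and $e$ (the member of $H$ at distance $2$), not beyond $t$. Because $v$ is a poor $1$-cluster, Corollary~\ref{poor1 cor} forces the third neighbor of the common neighbor of $v$ and $e$ out of $D$, and because $e$ is a $1$-cluster all of $e$'s neighbors are out of $D$; these hollow vertices accumulate until Proposition~\ref{force3cluster prop} forces a vertex $q$ at distance $3$ from $v$ into a $3^+$-cluster $C$, while Proposition~\ref{1clusters prop} forces a nearby vertex $g$ into $D$. A short three-way check then finishes: either $C$ contains a vertex at distance $2$ from $v$, or $C$ is a $4^+$-cluster at distance $3$, or $C$ is a $3$-cluster closed by $g$ at distance $3$. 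Your sketch never produces this forced $3^+$-cluster on the near side of $e$, and that forcing is the whole content of the proposition.
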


\begin{lemma}
\label{nonpoor lem1}

Consider a group of non-poor $1$-clusters, $H$.  There exist at most $2$ poor $1$-clusters which are distance-$2$ from $H$ and neither distance-$2$ from an open $3$-cluster nor within distance-$3$ of a closed $3$-cluster or \fourplus.

\end{lemma}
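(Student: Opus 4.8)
\textbf{Proof proposal for Lemma \ref{nonpoor lem1}.} The plan is to study the geometry forced by a group of non-poor $1$-clusters $H = \{u,v,w\}$ sharing a common one-third vertex $z$, and to show that at most two poor $1$-clusters can sit at distance $2$ from $H$ while simultaneously being ``isolated'' in the sense described (not distance-$2$ from an open $3$-cluster, not within distance-$3$ of a closed $3$-cluster or \fourpluss). First I would fix coordinates around $z$: the three members of $H$ occupy $N_1(z)$, and each $1$-cluster $x \in H$ has, by Definition \ref{poor1 def} and the fact that $x$ is a non-poor $1$-cluster, at most $2$ (or the relevant non-three) distance-$2$ vertices in $D$ other than $z$; more precisely I would use the structural constraints of Propositions \ref{1clusters prop} and \ref{leaves prop} to enumerate, up to the symmetry of the configuration, the possible placements of code vertices in $N_2(H)$.

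The key step is a case analysis on \emph{how} a candidate poor $1$-cluster $p$ can be distance-$2$ from $H$. A vertex at distance $2$ from some $x \in H$ lies in a bounded region, and $H$ itself (being three $1$-clusters meeting at $z$) occupies a symmetric ``propeller'' shape whose distance-$2$ shell is small. I would show that any such $p$, together with the edge structure connecting it back toward $z$, forces a code vertex at distance $2$ from $p$ lying on the ``$H$ side''; if that forced vertex, or the chain it sits on, belongs to a \threeplus, then either $p$ is distance-$2$ from an open $3$-cluster or the non-leaf/leaf analysis of Definition \ref{open-closed} makes it a closed $3$-cluster or \fourpluss nearby $p$ — in all of which cases $p$ is excluded. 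This is essentially a localized version of Proposition \ref{nonpoor prop1}: if $p$ is distance-$2$ from \emph{exactly one} member of $H$ it is handled directly by that proposition, so the remaining work is the situation where $p$ is distance-$2$ from two (or all three) members of $H$, which pins down the position of $p$ very rigidly. After discarding the excluded configurations, I would count the surviving positions around the propeller and check that at most two of them can be simultaneously occupied by valid poor $1$-clusters — typically because two such $p$'s on ``adjacent arms'' of $H$ would force overlapping code requirements at distance $2$ contradicting Corollary \ref{poor1 cor} (each neighbor of a poor $1$-cluster has exactly one code neighbor besides the cluster itself), or force one of the arms of $H$ to itself be poor.

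The main obstacle I expect is the sheer number of sub-cases: each of $u,v,w$ can independently carry its distance-$2$ code vertices in a few admissible spots, and a candidate $p$ interacts with the union of these. To keep this manageable I would lean heavily on the threefold rotational symmetry about $z$ and on the already-proved Proposition \ref{nonpoor prop1} to dispatch the ``distance-$2$ from exactly one arm'' branch wholesale, so that only the ``distance-$2$ from $\ge 2$ arms'' branch needs a hand drawing. In that branch the position of $p$ relative to $z$ is essentially unique up to symmetry, so showing ``at most $2$'' reduces to checking that the three symmetric copies of that position cannot all be realized — one of them always collides with the code vertices certifying non-poorness of the arms of $H$. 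I would close the argument by noting that $D$ was arbitrary, so the bound holds for every vertex identifying code of $G_H$.
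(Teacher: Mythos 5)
Your skeleton coincides with the paper's: dispatch the ``distance-$2$ from exactly one arm'' case wholesale via Proposition \ref{nonpoor prop1}, observe that a poor $1$-cluster at distance $2$ from at least two arms of $H$ must occupy one of exactly three positions (one between each pair of arms), and then rule out all three positions being realized simultaneously. The gap is in that last step, which is where all the content of the lemma lives, and the mechanisms you propose for it do not work as stated. Two candidates on adjacent arms do \emph{not} force conflicting code requirements via Corollary \ref{poor1 cor} --- if they did, the bound would be $1$, not $2$ --- and the contradiction is also not that a candidate ``collides with the code vertices certifying non-poorness of the arms'': a between-arm candidate lying in $D$ is at distance $2$ from two arms and therefore \emph{contributes to} their non-poorness rather than conflicting with it.

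What actually closes the argument is a forcing chain that needs all three candidates at once. Assuming each of the three between-arm positions is a poor $1$-cluster that is neither distance-$2$ from an open $3$-cluster nor within distance-$3$ of a closed $3$-cluster or \fourplus, Corollary \ref{poor1 cor} forces several vertices out of $D$, Proposition \ref{force3cluster prop} then forces a \threepluss $C$ at a specific vertex two steps beyond one of the candidates, and Proposition \ref{1clusters prop} forces an extra code vertex adjacent to $C$'s neighborhood; one then checks that $C$ must either extend to a \fourpluss or be closed (Definition \ref{open-closed}) by that extra vertex, and in either case $C$ lies within distance $3$ of two of the three candidates. The contradiction is thus with the isolation hypothesis on the candidates themselves, not with the non-poorness of the arms. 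Without carrying out this forcing argument (or an equivalent one), the ``at most $2$'' count is not established.
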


\begin{lemma}
\label{nonpoor lem2}

Let $v$ be a one-third vertex, and let $v$ have exactly $2$ adjacent $1$-clusters, $c$ and $d$.  Each of $c$ and $d$ has at most one distance-$2$ poor $1$-cluster that is neither distance-$2$ from an open $3$-cluster nor within distance-$3$ of a closed $3$-cluster or \fourplus.

\end{lemma}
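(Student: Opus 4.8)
The plan is to argue locally around the one-third vertex $v$ and its two adjacent $1$-clusters $c$ and $d$. First I would fix coordinates: $v$ has neighbors $c$, $d$, and a third neighbor which is in $D$ (since $v$ is a one-third vertex), so the third neighbor lies in some cluster $C'$. By Corollary \ref{not in 3+} and Proposition \ref{2clusters prop}, each of $c$ and $d$ being a $1$-cluster forces its other two neighbors to be outside $D$; I would name those neighbors and then invoke Proposition \ref{1clusters prop} (each neighbor of a $1$-cluster has a further neighbor in $D\setminus\{c\}$, resp.\ $D\setminus\{d\}$) to pin down which distance-$2$ vertices of $c$ and $d$ must or must not be in $D$. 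The key point is that $c$ and $d$ are non-poor, so by Definition \ref{poor1 def} each has at most two — not three — distance-$2$ vertices in $D$; in fact, since $v$ already contributes the cluster containing $c$'s and $d$'s "inner" distance-$2$ vertices via the other, one of those two slots is consumed by the configuration around $v$. This should leave each of $c$, $d$ with effectively one "free" distance-$2$ direction where another cluster can sit.

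Next I would enumerate, for the cluster $c$, the possible poor $1$-clusters $u$ at distance $2$ from $c$, and show that any such $u$ which is neither distance-$2$ from an open $3$-cluster nor within distance-$3$ of a closed $3$-cluster or a \fourplus\ must occupy that single free direction. Concretely: a distance-$2$ vertex of $c$ in $D$ is itself either a $1$-cluster or lies in a \threeplus\ (Corollary \ref{not in 3+}); if it lies in a \threeplus\ then the hypothesis on $u$ forces that \threeplus\ to be an open $3$-cluster but with $u$ close to it, contradicting "not distance-$2$ from an open $3$-cluster" unless the geometry places it elsewhere — so essentially $u$ must be a distinct poor $1$-cluster reachable only through the free direction of $c$. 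Since there is at most one such direction (the others being blocked by the non-poorness count and by the forced hollow neighbors around $v$), there is at most one such $u$ for $c$. The same argument applied verbatim to $d$ gives at most one for $d$.

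The main obstacle I expect is the case analysis needed to show the free direction is genuinely unique: a priori a $1$-cluster like $c$ has several distance-$2$ vertices, and I must rule out that two of them are both poor $1$-clusters of the restricted type. This is where the non-poor hypothesis does the real work — it caps $|N_2(c)\cap D|$ at $2$, one of which is "used up" near $v$ — together with Proposition \ref{nonpoor prop1}, which says that a poor $1$-cluster sitting at distance $2$ from exactly one member of the group $H$ is automatically distance-$2$ from an open $3$-cluster or within distance-$3$ of a closed $3$-cluster or \fourplus, hence excluded by hypothesis. So any surviving poor $1$-cluster of the restricted type attached to $c$ must be distance-$2$ from $c$ and from some other group member, which constrains it to the unique symmetric slot. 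Combining these forces at most one restricted poor $1$-cluster per $1$-cluster, completing the proof. I would present this as: (1) set up the neighborhood of $v$; (2) count distance-$2$ vertices of $c$ and $d$ using non-poorness; (3) apply Proposition \ref{nonpoor prop1} to eliminate the "single-group-member" poor $1$-clusters; (4) check the remaining geometric slot is unique for each of $c$ and $d$.
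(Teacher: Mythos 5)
Your outline rests on a counting claim that is backwards. Definition \ref{poor1 def} says a poor $1$-cluster has \emph{exactly} $3$ distance-$2$ vertices in $D$, and by Proposition \ref{1clusters prop} every $1$-cluster has at least $3$ such vertices (at least one through each neighbor); hence a non-poor $1$-cluster has at least $4$ distance-$2$ vertices in $D$, not ``at most two'' as you assert. With that, your whole ``one free slot'' bookkeeping collapses: beyond the two positions occupied by $a$ and $c$ through $v$, the cluster $d$ still has four outer distance-$2$ positions, and after discarding the one that is automatically distance-$2$ from the \threepluss containing $v$'s third neighbor, three genuine candidates remain (the vertices $e$, $h$, $i$ in the paper's figure). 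Nothing in your argument prevents two of these from simultaneously being poor $1$-clusters of the restricted type; that is exactly what has to be excluded, and it requires a structural argument, not a count.

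The appeal to Proposition \ref{nonpoor prop1} is also a misapplication: that proposition is stated for a \emph{group} of non-poor $1$-clusters, which by Definition \ref{nonpoor1 def} means three $1$-clusters adjacent to the same one-third vertex, whereas here $v$ has exactly two adjacent $1$-clusters and its third neighbor lies in a \threeplus. Even if one tried to transplant its conclusion, the surviving candidates are at distance $2$ from only one of $c$ and $d$, so the reduction to a ``unique symmetric slot'' does not eliminate them. The missing content is the paper's case analysis: assume two of the three remaining candidates around (say) $d$ are restricted poor $1$-clusters, then use Corollary \ref{poor1 cor} to force certain neighbors out of $D$, Proposition \ref{force3cluster prop} to force a vertex into $D_{3^+}$, and Proposition \ref{1clusters prop} to place the resulting cluster so that one of the two candidates is in fact distance-$2$ from a \threepluss or within distance-$3$ of a closed $3$-cluster or \fourplus, a contradiction. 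Your proposal does not contain this step.
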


\begin{lemma}
\label{nonpoor lem3}

If a one-third vertex has exactly one adjacent $1$-cluster, $d$, then $d$ has at most $2$ distance-$2$ poor $1$-clusters.

\end{lemma}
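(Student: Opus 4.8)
\textbf{Proof proposal for Lemma \ref{nonpoor lem3}.}

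The plan is to analyze the local configuration around a one-third vertex $t$ with exactly one adjacent $1$-cluster $d$, and to bound the number of poor $1$-clusters at distance $2$ from $d$. Since $t$ is a one-third vertex, all three of its neighbors lie in $D$; by Corollary \ref{not in 3+}, each such neighbor is either a $1$-cluster or lies in a \threeplus. By hypothesis exactly one of them, $d$, is a $1$-cluster, so the other two neighbors of $t$, call them $x$ and $y$, each lie in a \threeplus. First I would fix coordinates: let $d$ have neighbors $t$, $p$, $q$ in the hexagonal grid. A poor $1$-cluster at distance $2$ from $d$ must be one of the second-neighbors of $d$, i.e.\ a neighbor of $p$ or of $q$ (other than $d$ itself and other than vertices already ruled out). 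So there are at most four candidate vertices: the two neighbors of $p$ besides $d$, and the two neighbors of $q$ besides $d$. I want to show at most two of these four can actually be poor $1$-clusters.

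The key structural input is Corollary \ref{poor1 cor}: each neighbor of a poor $1$-cluster has \emph{exactly one} neighbor in $D$ other than that poor $1$-cluster. In particular, if $w$ is a poor $1$-cluster adjacent to $p$, then $p$'s only neighbor in $D \setminus \{w\}$ is $d$ (since $p \in N[w]$ and $p$'s third neighbor must then be outside $D$). I would use this to pin down the status of $p$ and $q$: if $p$ has a poor-$1$-cluster neighbor $w$, then $p \notin D$, $p$'s third neighbor is not in $D$, and $w$ is the unique $D$-neighbor of $p$ besides $d$. The same applies to $q$. Now suppose for contradiction that three of the four candidate vertices are poor $1$-clusters. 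Then without loss of generality both neighbors of $p$ besides $d$ are poor $1$-clusters — but each of them forces $p$ to have exactly one $D$-neighbor besides itself, namely $d$, which is consistent only if $p$ has no \emph{other} $D$-neighbor; yet the two distinct poor $1$-clusters are both $D$-neighbors of $p$ distinct from $d$, contradicting ``exactly one.'' Hence at most one of $p$'s two relevant neighbors, and at most one of $q$'s, is a poor $1$-cluster, giving the bound of $2$. I would then separately rule out the degenerate overlaps (e.g.\ a candidate vertex of $p$ coinciding with one of $q$, or coinciding with $x$, $y$, or $t$) using the fact that $x,y$ lie in \threeplus s while poor $1$-clusters are isolated in $D$, and using Proposition \ref{1clusters prop} applied to $d$.

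The main obstacle I anticipate is the case analysis needed to confirm that the ``at most four candidates'' enumeration is complete and that the geometry of the hexagonal grid does not create extra distance-$2$ vertices of $d$ lying ``behind'' $x$ or $y$ — i.e.\ verifying that a poor $1$-cluster at distance $2$ from $d$ genuinely must be a neighbor of $p$ or $q$ and cannot be a second-neighbor routed through $x$ or $y$. Since $x,y \in D$, any common neighbor of $x$ (or $y$) and a hypothetical poor $1$-cluster would have to be outside $D$, and one checks that such a vertex would then be a one-third vertex or force a \threeplus contradicting that the poor $1$-cluster is isolated; this should be routine but needs the figure. Once that enumeration is locked down, the counting argument via Corollary \ref{poor1 cor} is short and is the heart of the proof.
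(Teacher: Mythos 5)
Your proposal is correct and follows essentially the same route as the paper: enumerate the six distance-$2$ vertices of $d$, discard the two reached through the one-third vertex because they lie in \threeplus s (Corollary \ref{not in 3+}), and then use Corollary \ref{poor1 cor} to show that of the two candidates sharing a common neighbor of $d$, at most one can be a poor $1$-cluster. The "obstacle" you flag is not a real one — the only distance-$2$ vertices of $d$ reached through the one-third vertex are its other two $D$-neighbors themselves, which are already excluded — so the pairing argument immediately gives the bound of $2$.
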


\begin{lemma}
\label{finless lem}

A $3$-cluster has at most one finless side.

\end{lemma}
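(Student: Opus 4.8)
\emph{Proof proposal.} Since there are no triangles in $G_H$, a $3$-cluster $C$ is a path on three vertices $x,y,z$, with $y$ the middle (non-leaf) vertex. Because $C$ is a whole component of the subgraph induced by $D$, no neighbour of $C$ lies in $D$; in particular the single external neighbour of $y$ is not in $D$, and neither are the four external neighbours of the two leaves. In the notation of Definition~\ref{3cluster def} the two sides of $C$ are interchanged by the reflection of $G_H$ fixing $C$, and this reflection swaps the two fin positions $n$ and $q$. The plan is to argue by contradiction, supposing that $n\notin D$ and $q\notin D$, i.e.\ that both sides of $C$ are finless.

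The first step is to isolate, for each leaf, the short list of its distance-$2$ vertices that can lie in $D\setminus C$. Two of the six distance-$2$ vertices of a leaf are reached through $y$, and these are the other leaf (which is in $C$) and the external neighbour of $y$ (which is not in $D$); so only the four distance-$2$ vertices reached through the leaf's own two external neighbours are available, and by Definition~\ref{3cluster def} one of those four occupies the fin position on that leaf's side. By Proposition~\ref{leaves prop} each leaf has a distance-$2$ vertex in $D\setminus C$, so — the fin on that side being excluded by hypothesis — one of the other three such vertices is forced into $D$.

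The second step is a short case analysis on this forced vertex, carried out for each of the two finless sides. For each choice I would use Proposition~\ref{force3cluster prop} together with the fact that no neighbour of $C$ is in $D$ to pin down the status of the nearby vertices, and then exhibit two distinct non-code vertices — the two external neighbours of the leaf, or one of them together with a further non-code vertex — that receive the same nonempty trace, contradicting Definition~\ref{VIC def}(ii); in the few cases where this does not immediately occur, it is because a particular additional vertex on that side (the ``closer'' $d$, or the tail vertex $j$) must itself lie in $D$. Running the mirror-image argument on the other finless side then forces the reflected vertex into $D$ as well, and one checks that having both of these forced configurations present around the axis of symmetry of $C$ is impossible. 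The main obstacle is precisely this second step: there is no one-line collision, and one must go through the handful of positions the forced distance-$2$ code vertex can occupy, tracking the hexagonal geometry in each — which is why Definition~\ref{3cluster def} sets up such detailed position names. The individual cases are routine; the effort lies in the bookkeeping and in confirming that the two sides' forced structures genuinely conflict.
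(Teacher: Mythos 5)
Your proposal is a plan rather than a proof: the decisive step is deferred twice (``I would use\ldots'', ``one checks that\ldots''), and the outcomes you predict for it do not hold. Knowing that both fins are absent tells you essentially nothing about which distance-$2$ vertex of a leaf lies in $D$; a code vertex in a head, shoulder, arm, hand, foot or tail position is a perfectly consistent local configuration (such configurations occur throughout the paper, e.g.\ in Lemma \ref{vpsym lem}), so the promised trace collisions via Definition \ref{VIC def}(ii) do not materialize, and the fallback claim that the surviving cases force $d$ or $j$ into $D$ is unsupported. Worse, even if each side did force $d$ or $j$ into $D$, those are positions shared by the two sides, so running the mirror-image argument yields two \emph{compatible} conclusions, not a conflict ``around the axis of symmetry.'' Your sketch therefore has no endgame, and it also misplaces the fins: the paper's argument requires the fins to be the two outward neighbours of the vertex $p$ lying just beyond the tail $j$, not distance-$2$ vertices of the leaves, so the reduction in your first step does not restrict the leaves' candidates in the way you claim.

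The paper's proof is entirely local to the tail and never looks at the leaves. In Figure \ref{3-cluster}, $j$ is the distance-$2$ vertex common to both leaves and $p$ is its outward neighbour; once both fins are assumed out of $D$, every neighbour of $j$ other than $p$ (these are neighbours of the leaves, hence never in $D$) and every neighbour of $p$ other than $j$ (the two fins) lies outside $D$. All three cases then collapse at once: if $j\notin D$, Proposition \ref{force3cluster prop} applied to $j$ forces $p\in D_{3^+}$, yet $p$ has no neighbour in $D$ and so would be a $1$-cluster; if $j\in D$ and $p\in D$, then $\{j,p\}$ is a $2$-cluster, contradicting Proposition \ref{2clusters prop}; if $j\in D$ and $p\notin D$, Proposition \ref{force3cluster prop} applied to $p$ forces $j\in D_{3^+}$, yet $j$ has no neighbour in $D$. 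Your proposal never identifies the pair $\{j,p\}$ as the locus of the contradiction and never invokes Proposition \ref{2clusters prop}, without which the middle case cannot be closed; filling these gaps amounts to replacing your argument, not completing it.
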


\begin{lemma}
\label{closed3 lem}

Let $C_1$ be a closed $3$-cluster with $P(C_1) = 3$.


\begin{itemize}

\item[(i)] $C_1$ has at most $8$ nearby poor 1-clusters.  If $C_1$ has $8$ such clusters, at least one of the poor $1$-clusters at distance-$3$, $v$, is distance-$2$ from another \threeplus, $C_2$, such that

\begin{itemize}

\item[(a)] if $C_2$ is an open $3$-cluster, then $v$ is not in a shoulder position;

\item[(b)] if $C_2$ is an open $3$-cluster and $v$ is in an arm position, then $C_2$ is not type-$1$ paired; if $C_2$ is type-2 paired, then $C_1$ is type-$2$ paired with $C_2$.

\end{itemize}

\item[(ii)] If neither the shoulder positions nor the tail position are in $D$, then $C_1$ has at most $5$ nearby poor $1$-clusters.

\end{itemize}

\end{lemma}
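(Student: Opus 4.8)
The plan is to fix a closed $3$-cluster $C_1$ with $P(C_1)=3$, set up coordinates using the standard labelling of Figure~\ref{3-cluster}, and then account for all vertices of $D$ that can lie within distance-$3$ of $C_1$. Recall that $C_1$ has two leaves (the head positions $a,b$) and one non-leaf (the middle vertex, which I will call $d$ in the notation of Figure~\ref{3-cluster}, with $\Delta(C_1)=2$). Since $C_1$ is closed, at least one non-leaf vertex has a distance-$2$ vertex in $D\setminus C_1$; since $P(C_1)=3$, the total number of distance-$2$ vertices in $D\setminus C_1$, counted with the multiplicity from Definition~\ref{p-function}, is exactly $3$. The first step is a case split on how these three ``$P$-units'' are distributed among the vertices of $C_1$ and their geometric positions (head side versus shoulder/tail side), using Lemma~\ref{finless lem} to rule out configurations with two finless sides and Proposition~\ref{leaves prop} to force each leaf to have at least one distance-$2$ vertex in $D\setminus C_1$.

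Next I would count the poor $1$-clusters nearby $C_1$. A poor $1$-cluster nearby $C_1$ is either at distance-$2$ or at distance-$3$ from $C_1$. For those at distance-$2$: a poor $1$-cluster at distance-$2$ contributes to $P(C_1)$, so with $P(C_1)=3$ there are at most $3$ such clusters (and each occupies a distinct position around $C_1$). For those at distance-$3$: these are controlled by the structure of the distance-$2$ and distance-$3$ shells of $C_1$, and here I would use Corollary~\ref{poor1 cor} (each neighbor of a poor $1$-cluster has exactly one neighbor in $D$ besides the cluster itself) together with Proposition~\ref{1clusters prop} to limit how many distinct poor $1$-clusters can be packed into the boundary. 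A careful region-by-region tally — head region, each shoulder region, arm/hand region, foot region, tail/fin region — yields the bound of $8$. The content of part (ii) is that if the shoulder positions $c,e$ and the tail position $j$ are all outside $D$, then the distance-$3$ shell on that side of $C_1$ is too sparse to host more than a couple of extra poor $1$-clusters, dropping the total to $5$; this is again a direct shell count.

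The main obstacle — and where part (i) earns its length — is the ``tightness'' clause: showing that when the maximum of $8$ nearby poor $1$-clusters is attained, one of the distance-$3$ poor $1$-clusters $v$ must itself be distance-$2$ from a second $3^+$-cluster $C_2$ satisfying the shoulder/arm/type-pairing restrictions (a)--(b). The idea is that achieving $8$ forces a very rigid local picture: the distance-$3$ shell of $C_1$ must be saturated with poor $1$-clusters, and by Proposition~\ref{1clusters prop} / Corollary~\ref{poor1 cor} each such $v$ needs a ``partner'' vertex of $D$ near it, which (since the space around $C_1$ is already spent) can only be another $3^+$-cluster $C_2$ in a forced position relative to $v$. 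One then reads off from the forced geometry that $v$ cannot sit in a shoulder position of $C_2$, that if $v$ is in an arm position of $C_2$ then the fin/hand structure of $C_2$ precludes type-$1$ pairing, and that any type-$2$ pairing of $C_2$ must be with $C_1$ itself (because the only $3$-cluster positioned to pair with $C_2$ in that configuration is $C_1$). This requires pushing the analysis down to individual vertices and their $D$-status — exactly the ``tedious though more or less straightforward'' casework the authors warn about in Section~\ref{sketch} — and I would organize it by first pinning down the unique extremal configuration (up to symmetry) and then verifying (a)--(b) in that configuration, deferring the vertex-level verifications to the proof in Section~\ref{deferred proofs}.
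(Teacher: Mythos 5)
There is a genuine gap: what you have written is a plan for a proof, not a proof. Every substantive step is deferred — ``a careful region-by-region tally \ldots yields the bound of $8$,'' ``this is again a direct shell count,'' ``one then reads off from the forced geometry,'' ``deferring the vertex-level verifications.'' But the entire content of this lemma \emph{is} that tally and those verifications. The paper's argument fixes the labelled neighborhood of Figure~\ref{closed 3-cluster}, normalizes by symmetry ($f\in D$, hence $e\notin D$ and $|\{d,j,p,q\}\cap D|=|\{g,k,r,q\}\cap D|=1$), lists the $11$ candidates for nearby poor $1$-clusters as mutually exclusive pairs, and then eliminates candidates case by case. None of that elimination appears in your proposal, so the bounds $8$ and $5$ are asserted rather than proved.

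Two of your structural claims are also off in ways that would derail the execution. First, you say that when $8$ is attained the configuration is ``unique up to symmetry'' and that one can verify (a)--(b) in that single extremal picture. That is false: the paper's case analysis reaches the tight count of $8$ along several genuinely different branches (e.g.\ the branch with a shoulder position $d\in D$ and a hand position $k\in\poorone$, versus the branch with the tail position $q\in D$ and the foot position $i\in\poorone$), and conclusions (a)--(b) are verified separately in each, with the witness $v$ being a different vertex ($t$ versus $i$) and the second cluster $C_2$ arising for different reasons. Second, your mechanism for producing $C_2$ — that Corollary~\ref{poor1 cor} forces a ``partner'' vertex of $D$ near $v$ which ``can only be'' a \threepluss — is not sufficient: a neighbor in $D$ could a priori be a $1$-cluster. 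The engine that actually forces membership in a \threepluss throughout the paper's proof is Proposition~\ref{force3cluster prop} (a vertex not in $D$ with two neighbors not in $D$ forces its third neighbor into $D_{3^+}$), which your proposal never invokes. Without it, the existence of $C_2$, and hence clauses (a)--(b), cannot be established.
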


\begin{lemma}
\label{closed3 lem2}

Let $C$ be a closed $3$-cluster with $P(C) = 4$.  If $C$ is adjacent to a one-third vertex, then $C$ has at most $8$ nearby poor $1$-clusters.  Furthermore, if an arm position or a foot position of $C$ is a poor $1$-cluster, then $C$ has at most $7$ nearby poor $1$-clusters.  If $2$ arm or foot positions are poor $1$-clusters, then $C$ has at most $6$ nearby poor $1$-clusters.

\end{lemma}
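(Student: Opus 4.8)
The plan is to carry out a careful case analysis on the geometry of a closed $3$-cluster $C$ with $P(C) = 4$, using the labeling of Figure~\ref{3-cluster} (head positions $a,b$; shoulder positions $c,e$; arm positions $f,g$; hand positions $h,m$; foot positions $i,k$; tail position $j$; fin positions $n,q$). Since $P(C) = 4$, there are exactly four vertices of $D\setminus C$ at distance $2$ from $C$, and by Definition~\ref{open-closed} at least one of them closes $C$ by being a distance-$2$ vertex of a non-leaf vertex. First I would enumerate which positions can host these four distance-$2$ vertices: the non-leaf of the $3$-cluster is $d$ (the center), and the leaves are $a$ (or $b$) and $j$; Proposition~\ref{leaves prop} forces each leaf to have a distance-$2$ vertex in $D\setminus C$, which already accounts for some of the budget. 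I would then use Corollary~\ref{poor1 cor} (each neighbor of a poor $1$-cluster has exactly one neighbor in $D$ besides that cluster) together with Proposition~\ref{1clusters prop} to constrain how poor $1$-clusters nearby $C$ must attach to the structure.

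The counting itself proceeds by bounding, for each of the two ``sides'' of $C$ and for the ``fin'' regions, how many of the positions within distance $3$ of $C$ (i.e.\ the nearby vertices) can simultaneously be poor $1$-clusters. The key mechanism is that whenever a position is a poor $1$-cluster, Corollary~\ref{poor1 cor} forbids certain neighboring vertices from lying in $D$, and Proposition~\ref{force3cluster prop} conversely forces certain vertices into $3^+$-clusters; iterating these local obstructions around $C$ shows that the nearby poor $1$-clusters cannot all coexist once $P(C)$ is capped at $4$. The hypothesis that $C$ is adjacent to a one-third vertex (Definition~\ref{onethird def}) pins down the configuration on one side of $C$ — three of the positions around that one-third vertex are in $D$, and two of them are in $C$ — which removes several otherwise-available poor-$1$-cluster slots and yields the bound of $8$. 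For the refinements, I would show that if an arm position $f$ (or a foot position $i$) is itself a poor $1$-cluster, then by Corollary~\ref{poor1 cor} its other neighbor in $D$ is forced, which in turn eliminates one more candidate position at distance $3$ on that side, dropping the count to $7$; having two such arm/foot positions as poor $1$-clusters eliminates a candidate on each side independently, giving $6$. Lemma~\ref{finless lem} (a $3$-cluster has at most one finless side) is used to rule out the symmetric worst case where both fin regions are simultaneously maximally populated.

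The main obstacle I expect is the sheer number of sub-configurations: with four distance-$2$ vertices distributable among head/shoulder/tail/fin positions and two leaves each requiring their own distance-$2$ witness, there are many placements to check, and for each one must verify the poor-$1$-cluster count at distance $3$ in every direction. To keep this manageable I would organize the argument around the position of the closing vertex and the one-third vertex, treat the two sides of $C$ (finless vs.\ finned, closed vs.\ open in the sense of Definition~\ref{3cluster def}) as nearly independent local problems, and appeal repeatedly to Corollary~\ref{poor1 cor} and Propositions~\ref{force3cluster prop}--\ref{leaves prop} to kill whole families of cases at once. A secondary delicate point is making sure the ``nearby'' region (within distance $3$) is not double-counted between the two sides or between a side and a fin region, since some distance-$3$ vertices of $C$ are reached through two different vertices of $C$; I would fix a canonical assignment of each such vertex to one side at the outset.
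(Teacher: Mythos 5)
Your proposal is an outline of the right \emph{kind} of argument --- a finite case analysis over the placement of the one-third vertex and the four distance-$2$ vertices, driven by Proposition~\ref{leaves prop}, Proposition~\ref{force3cluster prop} and Corollary~\ref{poor1 cor} --- and this is indeed the strategy the paper follows. But as written it is a plan, not a proof: the bounds $8$, $7$ and $6$ are asserted to ``yield'' from the case analysis without any case ever being carried out. The substance of the paper's proof is precisely the part you defer: it fixes an explicit labelled neighborhood (Figure~\ref{closed 3-cluster}), lists the $11$ candidate positions for nearby poor $1$-clusters ($a/e$, $c/d$, $f$, $h$, $i/j$, $k/m$, $n$, $p/t$, $q/v$, $r/x$, $s$), observes that there are exactly $5$ possible one-third vertices adjacent to $C$, and then for each placement (e.g.\ $e\in D$; $d,j\in D$; and the symmetric cases $g,k\in D$, $p,q\in D$, $q,r\in D$) explicitly strikes enough candidates to reach $9$, then $8$, and then shows that each arm/foot position in $D$ kills the adjacent fin-region candidate ($j$ or $p$ in $D$ forces $n\notin D_1^p$; $k$ or $r$ in $D$ forces $s\notin D_1^p$), which is what gives the $7$ and $6$ refinements. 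None of these eliminations appears in your write-up, so the quantitative content of the lemma is unproved.

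Two further concrete problems. First, you misread the labelling of Figure~\ref{3-cluster}: the vertices $a$, $b$, $j$ and $d$ named in Definition~\ref{3cluster def} are \emph{not} vertices of the $3$-cluster $C$ (they are the head positions, the tail position, and the vertex whose membership in $D$ closes $C$); so your sentence identifying ``the non-leaf of the $3$-cluster'' with $d$ and ``the leaves'' with $a$ and $j$ starts the case analysis from an incorrect picture of which vertices carry the $P(C)=4$ budget. Second, your appeal to Lemma~\ref{finless lem} to exclude ``both fin regions simultaneously maximally populated'' is not how the refinements work and is not used in the paper's proof of this lemma; the reduction from $8$ to $7$ to $6$ comes from the local observation about fin candidates above, applied once per arm/foot position in $D$. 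To turn the proposal into a proof you would need to actually fix the extended labelled neighborhood, enumerate the candidate positions, and perform the eliminations case by case.
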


\begin{lemma}
\label{linearopen4 lem}

Let $C_1$ be a linear open $4$-cluster with $P(C_1) = 2$.

\begin{itemize}

\item[(i)] $C_1$ has at most $8$ nearby poor $1$-clusters.  Furthermore, if $C_1$ has $8$ such clusters, then at least $2$ of the distance-$3$ poor $1$-clusters are stealable; if $C_1$ has $7$ such clusters, then at least one is stealable.

\item[(ii)] If one one-turn position is not in $D$, then $C_1$ has at most $6$ nearby poor $1$-clusters.  If $C_1$ has exactly $6$ such clusters, then at least one of the distance-$3$ poor $1$-clusters is stealable.

\item[(iii)] If neither one-turn position is in $D$, then $C_1$ has at most $4$ nearby poor $1$-clusters.

\end{itemize}

\end{lemma}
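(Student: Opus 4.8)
The plan is to exploit the hypothesis $P(C_1) = 2$ to pin down the configuration near $C_1$ almost completely, and then to run a bounded case analysis over the distance-$3$ neighborhood. Write $C_1 = v_1v_2v_3v_4$ for the path realizing the linear $4$-cluster, with leaves $v_1, v_4$ and one-turn positions $a, b$ labeled as in Figure \ref{linear4}. By Proposition \ref{leaves prop} each of $v_1, v_4$ has a distance-$2$ vertex in $D \setminus C_1$; since $G_H$ is bipartite and $v_1, v_4$ lie at odd distance, no vertex is at distance $2$ from both, so each leaf contributes at least $1$ to $P(C_1)$ through a private distance-$2$ vertex. As $P(C_1) = 2$, this budget is exhausted: there is exactly one vertex $z_1 \in D \setminus C_1$ at distance $2$ from $v_1$, exactly one $z_4 \in D \setminus C_1$ at distance $2$ from $v_4$, the two are distinct, and every other vertex at distance $2$ from $C_1$ lies outside $D$ (in particular none meets $v_2$ or $v_3$, so openness of $C_1$ is forced automatically).

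A poor $1$-cluster nearby $C_1$ cannot be adjacent to $C_1$, since it would then merge with $C_1$; hence it is at distance $2$ or $3$. A distance-$2$ one is a distance-$2$ vertex of $D \setminus C_1$, hence equals $z_1$ or $z_4$, so at most $2$ nearby poor $1$-clusters are at distance $2$. For the distance-$3$ count, let $w$ be a poor $1$-cluster at distance $3$ from $C_1$; it is adjacent to some vertex $y$ at distance $2$ from $C_1$, and $y \notin D$ (otherwise $y \in \{z_1, z_4\}$ and $w$ would share a component with $y$). Enumerating the distance-$2$ vertices of $C_1$ --- the external neighbors of $v_1, v_2, v_3, v_4$ and their further neighbors --- with all of these except $z_1, z_4$ known to lie outside $D$, and invoking Corollary \ref{poor1 cor} (each neighbor of a poor $1$-cluster has exactly one neighbor in $D$) together with Proposition \ref{force3cluster prop}, one finds that the distance-$3$ vertices which can lie in $D$ are confined to a short list, of which at most $6$ can simultaneously be poor $1$-clusters. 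This yields at most $8$ nearby poor $1$-clusters overall, and the left-right symmetry of $C_1$ keeps the enumeration short.

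For the refinements in (i): if the total is $8$ (respectively $7$), the preceding step leaves essentially one configuration (respectively a short list), and the distance-$3$ poor $1$-clusters that realize the extreme are forced into a definite local picture. Each such $w$ is at distance $3$ from the $4$-cluster $C_1$, so the first clause of Definition \ref{stealable def} holds for free; what remains is to produce, for at least two of them (respectively one), a $3^+$-cluster $C_2$ at distance $2$ from $w$ with $w$ not in a shoulder position of $C_2$ and, when $w$ is in an arm position of $C_2$, with $C_2$ neither type-$1$ nor type-$2$ paired. The crowding forced around $C_1$ in a tight configuration supplies such a $C_2$: the vertices pressing on $w$ must themselves lie in or attach to another $3^+$-cluster, and the position of $w$ relative to $C_2$ is read off from the picture. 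For parts (ii) and (iii), declaring a one-turn position to be outside $D$ forces, via Propositions \ref{1clusters prop}, \ref{force3cluster prop} and \ref{leaves prop} and Corollary \ref{poor1 cor}, further vertices out of $D$, deleting the part of the neighborhood that housed several of the distance-$3$ poor $1$-clusters counted above; re-running the count under this restriction gives at most $6$ when one one-turn position is missing and at most $4$ when both are, and the $6$-case extremal analysis is identical to the one just described.

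The main obstacle is the bookkeeping of the second paragraph --- a complete and non-redundant enumeration of the distance-$\le 3$ neighborhood of a linear open $4$-cluster with $P = 2$, including the subcases in which $z_1$ or $z_4$ belongs to a larger cluster --- and, more seriously, the extremal analysis of the third paragraph, where one must verify that the crowding forced in each tight configuration genuinely produces an auxiliary $3^+$-cluster witnessing stealability with the correct shoulder/arm status.
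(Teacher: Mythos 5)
Your overall strategy --- use $P(C_1)=2$ together with Proposition \ref{leaves prop} to pin down the distance-$2$ layer, then bound the number of candidate positions for nearby poor $1$-clusters and treat the extremal configurations separately --- is exactly the strategy of the paper, and your preliminary observations are sound (each leaf has exactly one private distance-$2$ vertex of $D\setminus C_1$; a nearby poor $1$-cluster must lie at distance $2$ or $3$; the first clause of Definition \ref{stealable def} is automatic because $C_1$ is a $4$-cluster). The problem is that the proposal stops at precisely the two places where the lemma's content lives, and you flag this yourself. The assertion that the distance-$3$ candidates form ``a short list, of which at most $6$ can simultaneously be poor $1$-clusters'' \emph{is} the quantitative claim of part (i); it is announced, not derived. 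The paper's derivation is not a routine enumeration: it lists ten candidate slots, pairs certain distance-$2$ and distance-$3$ positions so that each pair contributes at most one poor $1$-cluster, and then eliminates two further slots by observing that Proposition \ref{force3cluster prop} forces $3^+$-clusters at the two external neighbors of the middle vertices of $C_1$, whence on each end at least one of the two outermost candidates is adjacent to or contained in such a cluster and so cannot be a $1$-cluster. None of that structure appears in your write-up, and without it the bounds $8$, $6$ and $4$ in (i)--(iii) are unsupported.

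More seriously, the stealability statements are what Claim \ref{4cluster claim} actually consumes, and your treatment of them is ``the crowding forced around $C_1$ in a tight configuration supplies such a $C_2$ \dots\ read off from the picture.'' That is not an argument. One must verify, in each extremal case, that the witnessing cluster $C_2$ (the $3^+$-cluster forced by Proposition \ref{force3cluster prop} next to the relevant one-turn position) contains a \emph{specific} neighbor of the candidate poor $1$-cluster, so that the candidate occupies a foot or arm position of $C_2$ rather than a shoulder position, and that when it occupies an arm position the vertex already placed inside $C_2$ is incompatible with $C_2$ being type-$1$ or type-$2$ paired. These are exactly the hypotheses of Definition \ref{stealable def}(i)--(ii); they are positional facts about $C_2$, not consequences of mere ``crowding,'' and the verification is absent. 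As written, the proposal is a correct plan for the paper's proof rather than a proof.
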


\begin{lemma}
\label{linear4 lem2}

Let $C$ be a linear $4$-cluster with $P(C) = 3$.

\begin{itemize}

\item[(i)] If $C$ is adjacent to no one-third vertices, then $C$ has at most $9$ nearby poor $1$-clusters.

\item[(ii)] If $C$ is adjacent a one-third vertex, then $C$ has at most $6$ nearby poor $1$-clusters.

\end{itemize}

\end{lemma}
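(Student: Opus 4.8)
\textbf{Proof proposal for Lemma \ref{linear4 lem2}.}

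The plan is to fix a linear $4$-cluster $C$ with $P(C)=3$ and enumerate, up to the symmetries of $C$, the possible distributions of the three distance-$2$ vertices of $C$ that lie in $D$. Write $C = v_1 v_2 v_3 v_4$ with leaves $v_1,v_4$ and one-turn positions $a,b$ as in Figure \ref{linear4}. Each $v_i$ contributes its distance-$2$ vertices in $D\setminus C$ to the sum defining $P(C)$; since $P(C)=3$, there are exactly three such vertices (counted with multiplicity, but adjacent cluster vertices can share a distance-$2$ vertex, so I must be slightly careful — first I would argue that the three contributions come from three \emph{distinct} vertices of $D$, using Proposition \ref{leaves prop} to force at least one distance-$2$ vertex off each leaf and a counting argument to pin down where the third one sits). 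The key structural input is that each vertex of $G_H$ outside $D$ that is adjacent to $C$ must, by Definition \ref{VIC def}, be distinguished from its neighbors; combined with Proposition \ref{1clusters prop} and Proposition \ref{force3cluster prop}, this severely constrains which neighbors of $C$ can be empty and which must carry a $1$-cluster or a larger cluster nearby.

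For part (i), assuming $C$ is adjacent to no one-third vertex, I would first observe that every vertex adjacent to a non-leaf vertex $v_2$ or $v_3$ that is outside $D$ then has at most two neighbors in $D$; this regularity lets me bound, for each of the $v_i$, the number of nearby poor $1$-clusters that can be ``charged'' to that vertex. The poor $1$-clusters nearby $C$ are at distance $2$ or $3$; a distance-$2$ poor $1$-cluster occupies one of the distance-$2$ slots counted by $P(C)$, so there are at most $3$ of those, and the distance-$3$ ones hang off the ends or off the one-turn positions. Walking around $C$ position by position (leaves, one-turn positions, the two ``outer'' hexagons past each leaf), I would show the total cannot exceed $9$: the generic worst case is something like $2$ poor $1$-clusters reachable past each leaf plus a bounded number attached to the interior, and the no-one-third hypothesis is what prevents an extra accumulation at $v_2$ or $v_3$. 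For part (ii), the presence of a one-third vertex $t$ adjacent to $C$ means $t$ has three neighbors in $D$, one of which is in $C$; this both consumes part of the budget (the other two neighbors of $t$ are in $D$, so they are not poor $1$-clusters and they block several distance-$2$ slots) and, more importantly, by Definition \ref{poor1 def} and Corollary \ref{poor1 cor} it restricts the configurations around the remaining positions, cutting the bound from $9$ down to $6$.

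The main obstacle I anticipate is the bookkeeping of shared distance-$2$ and distance-$3$ vertices: a single poor $1$-cluster can be distance-$3$ from $C$ via two different vertices of $C$, and two different poor $1$-clusters nearby $C$ can interact (e.g. be forced to coexist or forced apart) through Proposition \ref{force3cluster prop} applied to a common empty neighbor. To keep the case analysis honest I would organize it by the ``profile'' of the three $P$-contributing vertices — which of $v_1,v_2,v_3,v_4$ they attach to — reducing to a small number of cases by the left–right and the (in the linear case, only that) symmetry of $C$, and in each case draw the forced portion of the grid and read off the maximum. The one-third case (ii) is the shorter of the two once (i) is done, because the one-third vertex pins down so much of the local picture; the delicate part of (i) is ruling out the configuration that would give a tenth poor $1$-cluster, which I expect to eliminate by showing it would force either a one-third vertex adjacent to $C$ (contradicting the hypothesis) or a repeated closed neighborhood violating Definition \ref{VIC def}.
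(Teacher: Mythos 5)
Your plan is pointed in the same general direction as the paper's proof --- a case analysis on where the three distance-$2$ vertices of $D\setminus C$ attach to $C$, driven by Propositions \ref{leaves prop}, \ref{1clusters prop} and \ref{force3cluster prop} --- but as written it is a proof outline, not a proof. Nowhere do you actually enumerate the candidate positions for nearby poor $1$-clusters, establish the mutual exclusions among them, or derive the numbers $9$ and $6$. Phrases like ``the generic worst case is something like $2$ poor $1$-clusters reachable past each leaf plus a bounded number attached to the interior'' and ``I expect to eliminate'' are placeholders for exactly the content the lemma requires. In the paper, part (i) splits on whether a one-turn position ($g$ in Figure \ref{linear4 part 2}) is in $D$: if so, Proposition \ref{force3cluster prop} forces three specified vertices into $D_{3^+}$ and the candidate list collapses to $9$ immediately; if not (and, by symmetry, the opposite one-turn position is also empty), there are $10$ candidates and one must assume all $10$ are poor and chase the forced memberships ($n,v,t,x\in D$ by Proposition \ref{1clusters prop}, etc.) to a contradiction. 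That contradiction is the crux of (i), and your proposal both omits it and mispredicts its mechanism: you anticipate forcing ``a one-third vertex adjacent to $C$ or a repeated closed neighborhood,'' whereas the actual obstruction is that a forced $3^+$-cluster at a specific distance-$3$ vertex $w$, together with an adjacent empty vertex, prevents two particular candidates from both being $1$-clusters.

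Part (ii) in the paper likewise rests on a concrete observation you do not make: since $P(C)=3$ and each leaf needs a distance-$2$ vertex in $D\setminus C$ (Proposition \ref{leaves prop}), the one-third vertex must sit adjacent to a leaf and a middle vertex simultaneously, pinning two of the three $P$-contributions at once; this is what drops the candidate count to $7$, and a final short contradiction (again via a forced $3^+$-cluster) brings it to $6$. Your intuition that the one-third vertex ``consumes part of the budget'' is right, but the step from that intuition to the specific bound $6$ is the entire proof, and it is missing. To make this rigorous you would need to do what the paper does: fix a labelled figure, list the candidates position by position, and verify each exclusion explicitly.
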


\begin{lemma}
\label{curvedopen4 lem}

Let $C_1$ be a curved open $4$-cluster with $P(C_1) = 2$.

\begin{itemize}

\item[(i)] $C_1$ has at most $8$ nearby poor $1$-clusters.  Furthermore, if $C_1$ has $8$ such clusters, then at least $2$ of the distance-$3$ poor $1$-clusters are stealable; if $C_1$ has $7$ such clusters, then at least one is stealable.

\item[(ii)] If one backwards position is not in $D$, then $C_1$ has at most $6$ nearby poor $1$-clusters.  If $C_1$ has exactly $6$ such $1$-clusters, then at least one of the distance-$3$ poor $1$-clusters is stealable.

\item[(iii)] If neither backwards position is in $D$, then $C_1$ has at most $2$ nearby poor $1$-clusters.

\end{itemize}

\end{lemma}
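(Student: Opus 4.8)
The plan is to follow the template of the proof of Lemma~\ref{linearopen4 lem} and to reduce everything to a finite enumeration of the possible ``shapes'' of a curved open $4$-cluster with $P$-value $2$. Write $C_1=v_1v_2v_3v_4$ as a path with leaves $v_1,v_4$; by Definition~\ref{4cluster def} these leaves lie on a common $6$-cycle $H=v_1v_2v_3v_4v_5v_6$, and the backwards positions are the two distinguished vertices named there. Three observations pin down most of the local picture: since $\Delta(C_1)=2$, the off-$H$ neighbor of each of $v_2,v_3$ is not in $D$; since $C_1$ is open (Definition~\ref{open-closed}), neither $v_2$ nor $v_3$ has a distance-$2$ vertex in $D\setminus C_1$; and since a neighbor of a leaf lying in $D$ would be a fifth vertex of $C_1$, the vertices $v_5,v_6$ and the off-$H$ neighbors of $v_1,v_4$ are all not in $D$. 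By Proposition~\ref{leaves prop} each leaf has a distance-$2$ vertex in $D\setminus C_1$, so $P(C_1)=2$ (Definition~\ref{p-function}) forces each leaf to have exactly one; call them $w_1$ (near $v_1$) and $w_4$ (near $v_4$), allowing for the possibility $w_1=w_4$. Each of $w_1,w_4$ can sit in only one of a bounded number of positions relative to its leaf, and after using the symmetry of $C_1$ this leaves a handful of base configurations; roughly, whether the backwards positions of $C_1$ lie in $D$ constrains which of these positions $w_1$ and $w_4$ occupy, and that is what separates parts~(i)--(iii).

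Next I would bound the poor $1$-clusters nearby $C_1$. Such a cluster $u$ is at distance $2$ or $3$ from $C_1$. If $u$ is at distance $2$, openness forces the witnessing length-$2$ path to start at a leaf, so $u\in\{w_1,w_4\}$ and there are at most two of these. For the distance-$3$ poor $1$-clusters, enumerate the length-$3$ paths leaving $C_1$: each exits only through $v_5$, $v_6$, the off-$H$ neighbors of $v_2,v_3$, or the off-cluster neighbors of the two leaves, all already determined, so only a short list of candidate endpoints remains; Corollary~\ref{poor1 cor} (each neighbor of a poor $1$-cluster has exactly one other neighbor in $D$), Proposition~\ref{1clusters prop}, and Proposition~\ref{force3cluster prop} then discard most of them. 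Running this through each base configuration yields $8$ for part~(i), $6$ for part~(ii) once one backwards position is removed from $D$ (which kills a whole branch of the search), and $2$ for part~(iii) when neither backwards position is in $D$ (which forces nearly all of the extended neighborhood out of $D$).

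Finally I would handle the extremal cases. When the count reaches $8$ (or $7$) in part~(i), or $6$ in part~(ii), the configuration is rigid enough that each of the relevant distance-$3$ poor $1$-clusters must be distance-$2$ from some \threeplus, $C$; since $C_1$ is itself a \fourpluss at distance $3$ from each of them, stealability (Definition~\ref{stealable def}) reduces to verifying, for each such poor $1$-cluster, that it is not in a shoulder position of $C$ when $C$ is an open $3$-cluster, and that if it is in an arm position of such a $C$ then $C$ is neither type-$1$ nor type-$2$ paired (Definition~\ref{paired def}). Reading the exact position of the poor $1$-cluster inside $C$ off the pinned-down picture and checking these provisos is the step I expect to be the main obstacle: it is a nested case analysis inside an already substantial enumeration and is exactly where the side conditions of Definition~\ref{stealable def} must be met directly. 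Once it is finished, the extremal subcases of (ii) and (iii) require little, since the extra hypothesis there removes so much of the neighborhood that they barely arise.
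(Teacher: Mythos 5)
Your overall strategy matches the paper's: pin down the two distance-$2$ vertices of the leaves forced by Proposition \ref{leaves prop} and $P(C_1)=2$, reduce to a short list of configurations by symmetry (the paper gets six, grouped exactly by how many backwards positions lie in $D$), enumerate the candidates for nearby poor $1$-clusters in each configuration, and discard candidates using Corollary \ref{poor1 cor} and Propositions \ref{1clusters prop} and \ref{force3cluster prop}. But what you have written is a plan rather than a proof: the counts $8$, $6$ and $2$ are asserted rather than derived from the enumeration, and --- more importantly --- you explicitly defer the stealability verification, calling it ``the main obstacle.'' That verification is precisely the content of the extremal subcases of (i) and (ii), so leaving it open is a genuine gap rather than a routine omission.

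It is worth recording how the paper closes that gap, because the resolution is not the nested case analysis over shoulder/arm positions and type-$1$/type-$2$ pairings that you anticipate. In every configuration, Proposition \ref{force3cluster prop} forces the two vertices sitting just outside the leaves' neighborhoods ($k$ and $q$ in the paper's Figure \ref{open4-secondpart2}) into $D_{3^+}$. In the extremal subcase of (i) --- both backwards positions in $D$ and $8$ nearby poor $1$-clusters --- the fact that \emph{all} remaining candidates are poor $1$-clusters forces $k$ and $q$ to lie in the \emph{same} cluster $C_2$, which is therefore a \fourplus; the two distance-$3$ poor $1$-clusters $g$ and $v$ are then distance-$2$ from a cluster that is not an open $3$-cluster, so the provisos of Definition \ref{stealable def} are vacuous, and since $C_1$ is itself a $4$-cluster at distance $3$ they are stealable. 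The $7$-cluster subcase and the extremal subcase of (ii) are settled the same way: the relevant distance-$3$ poor $1$-cluster is distance-$2$ from the \threepluss at $k$ or at $q$, which the forced structure shows is never an open $3$-cluster. So the side conditions of Definition \ref{stealable def} never have to be checked against positions or pairings at all; the difficulty you flag collapses once one observes that the witnessing cluster cannot be an open $3$-cluster in any of the extremal configurations. Without this observation (or a worked substitute for it), your argument does not establish the ``furthermore'' clauses of (i) and (ii), which are exactly what Claim \ref{4cluster claim} needs from this lemma.
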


\begin{lemma}
\label{curved4 lem2}

Let $C$ be a curved $4$-cluster with $P(C) = 3$.

\begin{itemize}

\item[(i)] If $C$ is adjacent to no one-third vertices, then $C$ has at most $11$ nearby poor $1$-clusters.  Furthermore, if $C$ has $k$ backwards positions not in $D$, then $C$ has at most $11-k$ nearby poor $1$-clusters.

\item[(ii)] If $C$ is adjacent to a one-third vertex, then $C$ has at most $6$ nearby poor $1$-clusters.

\end{itemize}

\end{lemma}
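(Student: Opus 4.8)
The statement to prove is Lemma~\ref{curved4 lem2}: a curved $4$-cluster $C$ with $P(C)=3$ has at most $11$ nearby poor $1$-clusters if $C$ is adjacent to no one-third vertex (and at most $11-k$ if $k$ backwards positions are not in $D$), and at most $6$ if $C$ is adjacent to a one-third vertex.

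\medskip

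\textbf{Plan.} The approach is a careful case analysis on the local structure around a fixed curved $4$-cluster $C$, using the drawing of the curved $4$-cluster in Figure~\ref{curved4}. First I would fix coordinates/labels for the four vertices of $C$ (two leaves, two internal vertices including the two backwards positions $c,d$), and enumerate all vertices at distance $1$, $2$, and $3$ from $C$; since ``nearby'' means within distance $3$, only these matter. Poor $1$-clusters are $1$-clusters (so isolated in $D$) with exactly three distance-$2$ vertices in $D$ (Definition~\ref{poor1 def}), and by Corollary~\ref{poor1 cor} each of their neighbors has exactly one neighbor in $D$ besides the poor cluster itself; this strong local constraint is what makes the counting finite and tractable. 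The hypothesis $P(C)=3$ says $\sum_{v\in C}|N_2(v)\cap D\setminus C| = 3$ (Definition~\ref{p-function}), which tightly limits how much of $D$ can sit at distance $2$ from $C$; combined with Proposition~\ref{leaves prop} (each leaf of a $3^+$-cluster has a distance-$2$ vertex in $D\setminus C$) and Proposition~\ref{1clusters prop}, this forces the configuration of nearby vertices in $D$ into a small number of shapes.

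\medskip

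\textbf{Key steps, in order.} (1) Use $P(C)=3$ together with Proposition~\ref{leaves prop} to classify exactly where the three ``$P$-contributing'' distance-$2$ vertices of $C$ lie — in particular which leaves are ``closed'' and by whom, and whether any backwards position is in $D$. The parameter $k$ (number of backwards positions not in $D$) becomes a branching parameter. (2) For each such shape, identify the candidate slots where a nearby poor $1$-cluster could sit: these are vertices within distance $3$ of $C$ that can be $1$-clusters. (3) Impose the poor-cluster conditions (exactly three distance-$2$ neighbors in $D$, plus Corollary~\ref{poor1 cor}) and show that filling too many of these slots forces either a contradiction with $P(C)=3$, or two nearby poor clusters to conflict (sharing a forced distance-$2$ vertex in a way that violates identifiability via Definition~\ref{VIC def}), or the appearance of a one-third vertex adjacent to $C$. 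Each extra poor cluster beyond $11-k$ should trigger one of these obstructions. (4) Handle part (ii) separately: if $C$ is adjacent to a one-third vertex $t$, then $t$ has three neighbors in $D$, one of which is a vertex of $C$; the other two are in $D$ nearby, which eats into the $P(C)=3$ budget and simultaneously occupies slots, collapsing the maximum from $11$ down to $6$. This is essentially the same mechanism as in Lemmas~\ref{closed3 lem2} and~\ref{linear4 lem2}(ii), so I would mirror those arguments.

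\medskip

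\textbf{Main obstacle.} The hard part is the sheer number of sub-configurations in part (i): a curved $4$-cluster has less symmetry than the linear one, so the $k=0,1,2$ cases each split further according to which leaf is closed and which way the ``curve'' bends relative to the closing vertices, and one must verify the count $11-k$ is actually attained-or-exceeded-to-contradiction in each. The delicate sub-step is ruling out the $12$th (resp.\ $(12-k)$th) poor cluster: one typically must chase the forced-in-$D$ and forced-not-in-$D$ status of vertices two or three steps out, and argue that some pair of would-be poor clusters $v_1,v_2$ ends up with $N[v_1]\cap D = N[v_2]\cap D$ or that a third distance-$2$ vertex is forced on one of them (making it not poor) or absent (making it not a valid $1$-cluster at all). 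Keeping the bookkeeping honest — i.e., making sure every ``nearby'' slot is accounted for exactly once and no distance-$3$ poor cluster is double-counted between the two leaves of $C$ — is where the tedium, and the risk of error, concentrates; I would organize it by drawing the maximal configuration explicitly for each $k$ and then showing any addition breaks it.
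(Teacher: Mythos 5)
Your plan is the same in outline as the paper's proof: fix the labelled drawing of the curved $4$-cluster, use $P(C)=3$ together with Proposition~\ref{leaves prop} to pin down where the three distance-$2$ code vertices can sit, branch on how many backwards positions lie in $D$, list the candidate positions for nearby poor $1$-clusters, and eliminate candidates; part (ii) is handled exactly as you describe, by observing that the one-third vertex must be adjacent to a leaf and its two other $D$-neighbors exhaust the $P$-budget. So the approach is not in question.

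The gap is that the proposal never performs the enumeration, and for this lemma the enumeration \emph{is} the proof: the constants $11$, $11-k$ and $6$ are nothing but the output of that count, and nothing in your write-up certifies them. Two points in particular need to be made concrete rather than gestured at. First, the elimination mechanism the paper actually uses is Proposition~\ref{force3cluster prop}: whenever a vertex not in $D$ acquires two neighbors not in $D$ (which happens precisely around a backwards position that is absent from $D$, given $P(C)=3$), its third neighbor is forced into a $3^+$-cluster and so cannot be a poor $1$-cluster. This is exactly how each missing backwards position deletes one candidate and produces the bound $11-k$; your plan instead appeals to ``conflicts violating Definition~\ref{VIC def}'' and to a $12$th cluster ``triggering an obstruction,'' which is not how the count goes --- the paper simply exhibits at most $11-k$ candidate slots in every sub-case, with mutually exclusive pairs written as single candidates. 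Second, the sub-case structure matters: for each value of $k$ one must still split on whether the third $P$-contributing vertex sits in the tail-like position ($j$ in the paper's figure) or elsewhere, and the candidate lists differ between these splits. Until those lists are written down and checked, the lemma is asserted, not proved.
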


\begin{lemma}
\label{5cluster lem}

An open $5$-cluster, $C$, has at most $9$ nearby poor $1$-clusters.  If $C$ has exactly $9$ such $1$-clusters, then at least one of the distance-$3$ poor $1$-clusters is stealable.

\end{lemma}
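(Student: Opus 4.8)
The plan is to rely entirely on the structural facts already proved, together with a case analysis over the few possible shapes of $C$.

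First I would pin down the shape of $C$. Since $C$ is open we have $\Delta(C)=2$ (Definition \ref{open-closed}), and as the hexagonal grid has girth $6$, $C$ contains no cycle; hence $C$ is a path on five vertices, say $C=v_1v_2v_3v_4v_5$. The leaves $v_1,v_5$ have two neighbors outside $C$ and the internal vertices $v_2,v_3,v_4$ have exactly one; none of these seven vertices lies in $D$ (each would otherwise lie in the component $C$), and by the girth they are pairwise distinct. Openness gives the key restriction: no vertex of $D\setminus C$ lies at distance $2$ from $v_2$, $v_3$, or $v_4$; in particular $P(C)$ (Definition \ref{p-function}) is supported entirely on the two leaves. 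Up to reversing the path and reflecting across its axis there are only three turn patterns at $v_2,v_3,v_4$, hence only three shapes of $C$, and I would handle each, exploiting the extra reflection symmetry of the straight (zig-zag) shape.

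Next I would localize the nearby poor $1$-clusters. A poor $1$-cluster $u$ nearby $C$ is at distance $2$ or $3$ from $C$, since distance $\le 1$ would force $u\in C$. If $u$ is at distance $2$, then by openness $u$ is a distance-$2$ vertex of a leaf (at most four such per leaf), and in fact $u$ must be at distance at least $3$ from every internal vertex, so $u$ lies in the part of the leaf's neighborhood pointing away from $C$. If $u$ is at distance $3$, then it is reached from some $v_i$ through a distance-$1$ vertex and then a distance-$2$ vertex not in $D$; routes through internal vertices are tightly constrained by openness together with Proposition \ref{force3cluster prop}, so again the distance-$3$ poor $1$-clusters are concentrated near the two ends. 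The main body is then a case analysis at each end: fixing a leaf, say $v_1$ with outside neighbors $a,b$, I would run through the possibilities for which of the distance-$2$ vertices $a',a'',b',b''$ and of their further neighbors lie in $D$, classifying each such vertex as a $1$-cluster or part of a $3^+$-cluster via Corollary \ref{not in 3+} and Proposition \ref{force3cluster prop}, and using Proposition \ref{leaves prop} (at least one of $a',a'',b',b''$ is in $D\setminus C$), Corollary \ref{poor1 cor} (a neighbor of a poor $1$-cluster has exactly one neighbor in $D$), and the distinct-and-nonempty condition of Definition \ref{VIC def} to forbid configurations. Each sub-case bounds the number of poor $1$-clusters within distance $3$ attached to the $v_1$-end; symmetrically for the $v_5$-end; and subtracting for any poor $1$-cluster counted at both ends (which forces a short, essentially straight $C$, handled directly) gives the bound $9$.

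Finally, for the equality case, tracing the case analysis backward shows that having exactly $9$ nearby poor $1$-clusters pins down the configuration of $D$ around $C$ almost completely. The resulting density of poor $1$-clusters forces new structure in the extended neighborhood: applying Propositions \ref{force3cluster prop} and \ref{leaves prop} to the vertices lying between some distance-$3$ poor $1$-cluster $u$ and $C$ forces $u$ to be at distance $2$ from a $3^+$-cluster $C'$, while $C$ itself is a $5$-cluster at distance $3$ from $u$; thus $u$ meets the distance requirements of Definition \ref{stealable def}. The remaining, and hardest, step is the fine print: checking that $u$ is not in a shoulder position of $C'$ when $C'$ is an open $3$-cluster, and, when $u$ is in an arm position, that $C'$ is neither type-$1$ nor type-$2$ paired (Definition \ref{paired def}). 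I expect to verify this by showing that any forbidden option would place near $C'$ a vertex with empty or duplicated code-neighborhood, contradicting Definition \ref{VIC def}, where ruling out the paired configurations also invokes the rigid geometry behind Lemma \ref{finless lem}. This vertex-by-vertex verification around $C'$ is where the real obstacle lies; the counting leading to $9$ is tedious but routine.
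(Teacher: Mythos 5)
Your overall strategy is the paper's: reduce to the three path shapes of an open $5$-cluster up to symmetry, use openness together with Proposition \ref{force3cluster prop} to force specific nearby vertices into $3^+$-clusters, enumerate the remaining candidates for nearby poor $1$-clusters with the help of Proposition \ref{leaves prop} and Corollary \ref{poor1 cor}, and, in the tight case, exhibit a distance-$3$ poor $1$-cluster that is distance-$2$ from one of those forced $3^+$-clusters and check the conditions of Definition \ref{stealable def} (the $5$-cluster itself supplies the required distance-$3$ $4^+$-cluster). That is exactly how the paper's three cases run, and your plan for the stealability fine print --- showing that in the nine-cluster configuration the forced $3^+$-cluster must contain three specific vertices, which places the tight poor $1$-cluster outside the shoulder positions and rules out the paired configurations --- is the argument the paper gives.

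The one step that would fail as written is your localization claim. It is not true that the distance-$3$ poor $1$-clusters ``are concentrated near the two ends,'' and a count organized as ``poor $1$-clusters attached to the $v_1$-end, plus those attached to the $v_5$-end, minus overlaps'' undercounts. Openness only excludes from $D$ the distance-$2$ vertices of the internal vertices; the vertices at distance $3$ from an internal vertex, reached through its outside neighbor, are in general unconstrained and can be poor $1$-clusters lying at distance $4$ or more from both leaves. The paper's candidate lists for each shape contain several such middle positions (this is why its first case begins with ten candidates rather than the eight one would get from the two leaf neighborhoods), and the bound of $9$ comes from a global enumeration in which adjacent candidates are paired off and the forced $3^+$-cluster positions are deleted --- not from any end-concentration. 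A smaller slip: the seven outside neighbors of $C$ are not always pairwise distinct, since in the maximally curved shape the two leaves share a common neighbor on a $6$-cycle; this does not threaten the bound (that shape has only seven candidates), but it shows the girth argument cannot be applied as casually as you do. To repair the proof, replace the per-end bookkeeping with the global candidate enumeration for each of the three shapes.
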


\begin{lemma}
\label{4star lem}

If a $4$-cluster, $C$, has one degree-$3$ vertex and $P(C) = 3$, then $C$ has at most $8$ nearby poor $1$-clusters.

\end{lemma}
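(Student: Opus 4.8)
The plan is to analyze the unique shape of a $4$-cluster $C$ with a degree-$3$ vertex, call it $z$: since $\Delta(C)=3$ and $|C|=4$, the cluster is a ``star,'' i.e. $z$ together with its three neighbors $a,b,c$, each of which is a leaf of $C$. By Proposition~\ref{leaves prop}, each leaf $a,b,c$ has at least one distance-$2$ vertex in $D\setminus C$; moreover $z$ itself is surrounded entirely by $C$, so $z$ contributes nothing to $N_2(\cdot)\cap D\setminus C$. Hence $P(C)=\sum_{v\in C}|N_2(v)\cap D\setminus C|$ is a sum over the three leaves only, and the hypothesis $P(C)=3$ forces each leaf to have exactly one distance-$2$ vertex in $D\setminus C$. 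First I would fix coordinates for $C$ in $G_H$ and enumerate, for each leaf, the possible locations of its single distance-$2$ $D$-vertex (there are a small number of distance-$2$ positions off each leaf, after discarding those blocked by the other cluster vertices).

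Next I would count nearby poor $1$-clusters by going leaf-by-leaf. A poor $1$-cluster nearby $C$ is within distance~$3$ of $C$, hence within distance~$3$ of one of $a,b,c$ (distance from $z$ exceeds these by $1$). For a fixed leaf, say $a$: a distance-$2$ poor $1$-cluster of $a$ must be the vertex forced by Proposition~\ref{leaves prop}, and since $P(C)=3$ there is exactly one such $D$-vertex, giving at most one distance-$2$ poor $1$-cluster per leaf. For distance-$3$ poor $1$-clusters off $a$: I would use Corollary~\ref{poor1 cor}, that each neighbor of a poor $1$-cluster has exactly one neighbor in $D$ besides the $1$-cluster, together with the adjacency structure of $G_H$, to bound the number of distance-$3$ poor $1$-clusters ``hanging off'' leaf $a$. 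The point is that a distance-$3$ poor $1$-cluster from $a$ sits at a distance-$2$ vertex of the forced $D$-vertex near $a$, or is reached through a one-third vertex adjacent to $a$; in either case the local picture near $a$ is constrained enough (again because $a$ has only one distance-$2$ $D$-vertex, so $a$ lies on a ``sparse'' side) that each leaf accounts for at most a couple of nearby poor $1$-clusters, and summing over the three leaves and removing double-counting at shared distance-$3$ vertices yields the bound~$8$.

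The main obstacle I anticipate is the careful case analysis that rules out the configuration with $9$ or more nearby poor $1$-clusters: one must show the three leaves cannot simultaneously be maximally ``productive'' without creating a forbidden configuration — either two vertices of $G_H$ with equal nonempty trace (violating Definition~\ref{VIC def}), or a poor $1$-cluster whose neighbor has two $D$-neighbors off the $1$-cluster (violating Corollary~\ref{poor1 cor}), or a $2$-cluster (violating Proposition~\ref{2clusters prop}). Concretely I expect the worst case to be the fully symmetric placement of the three forced $D$-vertices around $C$; I would check that this placement, when pushed to $9$ nearby poor $1$-clusters, forces two of the poor $1$-clusters' neighborhoods to coincide or forces one of the forced $D$-vertices to lie in a $2$-cluster or $3^+$-cluster in a way that converts some candidate poor $1$-clusters into non-poor ones, dropping the count back to~$8$. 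Since $C$ has a bounded neighborhood (finitely many vertices within distance~$3$), this is a finite check; I would organize it by which leaves have a distance-$3$ one-third vertex and dispatch the remaining subcases by the three propositions above.
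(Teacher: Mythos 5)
Your setup matches the paper's exactly: the cluster is a claw (a degree-$3$ vertex $z$ whose three neighbors are the leaves), $z$ contributes nothing to $P(C)$, and $P(C)=3$ together with Proposition~\ref{leaves prop} forces each leaf to have exactly one distance-$2$ vertex in $D\setminus C$ (with the caveat, which you don't mention but which drives the paper's case split, that a single such vertex can serve two leaves at once, since some distance-$2$ positions are shared between two leaves). From there the paper enumerates $12$ mutually exclusive candidate positions for nearby poor $1$-clusters over the whole distance-$3$ neighborhood and, splitting into just two cases by symmetry (the middle leaf's unique $D$-vertex sits in a shared or a non-shared position), eliminates four candidates in each case to reach $8$.

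The gap is that the elimination step — the entire quantitative content of the lemma — is left as a promissory note, and the accounting scheme you describe would not straightforwardly deliver it. Your per-leaf tally (``at most one distance-$2$ poor $1$-cluster plus a couple at distance $3$ per leaf, then remove double-counting'') is not how the bound $8$ arises and does not obviously sum to $8$; the paper's count is global over the distance-$3$ neighborhood, and which candidates die depends on which symmetry case you are in. Moreover, the mechanism that actually kills candidates in the paper is one you never invoke: Proposition~\ref{force3cluster prop}. Because $P(C)=3$ empties most of the distance-$2$ shell, several non-$D$ vertices acquire two non-$D$ neighbors, which forces their remaining neighbors into $D_{3^+}$, and vertices in $D_{3^+}$ cannot be poor $1$-clusters. (Your list of forbidden configurations — equal traces, violations of Corollary~\ref{poor1 cor}, $2$-clusters — can in principle reconstruct this, but as stated it does not identify where or how the four eliminations happen.) Your guess that the extremal configuration is a ``fully symmetric'' placement of three distinct forced $D$-vertices is also not what the analysis shows: both of the paper's cases, shared and non-shared, reach exactly $8$ after successive applications of Proposition~\ref{force3cluster prop}, so the case analysis cannot be shortcut by privileging one placement. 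To complete your argument you would need to carry out the finite check you defer, organized as the paper does by the position type of each leaf's unique distance-$2$ $D$-vertex.
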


\begin{lemma}
\label{5star lem}

If a $5$-cluster, $C$, has one degree-$3$ vertex, then $C$ has at most $12$ nearby poor $1$-clusters.

\end{lemma}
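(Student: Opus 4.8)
The plan is to proceed by a direct structural analysis of the neighborhood of $C$, in the style of Lemmas \ref{closed3 lem}--\ref{5star lem}. First I would pin down the shape of $C$: since the hexagonal grid has girth $6$, no induced subgraph on $5$ vertices contains a cycle, so $C$ is a tree, and a tree on $5$ vertices possessing a degree-$3$ vertex has degree sequence $(3,2,1,1,1)$; hence $C$ is the spider with arm lengths $2,1,1$. Fix labels: let $c$ be the degree-$3$ vertex with neighbours $a,x,y\in C$, let $b\in C$ be the other neighbour of $a$, so that $b,x,y$ are the three leaves while $a$ has degree $2$. Using the girth-$6$ property one checks that the seven vertices at distance $1$ from $C$ are distinct and each has exactly one neighbour in $C$ (two neighbours in $C$ would force a cycle of length at most $5$), and one records the (bounded) list of vertices at distance $2$ and at distance $3$ from $C$; the only coincidences that can occur in these layers are the ``filling in'' of $6$-cycles through $c$. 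Note that, unlike Lemma \ref{4star lem}, there is no hypothesis on $P(C)$, so the bound must hold for every placement of $D$ around this fixed shape.

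Next I would observe that every nearby poor $1$-cluster $v$ lies at distance exactly $2$ or exactly $3$ from $C$, since distance at most $1$ would place $v$ in the component $C$, contradicting that $v$ is a $1$-cluster. I would then split the count. For a poor $1$-cluster $v$ at distance $2$: $v$ occupies one of the distance-$2$ positions listed above, and Corollary \ref{poor1 cor} (each neighbour of $v$ has exactly one neighbour in $D\setminus\{v\}$), Definition \ref{poor1 def} (exactly three distance-$2$ vertices of $v$ are in $D$), and Proposition \ref{leaves prop} (each leaf $b,x,y$ has a distance-$2$ vertex in $D\setminus C$) together forbid these positions from all being occupied simultaneously, so bounding the admissible subsets is a finite check. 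For a poor $1$-cluster $v$ at distance $3$: any length-$3$ path from $C$ to $v$ runs through two vertices not in $D$ --- a distance-$1$ vertex of $D$ would lie in $C$, and the last interior vertex is a neighbour of the $1$-cluster $v$ --- so $v$ is reached through a specific distance-$2$ vertex outside $D$; assigning $v$ to that intermediary and bounding how many poor $1$-clusters one intermediary can carry yields the distance-$3$ contribution.

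To organize the bookkeeping I would partition the neighborhood according to the three ``directions'' of the spider --- the long arm through $c$, $a$ and $b$ together with its tip $b$, the leg $c$–$x$, and the leg $c$–$y$ --- plus the ``gap'' regions between consecutive directions around $c$. Bounding the number of nearby poor $1$-clusters attributable to each direction and to each gap, and then summing while subtracting overlaps, should give at most $12$. The main obstacle, and where essentially all of the work lies, is exactly this casework: the distance-$3$ balls around $b$ and around $x,y$ overlap one another and the gap regions, so one must avoid double-counting a poor $1$-cluster sitting between two arms, while simultaneously showing that the naive maximum --- every direction maximally crowded --- is infeasible, because the leaves' forced distance-$2$ $D$-vertices (Proposition \ref{leaves prop}) and the sparsity imposed by Corollary \ref{poor1 cor} rule out the extremal configurations. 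I do not expect any idea beyond those already used for the $3$- and $4$-cluster lemmas; one might hope to shortcut by deleting the leaf $b$ to obtain the $4$-cluster $K_{1,3}$ and invoking Lemma \ref{4star lem}, but that lemma requires $P=3$, which need not survive the deletion, and one would still have to account separately for poor $1$-clusters near $b$, so I expect the direct enumeration to be the cleanest route.
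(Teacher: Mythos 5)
Your structural setup is sound and matches the paper's: a $5$-cluster with a degree-$3$ vertex is indeed the unique spider with arm lengths $2,1,1$, and the paper's proof likewise proceeds by direct enumeration of candidate positions around that one shape (Figure \ref{5star}). But the proposal stops exactly where the proof of the lemma begins. The entire content of the statement is the number $12$, and you never derive it: the lists of distance-$2$ and distance-$3$ positions are said to be ``recorded'' but are not produced, the ``finite check'' at distance $2$ is not performed, and the final bound appears only as what the bookkeeping ``should give.'' As written, nothing in the proposal rules out $13$ nearby poor $1$-clusters, so there is a genuine gap: the quantitative heart of the argument is missing.

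For comparison, the paper's proof --- of which yours is essentially a skeleton --- lists exactly $13$ candidates for nearby poor $1$-clusters, written as $a/h$, $b/c$, $d$, $e/f$, $g$, \dots, $x/y$, where a slash marks two positions of which at most one can be a poor $1$-cluster (typically because Corollary \ref{poor1 cor} forbids both from being poor simultaneously). It then eliminates one more candidate by a short chain of implications: if $c\in D$ then $d\notin \poorone$; otherwise either $b\notin\poorone$ (so the pair $b/c$ contributes nothing), or $b\in\poorone$ forces $a\notin\poorone$, after which either $h\notin\poorone$ (so the pair $a/h$ contributes nothing) or $h\in\poorone$ forces $g\notin\poorone$. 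Producing the candidate list, verifying the pairwise exclusions, and exhibiting such a chain \emph{is} the proof; your plan of partitioning into ``directions and gaps'' and subtracting overlaps is a plausible way to organize that work, but until it is carried out the bound of $12$ is unestablished. (Your side remark is correct that Lemma \ref{4star lem} cannot be invoked via deleting a leaf, because of its hypothesis $P(C)=3$; note also that the present lemma, unlike Lemma \ref{4star lem}, carries no hypothesis on $P(C)$, which is why the paper's count must tolerate the worst case over all placements of $D$.)
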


\begin{lemma}
\label{6cluster lem}

An open $6$-cluster, $C$,  with $\Delta(C) = 2$ has at most $10$ nearby poor $1$-clusters.

\end{lemma}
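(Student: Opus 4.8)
The plan is to argue by a bounded case analysis on the shape of $C$. Since $\Delta(C)=2$ and $C$ has six vertices, $C$ is either a path $v_1v_2\cdots v_6$ or, degenerately, a $6$-cycle; because at every vertex of $G_H$ a path must turn, the path case is described by a sequence of four left/right turns at the internal vertices $v_2,v_3,v_4,v_5$, so, modulo reversal and the reflection symmetry of $G_H$, only a handful of shapes occur. For each shape the openness hypothesis supplies the crucial extra information: each internal vertex $v_i$ has a single neighbor $v_i'$ outside $C$, and openness forces $v_i'\notin D$ together with both remaining neighbors of $v_i'$ outside $D$, i.e. $N[v_i']\cap D=\{v_i\}$. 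Thus openness essentially pins down the whole distance-$1$ and distance-$2$ environment of the interior of $C$, leaving genuinely undetermined only the two leaves $v_1,v_6$ (each of which, by Proposition~\ref{leaves prop}, has a distance-$2$ vertex in $D\setminus C$) and the region beyond distance $2$.

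Next, for a fixed shape I would draw the portion of $G_H$ within distance $3$ of $C$ and enumerate the candidate locations of nearby poor $1$-clusters. A poor $1$-cluster cannot be at distance $0$ or $1$ from $C$ (it is a $1$-cluster, hence has no neighbor in $D\supseteq C$), so every nearby poor $1$-cluster sits at distance exactly $2$ or $3$ from $C$, and there is only a bounded list of such candidate vertices. Each candidate $u$ that is in fact a poor $1$-cluster imposes a rigid local picture, by Corollary~\ref{poor1 cor} and Proposition~\ref{1clusters prop}: all three neighbors of $u$ lie outside $D$, and exactly one of the two outer neighbors of each of those three lies in $D$. I would then propagate these constraints against the openness data and against one another: two candidate poor $1$-clusters at distance $2$ share a common neighbor, which cannot then carry a code neighbor for both; two candidates competing for the same distance-$2$ vertex cannot both be realized; and any over-packing eventually forces two vertices of $G_H$ to have the same singleton trace, contradicting Definition~\ref{VIC def}. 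Running this through each shape shows that at most $10$ candidates can simultaneously be poor $1$-clusters --- tightest for the straightest shapes, where the distance-$3$ annulus around $C$ is largest, and comfortably below $10$ for the curlier ones --- which is the desired bound, in line with the analogous counts for open $3$-, $4$- and $5$-clusters.

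The main obstacle is the bookkeeping rather than any single hard idea: the distance-$3$ neighborhood of a six-vertex cluster is large, there are several shapes, and within each shape the choices of which boundary vertices lie in $D$ (subject to openness and to Propositions~\ref{1clusters prop} and \ref{leaves prop}) multiply into many subcases. What keeps each subcase finite and decidable is that a poor $1$-cluster is extremely rigid --- its closed neighborhood is empty apart from itself, and precisely half of its distance-$2$ sphere lies in $D$ --- so nearby poor $1$-clusters compete for the scarce code vertices in the annulus around $C$; converting that competition into the clean inequality ``$\le 10$'' is where all the care goes, and it is exactly the kind of lengthy but routine verification deferred to Section~\ref{deferred proofs}.
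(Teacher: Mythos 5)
Your proposal is correct and follows essentially the same route as the paper: a symmetry-reduced case analysis on the shape of the open $6$-cluster, using openness (via Proposition~\ref{force3cluster prop}) to force the vertices near the interior out of contention, and then enumerating the distance-$2$ and distance-$3$ candidates for poor $1$-clusters while pairing off mutually exclusive ones. The paper carries this out for four shapes, obtaining candidate counts of $10$, $9$, $8$ and $9$, exactly the kind of bookkeeping you describe.
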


Lemma \ref{k+8} was proved by Cranston and Yu \cite[p. 14]{12/29} in 2009.  We state it here without proof.

\begin{lemma}
\label{k+8}

For $k \geq 3$, a $k$-cluster has at most $k+8$ nearby clusters.  

\end{lemma}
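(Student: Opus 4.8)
Let $C$ be a $k$-cluster, $k\ge 3$, and let $m$ be the number of clusters within distance $3$ of $C$ (other than $C$ itself). The first observation is that no such cluster can lie at distance $1$ from $C$: if $v'$ belonged to a cluster $C'\ne C$ and were adjacent to some $v\in C$, then $v'$ and $v$ would be adjacent vertices of $D$ and hence in the same component, a contradiction. So every nearby cluster is at distance exactly $2$ or $3$ from $C$. I would split $m=m_2+m_3$ accordingly and bound each piece by tracing shortest paths back to $C$.

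\textbf{The boundary bound.} Let $T$ be the set of vertices at distance exactly $1$ from $C$; by the previous paragraph $T\cap D=\emptyset$. Since $G_H$ is $3$-regular and $C$ is connected, the number of edges from $C$ to $T$ equals $3k-2|E(C)|\le 3k-2(k-1)=k+2$, and since every vertex of $T$ has a neighbor in $C$, we get $|T|\le k+2$. Now each nearby cluster $C'$ is reached from $C$ by a shortest path of the form $c,y$ then into $C'$ (if $d(C,C')=2$) or $c,y,z$ then into $C'$ (if $d(C,C')=3$), where $c\in C$, $y\in T$, and $z\notin D$ is a distance-$2$ vertex. The plan is to charge $C'$ to the vertex $y\in T$ (equivalently to the edge $cy$ of the boundary), and then to control how many clusters land on a single $y$.

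\textbf{Controlling the overcount.} A naive count is far too weak: a vertex $y\in T$ with two neighbors outside $C$ can, a priori, be the access point of up to four distinct nearby clusters (one through each of its two outward neighbors if they lie in $D$, and two more one step further out if they do not), which only yields $m\lesssim 4k$. The real content of the lemma is that girth $6$, together with the elementary structure of Section~\ref{general} --- no $2$-clusters (Proposition~\ref{2clusters prop}), a non-$D$ vertex with two non-$D$ neighbors forcing the third into a $3^+$-cluster (Proposition~\ref{force3cluster prop}), and the leaf/neighbor conditions of Propositions~\ref{1clusters prop} and~\ref{leaves prop}) --- forces heavy overlap among the paths emanating from different boundary vertices and from different parts of a ``doubled'' boundary vertex, and also forces distinct nearby clusters to be pairwise at distance $\ge 2$ so they cannot accumulate densely around $C$. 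Concretely I would pass to the auxiliary planar subgraph $H$ of $G_H$ consisting of $C$, one witness vertex of each nearby cluster, and the chosen connecting paths, and run a discrete Euler/Gauss--Bonnet count on $H$: the $k-1$ internal edges of $C$ versus the $\le 2$ path-edges per witness, balanced against planarity and girth~$6$, make the surplus over $|T|$ a bounded ``curvature'' constant. Since the hexagonal grid has total curvature concentrated in a constant of size $6$, the surplus comes out to at most $6$, giving $m\le |T|+6\le k+8$.

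\textbf{Main obstacle.} The delicate step is the last one: squeezing the constant down from the crude $4k$ (or even $2k$) to exactly $k+8$. This requires carefully ruling out configurations in which many $1$-clusters line the distance-$2$ and distance-$3$ shells all the way around $C$, and it seems to demand a case analysis organized by whether $C$ contains a cycle and by how many vertices of $T$ have two or three neighbors in $C$ --- each such ``doubled'' boundary vertex simultaneously saves a boundary edge and rigidifies the surrounding geometry, and tracking this trade-off precisely is where the work lies. (Indeed, this is the point at which we simply invoke the count already carried out by Cranston and Yu~\cite{12/29}.)
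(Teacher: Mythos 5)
The paper does not actually prove this lemma: immediately before its statement the authors write that it ``was proved by Cranston and Yu \cite[p.\ 14]{12/29} \ldots\ We state it here without proof.'' Your proposal ends by invoking exactly that citation, so in its final form it matches the paper's treatment. The parts of your sketch that are carried out are correct and are the natural first steps: no nearby cluster can be at distance $1$ (adjacent code vertices lie in the same component), and the boundary bound $|T|\le 3k-2|E(C)|\le k+2$ is right, which explains where the ``$k+2$'' part of $k+8$ comes from.

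However, if the sketch is read as an attempt at a self-contained proof, it has a genuine gap at precisely the step you flag as delicate. The entire content of the lemma is the ``$+6$'': you correctly note that the naive charge of nearby clusters to boundary vertices gives only $m\lesssim 4k$, but the proposed remedy --- passing to an auxiliary subgraph $H$ of witnesses and connecting paths and running a ``discrete Euler/Gauss--Bonnet count'' --- is not an argument. You never specify $H$ precisely enough for planarity and girth $6$ to produce a numerical inequality, you do not explain why the relevant ``curvature'' constant is $6$ rather than some multiple of $|T|$, and none of the structural facts you cite (Propositions~\ref{2clusters prop}, \ref{force3cluster prop}, \ref{1clusters prop}, \ref{leaves prop}) is actually applied to eliminate a single configuration. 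The actual Cranston--Yu argument is a concrete combinatorial count over the distance-$2$ and distance-$3$ shells of $C$, not a curvature computation, so the heuristic does not even point at the right mechanism. In short: as a citation your proposal is fine and agrees with the paper; as a proof it establishes only $m\le |T|\cdot O(1)$ and asserts, rather than proves, the passage to $|T|+6$.
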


\begin{lemma}
\label{vpsym lem}

Let $v$ be a very poor $1$-cluster in a symmetric orientation, and let $a$, $b$ and $c$ be the vertices in $D$ at distance-$2$ from $v$.  There exist $3$ open $3$-clusters, $C_1$, $C_2$ and $C_3$, such that $v$ is in a head position of all $3$ and exactly one of $a$, $b$ and $c$ is in a shoulder position of each of $C_1$, $C_2$ and $C_3$.  Furthermore, if each of $C_1$, $C_2$ and $C_3$ is uncrowded, then each of $a$, $b$ and $c$ is distance-$3$ from a closed $3$-cluster or \fourplus.

\end{lemma}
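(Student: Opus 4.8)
The plan is to convert the very-poor and symmetric hypotheses into a rigid local picture of $D$ around $v$ and then to read the three open $3$-clusters off that picture. Since $v$ is very poor (Definition~\ref{vp def}) it is in particular a poor $1$-cluster, so its three neighbours $p_1,p_2,p_3$ lie outside $D$; by Corollary~\ref{poor1 cor} each $p_i$ has a unique neighbour $a_i\in D\setminus\{v\}$, and its remaining neighbour $q_i$ must lie outside $D$, for otherwise $p_i$ would have two neighbours in $D\setminus\{v\}$. After relabelling, $\{a_1,a_2,a_3\}=\{a,b,c\}$. Since $v$ is not distance-$2$ from a $3^+$-cluster or from a non-poor $1$-cluster and there are no $2$-clusters (Proposition~\ref{2clusters prop}), each $a_i$ — being at distance $2$ from $v$ — is itself a poor $1$-cluster, so all three of its neighbours lie outside $D$. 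The symmetric orientation (Definition~\ref{orientations}) forces the $a_i$ to be pairwise at distance $4$; checking the two choices of $a_i$ at each of $p_1,p_2,p_3$ shows that only two of the eight possibilities realize this and that they are reflections of one another, so the arrangement of $v$, the $p_i$, the $q_i$, the $a_i$ and their neighbourhoods is determined up to an automorphism of $G_H$.

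Next I would locate the three clusters. Fix the arm through $p_1$ and $a_1$. In the forced arrangement one neighbour of $q_1$ is $p_1$ and another is a neighbour of the poor $1$-cluster $a_j$ for some $j\neq 1$, so both lie outside $D$; since $N[q_1]\cap D\neq\emptyset$, the third neighbour $z_1$ of $q_1$ lies in $D$. Now $z_1$ cannot be a $1$-cluster: if it were, Proposition~\ref{1clusters prop} applied to the neighbour $q_1$ of the $1$-cluster $z_1$ would force $q_1$ to have a neighbour in $D\setminus\{z_1\}$, which the previous sentence rules out. Hence $z_1$ lies in a $3^+$-cluster $C_1$. A direct check gives $d(v,z_1)=3$; since $v$ is very poor it is neither distance-$3$ from a $4^+$-cluster — so $C_1$ has exactly three vertices — nor distance-$3$ from a closed $3$-cluster — so $C_1$ is \emph{open}. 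The same argument on the arms through $p_2$ and $p_3$ produces open $3$-clusters $C_2,C_3$, pairwise distinct from $C_1$ by the distance estimates. Finally one checks $d(v,C_i)=d(a_i,C_i)=3$ and matches the forced neighbourhood of $C_i$ against Figure~\ref{3-cluster}, which places $v$ in a head position and $a_i$ in a shoulder position of $C_i$.

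For the last statement, assume each $C_i$ is uncrowded, so by Definition~\ref{crowded} each has exactly two distance-$2$ vertices in $D$, both poor $1$-clusters. Fixing $i$, this condition forbids all but two vertices at distance $2$ from $C_i$ from lying in $D$ and requires those two to be poor; feeding this back into the already-rigid picture around the poor $1$-cluster $a_i$ — whose three distance-$2$ vertices in $D$ are $v$ together with the two vertices forced along the arm beyond $a_i$ — and pushing the identifying-code constraints (Propositions~\ref{force3cluster prop}, \ref{1clusters prop}, \ref{leaves prop}) one more shell outward, one finds there is no way to complete $D$ without creating a $4^+$-cluster or a closed $3$-cluster at distance exactly $3$ from $a_i$. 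By the symmetry observed above this holds for each of $a$, $b$, $c$, completing the proof.

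I expect the main obstacle to be this last step: everything through the identification of $C_1,C_2,C_3$ is a routine consequence of the earlier propositions once the symmetric orientation has made the local picture deterministic, whereas deducing the closed $3$-cluster or $4^+$-cluster near each $a_i$ from uncrowdedness of all three $C_i$ requires an exhaustive, if mechanical, propagation of the ``exactly one neighbour in $D$'' conditions of Corollary~\ref{poor1 cor} through several layers of vertices, alongside a careful reading of Figure~\ref{3-cluster} to pin down the head and shoulder positions.
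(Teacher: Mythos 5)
Your first half is essentially the paper's own argument and is sound: the very-poor hypothesis makes $a$, $b$, $c$ poor $1$-clusters, Corollary \ref{poor1 cor} rigidifies the pinwheel picture, each $q_i$ has two neighbours outside $D$ and hence (by your argument, or directly by Proposition \ref{force3cluster prop}) a neighbour lying in a $3^+$-cluster at distance $3$ from $v$, and the very-poor hypothesis forces that cluster to be an open $3$-cluster. One step you gloss over: to place $v$ in a head position and one of $a,b,c$ in a shoulder position you must determine all three vertices of each $C_i$, not just the one adjacent to $q_i$; the paper does this by first using Proposition \ref{1clusters prop} around the poor $1$-clusters $a,b,c$ to force three further vertices into $D$, which rules out all but one shape for each cluster (``Since $b,d\in D$, we must have $e,h,i\in C_h$'').

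The genuine gap is the ``Furthermore'' clause. Your argument there is that ``pushing the identifying-code constraints one more shell outward, one finds there is no way to complete $D$ without creating a $4^+$-cluster or a closed $3$-cluster at distance exactly $3$ from $a_i$'' --- but that is the conclusion restated, not a derivation, and you explicitly defer the propagation that would establish it. The mechanism is short and has to be exhibited. Each $C_i$ has both shoulder positions in $D$ (one is the relevant member of $\{a,b,c\}$, the other is one of the vertices forced into $D$ by Proposition \ref{1clusters prop} in the first half), and these are its only two distance-$2$ vertices in $D$; uncrowdedness therefore says the \emph{second} shoulder $d$ is a poor $1$-cluster. Proposition \ref{1clusters prop} applied to a neighbour of $d$ then forces one more vertex $e$ into $D$, Proposition \ref{force3cluster prop} puts a nearby vertex into a $3^+$-cluster, and a two-case check on a single remaining undetermined vertex shows that $e$ either belongs to that cluster (making it a \fourplus) or closes it (making it a closed $3$-cluster), in both cases at distance $3$ from the relevant one of $a,b,c$; this is exactly the computation in Figure \ref{verypoorsym1 part2}. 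Without identifying the second shoulder as the engine of the argument, your sketch gives no reason the outward propagation should terminate in a \emph{closed} $3$-cluster or \fourpluss rather than, say, yet another open $3$-cluster.
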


Corollaries \ref{vpsym cor} and \ref{vpsym cor2} follow directly from the proof of Lemma \ref{vpsym lem}.

\begin{corollary}
\label{vpsym cor}

None of the open $3$-clusters of which a very poor $1$-cluster in a symmetric orientation is in a head position is type-$1$ paired on top.

\end{corollary}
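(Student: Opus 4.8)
The plan is to read the conclusion off the local configuration already forced in the proof of Lemma \ref{vpsym lem} --- which is exactly why this corollary is stated as an immediate consequence of that proof rather than argued in isolation. So the first step is to reproduce the picture fixed there: a very poor $1$-cluster $v$ in a symmetric orientation, its three distance-$2$ vertices $a,b,c\in D$ pairwise at distance $4$, and the three open $3$-clusters $C_1,C_2,C_3$, each having $v$ in a head position and containing exactly one of $a,b,c$ in a shoulder position. The proof of that lemma pins down membership in $D$ of every vertex within distance $2$ of $v$, and of the relevant vertices at distance $3$; in particular it controls the neighbourhoods of both head positions of each $C_i$. I would redraw this picture with each $C_i$ labelled by the conventions of Definition \ref{3cluster def}, placing $v$ in the position of a head vertex of Figure \ref{3-cluster}.

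Next I would unpack the hypothesis ``type-$1$ paired on top''. By Definition \ref{paired def} and Figure \ref{type-1 paired}, if an open $3$-cluster $C$ is type-$1$ paired on top, then a second $3$-cluster $C'$ sits in a prescribed position relative to $C$, which forces a specific set of vertices near the head positions of $C$ --- namely the vertices of $C'$ together with the vertices witnessing that they form a cluster --- to lie in $D$. Hence it suffices to check, for each of $C_1,C_2,C_3$, that at least one of the vertices required to be in $D$ by ``type-$1$ paired on top'' has already been placed outside $D$ in the proof of Lemma \ref{vpsym lem}.

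The remaining step is that verification. Near the head position occupied by $v$, the vertices in question lie within distance $2$ of $v$ and are controlled directly: putting any of them in $D$ would either give $v$ a fourth distance-$2$ vertex in $D$ (contradicting that $v$ is a poor $1$-cluster) or bring two of $a,b,c$ within distance $3$ of one another (contradicting the symmetric orientation). Near the other head position of $C_i$, the relevant vertices lie within distance $3$ of $v$ on the side away from $C_i$, and the proof of Lemma \ref{vpsym lem} already excludes them, since otherwise $C_i$ would fail to be open or would be enlarged, or $v$ would end up distance-$2$ from a $3^+$-cluster other than $C_i$ or within distance-$3$ of a closed $3$-cluster or $4^+$-cluster, contradicting that $v$ is very poor. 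In every case some required vertex is forced out of $D$, so none of $C_1,C_2,C_3$ can be type-$1$ paired on top.

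I expect the only genuine work to be the bookkeeping in this last step: overlaying the vertex labels of Figure \ref{3-cluster}, Figure \ref{type-1 paired}, and the configuration drawn in the proof of Lemma \ref{vpsym lem}, and then dispatching the handful of placements available for the partner cluster $C'$. This is routine once the figures are aligned, and it is precisely the information the proof of Lemma \ref{vpsym lem} produces along the way.
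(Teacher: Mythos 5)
The paper offers no written argument here beyond the remark that the corollary ``follows directly from the proof of Lemma \ref{vpsym lem},'' but the mechanism it intends is the one it uses everywhere else to rule out type-$1$ pairing on top (end of the proofs of Lemmas \ref{type1paired lem} and \ref{type2paired lem}, and the dismissal of Rule 5 in Claim \ref{open3 claim}): an open $3$-cluster with \emph{both} shoulder positions in $D$ cannot be type-$1$ paired on top, because in the configuration of Figure \ref{type-1 paired} at least one shoulder position of the top cluster is not in $D$. The proof of Lemma \ref{vpsym lem} establishes precisely the negation of that for each $C_i$: in the notation of Figure \ref{verypoorsym1 part1}, the shoulder positions of $C_h=\{e,h,i\}$ are occupied by $b$ and $d$, both of which that proof places in $D$ (equivalently, one may invoke Lemma \ref{vpshoulders lem}, since $v$ is a very poor $1$-cluster in a head position of each $C_i$). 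That single observation is the entire proof.

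Your proposal has the right outer shape --- superimpose the configuration forced by Lemma \ref{vpsym lem} on the configuration forced by Definition \ref{paired def} and exhibit a vertex whose status conflicts --- but it never names that vertex, and the mechanism you describe points the wrong way. You assert that the pairing forces vertices ``near the head positions'' \emph{into} $D$ and that the lemma has placed them \emph{outside} $D$; the decisive conflict is the reverse: the pairing forces a shoulder position of $C_i$ \emph{out of} $D$, while the lemma forces both shoulders \emph{into} $D$. Your fallback --- that the partner cluster's own vertices are excluded by the lemma --- is also not available, because the partner cluster sits alongside the arm/foot region of $C_i$ (in the proof of Lemma \ref{type1paired lem}, a shoulder or arm position of the top cluster is simultaneously a foot position of the bottom one), hence largely outside the distance-$3$ neighborhood of $v$ that the proof of Lemma \ref{vpsym lem} actually controls. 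Finally, the one concrete sub-argument you offer --- that putting such a vertex into $D$ ``would bring two of $a,b,c$ within distance $3$ of one another'' --- cannot be right as stated, since membership in $D$ does not affect graph distances. As written, the proposal defers to ``bookkeeping'' exactly the step that constitutes the corollary.
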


\begin{corollary}
\label{vpsym cor2}

Each of the vertices in $D$ at distance-$2$ from a very poor $1$-cluster in a symmetric orientation is distance-$2$ from no other very poor $1$-cluster.

\end{corollary}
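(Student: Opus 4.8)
The plan is to reuse the explicit configuration produced in the proof of Lemma \ref{vpsym lem} rather than to argue from scratch, which is why the statement can be advertised as a direct consequence of that proof. Fix a very poor $1$-cluster $v$ in a symmetric orientation, let $a$, $b$, $c$ be its three distance-$2$ vertices in $D$, and recall from Lemma \ref{vpsym lem} that $v$ occupies a head position of open $3$-clusters $C_1$, $C_2$, $C_3$ with $a$, $b$, $c$ in a shoulder position of $C_1$, $C_2$, $C_3$ respectively. By the symmetry of the orientation it suffices to handle $a$. The first step is to record, from the proof of Lemma \ref{vpsym lem}, the local picture forced around $a$: since $a$ sits in the shoulder position of the open $3$-cluster $C_1$ while $v$ sits in a head position of $C_1$, the status (in $D$ or not) of every vertex within distance two of $a$ is pinned down up to the single reflection fixing the symmetric orientation of $v$; in particular $a$ is not a vertex of $C_1$ (it is at distance exactly $2$ from $C_1$, since $v$ is not distance-$2$ from any \threeplus by Definition \ref{vp def} and $d(v,a)=2$).

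Next I would assume for contradiction that some very poor $1$-cluster $v' \neq v$ is also at distance $2$ from $a$. Then $v'$ is a poor $1$-cluster, so by Corollary \ref{poor1 cor} each of its neighbors has exactly one neighbor in $D\setminus\{v'\}$ and $v'$ has exactly three distance-$2$ vertices in $D$; and, being very poor, $v'$ is not distance-$2$ from any \threeplus\ and not distance-$3$ from a closed $3$-cluster or \fourpluss\ (Definition \ref{vp def}). Applying Lemma \ref{vpsym lem} to $v'$, the vertex $a$ must then occupy a shoulder position of an open $3$-cluster $C_1'$ having $v'$ in a head position. Running through the ways a vertex can be a shoulder of an open $3$-cluster in the local picture already forced around $a$, one finds that either $C_1' = C_1$, which together with $d(v,a)=d(v',a)=2$ and both $v,v'$ in head positions of $C_1$ forces $v'=v$, contrary to hypothesis; or $C_1'\neq C_1$, in which case the forced neighborhood of $a$ leaves $v'$ at distance $2$ from $C_1$ (or from one of the closed $3$-clusters / \fourpluss s that the proof of Lemma \ref{vpsym lem} produces when the $C_i$ are crowded), contradicting the fact that a very poor $1$-cluster is not distance-$2$ from a \threeplus\ nor distance-$3$ from a closed $3$-cluster or \fourpluss. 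Carrying out the identical inspection for $b$ and $c$ then finishes the proof.

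The only genuine work is the case analysis in the second step: one must enumerate the ways a poor $1$-cluster can be placed at distance $2$ from the shoulder vertex $a$ inside the neighborhood that the proof of Lemma \ref{vpsym lem} has already fixed, and in each case exhibit the nearby \threeplus\ (or closed $3$-cluster / \fourpluss) that $v'$ would illegally be close to. I expect this to be short and mechanical precisely because the proof of Lemma \ref{vpsym lem} has already determined the relevant vertices around $a$; no new structural input is needed, and the same bookkeeping simultaneously yields Corollary \ref{vpsym cor}.
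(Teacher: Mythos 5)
You have the right high-level plan---the paper offers no separate argument and expects the reader to read the corollary off the configuration forced in the proof of Lemma \ref{vpsym lem}---but the decisive step of your case analysis is not valid. Having supposed that a very poor $1$-cluster $v'\neq v$ lies at distance $2$ from $a$, you ``apply Lemma \ref{vpsym lem} to $v'$'' to conclude that $a$ occupies a shoulder position of an open $3$-cluster having $v'$ in a head position. Lemma \ref{vpsym lem} applies only when $v'$ is in a \emph{symmetric} orientation, and nothing you have said rules out $v'$ being asymmetric. In that case the relevant statement is Lemma \ref{vp lem}, under which $a$ could occupy the $u$-, $w$- or $x$-position of $v'$: only the $w$-position is guaranteed to be a shoulder, the $x$-position would make $a$ a \emph{head} of the open $3$-cluster, and the $u$-position yields only the seven-way disjunction of Lemma \ref{vp lem}. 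So your enumeration of ``the ways a vertex can be a shoulder'' does not cover all cases, and the contradiction is not established as written.

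The gap is easy to close, and the repair shows that no property of $v'$ beyond its membership in $D$ is needed. The first paragraph of the proof of Lemma \ref{vpsym lem} already determines all three distance-$2$ vertices of $a$ that lie in $D$: they are $v$ itself; a leaf of the open $3$-cluster of which $a$ is a shoulder, which lies in a $3^+$-cluster and hence is not a $1$-cluster at all; and one of the forced vertices $d$, $m$, $q$, which is the second shoulder of one of $C_1$, $C_2$, $C_3$ (the uncrowded case in the second half of that proof works with exactly this vertex) and is therefore distance-$2$ from a $3^+$-cluster, hence not very poor by Definition \ref{vp def}. Since any very poor $1$-cluster at distance $2$ from $a$ must be one of these three vertices, it must be $v$; the same inspection for $b$ and $c$ finishes the proof, and, as you anticipate, it also yields Corollary \ref{vpsym cor}.
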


\begin{lemma}
\label{vp lem}

Let $v$ be a very poor $1$-cluster, and let $u$, $w$ and $x$ be in the $u$-position, $w$-position and $x$-position, respectively, of $v$.  There exists an open $3$-cluster, $C_0$, such that $v$ and $x$ are in the head positions and $w$ is in a shoulder position of $C_0$; and one of the following holds:

\begin{itemize}

\item[(i)] $C_0$ is crowded.

\item[(ii)] There exists a closed $3$-cluster or \fourpluss at distance-$3$ from $w$.

\item[(iii)] There exists an open $3$-cluster, $C$, such that the tail position of $C$ is in $D$ and $u$, $v$ or $w$ is in the hand position on the finless side of $C$.

\item[(iv)] There exists a leaf, $\ell$, of a \fourplus, $C$, at distance-$2$ from $u$ such that $u$ is the only vertex in $D \setminus C$ at distance-$2$ from $\ell$.  If $C$ is a linear $4$-cluster, then $C$ has at most one one-turn position.  If $C$ is a curved $4$-cluster, then $C$ has at most one backwards position in $D$.  

\item[(v)] There exists a leaf, $\ell$, of a closed $3$-cluster, $C$, at distance-$2$ from $u$ such that $u$ is the only vertex in $D \setminus C$ at distance-$2$ from $\ell$ and $u$ is in a foot or arm position.  Either $C$ is type-$2$ paired and $u$ is in the arm position on the closed side of $C$, or $C$ has at most $6$ nearby poor $1$-clusters.

\item[(vi)] There exists an open $3$-cluster, $C$, such that $v$ or $w$ is in a hand position of $C$ and the hand and arm positions on the opposite side of $C$ are both in $D$.

\item[(vii)] There exists an open $3$-cluster, $C$, such that $u$ is in a foot position and $C$ is type-$1$ paired on top.

\end{itemize}

\end{lemma}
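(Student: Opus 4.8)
The plan is to run a forcing argument from a single hypothesis — that $v$ is a very poor $1$-cluster in an asymmetric orientation — repeatedly invoking the two clauses of Definition~\ref{VIC def} and their distilled consequences in Propositions~\ref{force3cluster prop} and~\ref{1clusters prop}, Corollary~\ref{poor1 cor}, and Definition~\ref{vp def}, until enough of $D$ near $v$ is pinned down that one of the seven configurations is unavoidable.

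\textbf{Producing $C_0$.} Since $v$ is poor, its three neighbours lie outside $D$ and, by Corollary~\ref{poor1 cor}, each has exactly one further neighbour in $D$; these three vertices are $u$, $w$, $x$, and each is a poor $1$-cluster because $v$, being very poor, has no $3^+$-cluster and no non-poor $1$-cluster within distance~$2$ (Definition~\ref{vp def}, Corollary~\ref{not in 3+}). Asymmetry means two of $u,w,x$ are not at distance~$4$ from each other, and since the grid is bipartite the only remaining possibility is that they are at distance~$2$, forced to lie with their in-between vertex on one of the hexagons through $v$. Applying Proposition~\ref{force3cluster prop} to the out-of-$D$ vertices on that hexagon and on the adjacent hexagon on the ``$x$-side'' of $v$ forces more $D$-vertices there; iterating, and using Proposition~\ref{1clusters prop} each time a $1$-cluster appears, shows these forced vertices can be neither \emph{all} $1$-clusters nor a closed $3$-cluster nor a \fourpluss within distance~$2$ of $v$ without contradicting Definition~\ref{vp def}. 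Hence they make up an \emph{open} $3$-cluster; chasing the incidences places $v$ and $x$ in its head positions and $w$ in a shoulder position, and this is $C_0$.

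\textbf{Reaching one of (i)--(vii).} If $C_0$ is crowded we are in case~(i); if a closed $3$-cluster or \fourpluss lies at distance~$3$ from $w$ we are in case~(ii). Assume neither: $C_0$ is uncrowded (Definition~\ref{crowded}), so it has exactly two distance-$2$ $D$-vertices, both poor $1$-clusters, and $w$ has no large cluster nearby on the side away from $v$. I would then treat, as three separate finite forcing computations, the remaining ``loose ends'' of the configuration — the tail position of $C_0$, the vertex $u$, and $w$ on the side away from $v$ — pushing the forcing one or two hexagons further in each direction. The tail computation produces either a hand position of an open $3$-cluster occupied by one of $u,v,w$ on its finless side (case~(iii)) or an open $3$-cluster carrying a hand/arm pair on one side with $v$ or $w$ in a hand position on the other (case~(vi)). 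The $u$ computation produces either a leaf of a \fourpluss whose only private distance-$2$ $D$-vertex is $u$ (case~(iv)) or the same for a closed $3$-cluster with $u$ in an arm or foot position (case~(v)), where the side conditions on one-turn and backwards positions, and the bounds on the number of nearby poor $1$-clusters, drop out of the computation together with Lemmas~\ref{linearopen4 lem}--\ref{curved4 lem2} and~\ref{closed3 lem2}. If the structure around $u$ closes in none of these ways, the only remaining completion consistent with Definition~\ref{VIC def} is an open $3$-cluster type-$1$ paired on top with $u$ in a foot position — case~(vii).

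\textbf{The main obstacle} is the exhaustiveness of (i)--(vii): I expect most of the work to lie in showing that, under the standing assumptions, any configuration avoiding every hand, large-cluster-leaf, and type-$1$-paired-on-top pattern around $u$ and along the tail of $C_0$ cannot be completed to a vertex identifying code at all. This requires carrying the forcing two or three hexagons out from $v$ in several directions simultaneously and checking that at each stage either two vertices acquire equal non-empty traces (contradicting Definition~\ref{VIC def}(ii)) or one of (iii)--(vii) has already appeared; the asymmetry of $v$ is precisely what removes the ``triangular'' symmetric escape available in Lemma~\ref{vpsym lem}, and keeping track of which vertices remain unforced as the argument propagates is the delicate bookkeeping the proof must manage.
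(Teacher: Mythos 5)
Your overall strategy --- derive $C_0$ by forcing with Propositions \ref{force3cluster prop} and \ref{1clusters prop}, then assume each of (i)--(vii) fails in turn and keep forcing until Definition \ref{vp def} or \ref{VIC def} is violated --- is exactly the paper's strategy, and your derivation of $C_0$ is essentially the paper's (though the reason two of $u,w,x$ are at distance $2$ is not bipartiteness so much as that $u$, $w$, $x$ are distance-$2$ vertices of $v$ seen through distinct neighbors of $v$, so each pair is at distance $2$ or $4$). The difficulty is that everything after that is a promissory note: the entire content of the lemma is the exhaustiveness of (i)--(vii), and you explicitly defer this to three unexecuted ``forcing computations.'' What the paper actually does after ruling out (i) and (ii) is first pin down three further specific vertices of $D$ (its $i$, $r$, $c$ in Figure \ref{verypoor2}), then split on the location of the unique second $D$-vertex adjacent to one of $u$'s neighbors (exactly one of two candidates, by Corollary \ref{poor1 cor}), and in each branch run a single linear chain in which the failure of (iii) forces the structure examined for (iv), whose failure forces (v), and so on through (vii). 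Your decomposition into independent ``tail,'' ``$u$,'' and ``$w$'' computations does not match how the cases interlock: in the paper, case (iii) is triggered both from $w$'s side and from $u$'s side (the hand position on the finless side may be occupied by $u$), and case (vi) emerges only at the very end of the $u$-side chain after (iii)--(v) are excluded. Without carrying out at least one of these chains you have not shown that a configuration avoiding all seven patterns is impossible, and that impossibility is the theorem.

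A secondary issue: you propose to obtain the side conditions in (iv) and (v) from Lemmas \ref{linearopen4 lem}--\ref{curved4 lem2} and \ref{closed3 lem2}, but those lemmas carry hypotheses ($P(C)=2$ or $P(C)=3$, respectively $P(C)=4$ together with adjacency to a one-third vertex and poor $1$-clusters in arm or foot positions) that are not automatically available at that point of the argument. The paper instead verifies the one-turn/backwards conditions and the ``at most $6$ nearby poor $1$-clusters'' bound by direct enumeration of candidates in the configuration it has just forced; if you want to route through the structural lemmas you must first check their hypotheses hold in your forced configuration.
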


Corollary \ref{vpasym cor1} follows directly from the proof of Lemma \ref{vp lem}.

\begin{corollary}
\label{vpasym cor1}

Let $v$ be a very poor $1$-cluster in an asymmetric orientation, and let $w$ be in the $w$-position of $v$.  The open $3$-cluster of which $v$ is in a head position and $w$ is in a shoulder position is not type-$1$ paired on top.

\end{corollary}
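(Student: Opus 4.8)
This statement is exactly the content of one branch of the case analysis used to establish Lemma~\ref{vp lem}, so the plan is to isolate and name that branch. Let $v$ be a very poor $1$-cluster in an asymmetric orientation, with $u$, $w$, $x$ in the $u$-, $w$- and $x$-positions of $v$ (Definition~\ref{orientations}), and let $C_0$ be the open $3$-cluster supplied by Lemma~\ref{vp lem}, so that $v$ and $x$ occupy the head positions and $w$ a shoulder position of $C_0$. I would argue by contradiction: suppose $C_0$ is type-$1$ paired on top.

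First I would pin down coordinates. Knowing that $v$ lies in a head position and $w$ in a shoulder position of $C_0$ determines the embedding of $C_0$ in $G_H$ up to the reflection swapping the two head positions; in either case one matches the three vertices of $C_0$ with the vertices $b$, $d$, $e$ of the ``top'' $3$-cluster $C_3$ of Figure~\ref{type-1 paired}, with $v,x$ playing the head roles and $w$ the shoulder role. By Definition~\ref{paired def}, being type-$1$ paired on top then forces a second $3$-cluster $C_4$ (the vertices $f$, $g$, $h$ of Figure~\ref{type-1 paired}) to sit in a position rigidly determined by $C_0$, and in particular forces several further vertices near $C_0$ into $D$.

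Next I would read off the consequences for $v$. Tracing the grid from $v$ through the head/shoulder block of $C_0$ into the block occupied by $C_4$, one checks in each of the two reflection cases that either a vertex of $C_4$ lands at distance $2$ from $v$ --- contradicting the first clause of Definition~\ref{vp def}, since $C_4$ is a \threeplus{} --- or else $C_4$, now at distance $3$ from $v$, is a closed $3$-cluster or is joined to a fourth vertex of $D$, so that a closed $3$-cluster or \fourplus{} sits at distance $3$ from $v$, contradicting the second clause of Definition~\ref{vp def}. (One may also reach the contradiction via Corollary~\ref{poor1 cor}: the extra vertices of $D$ forced by the pairing would give some neighbor of $v$ two neighbors in $D\setminus\{v\}$.) Either way we are done, so $C_0$ cannot be type-$1$ paired on top.

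The main obstacle is the bookkeeping in the last two steps --- lining up the rigid local picture of type-$1$ paired $3$-clusters in Figure~\ref{type-1 paired} with the local picture forced around a very poor asymmetric $1$-cluster by poorness and by the asymmetric orientation, and then checking the distance claim separately for each admissible reflection. This is routine but delicate; it is in fact the very verification carried out inside the proof of Lemma~\ref{vp lem} at the moment that proof reaches the configuration with $w$ in a shoulder position of $C_0$, which is why the corollary ``follows directly'' from that proof.
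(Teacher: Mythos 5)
Your overall strategy --- isolate the configuration built in the proof of Lemma~\ref{vp lem} and show it is incompatible with the rigid picture of Figure~\ref{type-1 paired} --- is the right one, and this is indeed what the paper means by ``follows directly from the proof of Lemma~\ref{vp lem}.'' But the specific incompatibility you propose does not hold, and that is a genuine gap. You argue that the partner $3$-cluster $C_4$ forced by the pairing must land within distance~$3$ of $v$, so that one of the clauses of Definition~\ref{vp def} is violated. In Figure~\ref{type-1 paired}, however, $C_4$ sits at the foot/tail end of the top cluster $C_3$, i.e.\ on the side of $C_3$ away from its head positions; this is precisely why Rule~7e and case~(vii) of Lemma~\ref{vp lem} single out the \emph{foot} position of a cluster that is type-$1$ paired on top. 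Since a head position is already at distance~$3$ from $C_3$ on the opposite end, the vertices of $C_4$ lie at distance greater than~$3$ from $v$, and neither clause of Definition~\ref{vp def} is triggered. Your parenthetical fallback via Corollary~\ref{poor1 cor} is not worked out and does not obviously repair this.

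The incompatibility the paper actually exploits is at the shoulder positions. The proof of Lemma~\ref{vp lem} shows that $w$ occupies one shoulder position of $C_0$ and, by Proposition~\ref{1clusters prop} applied to the poor $1$-cluster $x$, that the other shoulder position (the vertex $g$ of Figure~\ref{verypoor1}) is also in $D$; equivalently, Lemma~\ref{vpshoulders lem} places both shoulder positions of $C_0$ in $D$. On the other hand, a $3$-cluster that is type-$1$ paired on top has at least one shoulder position \emph{not} in $D$ --- this is read off Figure~\ref{type-1 paired} and is used explicitly in the proof of Claim~\ref{open3 claim} (``Since at least one of the shoulder positions of $C$ is not in $D$, Rule 5 does not apply''). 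These two facts are contradictory, so $C_0$ cannot be type-$1$ paired on top; this is also exactly how Corollary~\ref{vpsym cor} follows from the proof of Lemma~\ref{vpsym lem}. Replacing your distance argument with this shoulder argument makes the proof correct and genuinely ``direct.''
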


\begin{corollary}
\label{vpasym cor2}

Let $u$ and $w$ be vertices in the $u$-position and $w$-position, respectively, of a very poor $1$-cluster, $v$.  Neither $u$ nor $w$ is distance-$2$ from any very poor $1$-cluster other than $v$.

\end{corollary}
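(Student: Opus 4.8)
The plan is to argue by contradiction, using that the distance-$2$ vertices of a very poor $1$-cluster are themselves poor $1$-clusters lying in an extremely sparse neighborhood, so that forcing a \emph{second} very poor $1$-cluster into the same region overcrowds it.

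\emph{Step 1 (the local picture of $v$).} Since the $u$- and $w$-positions are only defined in the asymmetric orientation (Definition~\ref{orientations}), we may take $v$ to be an asymmetric very poor $1$-cluster. As $v$ is poor, Corollary~\ref{poor1 cor} shows each neighbor of $v$ has exactly one further neighbor in $D$, so $v$ has exactly three distance-$2$ vertices in $D$; these are the vertices $u$, $w$, $x$ of Definition~\ref{orientations}. Each of $u,w,x$ lies in $D$, hence by Proposition~\ref{2clusters prop} and Corollary~\ref{not in 3+} is a $1$-cluster or lies in a $3^+$-cluster; the latter is excluded because $v$ is very poor (Definition~\ref{vp def}), and for the same reason none of $u,w,x$ is non-poor. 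So $u$, $w$, $x$ are poor $1$-clusters, and each again has exactly three distance-$2$ vertices in $D$. Lemma~\ref{vp lem} then supplies the open $3$-cluster $C_0$ having $v$ and $x$ in head positions and $w$ in a shoulder position; this pins down which pairs among $u,w,x$ lie at distance $2$ and which at distance $4$, and which vertices near $v$ are thereby forced into or out of $D$.

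\emph{Step 2 (introducing a second very poor $1$-cluster).} Suppose toward a contradiction that $u$ is distance-$2$ from a very poor $1$-cluster $v'\neq v$ (the case of $w$ is handled the same way, using the shoulder-position structure of $C_0$ together with Corollary~\ref{vpasym cor1}). Then $v$ and $v'$ are two of the exactly three distance-$2$ vertices in $D$ of the poor $1$-cluster $u$, and they reach $u$ through two distinct neighbors of $u$: if both reached $u$ through a single neighbor $n$, that $n$ would have two neighbors in $D\setminus\{u\}$, contradicting Corollary~\ref{poor1 cor}. Hence $u$ has neighbors $n_1,n_2,n_3$ whose outer $D$-neighbors are $v$, $v'$ and a third vertex $u^{*}$, and $v'$ — being very poor — is, by Step~1 applied to $v'$, surrounded at distance $2$ only by poor $1$-clusters, with no $3^+$-cluster within distance $2$ of $v'$ and no closed $3$-cluster or $4^+$-cluster within distance $3$ of $v'$.

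\emph{Step 3 (overcrowding).} I would now superimpose the vertices forced in or out of $D$ around $v$ (by $C_0$ and by Corollary~\ref{poor1 cor} applied at $u$) with the analogous forced structure around $v'$. The asymmetric orientation leaves only finitely many admissible positions for $v'$ relative to $u$; in each, propagating the ``exactly one outer neighbor'' condition of Corollary~\ref{poor1 cor} through the shared vertices either forces some vertex to be simultaneously in and out of $D$, or makes Proposition~\ref{force3cluster prop} (or Proposition~\ref{leaves prop}) place a $3^+$-cluster or a closing vertex within distance $2$ of $v$ or of $v'$ — contradicting Definition~\ref{vp def} in every case.

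The main obstacle is Step~3: for each placement of $v'$ around $u$ (and likewise around $w$) the chain of forced memberships must be traced out to distance $3$ or $4$ from $v$ before a contradiction surfaces, and the asymmetric orientation produces several genuinely different sub-cases. The real work is organizing these cases so that each collapses to a violation of one of Propositions~\ref{2clusters prop}, \ref{force3cluster prop}, \ref{1clusters prop}, \ref{leaves prop} or of Corollary~\ref{poor1 cor}, rather than checking each one by hand on a separate picture.
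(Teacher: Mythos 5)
Your Steps 1 and 2 are sound and match the paper's setup (Lemma \ref{vp lem} gives the open $3$-cluster $C_0$ with $v,x$ in head positions and $w$ in a shoulder position, and all of $u,w,x$ are poor $1$-clusters). But Step 3 \emph{is} the proof, and you have only announced it: ``I would now superimpose\dots'' and ``the real work is organizing these cases'' is a plan, not an argument. As written there is no verification that any particular placement of $v'$ actually fails, no enumeration of the admissible placements, and no identification of which contradiction arises in which case. That is a genuine gap, not a presentational one.

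For comparison, here is how the paper closes it, and it is shorter than you anticipate. Because $C_0$ and the positions of $v,w,x$ already force most of the vertices around $u$ in or out of $D$, there are exactly \emph{two} candidate positions (call them $c$ and $h$, as in Figure \ref{verypoor1u}) for a very poor $1$-cluster at distance $2$ from $u$ other than $v$. For each candidate the contradiction is not that some vertex is ``simultaneously in and out of $D$'': it is that the candidate is forced to lie within distance $3$ of a closed $3$-cluster or $4^+$-cluster, violating Definition \ref{vp def}. Concretely, if $c\in D_1^{vp}$ then Proposition \ref{1clusters prop} forces a neighbor-of-neighbor $a$ into $D$, Proposition \ref{force3cluster prop} then forces a vertex $e$ into $D_{3^+}$, and $a$ either belongs to that cluster (making it a $4^+$-cluster) or closes it --- either way $c$ is not very poor. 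The same two-step chain handles $h$. Your treatment of $w$ is also not ``the same way'': the only distance-$2$ candidate for $w$ other than $v$ is $u$ itself, and the paper rules it out by observing that a very poor $u$ would need $c$ or $h$ to be a poor $1$-cluster, which by the chain above puts $e$ or $m$ in $D_3^c\cup D_{4^+}$ at distance $3$ from $u$. So the case analysis you defer collapses to two short deductions plus a reuse of them; you need to actually carry those out (and pin down that the contradiction is with the distance-$3$ closed/$4^+$ clause of Definition \ref{vp def}, not merely ``a $3^+$-cluster nearby,'' since an open $3$-cluster at distance $3$ is permitted for a very poor $1$-cluster).
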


\begin{corollary}
\label{vpasym cor3}

Consider a very poor $1$-cluster, $v$, in an asymmetric orientation, and let $x$ be in the $x$-position of $v$.  If $x$ is a very poor $1$-cluster, then $x$ is in an asymmetric orientation and $v$ is in the $x$-position of $x$.

\end{corollary}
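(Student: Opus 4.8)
The plan is to prove Corollary~\ref{vpasym cor3} by a careful local analysis of the neighborhood of $v$, exploiting the structural constraints imposed on a very poor $1$-cluster in an asymmetric orientation. First I would fix $v$, a very poor $1$-cluster in an asymmetric orientation, with $u$, $w$, $x$ in the $u$-position, $w$-position, $x$-position, respectively, as in Figure~\ref{vpasym}, so that $x$ is one of the three distance-$2$ vertices of $v$ in $D$. Suppose $x$ is itself a very poor $1$-cluster. The key observation is that, since $x \in D_1^{vp}$, Definition~\ref{vp def} forces $x$ to be a $1$-cluster with exactly $3$ distance-$2$ vertices in $D$ (it is poor), none of which lies in a $3^+$-cluster or is a non-poor $1$-cluster, and with no closed $3$-cluster or $4^+$-cluster within distance-$3$. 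In particular the neighbor of $v$ that lies between $v$ and $x$ (the common neighbor realizing the distance-$2$ relation) plays the role of one of the ``arms'' of $x$; I would name the relevant vertices explicitly via the coordinates of the $3$-cluster/terminology framework of Section~\ref{terminology}.

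The main steps, in order, are: (1) use Definition~\ref{orientations} to pin down the relative positions of $u,w,x$ around $v$ in the asymmetric case, and symmetrically identify the three distance-$2$ vertices $u',w',x'$ of $x$; (2) show $v$ must occupy one of these three positions for $x$ — this uses that the edge/path connecting $v$ and $x$ through their common neighbor is forced, so $v \in \{u', w', x'\}$; (3) rule out $v$ being in the $u'$-position or $w'$-position of $x$ by showing that each of those would contradict either the very-poorness of $v$ or of $x$ (e.g.\ being in the $u$- or $w$-position typically creates a nearby $3^+$-cluster or forces an additional distance-$2$ vertex in $D$, violating the ``exactly $3$'' condition of Definition~\ref{poor1 def}, or creating a closed $3$-cluster nearby, contradicting Definition~\ref{vp def}); (4) conclude $v$ is in the $x$-position of $x$; and (5) since $x$ has a very poor $1$-cluster (namely $v$) in its $x$-position, $x$ cannot be in a symmetric orientation — by Lemma~\ref{vpsym lem}, in a symmetric orientation all three distance-$2$ vertices of $x$ would sit in shoulder positions of open $3$-clusters of which $x$ is in a head position, and one checks that $v$ in a head position of such a $3$-cluster contradicts $v \in D_1^{vp}$ (the $3$-cluster is a nearby $3^+$-cluster, or closes, contradicting Definition~\ref{vp def}). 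Hence $x$ is in an asymmetric orientation, completing the proof.

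The main obstacle I expect is step (3): carefully enumerating the two ``wrong'' positions for $v$ relative to $x$ and deriving a contradiction in each. This is essentially a finite case check on the local picture, but it is delicate because one must track which vertices are forced into $D$ and which are forced out of $D$ (to avoid creating a closed $3$-cluster, a $4^+$-cluster, or a non-poor $1$-cluster within the forbidden radius), and the asymmetric orientation has less symmetry to exploit than the symmetric one. I anticipate leaning on Propositions~\ref{force3cluster prop} and~\ref{leaves prop} and Corollary~\ref{poor1 cor} to propagate the in/out status of vertices around $v$ and $x$, and on Lemma~\ref{vpsym lem} together with Corollary~\ref{vpsym cor2} to handle step (5). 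Since Corollary~\ref{vpasym cor3} is asserted in the excerpt to follow directly from structural facts established alongside Lemma~\ref{vp lem}, the expectation is that the bulk of the casework has already been carried out there, and the proof here can be presented as a short deduction citing the configurations exhibited in that proof; accordingly I would keep the write-up tight, referencing the figures and the established positions rather than re-deriving the forced local structure from scratch.
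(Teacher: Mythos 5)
Your plan matches the paper's proof: the paper works directly in the configuration established for Lemma \ref{vp lem} (Figure \ref{verypoor1u}), observes that the third distance-$2$ neighbor of $x$ in $D$ is one of two candidate vertices, and eliminates the wrong one by propagating membership with Propositions \ref{1clusters prop} and \ref{force3cluster prop} until a closed $3$-cluster or $4^+$-cluster appears within distance $3$ of $x$, contradicting very-poorness --- exactly the mechanism you describe in step (3). The only quibble is in your step (5): by Lemma \ref{vpsym lem} the contradiction in the symmetric case would come from $v$ sitting in a \emph{shoulder} position (hence distance-$2$ from a $3^+$-cluster), not a head position, though the paper sidesteps this entirely because identifying the three distance-$2$ neighbors of $x$ already determines its orientation.
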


\begin{lemma}
\label{vpshoulders lem}

If a very poor $1$-cluster is in a head position of an open $3$-cluster, $C$, then both shoulder positions of $C$ are in $D$.

\end{lemma}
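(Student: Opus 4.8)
The plan is to work directly in the hexagonal grid, pin down the local picture forced by the hypothesis, and then extract each of the two conclusions from a single application of Corollary~\ref{poor1 cor} or of clause~(i) of Definition~\ref{VIC def}. First I would fix notation from Definition~\ref{3cluster def} and Figure~\ref{3-cluster}: write the open $3$-cluster $C$ as a path $p_1-p_2-p_3$ with center $p_2$; let $r$ be the third neighbour of $p_2$, which is not in $D$ because $C$ is a $3$-cluster. The two head positions $h_1,h_2$ of $C$ lie at distance $3$ from $C$ and have a common neighbour $m$, and the two shoulder positions $s_1,s_2$ lie at distance $2$ from $C$; say $v=h_1$. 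I would record three preliminary facts. (a) Since $v$ is very poor it is a poor $1$-cluster, so none of its three neighbours is in $D$. (b) Because $v$ is a $1$-cluster no neighbour of $v$ is in $D$, so $v$ lies at distance exactly $2$ from any cluster containing a vertex of $D$ at distance $2$ from $v$; since $v$ is \emph{very} poor it is distance-$2$ from no $3^+$-cluster and from no non-poor $1$-cluster, and there are no $2$-clusters (Proposition~\ref{2clusters prop}), so every vertex of $D$ at distance $2$ from $v$ is a poor $1$-cluster. (c) Because $C$ is \emph{open}, the non-leaf vertex $p_2$ has no distance-$2$ vertex in $D\setminus C$; the two neighbours of $r$ other than $p_2$ are such vertices, so they are not in $D$, and one of them, call it $t_1$, is adjacent to $v$ while the other, $t_2$, is adjacent to $h_2$.

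For the shoulder $s_1$ on the same side of $C$ as $v$: reading off Figure~\ref{3-cluster}, the neighbours of $t_1$ are $v$, $r$, and $s_1$. By (c) we have $t_1\notin D$, and by (a) the vertex $v$ is a poor $1$-cluster, so Corollary~\ref{poor1 cor} gives that $t_1$ has exactly one neighbour in $D\setminus\{v\}$. Since $r\notin D$, that neighbour is $s_1$, so $s_1\in D$.

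For the shoulder $s_2$ on the opposite side: the neighbours of $t_2$ are $h_2$, $r$, and $s_2$, and $t_2\notin D$ by (c). Suppose $s_2\notin D$. Then, since $r\notin D$ and $t_2\notin D$, clause~(i) of Definition~\ref{VIC def} applied to $t_2$ forces $h_2\in D$. But $h_2$ and $v=h_1$ share the neighbour $m$ and are not adjacent, so $h_2$ lies at distance $2$ from $v$; by (b), $h_2$ is then a poor $1$-cluster. Applying Corollary~\ref{poor1 cor} to $h_2$ and its neighbour $t_2$ shows that $t_2$ has exactly one neighbour in $D\setminus\{h_2\}$, which must be $s_2$ since $r\notin D$ — contradicting $s_2\notin D$. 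Hence $s_2\in D$, and both shoulder positions of $C$ are in $D$.

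The only real obstacle is the geometric bookkeeping in the first paragraph: one must verify carefully from Definition~\ref{3cluster def} and Figure~\ref{3-cluster} that the head positions of an open $3$-cluster lie at distance $3$ with a common neighbour, that each head $h_i$ has a neighbour $t_i$ that is also adjacent to $r$, and that the remaining neighbour of $t_i$ is exactly the corresponding shoulder. Everything after that is three short invocations of Corollary~\ref{poor1 cor} and clause~(i) of Definition~\ref{VIC def}. It is worth noting that the hypothesis ``\emph{very} poor,'' rather than merely ``poor,'' is used in exactly one place — fact (b) — which is what guarantees that the far head $h_2$, when it is in $D$, is a poor $1$-cluster so that Corollary~\ref{poor1 cor} applies to it; for a merely poor $1$-cluster, $h_2$ could instead lie in a $3^+$-cluster or be a non-poor $1$-cluster and the argument for $s_2$ would break down.
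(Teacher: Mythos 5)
Your proof is correct and follows essentially the same route as the paper's: openness of $C$ forces the two neighbours of $r$ out of $D$, Proposition~\ref{1clusters prop}/Corollary~\ref{poor1 cor} applied at $v$'s neighbour $t_1$ yields the near shoulder, and the far shoulder follows by contradiction from the domination condition at $t_2$ combined with the hypothesis that $v$ is \emph{very} poor. The only cosmetic difference is in the last step: the paper applies Proposition~\ref{force3cluster prop} to put the far head $h_2$ into a $3^+$-cluster at distance $2$ from $v$, contradicting very-poorness directly, whereas you first conclude $h_2\in D$, use very-poorness to make $h_2$ a poor $1$-cluster, and then contradict Corollary~\ref{poor1 cor} at $t_2$ --- the same configuration and the same underlying facts.
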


\begin{lemma}
\label{type1paired lem}

If a poor $1$-cluster, $v$, is in a shoulder or arm position of an open $3$-cluster that is type-$1$ paired on top, then $v$ is nearby another \threeplus, $C_2$, such that if $C_2$ is an open $3$-cluster then $v$ is distance-$2$ from $C_2$ but not in an arm position and $C_2$ is not type-$1$ paired on top.
\end{lemma}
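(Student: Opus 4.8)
The plan is to argue directly from the geometry of a type-1 paired configuration. By Definition \ref{paired def}, the open $3$-cluster $C_1$ that is type-1 paired on top sits in a very rigid local pattern (Figure \ref{type-1 paired}): it is paired with a second $3$-cluster $C_4$, and the positions of all vertices in $D$ and all vertices known \emph{not} to be in $D$ in the immediate neighborhood are forced. First I would set up coordinates on $C_1$ using the names from Definition \ref{3cluster def} (head positions $a,b$; shoulder positions $c,e$; arm positions $f,g$; etc.), identify precisely which of these positions the hypothesis allows $v$ to occupy (a shoulder position, or an arm position), and in each case read off from the type-1 paired picture which neighboring vertices are forced into $D$ and which are forced out. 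The key structural input is that in a type-1 pairing, the partner cluster $C_4$ lies on a specific side, so one shoulder/arm of $C_1$ already "sees" $C_4$, while the positions near $v$ on the other side are the ones that must be analyzed.

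The next step is to invoke the defining property of a poor $1$-cluster to pin down $v$'s own neighborhood. Since $v$ is a poor $1$-cluster, by Corollary \ref{poor1 cor} each of its three neighbors has exactly one neighbor in $D\setminus\{v\}$; combined with the fact that $v$ is distance-$2$ from $C_1$ (being in a shoulder or arm position), this determines where the other two distance-$2$ vertices in $D$ of $v$ must lie. I would then show that at least one of these forced distance-$2$ vertices is a vertex of a genuinely different cluster $C_2$ (not $C_1$), using Proposition \ref{leaves prop} or Proposition \ref{1clusters prop} as needed to guarantee that the relevant vertices cannot simply be isolated or absent. This gives the existence of a nearby $3^+$-cluster $C_2$; the remaining work is to verify the three extra conditions when $C_2$ happens to be an open $3$-cluster.

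For the open-$3$-cluster case I would case-split on which position of $C_2$ the vertex $v$ occupies. The claim "$v$ is distance-$2$ from $C_2$, not in an arm position" should follow because if $v$ were in an arm position of $C_2$ or only at distance $3$, the poor-$1$-cluster constraints on $v$ together with the already-forced vertices of the type-1 pattern around $C_1$ would create a conflict — most likely two vertices with the same (or empty) intersection with $D$, contradicting Definition \ref{VIC def} — and I would exhibit that conflict explicitly. Finally, "$C_2$ is not type-1 paired on top" I expect to follow from Corollary \ref{paired cor}-style reasoning plus a direct check: if $C_2$ were itself type-1 paired on top, its forced pattern would overlap incompatibly with $C_1$'s forced pattern in the shared region around $v$. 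The main obstacle is the bookkeeping: type-1 paired $3$-clusters force a large, asymmetric block of determined vertices, and one must carefully track, for every allowed position of $v$ relative to \emph{both} $C_1$ and $C_2$, that the two rigid patterns are mutually consistent only in the asserted configuration — this is a finite but sizable case analysis, and keeping the shoulder-side versus arm-side and finless-side versus closed-side distinctions straight (Definition \ref{3cluster def}) is where errors would creep in.
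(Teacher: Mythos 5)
Your overall shape is right---the proof is a finite case analysis on which position of the type-$1$-paired cluster the poor $1$-cluster $v$ occupies (the pairing rules out all but one shoulder and one arm position), followed by a local determination of a second cluster---but two of your key steps are not the ones that actually work. First, the engine that produces $C_2$ as a \emph{$3^+$-cluster}, rather than merely a vertex of $D$ at distance $2$ from $v$, is Proposition \ref{force3cluster prop}: one repeatedly finds a vertex not in $D$ with two neighbors not in $D$ (these absences being forced by Corollary \ref{poor1 cor} applied to $v$ and by the type-$1$-paired pattern) and concludes that its third neighbor lies in $D_{3^+}$. Corollary \ref{poor1 cor} together with Propositions \ref{1clusters prop} and \ref{leaves prop}, which is all you invoke, only places vertices in $D$; it cannot by itself rule out that the distance-$2$ vertex you find is a $1$-cluster, so your step ``at least one of these forced distance-$2$ vertices is a vertex of a genuinely different cluster $C_2$'' has no justification as written.

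Second, you misread the logical form of the conclusion. You propose to prove unconditionally, by contradiction, that $v$ is distance-$2$ from $C_2$; but the conditions on $C_2$ are only asserted \emph{when $C_2$ is an open $3$-cluster}. In the arm-position case the cluster one finds (the paper's $C_g$) may genuinely sit at distance $3$ from $v$---there is no contradiction to be had---and the argument instead shows that in that sub-case $C_g$ is a closed $3$-cluster or a \fourplus, so the implication is vacuously true. Your attempted contradiction would simply fail there. Relatedly, the reason $C_2$ is not type-$1$ paired on top is not a global ``incompatible overlap of two forced patterns'' (and Corollary \ref{paired cor} is irrelevant, since it only separates type-$1$ from type-$2$ pairing): it is a direct positional observation, namely that in the shoulder case $v$ lands in a \emph{foot} position of an unpaired cluster, and in the arm case both shoulder positions of $C_2$ are occupied (by $v$ and by the vertex the paper calls $c$), which is incompatible with the type-$1$-paired-on-top picture.
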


\begin{lemma}
\label{type2paired lem}

Let $C_1$ be a closed $3$-cluster that is type-$2$ paired with the open $3$-cluster, $C_2$.  If $C_1$ has $7$ nearby poor $1$-clusters, then the arm position, $n$, of $C_2$ is in $D$ and the hand position, $k$, on the same side is not in $D$.  Furthermore, if $n$ is a poor $1$-cluster, then $n$ is nearby a third \threeplus, $C_3$, such that if $C_3$ is an open $3$-cluster then $n$ is distance-$2$ from $C_3$ but not in an arm position and $C_3$ is not type-$1$ paired on top.

\end{lemma}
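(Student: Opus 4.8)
The plan is to fix the type-$2$ paired configuration precisely, run a localized case analysis on the vertices within distance $3$ of $C_1$ to force the local picture around the arm of $C_2$, and then settle the ``furthermore'' clause by applying the structural lemmas to the vertex $n$.

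First I would pin down the geometry. Using Definition \ref{paired def} and Figure \ref{type-2 paired}, write down coordinates for the paths of $C_1$ (the closed member) and of the open member $C_2$, together with every vertex within distance $3$ of $C_1$. Since $C_1$ is closed, by Definition \ref{open-closed} one of its non-leaf vertices has a distance-$2$ vertex in $D\setminus C_1$; being type-$2$ paired with $C_2$ already contributes part of this structure and fixes a lower bound on $P(C_1)$. I would then record which of the ``slots'' around $C_1$ that could host a nearby poor $1$-cluster are pre-empted by vertices of $C_2$, by the closing vertex, and by the fin constraints (Lemma \ref{finless lem}), so that a type-$2$ paired closed $3$-cluster has at most $7$ nearby poor $1$-clusters, in the spirit of Lemmas \ref{closed3 lem} and \ref{closed3 lem2}.

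The core step is to show that equality in that count forces $n\in D$ and $k\notin D$. I would enumerate the at most $7$ positions around $C_1$ that can host poor $1$-clusters, and for each such position use that being a poor $1$-cluster means (Corollary \ref{poor1 cor}) each of its neighbors has exactly one neighbor in $D$ other than itself and (Definition \ref{poor1 def}) it has exactly $3$ distance-$2$ vertices in $D$. Combining this with the identifying-code condition (Definition \ref{VIC def}) and Propositions \ref{force3cluster prop}, \ref{1clusters prop} and \ref{leaves prop} ties down the status of the vertices near the arm of $C_2$. The argument is then: if $n\notin D$, or if the hand position $k$ on the same side lies in $D$, then two of the $7$ candidate positions cannot simultaneously be poor $1$-clusters — one of them is forced out of $D$, or it gains a fourth distance-$2$ vertex in $D$, or two of them acquire equal closed neighborhoods, contradicting Definition \ref{VIC def} — dropping the count to at most $6$. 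Hence attaining $7$ requires $n\in D$ and $k\notin D$.

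For the ``furthermore'' clause, suppose $n$ is a poor $1$-cluster. Since $n\in D$, $k\notin D$, and $n$ is a $1$-cluster, its remaining cluster-neighbors are outside $D$, consistent with the previous step; and since $n$ is poor it has exactly $3$ distance-$2$ vertices in $D$, part of which the pairing already commits, so $n$ cannot be a very poor $1$-cluster (Definition \ref{vp def}). Therefore $n$ is distance-$2$ from a $3^+$-cluster or a non-poor $1$-cluster, or within distance-$3$ of a closed $3$-cluster or \fourplus; in each case (invoking Proposition \ref{nonpoor prop1} in the non-poor case) there is a \threeplus, $C_3$, nearby $n$. It remains to exclude the two bad sub-cases: $C_3$ an open $3$-cluster with $n$ in an arm position, and $C_3$ type-$1$ paired on top. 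For the first, $n$ in an arm position of an open $3$-cluster forces a rigid local picture that collides with the vertices of $C_1$ or $C_2$ already present (an identification conflict, or a forced \threeplus via Proposition \ref{force3cluster prop} that contradicts $C_1$ being a $3$-cluster). For the second, if $C_3$ were type-$1$ paired on top with $n$ in a shoulder or arm position, Lemma \ref{type1paired lem} would supply a \threeplus with exactly the required good properties, which we then take as $C_3$; and Corollary \ref{paired cor} prevents $C_3$ from being simultaneously type-$1$ and type-$2$ paired.

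\textbf{Main obstacle.} The bottleneck is the middle step: exhaustively verifying that the count of nearby poor $1$-clusters cannot reach $7$ unless $n\in D$ and $k\notin D$, which requires tracking many overlapping distance-$2$ neighborhoods and repeatedly invoking the identifying-code condition; the region around the arm of the open partner $C_2$ is the delicate one, since that is where the seventh poor $1$-cluster must live and where its distance-$2$ vertices interact with $C_2$. A secondary difficulty is ensuring in the last step that the excluded sub-cases are genuinely impossible or else replaced by an equally good $C_3$, since that is precisely what makes the lemma usable in the discharging argument of Section \ref{mainresult}.
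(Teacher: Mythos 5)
Your strategy for the first assertion coincides with the paper's: argue by contrapositive, list the (at most) seven candidate positions for nearby poor $1$-clusters around $C_1$, and show that if $n\notin D$ or $k\in D$ then at least one candidate is eliminated. But you stop at the plan. The paper's proof is exactly the chain of eliminations you defer to your ``main obstacle'': assume $k$ is a poor $1$-cluster, deduce via Corollary \ref{poor1 cor} that $j\notin D$, then successively force the status of $h$, $c$, $b$ and $d$, until $e$ is adjacent to a one-third vertex and hence not poor. Since that chain is the entire content of the first claim, leaving it unexecuted is a real omission rather than a routine detail, though the method itself is the right one.

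The more serious gap is in the ``furthermore'' clause, where your route genuinely diverges from the paper's and does not close. You argue that $n$ is not a very poor $1$-cluster and conclude that a $3^+$-cluster with the required properties must be nearby. Definition \ref{vp def} only guarantees that $n$ is distance-$2$ from a $3^+$-cluster or a non-poor $1$-cluster, or distance-$3$ from a closed $3$-cluster or $4^+$-cluster --- and the witness could be $C_2$ itself (with $n$ in an arm position, exactly the excluded configuration), or $C_1$, or a lone non-poor $1$-cluster, which is not a $3^+$-cluster at all and to which Proposition \ref{nonpoor prop1} need not apply (that proposition concerns groups of non-poor $1$-clusters). The lemma demands a \emph{third} cluster $C_3$, distinct from $C_1$ and $C_2$, and your argument never produces one. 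The paper constructs it explicitly: from $n\in D_1^p$, Corollary \ref{poor1 cor} forces $m,r\notin D$ (notation of Figure \ref{type2pairedexpanded}), Proposition \ref{force3cluster prop} then puts $q,t\in D_{3^+}$, and a short check shows the cluster at $q$ is either a closed $3$-cluster or $4^+$-cluster nearby $n$, or an open $3$-cluster with $n$ in a \emph{shoulder} position (hence not an arm position, and not type-$1$ paired on top since $n$ and $t$ occupy both shoulder positions). Your fallback via Lemma \ref{type1paired lem} only covers the case where $n$ sits in a shoulder or arm position of the offending type-$1$ paired cluster, so it cannot by itself repair the remaining cases.
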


\section{Proof of Theorem \ref{theorem}}
\label{mainresult}


We employ the discharging method.  Suppose each vertex in $D$ has 1 charge.  We redistribute this charge so that each vertex in $G_H$ has at least $\frac{5}{12}$ charge.  Below are the discharging rules:

\begin{enumerate}

\item \label{one} If a vertex, $v$, is not in $D$ and has $k$ neighbors in $D$, then $v$ receives $\frac{5}{12k}$ from each of these neighbors.

\item \label{two} Let $v_{\frac{1}{3}}$ be a one-third vertex, and let $a$, $b$ and $c$ be the vertices adjacent to $v_{\frac{1}{3}}$.

\begin{enumerate}

\item \label{twoa} If $a$ and $b$ are 1-clusters and $c$ is in a \threeplus, $C$, then each of $a$ and $b$ receives $\frac{1}{72}$ from $C$.

\item \label{twob} If $a$ is a 1-cluster and $b$ and $c$ are in \threeplus s, $C_1$ and $C_2$, then $a$ receives $\frac{1}{36}$ from each of $C_1$ and $C_2$.  If $C_1=C_2$, then $a$ receives $2 \cdot \frac{1}{36} = \frac{1}{18}$ from $C_1$.

\end{enumerate}

\item \label{three} Let $v$ be a poor 1-cluster.

\begin{enumerate}

\item \label{threea} If $v$ is distance-2 from a closed 3-cluster or \fourplus, $C$, then $v$ receives $\frac{1}{24}$ from $C$.  

\item \label{threeb} If $v$ is distance-2 from an open 3-cluster, $C$, and has not received charge by previous rules, then $v$ receives $\frac{1}{24}$ from $C$ unless (a) $C$ is type-1 paired on top and $v$ is in a shoulder or arm position, or (b) $C$ is type-2 paired and $v$ is in an arm position.

\item \label{threec} If $v$ is distance-3 from a closed 3-cluster or \fourplus, $C$, and has not received charge by previous rules, then $v$ receives $\frac{1}{24}$ from $C$ unless $C$ is a closed 3-cluster and $v$ is in the arm position of an open 3-cluster that is type-2 paired with $C$.

\item \label{threed} Let $h$ be a non-poor 1-cluster that is not in a group of non-poor 1-clusters.  If $v$ is distance-2 from $h$ and has not received charge by previous rules, then $v$ receives $\frac{1}{24}$ from $h$.

\item \label{threee} If $v$ is distance-2 from a group of non-poor 1-clusters, $H$, and has not received charge by previous rules, then $v$ receives $\frac{1}{24}$ from $H$.

\end{enumerate}

\item \label{four} If a closed 3-cluster, $C_1$, and an open 3-cluster, $C_2$, are type-2 paired and the arm position of $C_2$ is in $D$ and the hand position on the same side is not in $D$, then $C_1$ receives $\frac{1}{24}$ from $C_2$.

\item \label{five} If a very poor 1-cluster, $v$, is in a head position of a crowded open 3-cluster, $C_0$, then $v$ receives $\frac{1}{24}$ from $C_0$.

\item \label{six} Let $v$ be a very poor 1-cluster in a symmetric orientation, and let $a$ be a distance-2 poor 1-cluster.  If $a$ is in a shoulder position of an open 3-cluster and distance-3 from a closed 3-cluster or \fourplus, $C$, then $a$ receives $\frac{1}{24}$ from $C$ in addition to any charge received by previous rules and $v$ receives $\frac{1}{24}$ from $a$.

\item \label{seven} Let $v$ be a very poor 1-cluster in an asymmetric orientation, and let $u$, and $w$ be poor 1-clusters in the $u$-position and $w$-position, respectively, of $v$.  The following applies only if $v$ does not receive charge by Discharging Rule 5.

\begin{enumerate}

\item \label{sevena} If $w$ is distance-3 from a closed 3-cluster or \fourplus, $C$, then $w$ receives $\frac{1}{24}$ from $C$ in addition to any charge received by previous rules and $v$ receives $\frac{1}{24}$ from $w$.

\item \label{sevenb} Let $C$ be an open 3-cluster, and let the tail position of $C$ be in $D$.  If $u$, $v$ or $w$ is in the hand position on the finless side of $C$, then $u$, $v$ or $w$, respectively, receives $\frac{1}{24}$ from $C$ in addition to any charge received by previous rules.  If $u$ or $w$ receives this charge, then $v$ receives $\frac{1}{24}$ from $u$ or $w$, respectively.

\item \label{sevenc} If $u$ distance-2 from a leaf, $\ell$, of a type-2 paired closed 3-cluster, a closed 3-cluster with at most 6 nearby poor 1-clusters or a \fourplus, $C$, such that $u$ is not in a shoulder or tail position, a one-turn position or a backwards position of a closed 3-cluster, a linear 4-cluster or a curved 4-cluster, respectively, and $u$ is the only vertex in $D \setminus C$ at distance-2 from $\ell$, then $u$ receives $\frac{1}{24}$ from $C$ in addition to any charge received by previous rules and $v$ receives $\frac{1}{24}$ from $u$.

\item \label{sevend} Let $C$ be an open 3-cluster, and let the hand and arm positions on one side of $C$ be in $D$.  If $v$ or $w$ is in the hand position on the other side of $C$, then $v$ or $w$, respectively, receives $\frac{1}{24}$ from $C$ in addition to any charge received by previous rules.  If $w$ receives this charge, then $v$ receives $\frac{1}{24}$ from $w$.

\item \label{sevene} Let $C$ be an open 3-cluster that is type-1 paired on top.  If $u$ is in the foot position of $C$, then $u$ receives $\frac{1}{24}$ from $C$ in addition to any charge received by previous rules and $v$ receives $\frac{1}{24}$ from $u$.

\end{enumerate}

\end{enumerate}

Now we verify that the above discharging rules allow each vertex in $G_H$ to retain at least $\frac{5}{12}$ charge.  For a given vertex, $v$, let $f(v)$ be the final charge of $v$ and let $f_n(v)$ be the charge of $v$ after Discharging Rule $n$.  And for a given $k$-cluster, $C$, where $k \geq 3$, let $f(C)$ be the final charge of $C$ and let $f_n(C)$ be the charge of $C$ after Discharging Rule $n$; note that $f(C) \geq \frac{5k}{12}$ immediately implies that each vertex in $C$ can retain at least $\frac{5}{12}$ charge.

If $v \in V(G_H)$, then $v \not \in D$ or $v \in D_1$ or $v \in D_3$ or $v \in D_{4^+}$.  We consider vertices not in $D$ in Claim \ref{notinD claim}.  We partition $D_1$ such that $D_1 = (\poorone \setminus D_1^{vp}) \cup D_1^{np} \cup D_1^{vp}$, and we consider each case separately in Claims \ref{poor1 claim}, \ref{nonpoor1 claim} and \ref{vp claim}, respectively.  Rather than considering individual vertices in $D_3$, we consider $\cK_3$.  We partition $\cK_3$ such that $\cK_3 = \cK_3^o \cup \cK_3^c$, and we consider each case separately in Claims \ref{open3 claim} and \ref{closed3 claim}, respectively.  We defer our discussion of \fourplus s until after Claim \ref{closed3 claim}.

\begin{claim}
\label{notinD claim}

If a vertex, $v$, is not in $D$, then $f(v) = \frac{5}{12}$.

\end{claim}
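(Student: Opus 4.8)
The plan is to verify directly that the only discharging rule touching a vertex $v \notin D$ is Rule~\ref{one}, and that this rule is calibrated precisely so that $v$ keeps $\frac{5}{12}$. First I would observe that since $D$ is a vertex identifying code, condition~(i) of Definition~\ref{VIC def} guarantees $N[v] \cap D \neq \emptyset$; as $v \notin D$, this means $v$ has at least one neighbor in $D$, so the integer $k$ in Rule~\ref{one} satisfies $1 \le k \le 3$ (the hexagonal grid is $3$-regular). A vertex not in $D$ carries $0$ initial charge, and by Rule~\ref{one} it receives exactly $\frac{5}{12k}$ from each of its $k$ neighbors in $D$, for a total of $k \cdot \frac{5}{12k} = \frac{5}{12}$.

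The remaining work is to check that no other rule adds or removes charge from $v$. I would go through Rules \ref{two}--\ref{seven} in order and note the common feature: every transfer in those rules has as its source or sink either a one-third vertex acting only as a conduit label (Rule~\ref{two}, where charge moves from a \threeplus{} to the $1$-clusters $a$, $b$), a poor $1$-cluster, a non-poor $1$-cluster, a group of non-poor $1$-clusters, a very poor $1$-cluster, a $3$-cluster, or a \fourplus{}. Crucially, a poor, non-poor, or very poor $1$-cluster is by definition an element of $D$ (it is a $1$-cluster, hence a component of the subgraph induced by $D$), and the clusters are also subsets of $D$; meanwhile in Rule~\ref{two} the one-third vertex $v_{1/3} \notin D$ never gains or loses charge itself — the charge passes from the \threeplus{} directly to the $1$-clusters $a$ and $b$, which lie in $D$. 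So none of Rules~\ref{two}--\ref{seven} ever designates a vertex outside $D$ as a giver or receiver of charge. Hence after all rules are applied, $f(v) = f_{\ref{one}}(v) = \frac{5}{12}$.

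Honestly, this claim is essentially a bookkeeping check rather than a substantive argument, so there is no real obstacle; the only thing to be careful about is to confirm that the ``one-third vertex'' appearing as $v_{1/3}$ in Rule~\ref{two} and as an intermediary in the definition of a group of non-poor $1$-clusters is never itself charged — it merely names a location in the grid relative to which nearby $1$-clusters (all in $D$) exchange charge. Once that is noted, the conclusion $f(v) = \frac{5}{12}$ is immediate, and in fact this shows the stronger fact that a vertex not in $D$ retains \emph{exactly} $\frac{5}{12}$, never more, which is why (as remarked in the sketch) all subsequent analysis can focus entirely on vertices in $D$.
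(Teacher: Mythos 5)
Your proposal is correct and follows essentially the same (one-line) argument as the paper: Rule 1 gives $v$ exactly $k \cdot \frac{5}{12k} = \frac{5}{12}$, and no other rule touches a vertex outside $D$. Your added checks -- that $k \ge 1$ by Definition \ref{VIC def}(i) and that Rules \ref{two}--\ref{seven} only move charge among vertices of $D$ -- are sensible bookkeeping that the paper leaves implicit.
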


\begin{proof}

Let $v \not \in D$, and suppose $v$ has $k$ neighbors in $D$.  Then, by Discharging Rule 1, $v$ receives $\frac{5}{12k}$ from each of these neighbors.  That is, $f(v)=f_1(v)=k \cdot \frac{5}{12k} = \frac{5}{12}$.
\end{proof}


\begin{proposition}
\label{poor1discharging prop}

Any poor $1$-cluster at distance-$2$ from an open $3$-cluster or within distance-$3$ of a closed $3$-cluster or \fourpluss receives charge by Discharging Rules $3\emph{a}$-$3\emph{c}$.

\end{proposition}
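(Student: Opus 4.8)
The plan is to run a case analysis on how the poor $1$-cluster $v$ meets the hypothesis, dispatching each ``generic'' case by the matching rule among $3a$, $3b$, $3c$ and each exceptional case by appealing to a structural lemma that produces an alternative cluster. If $v$ is distance-$2$ from a closed $3$-cluster or a \fourplus, then Rule $3a$ sends $\frac{1}{24}$ to $v$ with no exception, so henceforth we may assume $v$ is not distance-$2$ from any closed $3$-cluster or \fourplus. Likewise, if $v$ is distance-$2$ from an open $3$-cluster $C$ and neither exception of Rule $3b$ applies to the pair $(v,C)$, then Rule $3b$ fires; and if $v$ is distance-$3$ from a closed $3$-cluster or \fourpluss $C$ and the exception of Rule $3c$ does not apply, then Rule $3c$ fires. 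Thus only the two exceptional configurations remain.

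In the first, $v$ lies in a shoulder or arm position of an open $3$-cluster $C$ that is type-$1$ paired on top. Here I would invoke Lemma~\ref{type1paired lem}: $v$ is nearby a further \threepluss $C_2$ such that, if $C_2$ is an open $3$-cluster, then $v$ is distance-$2$ from $C_2$, is not in an arm position of $C_2$, and $C_2$ is not type-$1$ paired on top---precisely the conditions under which neither exception of Rule $3b$ applies to $(v,C_2)$, so Rule $3b$ fires via $C_2$. If instead $C_2$ is a closed $3$-cluster or a \fourplus, then $v$ is distance-$2$ or distance-$3$ from $C_2$; the distance-$2$ subcase is dispatched by Rule $3a$, and in the distance-$3$ subcase one checks, using Corollary~\ref{paired cor} (so that $C$, being type-$1$ paired, is not type-$2$ paired) together with the location of $v$ relative to $C$, that the exception of Rule $3c$ cannot be triggered, and concludes via Rule $3c$.

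In the second exceptional situation, $v$ lies in an arm position of an open $3$-cluster $C$ that is type-$2$ paired with a closed $3$-cluster $C_1$; this also subsumes the exceptional case of Rule $3c$, since that exception already places $v$ in an arm position of an open $3$-cluster type-$2$ paired with the relevant closed cluster, hence at distance-$2$ from an open $3$-cluster. Now both Rule $3b$ via $C$ and Rule $3c$ via $C_1$ are blocked by their exceptions, so a third cluster must be exhibited. The plan is to read off the local structure of a type-$2$ pairing together with Lemma~\ref{type2paired lem}---which, when $C_1$ has $7$ nearby poor $1$-clusters, returns a third \threepluss $C_3$ with $v$ distance-$2$ from $C_3$, not in an arm position of $C_3$, and $C_3$ not type-$1$ paired on top, so that Rule $3b$ fires via $C_3$---and to treat the complementary case (in which $C_1$ has at most $6$ nearby poor $1$-clusters) by direct inspection of the hexagonal neighbourhood, using the poorness of $v$ and the rigidity of the pairing to locate the required closed $3$-cluster, \fourplus, or open $3$-cluster; in each instance Corollary~\ref{paired cor} and the ``not in an arm position'' clauses guarantee that no exception is re-triggered at the alternative cluster.

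I expect this last point---the type-$2$-paired configuration, where the charge to $v$ can come from neither partner of the pairing---to be the main obstacle, since it is the only place in the argument that is not an immediate application of a single rule and instead requires a short case analysis of the grid combined with the structural lemmas; everything else is a mechanical verification that the stated rules fire.
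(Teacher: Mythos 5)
Your proposal follows essentially the same route as the paper: the generic cases are dispatched directly by Rules 3a--3c, the type-1-paired exception via Lemma \ref{type1paired lem}, and the type-2-paired exception via Lemma \ref{type2paired lem}. The only divergence is that you reserve a separate ``direct inspection'' for the subcase where the closed partner has at most $6$ nearby poor $1$-clusters, which is unnecessary: the ``furthermore'' clause of Lemma \ref{type2paired lem} (as its proof shows) produces the third \threepluss whenever the arm position is a poor $1$-cluster, independently of the count of nearby poor $1$-clusters, and that is exactly how the paper invokes it.
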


\begin{proof}

Let $v \in \poorone$ such that $v$ is distance-$2$ from an open $3$-cluster or within distance-$3$ of a closed $3$-cluster or \fourplus.  Then, by Rules 3a-3c, $v$ receives $\frac{1}{24}$ from a nearby \threepluss except, potentially, in 2 cases.  In the first case, $v$ is in a shoulder or arm position of an open 3-cluster which is type-1 paired on top.  But, by Lemma \ref{type1paired lem}, $v$ is nearby another \threeplus, $C_1$, such that if $C_1$ is an open 3-cluster then $v$ is distance-2 from $C_1$ but not in an arm position and $C_1$ is not type-1 paired on top; therefore, $v$ receives $\frac{1}{24}$ from $C_1$ by Discharging Rules 3a-3c.  In the second case, $v$ is in an arm position of an open 3-cluster which is type-2 paired with a closed 3-cluster.  But, by Lemma \ref{type2paired lem}, $v$ is nearby a third \threeplus, $C_2$, such that if $C_2$ is an open 3-cluster then $v$ is distance-2 from $C_2$ but not in an arm position and $C_2$ is not type-1 paired on top; therefore, $v$ receives $\frac{1}{24}$ from $C_2$ by Discharging Rules 3a-3c.
\end{proof}

\begin{claim}
\label{poor1 claim}

If a poor $1$-cluster, $v$, is not very poor, then $f(v) = \frac{5}{12}$.

\end{claim}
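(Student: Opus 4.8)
The plan is to count the charge at $v$ directly. First I would compute the charge of $v$ after Rule \ref{one}. Since $v$ is a $1$-cluster, its three neighbours all lie outside $D$, and by Corollary \ref{poor1 cor} each of these neighbours has exactly one neighbour in $D\setminus\{v\}$, hence exactly two neighbours in $D$ altogether. So Rule \ref{one} moves $\frac{5}{24}$ out of $v$ along each of its three edges, giving $f_1(v)=1-3\cdot\frac{5}{24}=\frac{3}{8}$. Since $\frac{3}{8}=\frac{9}{24}$ while $\frac{5}{12}=\frac{10}{24}$, the claim reduces to showing that $v$ gains a net charge of exactly $\frac{1}{24}$ from the remaining rules.

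To obtain the bound $f(v)\ge\frac{5}{12}$ I would use the hypothesis that $v$ is poor but not very poor. Negating Definition \ref{vp def}, $v$ must satisfy at least one of: (a) $v$ is distance-$2$ from a \threeplus; (b) $v$ is distance-$2$ from a non-poor $1$-cluster; (c) $v$ is distance-$3$ from a closed $3$-cluster or \fourplus. In case (a) the \threepluss is an open $3$-cluster, a closed $3$-cluster, or a \fourplus, and in each of these $v$ is distance-$2$ from an open $3$-cluster or within distance-$3$ of a closed $3$-cluster or \fourplus; case (c) has this same form. Hence in cases (a) and (c), Proposition \ref{poor1discharging prop} ensures that $v$ receives $\frac{1}{24}$ by one of Rules \ref{threea}--\ref{threec}. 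In case (b), the non-poor $1$-cluster either lies in a group of non-poor $1$-clusters or it does not: Rule \ref{threee} covers the former and Rule \ref{threed} the latter, and in each subcase either $v$ has already received charge by an earlier rule or it now receives $\frac{1}{24}$. Either way $f_3(v)\ge\frac{3}{8}+\frac{1}{24}=\frac{5}{12}$, which already suffices for Theorem \ref{theorem}.

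For the stated equality I would check that no rule gives $v$ anything beyond this $\frac{1}{24}$. Rule \ref{two} never touches $v$, since it transfers charge only to $1$-clusters adjacent to a one-third vertex, whereas by Corollary \ref{poor1 cor} every neighbour of the poor $1$-cluster $v$ has only two neighbours in $D$ and hence is not a one-third vertex. Rules \ref{four} and \ref{five} do not involve $v$, which is a $1$-cluster that is not very poor. In Rules \ref{six} and \ref{seven}, whenever $v$ could receive $\frac{1}{24}$ as a distance-$2$ poor $1$-cluster of some very poor $1$-cluster --- in the role of $a$, $u$, or $w$ --- the same rule immediately moves $\frac{1}{24}$ from $v$ back out to that very poor $1$-cluster, so $v$'s balance is unchanged. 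Finally, the clause ``has not received charge by previous rules'' in Rules \ref{threeb}--\ref{threee} prevents $v$ from collecting the surplus twice. Putting this together, $f(v)=\frac{3}{8}+\frac{1}{24}=\frac{5}{12}$.

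The step I expect to be the main obstacle is this last one: the exhaustive verification that, apart from the single $\frac{1}{24}$ supplied by Rules \ref{threea}--\ref{threee}, every discharging rule is either inapplicable to a poor-but-not-very-poor $1$-cluster or charge-neutral on it. The two points that require genuine care are the use of Corollary \ref{poor1 cor} to rule out a one-third vertex among the neighbours of $v$ (so that Rule \ref{two} is inert on $v$), and the cancellation built into Rules \ref{six} and \ref{seven}, where any charge passing through $v$ en route to a very poor $1$-cluster leaves $v$ net unchanged.
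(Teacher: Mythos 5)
Your proposal is correct and follows essentially the same route as the paper: compute $f_1(v)=\frac{9}{24}$, use the negation of Definition \ref{vp def} together with Proposition \ref{poor1discharging prop} and Rules \ref{threed}--\ref{threee} to secure the extra $\frac{1}{24}$, and observe that Rules \ref{six}--\ref{seven} are charge-neutral on $v$. The only difference is that the paper justifies that neutrality by invoking Corollaries \ref{vpsym cor2} and \ref{vpasym cor2} to show $v$ serves at most one very poor $1$-cluster, whereas you rely on the per-application pairing of a receive with a send; both readings of the rules are acceptable.
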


\begin{proof}

Let $v \in \poorone \setminus D_1^{vp}$.  Then, $v$ must send charge to all 3 neighbors, each of which has exactly 2 neighbors in $D$.  That is, $f_1(v) = 1 - 3 \cdot \frac{5}{12 \cdot 2} = \frac{9}{24}$.  But $v$ is not very poor; therefore, $v$ is distance-2 from a \threepluss or non-poor 1-cluster or distance-3 from a closed 3-cluster or \fourplus.  If $v$ is distance-2 from an open 3-cluster or within distance-3 of a closed 3-cluster or \fourplus, then $v$ receives $\frac{1}{24}$ by Rules 3a-3c (Proposition \ref{poor1discharging prop}); if not, then $v$ receives charge from a distance-2 non-poor 1-cluster by Rules 3d-3e.  Thus, we have shown that $v$ will receive $\frac{1}{24}$ from a nearby cluster.  Therefore, $f_3(v) = f_1(v) + \frac{1}{24} = \frac{9}{24} + \frac{1}{24} = \frac{5}{12}$.  If $v$ is distance-2 from a very poor 1-cluster, $w$, then $v$ may need to receive charge from a nearby cluster and send charge to $w$ by Rules 6-7.  If $w$ is in a symmetric orientation, then $v$ is not distance-2 from any very poor 1-cluster other than $w$ (Corollary \ref{vpsym cor2}).  If $w$ is in an asymmetric orientation and $v$ is in the $u$-position or $w$-position of $w$, then $v$ is not distance-2 from any very poor 1-cluster other than $w$ (Corollary \ref{vpasym cor2}).  Thus, if Rules 6-7 require $v$ to receive and send charge, then $v$ must only send charge to one very poor 1-cluster.  Then, if Rule 6 is applicable, $v$ receives $\frac{1}{24}$ and sends $\frac{1}{24}$.  The same is true of Rules 7a-7e.  Therefore, Rules 6-7 have no effect on the final charge of $v$.  Therefore, $f(v) = f_7(v) = f_3(v) = \frac{5}{12}$.
\end{proof}

\begin{claim}
\label{nonpoor1 claim}

For every non-poor $1$-cluster, $v$, $f(v) \geq \frac{5}{12}$.

\end{claim}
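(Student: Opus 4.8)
The plan is to track the charge of a non-poor $1$-cluster, $v$, through all seven discharging rules, splitting into cases according to how many neighbors of $v$ lie outside $D$ (equivalently, according to the ``degree of poverty'' of $v$). Since $v$ is a $1$-cluster it begins with charge $1$ and is only affected by Rules 1, 2, 3d, 3e and possibly by serving as a conduit in Rules 6--7. First I would compute $f_1(v)$: if $v$ has $k$ neighbors not in $D$, each such neighbor takes at most $\frac{5}{12\cdot 2}=\frac{5}{24}$ from $v$ (here we use that, by Proposition \ref{1clusters prop}, every neighbor of a $1$-cluster has another neighbor in $D$, so each neighbor of $v$ has at least $2$ neighbors in $D$ and hence draws at most $\frac{5}{24}$). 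Thus $f_1(v)\ge 1-\frac{5k}{24}$. For $k\le 2$ this already gives $f_1(v)\ge 1-\frac{10}{24}=\frac{14}{24}>\frac{5}{12}$, and since $v$ can only lose more charge through Rules 2, 3d/3e and 6--7, I must then bound those losses; but a non-poor $1$-cluster that is not the conduit for a very poor neighbor loses nothing further under 6--7 (by Corollaries \ref{vpsym cor2} and \ref{vpasym cor2}, it is distance-$2$ from at most one very poor $1$-cluster, and each application pairs a receipt of $\frac1{24}$ with a send of $\frac1{24}$), and each of Rules 2a/2b/3d/3e costs at most $\frac1{36}$ or $\frac1{24}$ per application with only boundedly many applications, so there is plenty of slack.

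The delicate case is $k=3$, i.e. $v$ is a one-third-type $1$-cluster adjacent to three one-third vertices (or with three neighbors outside $D$). Then $f_1(v)=1-3\cdot\frac{5}{24}=\frac{9}{24}=\frac{3}{8}<\frac{5}{12}$, so $v$ must \emph{recover} $\frac{1}{24}$ from somewhere. This is exactly where the ``group of non-poor $1$-clusters'' / ``one-third vertex'' machinery pays off: each of the three one-third vertices adjacent to $v$ has two other neighbors in $D$, and I would examine, at each such one-third vertex, whether those two other neighbors are $1$-clusters or lie in $3^+$-clusters. Rule 2a sends $\frac1{72}$ to $v$ from a $3^+$-cluster sharing a one-third vertex with $v$ (in the configuration where the third neighbor is a $1$-cluster), and Rule 2b sends $\frac1{36}$ (or $\frac1{18}$) when the other two neighbors of the one-third vertex both lie in $3^+$-clusters. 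Summing the Rule 2 contributions over the three one-third vertices adjacent to $v$, I claim the total is at least $3\cdot\frac1{72}=\frac1{24}$ in every configuration — one needs to check that one cannot have ``too few'' $3^+$-clusters in these positions, which is where the structure forced by Propositions \ref{force3cluster prop} and \ref{1clusters prop} (a one-third vertex cannot have three $1$-cluster neighbors that are all distinct $1$-clusters without creating a forbidden configuration, and cannot have neighbors outside $D$) comes in. So $f_2(v)\ge \frac{9}{24}+\frac1{24}=\frac{5}{12}$, and thereafter Rules 3d/3e only ever \emph{remove} charge when $v$ itself acts as a non-poor donor — but by the definition of ``group of non-poor $1$-clusters'' the charge sent under Rule 3e comes from the group $H$ as a whole, and one verifies the group's budget suffices; and Rules 6--7 again net to zero for $v$.

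I expect the main obstacle to be the $k=3$ bookkeeping: precisely verifying that the Rule 2 inflow to $v$ totals at least $\frac1{24}$ requires enumerating the local configurations around each of the three adjacent one-third vertices and invoking the structural propositions to rule out the charge-starved ones, and then separately confirming that whatever $v$ gives away under Rules 2a/2b (when $v$ sits across a one-third vertex from a poor $1$-cluster that is a different $1$-cluster) is compatible with what $v$ receives — i.e., that $v$ is never simultaneously a net donor and charge-deficient. Once that balance is pinned down, the remaining cases ($k\le 2$, and $k=3$ with the conduit role under Rules 6--7) are routine slack arguments of the kind already used in Claim \ref{poor1 claim}. I would organize the write-up as: (1) the $k\le 2$ slack computation; (2) the $k=3$ inflow analysis via Rule 2, the heart of the proof; (3) verification that Rules 3d, 3e and 6--7 do not disturb the conclusion; concluding $f(v)=f_7(v)\ge\frac{5}{12}$ in all cases.
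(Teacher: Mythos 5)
Your starting computation is wrong, and it sends the whole argument in the wrong direction. Since $v$ is a $1$-cluster, all three of its neighbors lie outside $D$, so your parameter $k$ is always $3$ and the ``$k\le 2$'' cases are vacuous; more importantly, for $k=3$ you compute $f_1(v)=1-3\cdot\frac{5}{24}=\frac{9}{24}$, which is the value for a \emph{poor} $1$-cluster. The defining feature of a non-poor $1$-cluster is that it has at least $4$ distance-$2$ vertices in $D$, so at least one of its neighbors is a one-third vertex and draws only $\frac{5}{36}$ under Rule 1; hence $f_1(v)\ge 1-2\cdot\frac{5}{24}-\frac{5}{36}=\frac{4}{9}>\frac{5}{12}$. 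There is no Rule-1 deficit to recover. The genuine danger for a non-poor $1$-cluster is the \emph{outflow} under Rules 3d--3e to nearby poor $1$-clusters, which you dismiss as routine (``one verifies the group's budget suffices''); bounding that outflow is exactly what Lemmas \ref{nonpoor lem1}--\ref{nonpoor lem3} exist for, and the paper's proof is a tight balance of the Rule-1 surplus $\frac{1}{36}$, the Rule-2 inflow ($\frac{1}{72}$ or $\frac{1}{18}$ depending on whether the one-third vertex's other neighbors are a $1$-cluster plus a $3^+$-cluster, or $3^+$-clusters only), and at most one or two payments of $\frac{1}{24}$ under Rule 3d (respectively at most two under Rule 3e for a group).

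Two further errors: your claim that the Rule-2 inflow to $v$ totals at least $\frac{1}{24}$ ``in every configuration'' is false --- when $v$ belongs to a group of non-poor $1$-clusters the adjacent one-third vertex has three $1$-cluster neighbors and no $3^+$-cluster neighbor, so Rule 2 sends $v$ nothing; the paper instead treats the group $H$ as a unit, using $f_1(H)\ge\frac{4}{3}$ and Lemma \ref{nonpoor lem1} to cap the Rule-3e outflow at $\frac{2}{24}$. And Rule 2 only ever sends charge \emph{from} $3^+$-clusters \emph{to} $1$-clusters, so your worry about ``whatever $v$ gives away under Rules 2a/2b'' is moot. The correct case split is on the composition of the one-third vertex adjacent to $v$ (all $1$-clusters; one $1$-cluster and one $3^+$-cluster; $3^+$-clusters only), not on a count of neighbors outside $D$.
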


\begin{proof}

Let $v \in D_1^{np}$.  Then, $v$ must send charge to all 3 of its neighbors, at least one of which has 3 neighbors in $D$.  Therefore, $f_1(v) \geq 1 - \left(2 \cdot \frac{5}{12 \cdot 2} + \frac{5}{12 \cdot 3} \right) = \frac{4}{9}$.

Suppose $v$ is in a group of non-poor 1-clusters, $H$.  Then, $f_1(H) \geq 3 \cdot f_1(v) = \frac{4}{3}$.  By Discharging Rule 3e, $H$ may need to send charge to distance-2 poor 1-clusters that do not receive charge by Rules 3a-3d; therefore, $H$ must send charge to a distance-2 poor 1-cluster, $u$, only if $u$ is neither distance-2 from an open 3-cluster nor within distance-3 of a closed 3-cluster or \fourpluss (Proposition \ref{poor1discharging prop}).  Then, $H$ must send charge to at most 2 distance-2 poor 1-clusters (Lemma \ref{nonpoor lem1}).  Therefore, $f(H) = f_3(H) \geq f_1(H) - 2 \cdot \frac{1}{24} \geq \frac{4}{3} - \frac{1}{12} = \frac{15}{12} = 3 \cdot \frac{5}{12}$.  Therefore, $f(v) \geq \frac{5}{12}$.

Now, suppose $v$ shares a one-third vertex with a non-poor 1-cluster, $w$, and a \threeplus, $C$.    By Discharging Rule 2a, each of $v$ and $w$ receives $\frac{1}{72}$ from $C$.  And, by Discharging Rule 3d, each of $v$ and $w$ may need to send charge to distance-2 poor 1-clusters that do not receive charge by Rules 3a-3c; therefore, $v$ and $w$ send charge to a poor 1-cluster, $u$, only if $u$ is neither distance-2 from an open 3-cluster nor within distance-3 of a closed 3-cluster or \fourpluss (Proposition \ref{poor1discharging prop}).  Then, each of $v$ and $w$ has at most one distance-2 poor 1-cluster that does not receive charge by Rules 3a-3c (Lemma \ref{nonpoor lem2}).  Therefore, $f(v) = f_3(v) \geq f_2(v) - \frac{1}{24} = \left (f_1(v) + \frac{1}{72} \right ) - \frac{1}{24} \geq \left (\frac{4}{9} + \frac{1}{72} \right ) - \frac{1}{24} = \frac{5}{12}$

Now, suppose $v$ shares a one-third vertex with \threeplus s only.  Then, by Rule 2b, $v$ receives $\frac{1}{18}$ from these \threeplus s.  By Rule 3d, $v$ may need to send charge to distance-2 poor 1-clusters.  However, $v$ has at most 2 distance-2 poor 1-clusters (Lemma \ref{nonpoor lem3}).  Therefore, $f(v) = f_3(v) \geq f_2(v) - 2 \cdot \frac{1}{24} = \left (f_1(v) + \frac{1}{18} \right ) - \frac{1}{12} \geq \left (\frac{4}{9} + \frac{1}{18} \right ) - \frac{1}{12} = \frac{5}{12}$.
\end{proof}

\begin{claim}
\label{vp claim}

For every very poor $1$-cluster, $v$, $f(v) \geq \frac{5}{12}$.

\end{claim}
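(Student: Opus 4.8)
The plan is to show that every very poor $1$-cluster $v$ begins with $f_1(v) = \frac{9}{24}$ (exactly as in the proof of Claim \ref{poor1 claim}, since $v$ is a poor $1$-cluster and must send $\frac{5}{24}$ to each of its three neighbors, each of which by Corollary \ref{poor1 cor} has exactly two neighbors in $D$) and then verify that $v$ receives an additional net $\frac{1}{24}$ from the discharging rules designed for very poor $1$-clusters, namely Rules 5--7. Since $v$ is very poor, it receives nothing from Rules 3a--3e by Definition \ref{vp def} and Proposition \ref{poor1discharging prop}, so the whole burden falls on Rules 5--7; I also need to confirm $v$ never loses charge by any rule (it is a leaf-free $1$-cluster, so Rule 1 is the only rule that takes charge from it, and that was already accounted for). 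Thus the goal reduces to: \emph{every very poor $1$-cluster receives $\frac{1}{24}$ by exactly one of Rules 5, 6, 7a--7e.}

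First I would split on orientation (Definition \ref{orientations}). If $v$ is in a symmetric orientation, apply Lemma \ref{vpsym lem}: there are three open $3$-clusters $C_1,C_2,C_3$ with $v$ in a head position of each and one of the distance-$2$ vertices $a,b,c$ in a shoulder position of each. If some $C_i$ is crowded, then Rule 5 sends $\frac{1}{24}$ to $v$ directly and we are done. Otherwise all three are uncrowded, so by the second half of Lemma \ref{vpsym lem} each of $a,b,c$ is distance-$3$ from a closed $3$-cluster or \fourplus; pick any one, say $a$, with its witness $C$. Then Rule 6 applies — $a$ receives $\frac{1}{24}$ from $C$ and passes $\frac{1}{24}$ to $v$ — so $f_6(v) = \frac{9}{24} + \frac{1}{24} = \frac{5}{12}$. (I should note $a$ is a genuine poor $1$-cluster here: it lies in a shoulder position of an uncrowded open $3$-cluster, and by Definition \ref{crowded} the distance-$2$ vertices of an uncrowded cluster are poor $1$-clusters.)

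If $v$ is in an asymmetric orientation, apply Lemma \ref{vp lem} with $u,w,x$ in the $u$-, $w$-, $x$-positions: there is an open $3$-cluster $C_0$ with $v,x$ in head positions and $w$ in a shoulder position, and one of (i)--(vii) holds. Case (i): $C_0$ is crowded, so Rule 5 gives $v$ its $\frac{1}{24}$. In each of cases (ii)--(vii), $v$ does not receive charge from Rule 5 (else we are done), so the preamble of Rule 7 is satisfied, and the cases match the subrules one-to-one: (ii) triggers Rule 7a (with $w$ the relay), (iii) triggers Rule 7b (with $u$, $v$, or $w$ the hand vertex; if $v$ itself is the hand vertex it gets $\frac{1}{24}$ directly, otherwise $u$ or $w$ relays it), (iv) and (v) together supply the hypotheses of Rule 7c — here I must check that $u$ avoids the excluded shoulder/tail, one-turn, and backwards positions, which is exactly what clauses (iv) and (v) of Lemma \ref{vp lem} guarantee, and that in the closed-$3$-cluster subcase $C$ is type-$2$ paired or has at most $6$ nearby poor $1$-clusters, again from (v) — (vi) triggers Rule 7d (with $v$ or $w$ the hand vertex, $w$ relaying if needed), and (vii) triggers Rule 7e (with $u$ the foot vertex relaying). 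In every subcase $v$ nets $+\frac{1}{24}$, so $f(v) = f_7(v) = \frac{5}{12}$.

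The main obstacle is the bookkeeping for the relay vertices $u$, $w$, $a$: I must make sure that when a poor $1$-cluster relays $\frac{1}{24}$ to $v$, it is not simultaneously being double-charged or double-drained — i.e., that $u$ or $w$ is not forced to relay to two different very poor $1$-clusters. This is precisely what Corollaries \ref{vpsym cor2} and \ref{vpasym cor2} (and \ref{vpasym cor3}) are for, so the argument there is just: by those corollaries $u$, $w$, $a$ each relay to at most one very poor $1$-cluster, and Rules 6 and 7a--7e each pair the incoming $\frac{1}{24}$ with an outgoing $\frac{1}{24}$, so the relay is charge-neutral for $u$, $w$, $a$ — which is consistent with Claim \ref{poor1 claim} already having been proved for those (non-very-poor) poor $1$-clusters. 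A secondary subtlety is confirming that the ``in addition to any charge received by previous rules'' clauses do not conflict with the ``has not received charge by previous rules'' provisos inside Rule 3; but since $v$ (and, in the uncrowded symmetric case, $a$) receives nothing from Rule 3, no conflict arises, and the extra $\frac{1}{24}$ is genuinely new charge.
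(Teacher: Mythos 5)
Your proposal is correct and follows essentially the same route as the paper: compute $f_1(v)=\tfrac{9}{24}$, split on symmetric versus asymmetric orientation, invoke Lemma \ref{vpsym lem} (with Rules 5 and 6) in the first case and match the seven alternatives of Lemma \ref{vp lem} to Rules 5 and 7a--7e in the second, using Corollaries \ref{vpsym cor2}, \ref{vpasym cor2} and \ref{vpasym cor3} to rule out double relaying. The only loose spot is the aside that ``Rule 1 is the only rule that takes charge from $v$ because it is a leaf-free $1$-cluster'' --- the real reason $v$ sends nothing by Rules 6--7 is that its distance-$2$ neighbors are not very poor (and the $x$-position is not a relay position), which is exactly what those corollaries give and what the paper states explicitly.
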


\begin{proof}

Let $v \in D_1^{vp}$.  We saw above that $f_1(v) = \frac{9}{24}$.  

If $v$ is in a symmetric orientation, then $v$ is in a head position of 3 open 3-clusters, $C_1$, $C_2$ and $C_3$ (Lemma \ref{vpsym lem}).  If any of $C_1$, $C_2$ and $C_3$ is crowded, say $C_1$, then $v$ receives $\frac{1}{24}$ from $C_1$.  Then, $f_5(v) = f_1(v) + \frac{1}{24} = \frac{5}{12}$.  Let $a$, $b$ and $c$ be the vertices in $D$ at distance-2 from $v$.  Since $v \in D_1^{vp}$, we must have $a,b,c \in \poorone$.  However, exactly one of $a$, $b$ and $c$ is in a shoulder position of each of $C_1$, $C_2$ and $C_3$ (Lemma \ref{vpsym lem}); therefore $a,b,c \not \in D_1^{vp}$.  Then, Rules 6-7 do not require $v$ to send any charge; therefore, $f(v) \geq f_5(v) \geq \frac{5}{12}$.
Now, if each of $C_1$, $C_2$ and $C_3$ is uncrowded, then each of $a$, $b$ and $c$ is distance-3 from a closed 3-cluster or \fourpluss (Lemma \ref{vpsym lem}).  Discharging Rule 5 is not applicable, but, by Discharging Rule 6, each of $a$, $b$ and $c$ receives $\frac{1}{24}$ from a distance-3 closed 3-cluster or \fourpluss and sends $\frac{1}{24}$ to $v$.  Rule 7 is not applicable; therefore $f(v) = f_6(v) = f_1(v) + 3 \cdot \frac{1}{24} = \frac{9}{24} + \frac{1}{8} = \frac{1}{2} > \frac{5}{12}$.

Now, suppose $v$ is in an asymmetric orientation.  Let $u$, $w$ and $x$ be in the $u$-position, $w$-position and $x$-position, respectively.  Now, $u,w \not \in D_1^{vp}$ (Corollary \ref{vpasym cor2}); and if $x \in D_1^{vp}$, then $v$ is in the $x$-position of $x$ (Corollary \ref{vpasym cor3}).  Therefore, none of the Discharging Rules requires $v$ to send charge.  Now, by Lemma \ref{vp lem}, $v$ and $x$ are in the head positions of an open 3-cluster, $C_0$, and one of the following holds:

\begin{itemize}

\item $C_0$ is crowded.
In this case, $v$ receives $\frac{1}{24}$ from $C_0$ by Rule 5.

\item There exists a closed 3-cluster or \fourpluss at distance-3 from $w$.
In this case, $v$ receives $\frac{1}{24}$ from $w$ by Rule 7a.

\item There exists an open 3-cluster, $C$, such that the tail position of $C$ is in $D$ and $u$, $v$ or $w$ is in the hand position on the finless side of $C$.  In this case, $v$ receives $\frac{1}{24}$ from $u$, $w$ or $C$ by Rule 7b.

\item There exists a leaf, $\ell$, of a \fourplus, $C$, at distance-2 from $u$ such that $u$ is not in a one-turn position or a backwards position of a linear 4-cluster or a curved 4-cluster, respectively, and $u$ is the only vertex in $D \setminus C$ at distance-2 from $\ell$.  In this case, $v$ receives $\frac{1}{24}$ from $u$ by Rule 7c.

\item There exists a leaf, $\ell$, of a closed 3-cluster, $C$, at distance-2 from $u$ such that $u$ is in a foot or arm position and $u$ is the only vertex in $D \setminus C$ at distance-2 from $\ell$.  Furthermore, either $C$ is type-2 paired or $C$ has at most 6 nearby poor 1-clusters.  In this case, $v$ receives $\frac{1}{24}$ from $u$ by Rule 7c.

\item There exists an open 3-cluster, $C$, such that $v$ or $w$ is in a hand position of $C$ and the hand and arm positions on the other side of $C$ are in $D$.  In this case, $v$ receives $\frac{1}{24}$ from $C$ or $w$ by Rule 7d.

\item There exists an open 3-cluster, $C$, such that $u$ is in a foot position and $C$ is type-1 paired on top.  In this case, $v$ receives $\frac{1}{24}$ from $u$ by Rule 7e.

\end{itemize}
Therefore, $v$ receives at least $\frac{1}{24}$ from a nearby cluster by Discharging Rules 5 and 7.  Therefore, $f(v) = f_7(v) \geq f_1(v) + \frac{1}{24} = \frac{9}{24} + \frac{1}{24} = \frac{5}{12}$.
\end{proof}

\begin{proposition}
\label{open3 prop}

If an open $3$-cluster, $C$, is neither type-$1$ paired nor type-$2$ paired and $f_2(C) \geq \frac{34+ P(C)}{24}$, then $f(C) \geq 3 \cdot \frac{5}{12}$.


\end{proposition}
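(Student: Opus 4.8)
The plan is to show that during Discharging Rules 3--7 the cluster $C$ loses at most $\frac{P(C)+4}{24}$ charge; combined with the hypothesis this gives
\[
f(C)=f_7(C)\geq f_2(C)-\frac{P(C)+4}{24}\geq\frac{34+P(C)}{24}-\frac{P(C)+4}{24}=\frac{30}{24}=3\cdot\frac{5}{12}.
\]
So the first step is to pin down which rules can move charge out of $C$ after Rule 2. Rules 3a, 3c, 3d, 3e, 6, 7a and 7c only take charge from closed $3$-clusters, $4^+$-clusters, or $1$-clusters; Rule 4 takes charge from an open $3$-cluster only when it is type-$2$ paired, and Rule 7e only when it is type-$1$ paired on top. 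Since $C$ is neither type-$1$ nor type-$2$ paired, the only rules that can decrease its charge after Rule 2 are 3b, 5, 7b and 7d, each of which transfers exactly $\frac{1}{24}$ per application; hence it is enough to bound how often each of the four can fire on $C$.

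Next I would bound the four counts. For Rule 3b, each firing sends $\frac{1}{24}$ to a distinct poor $1$-cluster at distance-$2$ from $C$; writing $C$ as a path with leaves $\ell_1,\ell_2$ and non-leaf $m$, the definition of an open cluster gives $N_2(m)\cap(D\setminus C)=\emptyset$, so every such poor $1$-cluster lies in $N_2(\ell_1)\cup N_2(\ell_2)$, and there are at most $|N_2(\ell_1)\cap(D\setminus C)|+|N_2(\ell_2)\cap(D\setminus C)|=P(C)$ of them. Rule 5 fires on $C$ only when $C$ is crowded, and then sends $\frac{1}{24}$ to each very poor $1$-cluster occupying a head position of $C$; as $C$ has exactly two head positions, this costs at most $\frac{2}{24}$. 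For Rule 7b the tail position of $C$ must be in $D$ and the recipient is the hand-position vertex on the finless side of $C$; since $C$ has at most one finless side (Lemma~\ref{finless lem}) and a $u$- or $w$-position vertex is associated with only one very poor $1$-cluster (Corollary~\ref{vpasym cor2}), this costs at most $\frac{1}{24}$. For Rule 7d, one side of $C$ has its hand and arm positions in $D$ and the recipient is the hand-position vertex $h$ on the other side; since $h$ must be a $1$-cluster in $D$, the arm position adjacent to $h$ cannot be in $D$, so Rule 7d can fire on $C$ for only one choice of side and, again by Corollary~\ref{vpasym cor2}, costs at most $\frac{1}{24}$. Adding these up, $C$ loses at most $\frac{P(C)+2+1+1}{24}=\frac{P(C)+4}{24}$, which is exactly what is needed.

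I expect the main obstacle to be the first step's bookkeeping: being certain the list of rules that can drain an open, non-type-$1$/type-$2$-paired $3$-cluster is complete, and that the constants $\frac{2}{24}$, $\frac{1}{24}$, $\frac{1}{24}$ for Rules 5, 7b, 7d cannot be exceeded---any extra $\frac{1}{24}$ here would push the loss past $\frac{P(C)+4}{24}$ and break the bound. The estimates for 7b and 7d rest on the small structural facts that there is at most one finless side and that a hand position in $D$ forces the neighbouring arm position out of $D$, together with the uniqueness statement in Corollary~\ref{vpasym cor2}; verifying the latter two facts from the figures, and double-checking that Rule 3b's exceptions and Rule 7e are genuinely vacuous under the pairing hypothesis, is the place where care is required.
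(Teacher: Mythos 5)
Your proposal is correct and follows the same route as the paper: after noting that Rules 4 and 7e cannot apply to an unpaired open $3$-cluster, you bound the loss from Rule 3b by $\frac{P(C)}{24}$, from Rule 5 by $\frac{2}{24}$, and from Rules 7b and 7d by $\frac{1}{24}$ each (via Lemma \ref{finless lem}), giving the same total of $\frac{P(C)+4}{24}$. The paper's proof is just a terser version of this accounting, so your extra justifications (e.g.\ via Corollary \ref{vpasym cor2}) simply fill in details it leaves implicit.
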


\begin{proof}

Let $C$ be an open 3-cluster that is neither type-1 nor type-2 paired.  Note that Rule 3b requires $C$ to send at most $P(C) \cdot \frac{1}{24}$, Rule 5 requires $C$ to send at most $\frac{2}{24}$ and Rules 7b and 7d each require $C$ to send at most $\frac{1}{24}$ (Lemma \ref{finless lem}); therefore, $C$ sends at most $\frac{ P(C) + 4}{24}$ by Rules 3-7.  Therefore, if $f_2(C) \geq \frac{34+P(C)}{24}$, then $f(C) \geq \frac{30}{24} = 3 \cdot \frac{5}{12}$.
\end{proof}

\begin{claim}
\label{open3 claim}

For every open $3$-cluster, $C$, $f(C) \geq 3 \cdot \frac{5}{12}$.

\end{claim}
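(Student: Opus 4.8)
The plan is to bound $f_2(C)$ from below for an arbitrary open $3$-cluster $C$, split into cases according to whether $C$ is type-$1$/type-$2$ paired and according to the value of $P(C)$, and then invoke Proposition \ref{open3 prop} wherever it applies. Recall $f_1(C) = 3 - (\text{charge lost by Rule 1})$. An open $3$-cluster with $\Delta(C)=2$ has two leaves and one degree-$2$ non-leaf; each leaf has two non-$D$ neighbors and the central vertex has one non-$D$ neighbor (a one-third vertex is possible there). In the worst case every such non-$D$ neighbor is a one-third vertex, so the charge leaving $C$ by Rule 1 is at most $5 \cdot \tfrac{5}{12\cdot 3} = \tfrac{25}{36}$ when all five are one-third vertices; more typically several are two-thirds vertices. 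The first step is to compute, for each configuration of the neighbors, the exact value of $f_1(C)$, and then to account for the (small) charge $C$ may receive or must give by Rule 2. Since $P(C)$ counts the distance-$2$ vertices of $C$ in $D\setminus C$, a cluster with large $P(C)$ loses more by Rule 3b but also has more structure forcing its distance-$1$ neighbors to be two-thirds vertices (Proposition \ref{leaves prop} and the fact that the leaves are not distance-$2$--isolated), so $f_1(C)$ rises with $P(C)$; the main quantitative claim is that $f_2(C) \geq \tfrac{34 + P(C)}{24}$ holds whenever $C$ is not paired, which is exactly the hypothesis of Proposition \ref{open3 prop}.

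Concretely I would argue as follows. If $P(C) \le 2$, then $C$ is uncrowded or close to it; Rule 3b sends at most $\tfrac{2}{24}$, Rule 5 at most $\tfrac{2}{24}$, Rules 7b and 7d at most $\tfrac{1}{24}$ each, so the total outflow after Rule 1 is at most $\tfrac{6}{24}=\tfrac14$, and one checks $f_1(C)\ge 3-\tfrac{25}{36} = \tfrac{83}{36} > \tfrac{30}{24}+\tfrac14 = \tfrac{3}{2}$, in fact with room to spare; here no appeal to Proposition \ref{open3 prop} is even needed. If $3 \le P(C) \le 4$, each extra distance-$2$ vertex in $D$ forces the corresponding leaf's two non-$D$ neighbors to be two-thirds rather than one-third vertices, which raises $f_1(C)$ by enough to compensate the extra $\tfrac{1}{24}$ lost by Rule 3b; this is where one verifies $f_2(C)\ge\tfrac{34+P(C)}{24}$ and applies Proposition \ref{open3 prop}. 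For larger $P(C)$ the same monotonicity continues: each unit of $P(C)$ beyond a small threshold "pays for itself," so the inequality of Proposition \ref{open3 prop} only gets easier.

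The remaining case is when $C$ \emph{is} type-$1$ or type-$2$ paired, so Proposition \ref{open3 prop} does not directly apply because the paired cluster is exempted from some of Rules 3b, 3c and instead sends charge via Rules 4, 7b, 7d, 7e to reserve charge for very poor $1$-clusters. Here I would use Corollary \ref{paired cor} to treat the two pairing types separately, and the structural rigidity of Definition \ref{paired def} (Figures \ref{type-1 paired}, \ref{type-2 paired}) to pin down exactly which of $C$'s distance-$1$ and distance-$2$ neighbors lie in $D$ — in a type-$1$ or type-$2$ paired configuration the second $3$-cluster $C'$ occupies several of the positions that would otherwise be lost to Rule 1, so $f_1(C)$ is substantially larger than the generic bound. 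One then tallies the at-most-$\tfrac{1}{24}$ amounts sent by Rules 4, 5, 7b, 7d, 7e and checks $f(C) \ge \tfrac{30}{24}$ directly. I expect this paired case to be the main obstacle: it requires reading off the exact neighbor statuses from the paired-cluster figures and carefully not double-counting charge that a single vertex of $C$ could be asked to send under two different rules (the "in addition to any charge received by previous rules" clauses are what make this delicate), and it is where the bookkeeping is heaviest.
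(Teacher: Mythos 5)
Your overall decomposition (split on paired versus non-paired, then on $P(C)$, and invoke Proposition \ref{open3 prop} where its hypothesis holds) matches the paper's, but there is a concrete error that breaks the tightest case. You assert that the Rule 1 loss is \emph{at most} $5\cdot\frac{5}{36}=\frac{25}{36}$, ``when all five are one-third vertices.'' This inverts the worst case: a one-third vertex draws the \emph{least} charge ($\frac{5}{36}$ per $D$-neighbor), while a non-$D$ neighbor with only one neighbor in $D$ draws the full $\frac{5}{12}$. For an open $3$-cluster with $P(C)=2$ the correct worst case is $f_1(C)=3-\frac{5}{12}-2\cdot\frac{15}{24}=\frac{32}{24}$, not your $\frac{83}{36}\approx\frac{55}{24}$. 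With the correct $f_1(C)=\frac{32}{24}$ and your claimed outflow of up to $\frac{6}{24}$ (Rule 3b up to $\frac{2}{24}$, Rule 5 up to $\frac{2}{24}$, Rules 7b and 7d up to $\frac{1}{24}$ each), you only get $f(C)\geq\frac{26}{24}<\frac{30}{24}$, so the $P(C)=2$ case does not close. The missing idea is the paper's mutual-exclusivity argument: using Lemma \ref{vpshoulders lem} and Proposition \ref{leaves prop} one shows that at most one of Rules 5, 7b, 7d can apply to a given open $3$-cluster with $P(C)=2$, and that whichever one applies forces at least one of the two distance-$2$ vertices in $D$ not to be a poor $1$-cluster, cutting the Rule 3b loss to $\frac{1}{24}$. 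Only with both reductions does the total outflow drop to $\frac{2}{24}$ and the bound $\frac{30}{24}$ survive.

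Two smaller points. First, for $P(C)=3$ or $4$ the inequality $f_2(C)\geq\frac{34+P(C)}{24}$ of Proposition \ref{open3 prop} fails exactly when $C$ is adjacent to one-third vertices (e.g.\ $P(C)=3$ with a one-third vertex gives only $f_2(C)\geq\frac{33}{24}$), so those subcases need the same kind of direct argument as $P(C)=2$, not an appeal to the proposition; your sketch glosses over this. Second, your treatment of the paired cases is only a plan, and it is indeed where the paper's bookkeeping is heaviest (the type-2 case alone splits on whether the relevant arm position is a non-poor $1$-cluster, a poor $1$-cluster, in a $3^+$-cluster, or not in $D$, and uses Rule 4 and Corollary \ref{paired cor} to show Rules 4 and 7d cannot both fire). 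You correctly anticipate the shape of that argument, but as written the proposal does not establish the claim.
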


\begin{proof}

Consider an open 3-cluster, $C$.  Then, by Discharging Rule 1, the middle vertex of $C$ must send $\frac{5}{12}$.  Each of the leaf vertices has at least one distance-2 vertex in $D \setminus C$ (Proposition \ref{leaves prop}); therefore, each of the leaf vertices must send at most $\frac{5}{12 \cdot 2} + \frac{5}{12} = \frac{15}{24}$ by Rule 1.  Thus, $f_1(C) \geq 3 - \frac{5}{12} - 2 \cdot \frac{15}{24} = \frac{4}{3}$.

First, suppose $C$ is type-1 paired on top (see Figure \ref{type-1 paired}).  Now, the shoulder position of $C$ is in $D$ or the arm position is in $D$ (Proposition \ref{leaves prop}).  Suppose exactly one is in $D$, and let $v$ be this vertex.  Then, $C$ has exactly 2 distance-2 vertices in $D$; therefore, $f_1(C) = \frac{4}{3}$ and Rule 2 does not apply.  By Rule 3b, $C$ does not send charge to $v$ even if $v \in \poorone$, but $C$ may need to send charge to the poor 1-cluster, $u$, in the foot position.  Therefore, $f_3(C) \geq f_1(C) - \frac{1}{24} = \frac{31}{24}$.  Now, $C$ is not type-2 paired (Corollary \ref{paired cor}); therefore, Rule 4 does not apply.  Since at least one of the shoulder positions of $C$ is not in $D$, Rule 5 does not apply (Lemma \ref{vpshoulders lem}).  Since the tail position of a paired 3-cluster is not in $D$, Rule 7b does not apply.  At least one of the hand positions of $C$ is not in $D$; therefore, Rule 7d does not apply.  By Rule 7e, $C$ may need to send $\frac{1}{24}$ to $u$; therefore, $f(C) = f_7(C) \geq f_3(C) - \frac{1}{24} \geq \frac{31}{24} - \frac{1}{24} = 3 \cdot \frac{5}{12}$.

Now, suppose $C$ is type-1 paired on top and both the shoulder position and the arm position of $C$ are in $D$.  Then, $f_1(C) = 3 - 3 \cdot \frac{5}{12} - \frac{5}{12 \cdot 2} - \frac{5}{12 \cdot 3} = \frac{101}{72}$.  Let $v$ and $w$ be the shoulder and arm positions of $C$, respectively.  If $v$ and $w$ are 1-clusters, then $C$ sends $\frac{1}{72}$ to both by Discharging Rule 2a.  If exactly one of $v$ and $w$ is a 1-cluster, say $v$, then $C$ sends $\frac{1}{36}$ to $v$ by Rule 2b.  In both cases, $C$ sends a total of $\frac{1}{36}$; thus, $f_2(v) \geq f_1(v) - \frac{1}{36} = \frac{33}{24}$.  By Rule 3b, $C$ may need to send $\frac{1}{24}$ to the poor 1-cluster, $u$, in the foot position; therefore, $f_3 \geq \frac{32}{24}$.  Again, Rules 4-7d do not apply.  By Rule 7e, $C$ may need to send $\frac{1}{24}$ to $u$; therefore, $f(C) = f_7(C) \geq f_3(C) - \frac{1}{24} \geq \frac{32}{24} - \frac{1}{24} = \frac{31}{24} > 3 \cdot \frac{5}{12}$.

Suppose $C$ is type-2 paired (see Figure \ref{type-2 paired}).  One of the shoulder positions of $C$ is in $D$.  On the other side of $C$, the shoulder position is in $D$ or the arm position is in $D$ (Proposition \ref{leaves prop}).  Let $v$ and $w$ be the shoulder and arm positions of $C$, respectively.  Then, $w \in D_1^{np}$ or $w \in \poorone$ or $w \in D_{3^+}$ or $w \not \in D$.  If $w \in D_1^{np}$ and $v \not \in D$, then $f_1(C) = \frac{32}{24}$.  Now, $C$ is not type-1 paired (Corollary \ref{paired cor}); therefore, $C$ may need to send charge to the shoulder position on the side opposite $w$ by Rule 3b.  Then, $f_3(C) \geq \frac{31}{24}$.  By Rule 4, $C$ sends $\frac{1}{24}$ to the closed 3-cluster with which $C$ is type-2 paired.  Then, $f_4(C) \geq \frac{30}{24}$.  Rule 5 does not apply (Lemma \ref{vpshoulders lem}).  And Rules 7b, 7c and 7e do not apply.  Therefore, $f(C) \geq \frac{30}{24} = 3 \cdot \frac{5}{12}$.  If $w \in D_1^{np}$ and $v \in D$, then $f_1(C) = \frac{101}{72}$.  By Rule 2, $C$ may need to send $\frac{1}{36}$ to distance-2 non-poor 1-clusters.  Then, $f_2(C) \geq \frac{33}{24}$.  By Rule 3b, $C$ sends at most $\frac{1}{24}$.  By Rule 4, $C$ sends $\frac{1}{24}$.  Since $v \in D_1^{np} \cup D_{3^+}$, at least one of the head positions of $C$ is not a very poor 1-cluster; therefore, $C$ sends at most $\frac{1}{24}$ by Rule 5.  Rules 6-7 do not apply.  Therefore, $f(C) \geq \frac{30}{24} = 3 \cdot \frac{5}{12}$.
Now, if $w \in \poorone$, then $v \not \in D$ (Corollary \ref{poor1 cor}) and $f_1(C) = \frac{32}{24}$.  Then, $C$ sends at most $\frac{1}{24}$ by Rule 3b, and $C$ sends $\frac{1}{24}$ by Rule 4.  Rules 5-7 do not apply.  Therefore, $f(C) \geq \frac{30}{24} = 3 \cdot \frac{5}{12}$.  If $w \in D_{3^+}$ and $v \not \in D$, then $f_1(C) = \frac{32}{24}$.  By Rule 3b, $C$ sends at most $\frac{1}{24}$.  If the hand position adjacent to $w$ is in $D$, then $C$ does not send charge by Rule 4 but $C$ may need to send charge by Rule 7d.  If the hand position adjacent to $w$ is not in $D$, then $C$ must send charge by Rule 4 but not by Rule 7d.  Therefore, $C$ sends charge by at most one of Rules 4 and 7d.  No other rules apply.  Therefore, $f(C) \geq \frac{30}{24} = 3 \cdot \frac{5}{12}$.  If $w \in D_{3^+}$ and $v \in D$, then $f_1(C) = \frac{101}{72}$.  By Rule 2, $C$ sends at most $\frac{1}{36}$ to $v$.  Then, $f_2(C) \geq \frac{33}{24}$.  By Rule 3b, $C$ sends at most $\frac{1}{24}$.  Therefore, $f_3(C) \geq \frac{32}{24}$.  Now, $v \in D_1^{np} \cup D_{3^+}$ and $v$ is distance-2 from a head position of $C$; therefore, at most one of the head positions of $C$ is a very poor 1-cluster.  Then, $C$ sends at most $\frac{1}{24}$ by Rule 5.  Again, $C$ sends charge by at most one of Rules 4 and 7d.  Therefore, $f(C) \geq \frac{30}{24} = 3 \cdot \frac{5}{12}$.  If $w \not \in D$, then $v \in D$ (Proposition \ref{leaves prop}).  Then, $f_1(C) = \frac{32}{24}$ and both shoulder positions of $C$ are in $D$.  If both are poor 1-clusters, then $C$ sends at most $\frac{2}{24}$ by Rule 3b and sends no charge by other rules.  Therefore, $f(C) \geq 3 \cdot \frac{5}{12}$.  If one is not a poor 1-cluster, then at most one of the head positions of $C$ is a very poor 1-cluster.  Therefore, $C$ sends at most $\frac{1}{24}$ by Rule 3b and at most $\frac{1}{24}$ by Rule 5, and $C$ sends no charge by other rules.  Therefore, $f(C) \geq \frac{30}{24}$.  If neither is a poor 1-cluster, then $C$ does not send charge by any rule; therefore, $f(C) = \frac{32}{24} > 3 \cdot \frac{5}{12}$.

Suppose $C$ is neither type-1 paired nor type-2 paired and $P(C) = 2$.  Then, 4 and 7e do not apply and $f_1(C) = \frac{32}{24}$.  Since $P(C) = 2$, there exists no one-third vertex adjacent to $C$; therefore, Rule 2 does not apply.  Now, $C$ may need to send charge by Rules 3b, 5, 7b, and 7d.  If $C$ must send charge by Rule 5, then both shoulder positions are in $D$ (Lemma \ref{vpshoulders lem}); therefore, the arm positions and the tail position of $C$ are not in $D$ and, hence, Rules 7b and 7d do not apply.  If $C$ must send charge by Rule 7b, then the tail position of $C$ is in $D$; therefore, the arm and shoulder positions of $C$ are not in $D$ and, hence, Rules 5 and 7d do not apply.  If $C$ must send charge by Rule 7d, then an arm position of $C$ is in $D$.  Since each leaf must have at least one distance-2 vertex in $D$ (Proposition \ref{leaves prop}) and $P(C) = 2$, the tail position of $C$ is not in $D$ and at least one of the shoulder positions is not in $D$; therefore, Rules 5 and 7b do not apply.  Thus, we have shown that $C$ sends charge by at most one of Rules 5, 7b and 7d.  First, suppose $C$ sends by none of Rules 5, 7b and 7d.  Then, $C$ is uncrowded and the tail position of $C$ is not in $D$; therefore, $C$ has exactly 2 distance-2 poor 1-clusters.  Then, $C$ sends $\frac{2}{24}$ by Rule 3b and no charge by any other rule; therefore, $f(C) = \frac{30}{24}$.  Suppose $C$ sends charge by Rule 5.  Then, both shoulder positions of $C$ are in $D$ (Lemma \ref{vpshoulders lem}), and $C$ is crowded; therefore, one of the shoulder positions is not a poor 1-cluster.  But then one of the head positions of $C$ is distance-2 from a non-poor 1-cluster or \threeplus; therefore, there exists only one very poor 1-cluster in a head position of $C$.  Then, $C$ sends $\frac{1}{24}$ by 3b and $\frac{1}{24}$ by Rule 5; therefore, $f(C) = \frac{30}{24}$.  Now suppose $C$ sends charge by Rule 7b.  Then, the tail position of $C$ is in $D$.  Since $P(C) = 2$, the tail position is the only vertex in $D$ at distance-2 from $C$; therefore, there exists at most one distance-2 poor 1-cluster.  Then, $f_3(C) \geq \frac{31}{24}$ and Rule 7b requires $C$ to send at most $\frac{1}{24}$; therefore, $f(C) \geq \frac{30}{24}$.  Finally, suppose $C$ sends charge by Rule 7d.  Then, the hand and arm positions on one side of $C$ are in $D$; therefore, at least one of the vertices in $D$ at distance-2 from $C$ is not a poor 1-cluster.  Then, $f_3(C) \geq \frac{31}{24}$ and, by Rule 7d, $C$ sends at most $\frac{1}{24}$; therefore, $f(C) \geq \frac{30}{24}$.

Suppose $C$ is neither type-1 paired nor type-2 paired and $P(C) = 3$.  If $C$ is adjacent to no one-third vertices, then $f_1(C) = 3 - 3 \cdot \frac{5}{24} - 2 \cdot \frac{5}{12} = \frac{37}{24}$; therefore, $f(C) \geq \frac{30}{24}$ (Proposition \ref{open3 prop}).  If $C$ is adjacent to a one-third vertex, then $f_1(C) = \frac{101}{72}$ and $f_2(C) \geq \frac{99}{72} = \frac{33}{24}$.  Since $C$ is adjacent to a one-third vertex and $P(C) = 3$, there exists at most one distance-2 poor 1-cluster; therefore, $f_3(C) \geq \frac{32}{24}$.  First, suppose the tail position of $C$ is in $D$.  Then, a foot position is also in $D$ and no other distance-2 vertices are in $D$; therefore, Rules 5 and 7d do not apply.  By Rule 7b, $C$ sends at most $\frac{1}{24}$; therefore, $f(C) \geq \frac{31}{24}$.  Now, suppose the tail position of $C$ is not in $D$; therefore, Rule 7b does not apply.  If $C$ does not send charge by Rule 5, then $C$ sends at most by Rule 7d and $f(C) \geq \frac{31}{24}$.  If $C$ sends charge by Rule 5, then both shoulder positions of $C$ are in $D$ (Lemma \ref{vpshoulders lem}).  However, $C$ is adjacent to a one-third vertex; therefore, one of the shoulder positions is not a poor 1-cluster.  Then, at most one of the head positions is a very poor 1-cluster.  Therefore, $C$ sends at most $\frac{1}{24}$ by Rule 5 and at most $\frac{1}{24}$ by Rule 7d.  Therefore, $f(C) \geq \frac{30}{24}$.

Suppose $C$ is neither type-1 paired nor type-2 paired and $P(C) = 4$.  First, suppose $C$ is adjacent to no one-third vertices.  Then, $f_1(C) = \frac{42}{24}$; therefore, $f(C) \geq \frac{30}{24}$ (Proposition \ref{open3 prop}).  Now, suppose $C$ is adjacent to exactly one one-third vertex.  Then, $f_1(C) = \frac{116}{72}$.  By Rule 2, $C$ sends at most $\frac{1}{36}$; therefore, $f_2(C) = \frac{114}{72} = \frac{38}{24}$.  Therefore, $f(C) \geq \frac{30}{24}$ (Proposition \ref{open3 prop}).  Now, suppose $C$ is adjacent to exactly 2 one-third vertices.  Then, $f_1(C) = \frac{53}{36}$.  By Rule 2, $C$ sends at most $2 \cdot \frac{1}{36}$; therefore, $f_2(C) \geq \frac{51}{36} = \frac{34}{24}$.  Since $C$ is adjacent to 2 one-third vertices and $P(C) = 4$, there exist no distance-2 poor 1-clusters; therefore, $f_3(C) \geq f_2(C) \geq \frac{34}{24}$.  In total, Rules 5-7 require $C$ to send at most $\frac{4}{24}$; therefore, $f(C) \geq \frac{30}{24}$.

Suppose $C$ is neither type-1 paired nor type-2 paired and $P(C) \geq 5$.  Then, $f_2(C) \geq \frac{39}{24}$.  Now, an open 3-cluster has at most 4 distance-2 poor 1-clusters.  Therefore, $C$ sends at most $\frac{4}{24}$ by Rule 3b.  By Rules 5-7, $C$ sends at most $\frac{4}{24}$.  Therefore, $f(C) \geq \frac{31}{24}$.
\end{proof}

\begin{proposition}
\label{dist3 prop}

A closed $3$-cluster or \fourpluss sends charge to a distance-$3$ poor $1$-cluster by at most one of Rules $3$\emph c, $6$ and $7$\emph a.

\end{proposition}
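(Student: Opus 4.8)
The plan is to fix a closed $3$-cluster or $4^+$-cluster $C$ together with a distance-$3$ poor $1$-cluster $p$, and to show that at most one of Rules 3c, 6 and 7a transfers charge from $C$ to $p$ --- and that the rule in question does so only once, so that $C$ sends at most $\frac{1}{24}$ to $p$. As a preliminary I would record that Rules 3c, 6 and 7a are indeed the only rules under which a closed $3$-cluster or $4^+$-cluster can send charge to a distance-$3$ poor $1$-cluster: in Rules 1, 2, 3a and 4 the recipient is never a distance-$3$ poor $1$-cluster; in Rules 3b, 3d, 3e, 5, 7b, 7d and 7e the sender is not a closed $3$-cluster or $4^+$-cluster; and in Rule 7c the recipient $u$ lies at distance $2$ from a leaf of $C$ and, being a $1$-cluster, is not adjacent to $C$, hence is at distance exactly $2$, not $3$, from $C$.

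The core of the argument is the observation that whenever Rule 6 or Rule 7a sends charge from $C$ to $p$, the poor $1$-cluster $p$ has already received charge by some rule preceding Rule 3c, so Rule 3c is inapplicable to $p$. First I would treat Rule 6: if it fires with $p = a$, then $a$ is a distance-$2$ poor $1$-cluster of a very poor $1$-cluster $v$ in a symmetric orientation, and by Lemma \ref{vpsym lem} there is an open $3$-cluster $C'$ of which $v$ is in a head position and $a$ is in a shoulder position; by Corollary \ref{vpsym cor}, $C'$ is not type-$1$ paired on top. Since $a$ is a poor $1$-cluster at distance-$2$ from the open $3$-cluster $C'$ occupying a shoulder (not arm) position, neither exception of Rule 3b applies to the pair $(C',a)$, so $a$ receives $\frac{1}{24}$ from $C'$ by Rule 3b unless it had already been charged by Rules 1--3a. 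Either way $a$ has received charge before Rule 3c is reached, so Rule 3c never fires for $a$; in particular $C$ does not send to $a$ by Rule 3c. The treatment of Rule 7a is identical, with Lemma \ref{vp lem} and Corollary \ref{vpasym cor1} in place of Lemma \ref{vpsym lem} and Corollary \ref{vpsym cor}: if Rule 7a fires with $p = w$, then $w$ sits in a shoulder position of the open $3$-cluster $C_0$ of which the asymmetric very poor $1$-cluster is in a head position, $C_0$ is not type-$1$ paired on top, and hence $w$ is charged by Rule 3b or earlier, so Rule 3c cannot fire for $w$. This rules out $C$ sending to $p$ simultaneously by Rules 3c and 6, and simultaneously by Rules 3c and 7a.

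To rule out simultaneous application of Rules 6 and 7a, and to confirm each of the three rules fires at most once, I would invoke the corollaries on very poor $1$-clusters. If Rule 6 sends charge to $p = a$, then $a$ is distance-$2$ from the symmetric very poor $1$-cluster $v$, so by Corollary \ref{vpsym cor2} $a$ is distance-$2$ from no very poor $1$-cluster other than $v$; but if Rule 7a also sent charge to $p = a$, then $a = w$ would be the $w$-position --- hence a distance-$2$ vertex --- of a different, asymmetric, very poor $1$-cluster, a contradiction. The same corollary also shows at most one very poor $1$-cluster has $a$ at distance-$2$, so Rule 6 fires at most once for $a$; Corollary \ref{vpasym cor2} shows at most one very poor $1$-cluster has $w$ in its $w$-position, so Rule 7a fires at most once for $w$; and Rule 3c plainly fires at most once for $p$. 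Hence $C$ sends charge to $p$ by at most one of Rules 3c, 6 and 7a, and in total at most $\frac{1}{24}$.

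I expect the only real obstacle to be the bookkeeping: tracking the ``unless'' clauses of Rules 3b and 3c and confirming that every poor $1$-cluster fed by Rule 6 or Rule 7a is one to which Rule 3c does not apply. The linchpin is that a very poor $1$-cluster never sits in a head position of an open $3$-cluster that is type-$1$ paired on top (Corollaries \ref{vpsym cor} and \ref{vpasym cor1}); were that possible, the shoulder-position vertex could slip past Rule 3b and re-enter the picture via Rule 3c, and the argument would break.
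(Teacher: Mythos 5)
Your proposal is correct and follows essentially the same route as the paper's proof: it uses Lemma \ref{vpsym lem} with Corollary \ref{vpsym cor} (resp.\ Lemma \ref{vp lem} with Corollary \ref{vpasym cor1}) to show that a recipient of Rule 6 (resp.\ 7a) sits in a shoulder position of an open $3$-cluster that is not type-$1$ paired on top and hence is already served by Rule 3b, and it uses Corollaries \ref{vpsym cor2} and \ref{vpasym cor2} to exclude simultaneous application of Rules 6 and 7a. The extra bookkeeping you add (that no other rule can send from such a cluster to a distance-$3$ poor $1$-cluster, and that each rule fires at most once) is a harmless and sensible supplement to what the paper leaves implicit.
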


\begin{proof}

Let $C_1$ be closed 3-cluster or \fourplus.  If a poor 1-cluster, $v$, is distance-2 from a very poor 1-cluster in a symmetric orientation, then $v$ is distance-2 from exactly one very poor 1-cluster.  If $v$ is in the $u$-position or $w$-position of a very poor 1-cluster in an asymmetric orientation, then $v$ is distance-2 from exactly one very poor 1-cluster.  Therefore, $C_1$ sends charge to a poor 1-cluster by at most one of Rules 6 and 7a.  If $C_1$ sends charge to a poor 1-cluster, $a$, by Rule 6, then $a$ is distance-2 from a very poor 1-cluster in a symmetric orientation and in a shoulder position of an open 3-cluster, $C_2$ (Lemma \ref{vpsym lem}).  Now, $C_2$ is not type-1 paired on top (Corollary \ref{vpsym cor}); therefore, $a$ receives charge from $C_2$ by Rule 3b and not from $C_1$ by Rule 3c.  Therefore, $C_1$ sends charge to $a$ by at most one of Rules 3c and 6.  If $C_1$ sends charge to a poor 1-cluster, $w$, by Rule 7a, then $w$ is in the $w$-position of a very poor 1-cluster in an asymmetric orientation; therefore, $w$ is in the shoulder position of an open 3-cluster, $C_0$ (Lemma \ref{vp lem}).  Now, $C_0$ is not type-1 paired on top (Corollary \ref{vpasym cor1}); therefore, $w$ receives charge from $C_0$ by Rule 3b and not from $C_1$ by Rule 3c.  Therefore, $C_1$ sends charge to $w$ by at most one of Rules 3c and 7a. 
\end{proof}


\begin{proposition}
\label{c3 prop}

If $C$ is a closed $3$-cluster and $f_2(C) \geq \frac{43}{24}$, then $f(C) \geq 3 \cdot \frac{5}{12}$.

\end{proposition}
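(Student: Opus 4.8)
The plan is to bound the total charge a closed $3$-cluster $C$ gives away after Rule~2 and show it is at most $\frac{13}{24}$; since $\frac{43}{24}-\frac{13}{24}=\frac{30}{24}=3\cdot\frac{5}{12}$, the proposition follows. First I would check which rules can cost $C$ charge after Rule~2. Rules~1 and~2 are already absorbed into $f_2(C)$; Rules~3d, 3e, 5, 7b, 7d and 7e act only on non-poor $1$-clusters, groups of non-poor $1$-clusters, or open $3$-clusters, so they never remove charge from $C$; and in Rule~4 the closed $3$-cluster is a \emph{recipient}, so that rule can only raise $f(C)$. Hence $C$ loses charge only through Rules~3a, 3c, 6, 7a and 7c, and in each case the recipient is a poor $1$-cluster nearby $C$.

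Next I would bound the loss through Rules~3a, 3c, 6 and 7a together. A poor $1$-cluster at distance~$2$ from $C$ can receive from $C$ only via Rule~3a among these (Rules~3c, 6, 7a all involve a distance-$3$ recipient), and, by Proposition~\ref{dist3 prop}, a poor $1$-cluster at distance~$3$ from $C$ receives from $C$ via at most one of Rules~3c, 6 and 7a; so $C$ gives at most $\frac{1}{24}$ to each nearby poor $1$-cluster through these four rules. Each such poor $1$-cluster is a distinct $1$-cluster within distance~$3$ of $C$, so by Lemma~\ref{k+8} there are at most $3+8=11$ of them, accounting for a loss of at most $\frac{11}{24}$.

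It then remains to bound the loss through Rule~7c, which is the one rule that lets $C$ send a \emph{second} $\frac{1}{24}$ to a distance-$2$ poor $1$-cluster it already fed under Rule~3a. Each application of Rule~7c to $C$ is attached to a leaf $\ell$ of $C$ whose unique vertex of $D\setminus C$ at distance~$2$ is the recipient; since a $3$-cluster is a path, it has exactly two leaves, so Rule~7c applies to $C$ at most twice, a loss of at most $\frac{2}{24}$. (If $C$ is neither type-$2$ paired nor has at most $6$ nearby poor $1$-clusters, Rule~7c does not apply to $C$ at all, which only helps.) Altogether $C$ loses at most $\frac{11}{24}+\frac{2}{24}=\frac{13}{24}$ after Rule~2, so $f(C)\ge f_2(C)-\frac{13}{24}\ge\frac{43}{24}-\frac{13}{24}=3\cdot\frac{5}{12}$.

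The step to be careful with is the over-counting bookkeeping: Rules~6, 7a and 7c all assign charge ``in addition to'' previously-assigned charge, so a single poor $1$-cluster can legitimately draw more than $\frac{1}{24}$ from $C$. What rescues the estimate is that Proposition~\ref{dist3 prop} forbids this at distance~$3$, while at distance~$2$ the only extra channel is Rule~7c, pinned to the two leaves of $C$. Note that the finer counting of Lemmas~\ref{closed3 lem} and~\ref{closed3 lem2} is not needed here — Lemma~\ref{k+8} already suffices — so I expect this argument to be short.
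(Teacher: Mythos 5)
Your proof is correct and follows essentially the same route as the paper's: bound the loss from Rules 3--7a by $\frac{11}{24}$ using Proposition \ref{dist3 prop} together with the bound of $3+8=11$ nearby poor $1$-clusters from Lemma \ref{k+8}, and bound the loss from Rule 7c by $\frac{2}{24}$ via the two leaves of $C$. Your explicit check that the remaining rules never charge a closed $3$-cluster is left implicit in the paper but is the same bookkeeping.
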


\begin{proof}

By Rules 3c, 6 and 7a, $C$ sends at most $\frac{1}{24}$ to each distance-3 poor 1-cluster (Proposition \ref{dist3 prop}).  By Rule 3a, $C$ sends at most $\frac{1}{24}$ to each distance-2 poor 1-cluster.  Now, $C$ has at most 11 nearby poor 1-clusters (Lemma \ref{k+8}); therefore, by Rules 3-7a, $C$ sends at most $\frac{11}{24}$.  By Rule 7c, $C$ sends at most $\frac{2}{24}$ to distance-2 poor 1-clusters.  Therefore, $C$ sends at most $\frac{13}{24}$ by Rules 3-7.
\end{proof}

\begin{corollary}
\label{c3 prop cor}

A closed $3$-cluster sends at most $\frac{11}{24}$ by Rules $3$-$7$\emph a and at most $\frac{2}{24}$ by Rule $7$\emph c.

\end{corollary}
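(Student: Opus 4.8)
The plan is to read off both inequalities from the proof of Proposition~\ref{c3 prop}, which already establishes them as intermediate steps; the corollary is simply the bookkeeping content of that proof isolated for later reference.

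For the first bound, I would begin by checking that among Discharging Rules 3--7a, a closed $3$-cluster $C$ can act as a sender only through Rules 3a, 3c, 6 and 7a: Rule~4 transfers charge \emph{to} a closed $3$-cluster rather than from one, and Rule~5 is applied only to open $3$-clusters. Each of these four rules sends exactly $\frac{1}{24}$ at a time, always to a poor $1$-cluster that is nearby $C$ (Rule~3a to a distance-$2$ poor $1$-cluster; Rules 3c, 6, 7a to a distance-$3$ poor $1$-cluster). By Proposition~\ref{dist3 prop}, at most one of Rules 3c, 6 and 7a fires for any fixed distance-$3$ poor $1$-cluster, while the only one of the four rules by which $C$ can charge a distance-$2$ poor $1$-cluster is Rule~3a, which fires at most once per such cluster. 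Hence $C$ sends at most $\frac{1}{24}$ to each nearby poor $1$-cluster. Since every nearby poor $1$-cluster is in particular a cluster nearby $C$, and a $3$-cluster has at most $3+8=11$ nearby clusters (Lemma~\ref{k+8}), the total charge sent by Rules 3--7a is at most $\frac{11}{24}$.

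For the second bound, I would note that Rule~7c sends $\frac{1}{24}$ from $C$ only to a vertex $u$ which is distance-$2$ from a leaf $\ell$ of $C$ and is the unique vertex of $D\setminus C$ at distance-$2$ from $\ell$. A closed $3$-cluster has exactly two leaves, so there are at most two candidates $u$, and Rule~7c therefore sends at most $\frac{2}{24}$.

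I expect no genuine obstacle here, as the argument is purely a recount of the rules with a closed $3$-cluster as sender. The only points needing a moment of care are verifying that none of the other rules numbered 3--7a ever has a closed $3$-cluster in the sender role (immediate from the statements of Rules 4 and 5), and that no nearby poor $1$-cluster receives charge twice from $C$ across these rules --- which is exactly the content of Proposition~\ref{dist3 prop} for distance-$3$ targets, the distance-$2$ case being trivial since only Rule~3a applies there.
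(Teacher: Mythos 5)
Your proposal is correct and takes essentially the same route as the paper: the corollary is simply the intermediate bookkeeping from the proof of Proposition~\ref{c3 prop}, namely Proposition~\ref{dist3 prop} plus Lemma~\ref{k+8} giving at most $\frac{1}{24}$ to each of at most $11$ nearby poor $1$-clusters for Rules 3--7a, and the two leaves of a $3$-cluster giving the $\frac{2}{24}$ bound for Rule 7c. Your explicit check that Rules 4 and 5 never have a closed $3$-cluster as sender is a small but welcome addition of care; there are no gaps.
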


\begin{claim}
\label{closed3 claim}

For every closed $3$-cluster, $C$, $f(C) \geq 3 \cdot \frac{5}{12}$.

\end{claim}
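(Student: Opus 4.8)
The plan is to fix a closed $3$-cluster $C$, write it as the path $\ell_1 m \ell_2$ (recall $\Delta(C)=2$), and first pin down $f_2(C)$. Since $C$ is closed, the unique non-$D$ neighbour $d_m$ of the middle vertex $m$ has a further neighbour in $D\setminus C$, so $m$ loses at most $\frac{5}{24}$ under Rule 1; and by Proposition \ref{leaves prop} each leaf $\ell_i$ has a distance-$2$ vertex in $D\setminus C$, hence loses at most $\frac{5}{12}+\frac{5}{24}=\frac{15}{24}$. Thus $f_1(C)\ge\frac{19}{24}+2\cdot\frac{9}{24}=\frac{37}{24}$, with equality precisely when $P(C)=3$; in general $f_2(C)$ increases with $P(C)$ and with the number of one-third vertices adjacent to $C$ (each such vertex lets $C$ keep more under Rule 1 than it returns under Rule 2). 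Against this I set the outgoing budget from Corollary \ref{c3 prop cor}: at most $\frac{11}{24}$ by Rules 3--7a and at most $\frac{2}{24}$ by Rule 7c, so at most $\frac{13}{24}$; by Proposition \ref{c3 prop} it therefore suffices to prove $f_2(C)\ge\frac{43}{24}$, and I only need structural input when that inequality fails.

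For $P(C)\ge 5$ a direct count with Rules 1 and 2 gives $f_2(C)\ge\frac{43}{24}$ in every subcase except those in which $C$ is adjacent to two or three one-third vertices; Proposition \ref{c3 prop} finishes all the others. In the exceptional subcases I use that a distance-$2$ vertex of $C$ lying on a one-third vertex cannot be a poor $1$-cluster (its partner in $D$ on the far side of the one-third vertex would violate Corollary \ref{poor1 cor}), so the one-third vertices strip $C$ of distance-$2$ poor $1$-clusters; feeding the reduced count into Lemma \ref{k+8} (at most $11$ nearby clusters, hence poor $1$-clusters) recovers the bound. For $P(C)=4$ I split on whether $C$ is adjacent to a one-third vertex. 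If it is, Lemma \ref{closed3 lem2} bounds the nearby poor $1$-clusters by $8$, and by $7$ or $6$ when one or two arm/foot positions are poor; matching these against the corresponding (larger, when more one-third vertices are present) values of $f_2(C)$ yields $f(C)\ge\frac{30}{24}$, the type-$2$-paired subcase additionally exploiting the $\frac{1}{24}$ returned to $C$ by Rule 4, available by Lemma \ref{type2paired lem} once the poor-cluster count is large. If $C$ is adjacent to no one-third vertex, then $P(C)=4$ forces a fixed local picture with $f_2(C)=\frac{42}{24}$, and Lemma \ref{k+8} finishes: Rule 7c can only extract the extra $\frac{2}{24}$ from $C$ when $C$ is type-$2$ paired (where Rule 4 compensates) or when $C$ already has at most $6$ nearby poor $1$-clusters (where the crude count has ample slack).

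The substantive case is $P(C)=3$, where $f_2(C)=f_1(C)=\frac{37}{24}$ exactly (no adjacent one-third vertex can occur, since $n_d\le 1$ for every non-$D$ neighbour $d$) and the crude budget yields only $\frac{37}{24}-\frac{13}{24}=\frac{24}{24}$. Here I rely on Lemma \ref{closed3 lem}: part (ii) gives at most $5$ nearby poor $1$-clusters when neither a shoulder position nor the tail position is in $D$, which leaves room even after Rule 7c; otherwise part (i) gives at most $8$. When at most $7$ are nearby and $C$ is not type-$2$ paired, Rule 7c does not apply to $C$ at all, so $C$ pays out at most $\frac{7}{24}$ and $f(C)\ge\frac{30}{24}$. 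The tight subcase is exactly $8$ nearby poor $1$-clusters: Lemma \ref{closed3 lem}(i) then produces a distance-$3$ poor $1$-cluster $v$ that is distance-$2$ from another \threeplus, $C_2$, satisfying the configuration constraints (a),(b) of the lemma, and I would argue that these constraints force $v$ to collect its $\frac{1}{24}$ from $C_2$ by Rule 3b---or, in the type-$2$-paired alternative, force the exception in Rule 3c to release $C$ from paying $v$ (with $v$ itself covered by the third cluster supplied by Lemma \ref{type2paired lem}) while Rule 4 simultaneously returns $\frac{1}{24}$ to $C$---so that, after accounting for Rule 7c (in play only when $C$ is type-$2$ paired, and then controlled by Lemma \ref{type2paired lem}), the cluster retains at least $\frac{30}{24}$.

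I expect this $P(C)=3$, eight-nearby subcase---deciding exactly which rule delivers $v$'s charge and keeping Rules 3c, 4 and 7c mutually consistent when $C$ is type-$2$ paired---to be the one genuine obstacle; the remaining cases reduce to arithmetic against the $f_2(C)$ values established in the first step.
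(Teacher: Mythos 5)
Your decomposition by $P(C)$, the reduction via Proposition \ref{c3 prop} and Corollary \ref{c3 prop cor}, and the use of Lemmas \ref{closed3 lem}, \ref{closed3 lem2}, \ref{k+8} and \ref{type2paired lem} are exactly the paper's strategy, so the architecture is sound. But the Rule 7c bookkeeping in the critical $P(C)=3$ case contains a false step. You assert that when $C$ has at most $7$ nearby poor $1$-clusters and is not type-$2$ paired, ``Rule 7c does not apply to $C$ at all.'' That is only true when the count is exactly $7$: Rule 7c explicitly applies to \emph{a closed $3$-cluster with at most $6$ nearby poor $1$-clusters}, so a non-type-$2$-paired $C$ with $6$ nearby poor $1$-clusters could in principle pay $\frac{6}{24}$ by Rules 3--7a plus $\frac{2}{24}$ by Rule 7c, leaving $\frac{37}{24}-\frac{8}{24}=\frac{29}{24}<\frac{30}{24}$. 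The paper closes this by observing that a Rule 7c recipient must be the \emph{only} vertex of $D\setminus C$ at distance $2$ from its leaf and must not occupy a shoulder or tail position; hence if $C$ pays $\frac{2}{24}$ by Rule 7c, both shoulder positions and the tail position lie outside $D$, and Lemma \ref{closed3 lem}(ii) then caps the nearby poor $1$-clusters at $5$, restoring the budget. Your sketch cites part (ii) but never ties its hypothesis to the event ``Rule 7c fires twice,'' which is the whole point of that clause.

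A second, smaller soft spot: in the $P(C)=4$, no-one-third-vertex case you lean on ``Rule 4 compensates'' whenever $C$ is type-$2$ paired, but Rule 4 requires the arm/hand configuration that Lemma \ref{type2paired lem} guarantees only when $C$ has $7$ nearby poor $1$-clusters (its contrapositive gives ``Rule 4 applies or at most $6$ nearby''); you would need to invoke that contrapositive explicitly. The paper avoids the issue entirely: with no adjacent one-third vertex the middle vertex contributes exactly $1$ to $P(C)$, so one leaf has two distance-$2$ vertices in $D\setminus C$ and Rule 7c pays at most $\frac{1}{24}$, whence $\frac{42}{24}-\frac{11}{24}-\frac{1}{24}=\frac{30}{24}$ with no appeal to Rule 4. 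Finally, you leave the $8$-nearby, $P(C)=3$ subcase as an acknowledged obstacle; the paper's resolution is precisely Lemma \ref{closed3 lem}(i)(a),(b) together with Proposition \ref{dist3 prop} (the distinguished $v$ is paid by $C_2$ under Rules 3a--3b and hence not by $C$ under 3c, 6 or 7a), plus Lemma \ref{type2paired lem} and Rule 4 in the type-$2$-paired branch. Your instinct about where the work lies is correct, but as written the proof is not complete.
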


\begin{proof}

Consider a closed 3-cluster, $C_1$, and let $P(C_1) = 3$.  Then, $f_1(C_1) = \frac{37}{24}$.  By Lemma \ref{closed3 lem}, $C_1$ has at most 8 nearby poor 1-clusters; however, if $C_1$ has 8 such clusters, at least one of the poor 1-clusters at distance-3, $v$, is distance-2 from another \threeplus, $C_2$, such that

\begin{itemize}

\item[(a)] if $C_2$ is an open 3-cluster, then $v$ is not in a shoulder position;

\item[(b)] if $C_2$ is an open 3-cluster and $v$ is in an arm position, then $C_2$ is not type-1 paired; if $C_2$ is type-2 paired, then $C_1$ is type-2 paired with $C_2$.  

\end{itemize}
Therefore, $v$ receives charge from $C_2$ by Rules 3a-3b and not from $C_1$ by Rule 3c; additionally, $C_1$ does not send charge to $v$ by Rules 6 and 7a.  Then, $C_1$ sends at most $\frac{7}{24}$ by Rules 3, 6 and 7a (Proposition \ref{dist3 prop}).  If $C_1$ sends charge by Rule 7c, then $C_1$ has at most 6 nearby poor 1-clusters or $C_1$ is type-2 paired.  A poor 1-cluster, $u$, receives charge by Rule 7c only if $u$ is the only vertex in $D \setminus C_1$ at distance-2 from a leaf of $C_1$ and $u$ is not in a shoulder or tail position; therefore, $C_1$ sends at most $\frac{2}{24}$ by Rule 7c.  If $C_1$ sends $\frac{2}{24}$ by Rule 7c, then the shoulder positions and the tail position of $C_1$ are not in $D$; therefore, $C_1$ has at most 5 nearby poor 1-clusters (Lemma \ref{closed3 lem}).  Then, $C$ sends at most $\frac{5}{24}$ by Rules 3, 6 and 7a (Proposition \ref{dist3 prop}) and $\frac{2}{24}$ by Rule 7c, and $f(C_1) \geq \frac{30}{24}$.  If $C_1$ sends $\frac{1}{24}$ by Rule 7c and $C_1$ has at most 6 nearby poor 1-clusters, then $C_1$ sends at most $\frac{6}{24}$ by Rules 3, 6 and 7a (Proposition \ref{dist3 prop}) and $\frac{1}{24}$ by Rule 7c, and $f(C_1) \geq \frac{30}{24}$.  If $C_1$ sends $\frac{1}{24}$ by Rule 7c and $C_1$ is type-2 paired with the open 3-cluster, $C_2$, then the argument is identical to the previous case unless $C_1$ has 7 nearby poor 1-clusters.  In this case, the arm position of $C_2$ is in $D$ and the hand position on the same side is not in $D$ (Lemma \ref{type2paired lem}); therefore, $C_1$ receives $\frac{1}{24}$ from $C_2$ by Rule 4.  Then, $C_1$ sends at most $\frac{7}{24}$ by Rules 3, 6 and 7a (Proposition \ref{dist3 prop}) and $\frac{1}{24}$ by Rule 7c; however, $C_1$ receives $\frac{1}{24}$ by Rule 4 and, therefore, $f(C_1) \geq \frac{30}{24}$.


Consider a closed 3-cluster, $C$, and let $P(C) = 4$.  Then, either $C$ is adjacent to no one-third vertices or $C$ is adjacent to exactly one one-third vertex.  In the former case, $f_1(C) = \frac{42}{24}$; and since $C$ is not adjacent to any one-third vertices, $f_2(C) = \frac{42}{24}$.  Now, $C$ sends at most $\frac{11}{24}$ by Rules 3-7a and at most $\frac{2}{24}$ by Rule 7c (Corollary \ref{c3 prop cor}).  Since $C$ is adjacent to no one-third vertices, $C$ is closed by a single vertex; therefore, since $P(C) = 4$, one of the leaves of $C$ is distance-2 from 2 vertices in $D \setminus C$; therefore, $C$ sends at most $\frac{1}{24}$ by Rule 7c.  Therefore, $f(C) \geq \frac{30}{24}$.  Now suppose $C$ is adjacent to a one-third vertex.  Then, $f_2(C) \geq \frac{38}{24}$.  Now, $C$ has at most 8 nearby poor 1-clusters (Lemma \ref{closed3 lem2}); therefore, $C$ sends at most $\frac{8}{24}$ by Rules 3-7a (Proposition \ref{dist3 prop}).  Therefore, if $C$ sends no charge by Rule 7c, then $f(C) \geq \frac{30}{24}$.  If $C$ sends $\frac{1}{24}$ by Rule 7c, then an arm position or foot position of $C$ is a poor 1-cluster.  But then $C$ has at most 7 nearby poor 1-clusters (Lemma \ref{closed3 lem2}) and $f_{7a} \geq \frac{31}{24}$; therefore, $f(C) \geq \frac{30}{24}$.  If $C$ sends $\frac{2}{24}$ by Rule 7c, then 2 arm or foot positions are poor 1-clusters.  But then $C$ has at most 6 nearby poor 1-clusters (Lemma \ref{closed3 lem2}) and $f_{7a} \geq \frac{32}{24}$; therefore, $f(C) \geq \frac{30}{24}$.  

Let $P(C) = 5$.  Then, either $C$ is adjacent to no one-third vertices, one one-third vertex or 2 one-third vertices.  In the first case, $f_2(C) = \frac{47}{24}$; therefore, $f(C) \geq \frac{30}{24}$ (Proposition \ref{c3 prop}).  In the second case, $f_2(C) \geq \frac{43}{24}$; therefore, $f(C) \geq \frac{30}{24}$ (Proposition \ref{c3 prop}).  In the last case, $f_2(C) \geq \frac{39}{24}$.  Now, $C$ has at most 11 nearby clusters (Lemma \ref{k+8}).  Since $C$ is adjacent to 2 one-third vertices, at least 3 of these clusters are not poor 1-clusters; additionally, at least one of the leaves of $C$ is distance-2 from more than one vertex in $D \setminus C$.  Therefore, $C$ sends at most $\frac{8}{24}$ by Rules 3-7a (Proposition \ref{dist3 prop}) and at most $\frac{1}{24}$ by Rule 7c; therefore, $f(C) \geq \frac{30}{24}$.

Let $P(C) \geq 6$.  If $f_2(C) \geq \frac{43}{24}$, then $f(C) \geq \frac{30}{24}$ (Proposition \ref{c3 prop}).  If $f_2(C) < \frac{43}{24}$, then $C$ is adjacent to at least 3 one-third vertices.  Therefore, at least 5 of the clusters nearby $C$ are not poor 1-clusters.  Therefore, $C$ has at most 6 nearby poor 1-clusters (Lemma \ref{k+8}).  Then, $C$ sends at most $\frac{6}{24}$ by Rules 3-7a (Proposition \ref{dist3 prop}).  By Rule 7c, $C$ sends at most $\frac{2}{24}$.  Therefore, $C$ sends at most $\frac{8}{24}$.  But if $P(C) \geq 6$, then $f_2(C) \geq \frac{40}{24}$; therefore, $f(C) \geq \frac{32}{24}$.
\end{proof}

Now we begin our discussion of \fourplus s.  For $k \geq 4$, let $C$ be a $k$-cluster, and let $v$ be a vertex in $C$.  Then, $d_C(v) \in \{ 1,2,3 \}$.  Let $$\alpha_i = | \{ v \in C : d_C(v) = i \} |$$
Now, $C$ has at most $k+8$ nearby poor 1-clusters (Lemma \ref{k+8}); therefore, $C$ sends at most $\frac{k+8}{24}$ by Rules 3-7a (Proposition \ref{dist3 prop}).  By Rule 7c, $C$ sends at most $\frac{1}{24}$ for each leaf of $C$.  Now, the number of leaves of $C$ is $\alpha_1$; therefore, $C$ sends at most $\frac{1}{24} \alpha_1$ by Rule 7c.  Rules 1 and 2 are the only others by which $C$ may need to send charge; therefore, $f(C) \geq f_2(C) - \frac{1}{24} [(k+8) + \alpha_1]$.  Now, $f_2(C)$ is minimal when $P(C) = 2$; that is, $f_2(C) \geq k - (\frac{5}{12} + \frac{5}{24}) \alpha_1 - \frac{5}{12} \alpha_2$.  Let $F(C) = f(C) - \frac{5}{12} k$.  Then, $$F(C) \geq \left[k -  \left(\frac{5}{12} + \frac{5}{24}  \right) \alpha_1 - \frac{5}{12} \alpha_2 \right] - \frac{1}{24} \left[ \left(k+8 \right) + \alpha_1 \right] -\frac{5}{12} k$$
Now, $k = \alpha_1 + \alpha_2 + \alpha_3$.  Then, substituting and simplifying, $$F(C) \geq \frac{1}{24} \left( -3 \alpha_1 + 3 \alpha_2 + 13 \alpha_3 - 8 \right)$$
Now, $\Delta(C) = 3$; therefore, $\alpha_1 \leq \alpha_3 + 2$.  Then, 
\begin{equation} 
\label{F(C)}
F(C) \geq \frac{1}{24} \left[ -3 \left( \alpha_3 +2 \right) + 3 \alpha_2 + 13 \alpha_3 - 8 \right] = \frac{1}{24} \left( 3 \alpha_2 + 10 \alpha_3 - 14 \right)
\end{equation}
Now, $F(C) < 0$ if, and only if, $f(C) < \frac{5}{12} k$.  Let $A = \{ (0,0), (1,0), (2,0), (3,0), (4,0), (0,1), (1,1) \}$.  If $F(C) < 0$, then $(\alpha_2, \alpha_3) \in A$.  That is, for all $(\alpha_2, \alpha_3) \not \in A$, Equation \ref{F(C)} implies $f(C) \geq \frac{5}{12} k$.  Therefore, we have only left to consider the cases in which $(\alpha_2, \alpha_3) \in A$.

If $(\alpha_2, \alpha_3) \in \{ (0,0), (1,0) \}$, then $C \in \cK_1 \cup \cK_3$.  But we assumed $C \in \cK_{4^+}$; therefore, we need not consider this case.  If $(\alpha_2, \alpha_3) \in \{ (2,0), (0,1) \}$, then $C \in \cK_4$; we consider this case in Claim \ref{4cluster claim}.  If $(\alpha_2, \alpha_3) \in \{ (3,0), (1,1) \}$, then $C \in \cK_5$; we consider this case in Claim \ref{5cluster claim}.  Finally, if $(\alpha_2, \alpha_3) = (4,0)$, then $C \in \{L \in \cK_6 : \Delta(L) = 2 \}$; we consider this case in Claim \ref{6cluster claim}.


\begin{claim}
\label{4cluster claim}

For every $4$-cluster, $C$, $f(C) \geq 4 \cdot \frac{5}{12}$.

\end{claim}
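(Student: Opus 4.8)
The plan is to feed the shape of $C$ into the general inequality~\eqref{F(C)} and then sharpen it with the structural lemmas of Section~\ref{lemmas}. Since $C$ is a $4$-cluster, $(\alpha_2,\alpha_3)\in\{(2,0),(0,1)\}$, so either $\Delta(C)=2$, in which case $C$ is a $4$-vertex path and hence a linear or a curved $4$-cluster (Definition~\ref{4cluster def}), each of which may be open or closed (Definition~\ref{open-closed}); or $\Delta(C)=3$, in which case $C$ is a ``$4$-star'' with $\alpha_1=3$. I would treat these two shapes separately and, within each, split on the value of $P(C)$ (and, when $P(C)=3$ or $C$ is closed, on whether $C$ is adjacent to a one-third vertex). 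Two bookkeeping facts get used throughout: a $4^+$-cluster never receives charge, so $f(C)=f_1(C)-(\text{Rule 2})-(\text{Rules 3--7})$; and of Rules 5--7 only 6, 7a and 7c apply to a $4^+$-cluster (the rest concern open $3$-clusters). By Proposition~\ref{dist3 prop} Rules 3c, 6 and 7a together cost $C$ at most $\tfrac1{24}$ per nearby distance-$3$ poor $1$-cluster, Rule 3a costs at most $\tfrac1{24}$ per distance-$2$ poor $1$-cluster, and Rule 7c costs at most $\tfrac1{24}$ per leaf of $C$ --- and then only for a leaf whose sole $D$-vertex at distance $2$ is a poor $1$-cluster lying in no one-turn or backwards position.

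For a linear or curved $4$-cluster with $P(C)=2$ --- which is necessarily open, since ``closed'' forces a distance-$2$ vertex at a non-leaf --- I would compute $f_1(C)=\tfrac{46}{24}$ directly from Rule 1 (a leaf sends at most $\tfrac{15}{24}$, a degree-$2$ vertex at most $\tfrac{10}{24}$, and $P(C)=2$ rules out an adjacent one-third vertex), then invoke Lemma~\ref{linearopen4 lem} (resp.\ Lemma~\ref{curvedopen4 lem}). If both one-turn (resp.\ backwards) positions lie in $D$, they are the only distance-$2$ vertices in $D$, so Rule 7c cannot fire, and the at most $8$ nearby poor $1$-clusters include, when the bound is met, at least $2$ stealable ones, so $C$ pays at most $\tfrac6{24}$. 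If one or neither of those positions is in $D$, the lemma cuts the count to $6$ (with a stealable one) or $4$ (resp.\ $2$), and including the extra Rule 7c term again leaves $C$ paying at most $\tfrac6{24}$. In every sub-case $f(C)\ge\tfrac{46}{24}-\tfrac6{24}=4\cdot\tfrac5{12}$.

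For a linear or curved $4$-cluster with $P(C)=3$ --- which also covers every closed $4$-vertex path --- the extra distance-$2$ vertex in $D$ pushes $f_1(C)$ up to at least $\tfrac{51}{24}$, while Lemmas~\ref{linear4 lem2} and~\ref{curved4 lem2} bound the nearby poor $1$-clusters by $9$, by $11$, or, once a one-third vertex is adjacent, by $6$; subtracting the charge $C$ may send (at most $\tfrac1{36}$ by Rule 2 per adjacent one-third vertex, plus the Rule 3/6/7a and Rule 7c terms) gives $f(C)\ge4\cdot\tfrac5{12}$, and for $P(C)\ge4$ the surplus of $f_2(C)$ over its $P(C)=2$ minimum absorbs even the crude bound $k+8=12$ of Lemma~\ref{k+8} plus the Rule 7c term. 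For the $4$-star, $P(C)\ge3$ always (each of the three leaves needs a distance-$2$ vertex in $D$ by Proposition~\ref{leaves prop}), so the case $P(C)=2$ never arises; when $P(C)=3$ one has $f_1(C)=\tfrac{51}{24}$ (the degree-$3$ vertex sends nothing by Rule 1), Lemma~\ref{4star lem} gives at most $8$ nearby poor $1$-clusters, and Rule 7c costs at most $\tfrac3{24}$, so $f(C)\ge\tfrac{51}{24}-\tfrac8{24}-\tfrac3{24}=4\cdot\tfrac5{12}$; and for $P(C)\ge4$ one uses that a leaf reached through a one-third vertex carries two $D$-vertices at distance $2$, hence contributes nothing to Rule 7c, together with the improved $f_1(C)$ and the fact that those extra distance-$2$ vertices suppress the count of nearby poor $1$-clusters below $12$.

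I expect the main obstacle to be exactly the extremal configurations, where the inequality is essentially an equality: the open linear or curved $4$-cluster with $P(C)=2$ attaining $8$ nearby poor $1$-clusters --- where one must lean on stealability to remove two from the count and on the geometry of the one-turn/backwards positions to kill the Rule 7c term --- and the $4$-star with $P(C)=3$ attaining $8$, where $f(C)=\tfrac{51}{24}-\tfrac{11}{24}=4\cdot\tfrac5{12}$ with nothing to spare. Showing that these tight configurations really do land on $\tfrac5{12}$ rather than below it, instead of producing an easy strict surplus, is the whole point of the claim, and it is precisely what Lemmas~\ref{linearopen4 lem}--\ref{4star lem} were designed to guarantee.
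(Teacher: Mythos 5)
Your decomposition and toolkit are exactly the paper's: split into linear, curved and degree-$3$ $4$-clusters, subdivide by $P(C)$ and by adjacency to one-third vertices, compute $f_2(C)$ from Rules 1--2, and then charge Rules 3--7a at $\frac{1}{24}$ per nearby poor $1$-cluster (Proposition~\ref{dist3 prop}) and Rule 7c at $\frac{1}{24}$ per eligible leaf, leaning on stealability and on the one-turn/backwards positions in the tight $P(C)=2$ cases. That skeleton, and your handling of $P(C)=2$ and of the $4$-star, match the paper's proof.

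However, the accounting does not close in two of your subcases as you have stated it. First, for a curved $4$-cluster with $P(C)=3$ and no adjacent one-third vertex, you have $f_2(C)=\frac{51}{24}$ and a bound of $11$ nearby poor $1$-clusters, so your ledger reads $\frac{51}{24}-\frac{11}{24}-\frac{2}{24}=\frac{38}{24}<\frac{40}{24}$. You need the coupling built into Lemma~\ref{curved4 lem2}(i) --- if $k$ backwards positions are absent from $D$ the count drops to $11-k$ --- together with the observation that Rule 7c can only fire for a leaf whose backwards position is \emph{not} in $D$, so the Rule 7c cost and the poor-$1$-cluster count cannot both be maximal; the paper runs exactly this three-way case analysis on how many times Rule 7c fires. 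Second, for a linear or curved $4$-cluster with $P(C)\ge 4$ that is adjacent to one-third vertices, the surplus in $f_2(C)$ does \emph{not} absorb the crude bound of Lemma~\ref{k+8}: with one adjacent one-third vertex $f_2(C)$ can be as small as $\frac{52}{24}$, and with two (which is forced when $P(C)=4$ and both leaves sit on one-third vertices) as small as $\frac{48}{24}$, so subtracting $\frac{12}{24}+\frac{2}{24}$ lands below $\frac{40}{24}$. The repair is the observation you invoke only for the $4$-star: each adjacent one-third vertex contributes two vertices of $D\setminus C$ at distance $2$ that are not poor $1$-clusters (Corollary~\ref{poor1 cor}), cutting the count of Lemma~\ref{k+8} by $2$ per one-third vertex, and when both leaves are covered by one-third vertices Rule 7c cannot fire at all. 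With those two adjustments your argument coincides with the paper's.
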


\begin{proof}

First, consider a linear 4-cluster, $C$, and let $P(C) = 2$.  Then, $f_1(C) = \frac{46}{24}$.  Rule 2 does not apply; therefore, $f_2(C) = \frac{46}{24}$.  First suppose $C$ sends no charge by Rule 7c.  Then, $C$ has at most 8 nearby poor 1-clusters; however, if $C$ has $k$ nearby poor 1-clusters, where $k > 6$, then $k - 6$ of the distance-3 poor 1-clusters are stealable (Lemma \ref{linearopen4 lem}) -- that is, $k-6$ of the nearby poor 1-clusters will receive charge from other \threeplus s by Rules 3a-3b and not from $C$ by Rules 3c, 6 or 7a.  Therefore, $C$ sends charge to at most 6 nearby poor 1-clusters.  Then, $C$ sends at most $\frac{6}{24}$ (Proposition \ref{dist3 prop}) and, therefore, $f(C) \geq \frac{40}{24} = 4 \cdot \frac{5}{12}$.  Now suppose $C$ sends $\frac{1}{24}$ by Rule 7c.  Then, one one-turn position is not in $D$; therefore, $C$ has at most 6 nearby poor 1-clusters and if $C$ has exactly 6 such clusters then at least one of the distance-3 poor 1-clusters is stealable (Lemma \ref{linearopen4 lem}) -- that is, at least one of the distance-3 poor 1-clusters will receive charge by Rules 3a-3b and not from $C$ by Rules 3c, 6 or 7a.  Therefore, $C$ sends charge to at most 5 nearby poor 1-clusters by Rules 3, 6 and 7a.  Then, $C$ sends at most $\frac{5}{24}$ by Rules 3, 6 and 7a (Proposition \ref{dist3 prop}) and $\frac{1}{24}$ by Rule 7c; therefore, $f(C) \geq \frac{40}{24}$.  Finally, suppose $C$ sends $\frac{2}{24}$ by Rule 7c.  Then, neither one-turn position is in $D$; therefore, $C$ has at most 4 nearby poor 1-clusters.  Then, $C$ sends at most $\frac{4}{24}$ by Rules 3, 6 and 7a (Proposition \ref{dist3 prop}) and $\frac{2}{24}$ by Rule 7c; therefore, $f(C) \geq \frac{40}{24}$.  

Let $P(C) = 3$.  First, suppose $C$ is adjacent to no one-third vertices.  Then, $f_2(C) = \frac{51}{24}$.  Now, $C$ has at most 9 nearby poor 1-clusters (Lemma \ref{linear4 lem2}); therefore, $C$ sends at most $\frac{9}{24}$ by Rules 3-7a (Proposition \ref{dist3 prop}).  And $C$ sends at most $\frac{2}{24}$ by Rule 7c; therefore, $f(C) \geq \frac{40}{24}$.  Now, suppose $C$ is adjacent to a one-third vertex, $v_{\frac{1}{3}}$.  Then, $f_2(C) \geq \frac{47}{24}$.  Now, $C$ has at most 6 nearby poor 1-clusters (Lemma \ref{linear4 lem2}); therefore, $C$ sends at most $\frac{6}{24}$ by Rules 3-7a (Proposition \ref{dist3 prop}).  Since $P(C) = 3$, one of the leaves of $C$ must be adjacent to $v_{\frac{1}{3}}$; therefore, one of the leaves of $C$ has more than one distance-2 vertex in $D \setminus C$.  Then, $C$ sends at most $\frac{1}{24}$ by Rule 7c; therefore, $f(C) \geq \frac{40}{24}$.  

Now, consider a curved 4-cluster, $C$, and let $P(C) = 2$.  Then, $f_1(C) = \frac{46}{24}$.  Rule 2 does not apply; therefore, $f_2(C) = \frac{46}{24}$.  First, suppose $C$ sends no charge by Rule 7c.  Then, $C$ has at most 8 nearby poor 1-clusters; however, if $C$ has $k$ such clusters, where $k > 6$, then at least $k-6$ of the distance-3 poor 1-clusters are stealable (Lemma \ref{curvedopen4 lem}) -- that is, $C$ sends charge to at most 6 nearby poor 1-clusters by Rules 3-7a.  Then, $C$ sends no charge by Rule 7c and at most $\frac{6}{24}$ by Rules 3-7a (Proposition \ref{dist3 prop}); therefore, $f(C) \geq \frac{40}{24}$.  Now, suppose $C$ sends $\frac{1}{24}$ by Rule 7c.  Then, one backwards position of $C$ is not in $D$; therefore, $C$ has at most 6 nearby poor 1-clusters, and if $C$ has 6 such clusters then at least one is stealable (Lemma \ref{curvedopen4 lem}) -- that is, $C$ sends at most $\frac{5}{24}$ by Rules 3-7a (Proposition \ref{dist3 prop}).  Therefore, $f(C) \geq \frac{40}{24}$.  Finally, suppose $C$ sends $\frac{2}{24}$ by Rule 7c.  Then, neither backwards position of $C$ is in $D$; therefore, $C$ has at most 2 nearby poor 1-clusters (Lemma \ref{curvedopen4 lem}).  Then, $C$ sends at most $\frac{2}{24}$ by Rules 3-7a (Proposition \ref{dist3 prop}); therefore, $f(C) \geq \frac{42}{24}$.  

Let $P(C) = 3$.  First, suppose $C$ is adjacent to no one-third vertices.  Then, $f_2(C) = \frac{51}{24}$.  Now, $C$ has at most 11 nearby poor 1-clusters (Lemma \ref{curved4 lem2}); therefore, if $C$ sends no charge by Rule 7c, then $f(C) \geq \frac{40}{24}$ (Proposition \ref{dist3 prop}).  If $C$ sends $\frac{1}{24}$ by Rule 7c, then one backwards position of $C$ is not in $D$; therefore, $C$ has at most 10 nearby poor 1-clusters (Lemma \ref{curved4 lem2}).  Then, $C$ sends at most $\frac{10}{24}$ by Rules 3-7a (Proposition \ref{dist3 prop}) and $\frac{1}{24}$ by Rule 7c; therefore, $f(C) \geq \frac{40}{24}$.  If $C$ sends $\frac{2}{24}$, then both backwards positions are not in $D$; therefore, $C$ has at most 9 nearby poor 1-clusters (Lemma \ref{curved4 lem2}).  Then, $C$ sends at most $\frac{9}{24}$ by Rules 3-7a (Proposition \ref{dist3 prop}); therefore, $f(C) \geq \frac{40}{24}$.  Now, suppose $C$ is adjacent to a one-third vertex, $v_{\frac{1}{3}}$.  Then, $f_2(C) \geq \frac{47}{24}$.  Since $P(C)=3$ and each leaf of $C$ has at least one distance-2 vertex in $D \setminus C$ (Proposition \ref{leaves prop}), $v_{\frac{1}{3}}$ is adjacent to one of the leaves of $C$; therefore, $C$ sends at most $\frac{1}{24}$ by Rule 7c.  Now, $C$ has at most 6 nearby poor 1-clusters (Lemma \ref{curved4 lem2}).  Therefore, $C$ sends at most $\frac{6}{24}$ by Rules 3-7a (Proposition \ref{dist3 prop}) and at most $\frac{1}{24}$ by Rule 7c; therefore, $f(C) \geq \frac{40}{24}$.  

Consider a linear or curved 4-cluster, $C$, and let $P(C) \geq 4$.  First, suppose $C$ is adjacent to no one-third vertices.  Then, $f_2(C) \geq \frac{56}{24}$.  Now, $C$ has at most 12 nearby poor 1-clusters (Lemma \ref{k+8}); therefore, $C$ sends at most $\frac{12}{24}$ by Rules 3-7a (Proposition \ref{dist3 prop}).  By Rule 7c, $C$ sends at most $\frac{2}{24}$; therefore, $f(C) \geq \frac{42}{24}$.  Now, suppose $C$ is adjacent to exactly one one-third vertex.  Then, $f_2(C) \geq \frac{52}{24}$.  Now, $C$ has at most 12 nearby clusters (Lemma \ref{k+8}).  However, since $C$ is adjacent to a one-third vertex, at least 2 of these clusters are not poor 1-clusters; therefore, $C$ has at most 10 nearby poor 1-clusters.  Then, $C$ sends at most $\frac{10}{24}$ by Rules 3-7a (Proposition \ref{dist3 prop}) and at most $\frac{2}{24}$ by Rule 7c; therefore, $f(C) \geq \frac{40}{24}$.  Finally, suppose $C$ is adjacent to 2 one-third vertices.  Now, if $P(C) = 4$, then each leaf is adjacent to a one-third vertex and $f_2(C) \geq \frac{48}{24}$.  Then, at least 4 of the 12 possible nearby clusters are not poor 1-clusters; therefore, $C$ sends at most $\frac{8}{24}$ by Rules 3-7a (Proposition \ref{dist3 prop}).  Since both leaves have more than one distance-2 vertex in $D \setminus C$, no charge is sent by Rule 7c; therefore, $f(C) \geq \frac{40}{24}$.  If $P(C) \geq 5$ and $C$ is adjacent to 3 one-third vertices, then $f_2(C) \geq \frac{49}{24}$ and at least 5 of the 12 possible nearby clusters are not poor 1-clusters; therefore, $C$ sends at most $\frac{7}{24}$ by Rules 3-7a (Proposition \ref{dist3 prop}).  By Rule 7c, $C$ sends at most $\frac{2}{24}$; therefore, $f(C) \geq \frac{40}{24}$.  If $P(C) \geq 5$ and $C$ is adjacent to exactly 2 one-third vertices, then $f_2(C) \geq \frac{53}{24}$.  Now, at least 3 of the 12 possible nearby clusters are not poor 1-clusters; therefore, $C$ sends at most $\frac{9}{24}$ by Rules 3-7a (Proposition \ref{dist3 prop}).  By Rule 7c, $C$ sends at most $\frac{2}{24}$; therefore, $f(C) \geq \frac{42}{24}$.

Consider a 4-cluster, $C$, and let $C$ have a degree-3 vertex.  First, suppose $P(C) = 3$.  Then, $f_2(C) = \frac{51}{24}$.  Now, $C$ has at most 8 nearby poor 1-clusters; therefore, $C$ sends at most $\frac{8}{24}$ by Rules 3-7a (Proposition \ref{dist3 prop}).  Since $C$ has 3 leaves, $C$ sends at most $\frac{3}{24}$ by Rule 7c.  Therefore, $f(C) \geq \frac{40}{24}$.  Now, suppose $P(C) \geq 4$.  If $C$ is adjacent to a one-third vertex, $v_{\frac{1}{3}}$, then $f_2(C) \geq \frac{52}{24}$ and at least 2 of the 12 possible nearby clusters (Lemma \ref{k+8}) are not poor 1-clusters; therefore, $C$ sends at most $\frac{10}{24}$ by Rules 3-7a (Proposition \ref{dist3 prop}).  From the structure of $C$, we see that $v_{\frac{1}{3}}$ must be adjacent to a leaf of $C$; therefore, at least one of the leaves of $C$ has more than one distance-2 vertex in $D \setminus C$.  Therefore, $C$ sends at most $\frac{2}{24}$ by Rule 7c.  Therefore, $f(C) \geq \frac{40}{24}$.  If $C$ is adjacent to no one-third vertices, then $f_2(C) \geq \frac{56}{24}$.  Now, $C$ has at most 12 nearby poor 1-clusters (Lemma \ref{k+8}), and $C$ sends at most $\frac{3}{24}$ by Rule 7c; therefore, $f(C) \geq \frac{41}{24}$ (Proposition \ref{dist3 prop}).  
\end{proof}

\begin{claim}
\label{5cluster claim}

For every $5$-cluster, $C$, $f(C) \geq 5 \cdot \frac{5}{12}$.

\end{claim}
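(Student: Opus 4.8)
The plan is to reuse the case split established just above: every $5$-cluster $C$ has $(\alpha_2,\alpha_3)\in\{(3,0),(1,1)\}$, so either $\Delta(C)=2$ and $C$ is a five-vertex path (hence open or closed, by Definition~\ref{open-closed}) with $\alpha_1=2$, or $C$ has exactly one degree-$3$ vertex and $\alpha_1=3$. A useful preliminary observation is that the only discharging rules that can move charge out of a $5$-cluster are Rule 1; Rule 2; Rules 3a and 3c, Rule 6 and Rule 7a, each of which sends at most $\frac{1}{24}$ to a given nearby poor $1$-cluster by Proposition~\ref{dist3 prop}; and Rule 7c, which sends at most $\frac{1}{24}$ for each leaf of $C$. (Rules 4, 5, 7b, 7d, 7e transfer charge only among $3$-clusters and one-third vertices.) I would treat the three cases — open $5$-cluster, closed $5$-cluster, and $5$-cluster with a degree-$3$ vertex — in turn, bounding $f_2(C)$ below via Rule 1 and Proposition~\ref{leaves prop} and then bounding the total outflow via the relevant structural lemma.

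For an open $5$-cluster, Proposition~\ref{leaves prop} gives that for each of the two leaves one of its two neighbours not in $D$ has a neighbour in $D$ other than the leaf, so each leaf sends at most $\frac{5}{24}+\frac{5}{12}=\frac{15}{24}$ by Rule 1 and each of the three degree-$2$ vertices sends at most $\frac{5}{12}=\frac{10}{24}$; hence $f_1(C)\ge 5-\frac{60}{24}=\frac{60}{24}$. A one-third vertex adjacent to $C$ must be adjacent to a leaf (otherwise a non-leaf vertex would have a distance-$2$ vertex in $D\setminus C$, making $C$ closed), and the resulting extra Rule 1 saving at that leaf ($\ge\frac{5}{72}$) exceeds what Rule 2 then removes ($\le\frac{1}{18}$), so $f_2(C)\ge\frac{60}{24}$ in all cases. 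By Lemma~\ref{5cluster lem}, $C$ has at most $9$ nearby poor $1$-clusters, and if it has exactly $9$ then some distance-$3$ poor $1$-cluster $v$ is stealable; by Definition~\ref{stealable def} and Rules 3a--3b, $v$ then receives its $\frac{1}{24}$ from a distance-$2$ \threepluss, and since $v$ is not in a shoulder position it gets nothing from $C$ by Rule 3c, 6 or 7a. So $C$ sends at most $\frac{8}{24}$ by Rules 3, 6 and 7a and at most $\frac{2}{24}$ by Rule 7c (at most one application per leaf), giving $f(C)\ge\frac{60}{24}-\frac{10}{24}=\frac{50}{24}=5\cdot\frac{5}{12}$.

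For a closed $5$-cluster, the vertex $w$ closing $C$ lies at distance $2$ from some non-leaf vertex $v$ via a common neighbour $u$; since $u\notin D$ (else $u$ would lie in $C$) is adjacent to both $v$ and $w$, vertex $v$ sends at most $\frac{5}{24}$ by Rule 1, so $f_1(C)\ge 5-\frac{55}{24}=\frac{65}{24}$, and, as before, one-third vertices only improve this, so $f_2(C)\ge\frac{65}{24}$. By Lemma~\ref{k+8}, $C$ has at most $13$ nearby clusters, so it sends at most $\frac{13}{24}$ by Rules 3, 6 and 7a and at most $\frac{2}{24}$ by Rule 7c, giving $f(C)\ge\frac{65}{24}-\frac{15}{24}=\frac{50}{24}$. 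For a $5$-cluster with a degree-$3$ vertex, that vertex sends nothing by Rule 1, the single remaining degree-$2$ vertex sends at most $\frac{10}{24}$, and each of the three leaves sends at most $\frac{15}{24}$, so again $f_1(C)\ge 5-\frac{55}{24}=\frac{65}{24}$ and $f_2(C)\ge\frac{65}{24}$; by Lemma~\ref{5star lem}, $C$ has at most $12$ nearby poor $1$-clusters, and Rule 7c can act through at most its three leaves, so $C$ sends at most $\frac{12}{24}+\frac{3}{24}=\frac{15}{24}$ and $f(C)\ge\frac{50}{24}$.

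I expect the open-$5$-cluster case to be the only genuinely tight one: the plain count of $9$ nearby poor $1$-clusters is exactly one too many, so the argument really does need the ``stealable'' half of Lemma~\ref{5cluster lem} together with Proposition~\ref{dist3 prop} to discard one of them. The remaining obligations — pinning down $f_1(C)$, checking that adjacency to one-third vertices never pushes $f_2(C)$ below the stated value, and noting that every Rule 7c recipient is a distance-$2$ poor $1$-cluster and hence already among those counted by the relevant lemma — are routine bookkeeping of the same flavour as in Claim~\ref{4cluster claim}.
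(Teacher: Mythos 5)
Your proposal is correct and follows essentially the same route as the paper: the same three-way split into open $5$-paths, closed $5$-paths, and $5$-clusters with a degree-$3$ vertex, the same lower bounds $f_2(C)\ge\frac{60}{24}$ and $\frac{65}{24}$, and the same use of Lemma \ref{5cluster lem} (with stealability and Proposition \ref{dist3 prop}), Lemma \ref{k+8}, and Lemma \ref{5star lem} together with the per-leaf bound on Rule 7c. The only difference is that you justify the $f_2$ bounds and the one-third-vertex bookkeeping explicitly rather than citing the general formula derived before Claim \ref{4cluster claim}, which is harmless.
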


\begin{proof}

Consider a 5-cluster, $C$ with $\Delta(C) = 2$.  If $C \in \cK_5^c$, then $f_2(C) \geq \frac{65}{24}$.  Now, $C$ has at most 13 nearby poor 1-clusters (Lemma \ref{k+8}); therefore, $C$ sends at most $\frac{13}{24}$ by Rules 3-7a (Proposition \ref{dist3 prop}).  Since $C$ has exactly 2 leaves, $C$ sends at most $\frac{2}{24}$ by Rule 7c.  Therefore, $f(C) \geq \frac{50}{24} = 5 \cdot \frac{5}{12}$.  If $C \in \cK_5^o$, then $f_2(C) \geq \frac{60}{24}$.  Now, $C$ has at most 9 nearby poor 1-clusters; furthermore, if $C$ has exactly 9 such clusters, then at least one is stealable (Lemma \ref{5cluster lem}) -- that is, $C$ sends at most $\frac{8}{24}$ by Rules 3-7a (Proposition \ref{dist3 prop}).  By Rule 7c, $C$ sends at most $\frac{2}{24}$.  Therefore, $f(C) \geq \frac{50}{24}$.

Now, let $C$ have a degree-3 vertex.  Then, $f_2(C) \geq \frac{65}{24}$.  Now, $C$ has at most 12 nearby poor 1-clusters (Lemma \ref{5star lem}); therefore, $C$ sends at most $\frac{12}{24}$ by Rules 3-7a (Proposition \ref{dist3 prop}).  Since $C$ has 3 leaves, $C$ sends at most $\frac{3}{24}$ by Rule 7c.  Therefore, $f(C) \geq \frac{50}{24}$.  
\end{proof}

\begin{claim}
\label{6cluster claim}

For every $6$-cluster, $C$, with $\Delta(C) = 2$, $f(C) \geq 6 \cdot \frac{5}{12}$.

\end{claim}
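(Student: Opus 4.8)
The plan is to follow the template of Claim~\ref{5cluster claim}, splitting on the structure of $C$. Since $\Delta(C)=2$ and $C$ is connected on six vertices, $C$ is either a $6$-cycle or a $6$-path. If $C$ is a $6$-cycle then $\alpha_1=0$, so (the derivation of Equation~\ref{F(C)} only uses $\alpha_1\le\alpha_3+2$, which holds) Equation~\ref{F(C)} already gives $F(C)\ge\frac{1}{24}(3\cdot 6-14)>0$, i.e.\ $f(C)\ge 6\cdot\frac{5}{12}$. So assume $C$ is a $6$-path, with $\alpha_1=2$ leaves and $\alpha_2=4$ internal vertices. Note that a $6$-cluster is neither an open $3$-cluster nor a non-poor $1$-cluster, and it is not one of the clusters named in Rule~4; hence after Rule~2 it sends charge only by Rules 3a, 3c, 6, 7a — at most $\frac{1}{24}$ to each nearby poor $1$-cluster, by Proposition~\ref{dist3 prop} — and by Rule~7c, at most $\frac{1}{24}$ per leaf, hence at most $\frac{2}{24}$ in total.

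For the open case, the general lower bound $f_2(C)\ge k-\left(\frac{5}{12}+\frac{5}{24}\right)\alpha_1-\frac{5}{12}\alpha_2$ recorded in the discussion preceding these claims reads, with $k=6$, $\alpha_1=2$, $\alpha_2=4$, $f_2(C)\ge 6-\left(\frac{5}{12}+\frac{5}{24}\right)\cdot 2-\frac{5}{12}\cdot 4=\frac{74}{24}$. By Lemma~\ref{6cluster lem}, an open $6$-cluster with $\Delta(C)=2$ has at most $10$ nearby poor $1$-clusters, so $C$ sends at most $\frac{10}{24}$ by Rules 3--7a and at most $\frac{2}{24}$ by Rule~7c; thus $f(C)\ge\frac{74}{24}-\frac{10}{24}-\frac{2}{24}=\frac{62}{24}>6\cdot\frac{5}{12}$.

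For the closed case, I would first raise the baseline to $f_2(C)\ge\frac{79}{24}$. If a non-leaf vertex $v_i$ of $C$ has a distance-$2$ vertex $z\in D\setminus C$, then $z$ must be adjacent to the unique neighbor $y_i$ of $v_i$ lying outside $C$, for every other vertex at distance $2$ from $v_i$ is a neighbor of a neighbor of $v_i$ in $C$ and hence, if in $D$, would itself lie in $C$. Therefore $y_i$ has at least two neighbors in $D$, so by Rule~1 vertex $v_i$ pays at most $\frac{5}{24}$ to $y_i$ rather than $\frac{5}{12}$ — a saving of at least $\frac{5}{24}$ over the open baseline; and when $y_i$ is a one-third vertex the (larger) Rule-1 saving still leaves a net gain of at least $\frac{5}{24}$ after the at most $\frac{1}{18}$ that $C$ may owe by Rule~2 on $y_i$'s account. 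Hence $f_2(C)\ge\frac{74}{24}+\frac{5}{24}=\frac{79}{24}$. Now by Lemma~\ref{k+8}, $C$ has at most $6+8=14$ nearby clusters, so at most $14$ nearby poor $1$-clusters; thus $C$ sends at most $\frac{14}{24}$ by Rules 3--7a and at most $\frac{2}{24}$ by Rule~7c, giving $f(C)\ge\frac{79}{24}-\frac{14}{24}-\frac{2}{24}=\frac{63}{24}>6\cdot\frac{5}{12}$.

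The main obstacle is the closed case. The bare bound $k+8=14$ of Lemma~\ref{k+8} is just short on its own (it yields only $f(C)\ge\frac{58}{24}<\frac{60}{24}$ — precisely why the $\Delta(C)=2$ case was carved out of the general $4^+$-cluster argument), so it is essential to harvest the $\frac{5}{24}$ of surplus produced by the vertex closing $C$ and to check that, when that surplus passes through a one-third vertex and triggers Rule~2, the net balance stays nonnegative. The remaining verification — the positions the extra distance-$2$ vertices of the two leaves may occupy — is routine bookkeeping against the rules.
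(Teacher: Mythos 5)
Your proof is correct and follows essentially the same route as the paper: split into $C\in\mathcal{K}_6^o$ (using Lemma~\ref{6cluster lem} and $f_2(C)\ge\frac{74}{24}$) versus $C\in\mathcal{K}_6^c$ (using Lemma~\ref{k+8} and $f_2(C)\ge\frac{79}{24}$), with Proposition~\ref{dist3 prop} bounding the outflow by Rules 3--7a and $\frac{2}{24}$ by Rule~7c. The only differences are that you explicitly justify the $\frac{79}{24}$ baseline and dispose of the $6$-cycle case, both of which the paper leaves implicit (the latter because only $(\alpha_2,\alpha_3)=(4,0)$ clusters are routed to this claim).
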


\begin{proof}

Consider a 6-cluster, $C$ with $\Delta(C) = 2$.  Then, $C$ has exactly 2 leaves.  If $C \in \cK_6^c$, then $f_2(C) \geq \frac{79}{24}$.  Now, $C$ has at most 14 nearby poor 1-clusters; therefore, $C$ sends at most $\frac{14}{24}$ by Rules 3-7a (Proposition \ref{dist3 prop}).  By Rule 7c, $C$ sends at most $\frac{2}{24}$.  Therefore, $f(C) \geq \frac{63}{24} > 6 \cdot \frac{5}{12}$.  If $C \in \cK_6^o$, then $f_2(C) \geq \frac{74}{24}$.  Now, $C$ has at most 10 nearby poor 1-clusters (Lemma \ref{6cluster lem}); therefore, $C$ sends at most $\frac{10}{24}$ by Rules 3-7a (Proposition \ref{dist3 prop}).  By Rule 7c, $C$ sends at most $\frac{2}{24}$.  Therefore, $f(C) \geq \frac{62}{24} > 6 \cdot \frac{5}{12}$.  
\end{proof}

\section{Deferred Proofs}
\label{deferred proofs}

\begin{proof}[Proof of Proposition \ref{nonpoor prop1}]

Let $H$ be the group of non-poor 1-clusters described by $b,d$ and $e$ in Figure \ref{nonpoorgroup2}.  We choose $i \in \poorone$.  Then, $i$ is distance-2 from $e$ and not distance-2 from the other 1-clusters in $H$.  By symmetry, this is the general case.  Since $i \in \poorone$, we have $j,s \not \in D$ and, by Corollary \ref{poor1 cor}, $h \not \in D$.  Therefore, $q \in D_{3^+}$ (Proposition \ref{force3cluster prop}) and $g \in D$ (Proposition \ref{1clusters prop}).  Let $C$ be the \threepluss at $q$.  If $r \in D$, then $r \in C$; therefore, $i$ is distance-2 from a \threeplus.  If $r \not \in D$, then $p \in C$.  If $n \in D$, then $g,n,p,q \in C$ and $C$ is a \fourplus; therefore, $i$ is distance-3 from a \fourplus.  If $n \not \in D$, then $v \in D$ and $p,q,v \in C$.  Now, $g$ closes $C$.  Therefore, $i$ is distance-3 from a closed \threeplus.
\end{proof}

\begin{proof}[Proof of Lemma \ref{nonpoor lem1}]
Let $H$ be the group of non-poor 1-clusters described by $b,d$ and $e$ in Figure \ref{nonpoorgroup2}.  Now, if a poor 1-cluster, $w$, is distance-2 from exactly one of $b,d$ and $e$, then $w$ is distance-2 from an open 3-cluster or within distance-3 of a closed 3-cluster or $4^+$-cluster (Proposition \ref{nonpoor prop1}).  Thus, we need only consider poor 1-clusters which are distance-2 from 2 of the 1-clusters in $H$.  There are 3 possibilities: $a,c$ and $h$.  Suppose by contradiction that each of $a,c$ and $h$ is a poor 1-cluster that is not distance-2 from an open 3-cluster nor within distance-3 of a closed 3-cluster or $4^+$-cluster.  Since $h \in \poorone$, we have $p \in D$ or $r \in D$ but not both (Corollary \ref{poor1 cor}).  By symmetry, we choose $r \in D$.  Now, by hypothesis, $h$ is not distance-2 from any \threeplus; therefore, $r \in D_1$ (Corollary \ref{not in 3+}).  So we have $s \not \in D$.  Since $h \in \poorone$ and $e \in D$, we also have $i \not \in D$ (Corollary \ref{poor1 cor}).  Therefore, $j \in D_{3^+}$ (Proposition \ref{force3cluster prop}).  Also, since $i,s \not \in D$ and $r \in D_1$, we have $t \in D$ (Proposition \ref{1clusters prop}).  Now, $e \in D$ and, by hypothesis, $c \in \poorone$; therefore, $f \not \in D$ (Corollary \ref{poor1 cor}).  Let $C$ be the \threepluss at $j$.  Since $f,i \not \in D$, we have $k \in C$.  Now, if $u \in D$, then $j,k,t,u \in C$; therefore, $c$ and $h$ are distance-3 from a \fourplus, which is a contradiction.  If $u \not \in D$, then $m \in C$ and $t$ closes $C$; therefore, $c$ and $h$ are distance-3 from a closed \threeplus, which is a contradiction.
\end{proof}

\begin{figure}[h]
\begin{center}
\subfloat[Proposition \ref{nonpoor prop1} and Lemma \ref{nonpoor lem1}]{\label{nonpoorgroup2}\includegraphics[width=0.2877\textwidth]{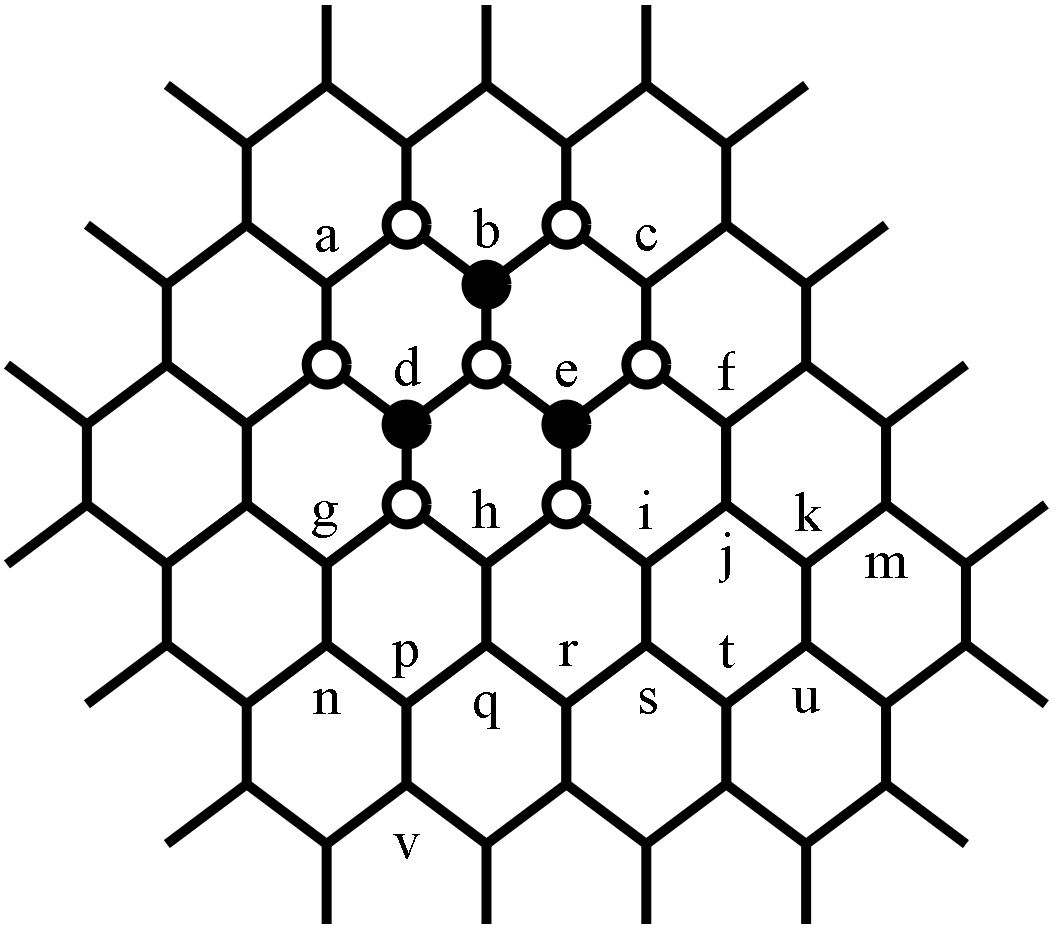}}
\hspace{0.5cm}
\subfloat[Lemma \ref{nonpoor lem2}]{\label{nonpoor2}
\includegraphics[width=0.2355\textwidth]{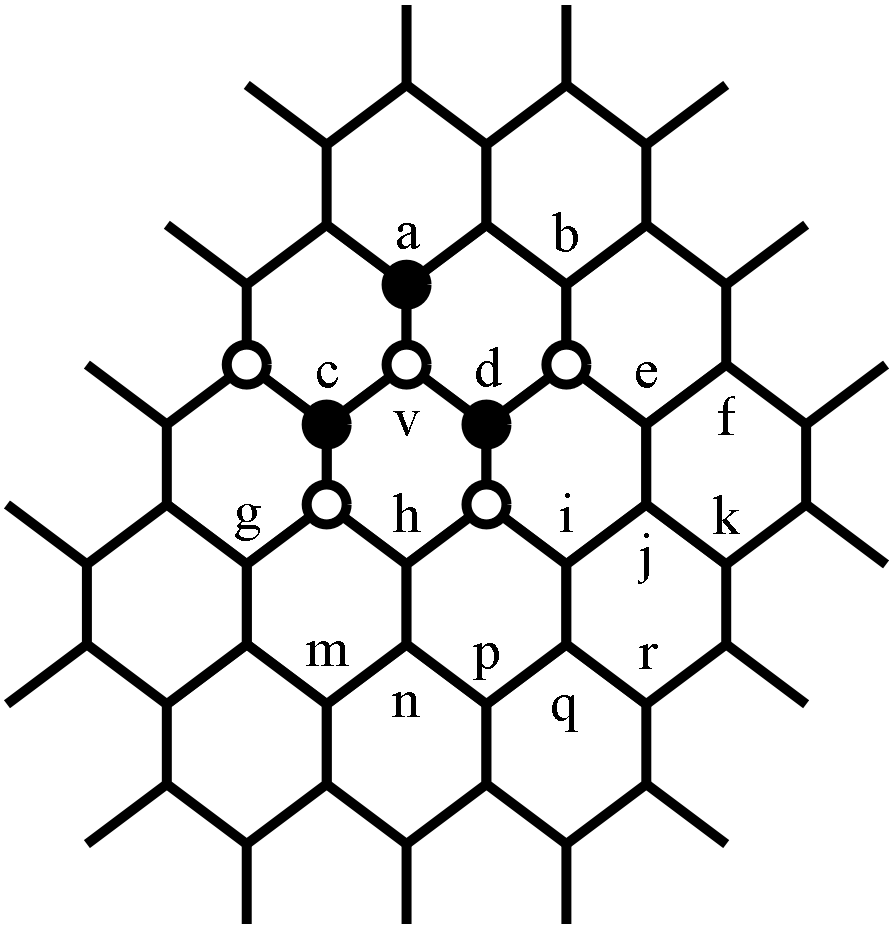}}
\hspace{0.5 cm}
\subfloat[Lemma \ref{nonpoor lem3}]{\label{nonpoor3}\includegraphics[width=0.2355\textwidth]{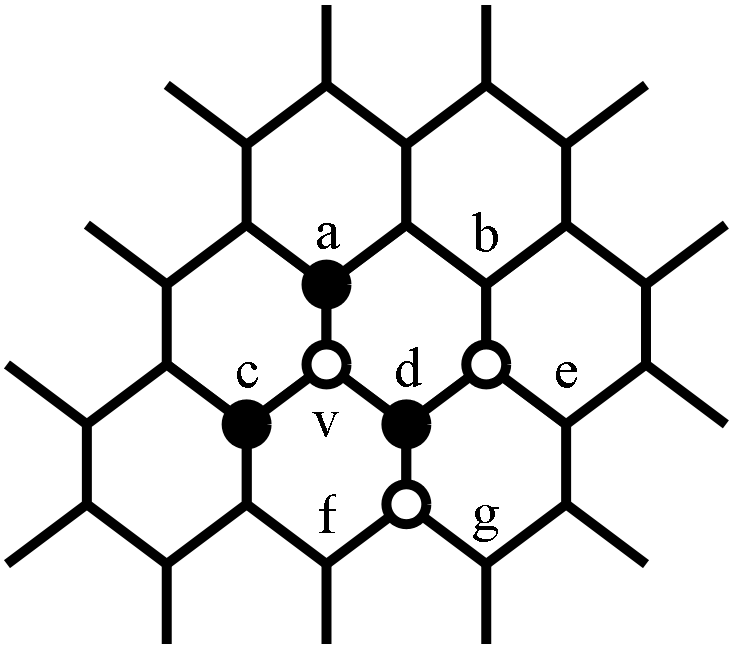}}
\end{center}
\caption{}
\end{figure}

\begin{proof}[Proof of Lemma \ref{nonpoor lem2}]

Let $v$ be the one-third vertex shown in Figure \ref{nonpoor2}, and let $c$ and $d$ be 1-clusters.  Then, by hypothesis, $a \in D \setminus D_1$; therefore, $a \in D_{3^+}$ (Corollary \ref{not in 3+}).  Suppose by contradiction that one of $c$ and $d$ has at least 2 distance-2 poor 1-clusters that are not distance-2 from an open 3-cluster nor within distance-3 of a closed 3-cluster or $4^+$-cluster.  By symmetry, we consider $d$.  Now, there are 4 candidates for distance-2 poor 1-clusters: $b,e,h$ and $i$.  However, $b$ is distance-2 from the \threepluss at $a$, so we need not consider $b$.

If $h \in \poorone$, then $n \not \in D$ and $i \not \in D$ (Corollary \ref{poor1 cor}).  So we have $e \in \poorone$; therefore, $f,j \not \in D$ and $k \in D$ (Proposition \ref{1clusters prop}).  Since $i,j \not \in D$, we have $q \in D_{3^+}$ (Proposition \ref{force3cluster prop}).  Let $C$ be the \threepluss at $q$.  If $p \in D$, then $p \in C$ and $h$ is distance-2 from a \threeplus, which is a contradiction.  If $p \not \in D$, then $r \in C$; therefore, either $k \in C$ and $e$ is distance-2 from a \fourplus, or $k$ closes $C$ and both $e$ and $h$ are distance-3 from a closed \threeplus, which is a contradiction.

If $h \not \in \poorone$, then we have $e,i \in \poorone$.  Therefore, $f,j,q \not \in D$ and, by Corollary \ref{poor1 cor}, $h \not \in D$.  Then $n \in D_{3^+}$ (Proposition \ref{force3cluster prop}) and $g \in D$ (Proposition \ref{1clusters prop}).  Let $C$ be the \threepluss at $n$.  If $p \in D$, then $p \in C$ and $i$ is distance-2 from a \threeplus, which is contradiction.  If $p 
\not \in D$, then $m \in C$.  Then, either $g \in C$ and $i$ is distance-3 from a \fourplus, or $g$ closes $C$ and $i$ is distance-3 from a closed \threeplus, which is a contradiction.
\end{proof}

\begin{proof}[Proof of Lemma \ref{nonpoor lem3}]

Let $v$ be the one-third vertex shown in Figure \ref{nonpoor3}, and let $d$ be a 1-cluster.  Then, by hypothesis, $a,c  \in D \setminus D_1$; therefore, $a,c \in D_{3^+}$ (Corollary \ref{not in 3+}).  Now, $d$ has 6 distance-2 vertices: $a,b,c,e,f$ and $g$.  However, $a,c \in D_{3^+}$.  If $b \in \poorone$, then $e \not \in D$ (Corollary \ref{poor1 cor}); and vice versa.  If $f \in \poorone$, then $g \not \in D$ (Corollary \ref{poor1 cor}); and vice versa.  Therefore, at most 2 of $b$, $e$, $f$ and $g$ are poor 1-clusters.
\end{proof}

\begin{proof}[Proof of Lemma \ref{finless lem}]

Let $C$ be the 3-cluster shown in Figure \ref{3-cluster}.  Suppose by contradiction that $C$ has 2 finless sides; then, $n,p \not \in D$ (Definition \ref{3cluster def}).  If $j \not \in D$, then $p \in D_{3^+}$ (Proposition \ref{force3cluster prop}).  But none of the vertices adjacent to $p$ is in $D$; therefore, $p \in D_1$, which is a contradiction.  If $j \in D$ and $p \in D$, then $j,p \in D_2$, which is a contradiction (Proposition \ref{2clusters prop}).  If $j \in D$ and $p \not \in D$, then $j \in D_{3^+}$ (Proposition \ref{force3cluster prop}).  But none of the vertices adjacent to $j$ is in $D$; therefore, $j \in D_1$, which is a contradiction.
\end{proof}

\begin{proof}[Proof of Lemma \ref{closed3 lem}]
\label{closed3 pf}

Let $C_1$ be the 3-cluster shown in Figure \ref{closed 3-cluster}.  If $C_1 \in \cK_3^c$, then the non-leaf vertex of $C_1$ has at least one distance-2 vertex in $D \setminus C$ (Definition \ref{open-closed}); by symmetry, we choose $f \in D$.  Now, $P(C_1) = 3$ and each leaf of $C$ has at least one distance-2 vertex in $D \setminus C$ (Proposition \ref{leaves prop}); therefore, $e \not \in D$ and $$|\{d,j,p,q\} \cap D| = |\{g,k,r,q\} \cap D|=1$$
There are 11 candidates for nearby poor 1-clusters: $a$, $c/d$, $f$, $h$, $i/j$, $k/m$, $n$, $p/t$, $q/v$, $r/x$ and $s$.

First we consider the cases for which $q \not \in D$.  To begin, we show that there are at most 9 nearby poor 1-clusters.  Now, $v \in D_{3^+}$ (Proposition \ref{force3cluster prop}); therefore, there are at most 10 nearby poor 1-clusters.  If $p \in D$, then $n \not \in \poorone$ and there are at most 9 nearby poor 1-clusters.  If $p \not \in D$ and $t \not \in D$, then there are at most 9 nearby poor 1-clusters.  If $r \in D$, then $s \not \in \poorone$ and there are at most 9 nearby poor 1-clusters.  If $r \not \in D$ and $x \not \in D$, then there are at most 9 nearby poor 1-clusters.  So we consider $p,r \not \in D$ and $t,x \in D$.  Let $C_v$ be the \threepluss at $v$.  At least one of $u$ and $w$ is in $C_v$; therefore, at least one of $t$ and $x$ is not a 1-cluster.  Therefore, there are at most 9 nearby poor 1-clusters.

Now we consider the cases for which at least one of $d$ and $g$ is in $D$.  If $g \in D$, then $f,h \not \in \poorone$.  Therefore, there are at most 7 nearby poor 1-clusters.  So now we consider $d \in D$ and $g \not \in D$.  Either $k \in D$ or $r \in D$ but not both.  If $k \in D$, then $s \not \in \poorone$ and there are at most 8 nearby poor 1-clusters.  Now, if $k \not \in \poorone$, then there are at most 7 nearby poor 1-clusters.  If $k \in \poorone$, then $s \not \in D$.  Since $r \not \in D$, we have $x \in D_{3^+}$ (Proposition \ref{force3cluster prop}).  If $t \not \in \poorone$, then $C_1$ has at most 7 nearby poor 1-clusters.  If $t \in \poorone$, then $t$ is distance-2 from $C_v$.  If $C_v \in \openthree$, then $v,w,x \in C_v$ and $t$ is in an arm position.  If $C_v$ is paired, then it is type-2 paired with $C_1$.  Therefore, the lemma holds with $k \in D$.  If $r \in D$, then $s \not \in \poorone$ and there are at most 8 nearby poor 1-clusters.  If $r \not \in \poorone$, then $C_1$ has at most 7 nearby poor 1-clusters.  If $r \in \poorone$, then $s \not \in D$.  Since $k \not \in D$, we have $m \in D_{3^+}$ (Proposition \ref{force3cluster prop}); therefore, $C_1$ has at most 7 nearby poor 1-clusters and the lemma holds.

Now we consider the cases for which neither shoulder position is in $D$.  Since $d \not \in D$, we have $a,c \in D_{3^+}$.  Therefore, $C_1$ has at most 7 nearby poor 1-clusters.  Either $j \in D$ or $p \in D$; in both cases, $n \not \in \poorone$.  Therefore, $C_1$ has at most 6 nearby poor 1-clusters.  Either $k \in D$ or $r \in D$; in both cases, $s \not \in \poorone$.  Therefore, $C_1$ has at most 5 nearby poor 1-clusters and the lemma holds.
\\
\\
Now we consider the case for which $q \in D$.  If $q \in D$, then $d,g,j,k,p,r \not \in D$.  Then, $a,c \in D_{3^+}$ (Proposition \ref{force3cluster prop}).  Therefore, $C_1$ has at most 9 nearby poor 1-clusters.  If $q \not \in \poorone$, then $C_1$ has at most 8 nearby poor 1-clusters.  If $q \in \poorone$, then either $u \in D$ or $w \in D$ (Proposition \ref{1clusters prop}).  If $u \in D$, then $t \not \in \poorone$ and there are at most 8 nearby poor 1-clusters; if $w \in D$, then $x \not \in \poorone$ and there are at most 8 nearby poor 1-clusters.  Therefore, $C_1$ has at most 8 nearby poor 1-clusters.  Now, if $i \not \in \poorone$, then $C_1$ has at most 7 nearby poor 1-clusters.  If $i \in \poorone$, then $i$ is distance-2 from the \threepluss at $c$, $C_c$; since $a \in D$, we have $C_c \not \in \openthree$.
\end{proof}

\begin{figure}[h]
\setcaptionwidth{0.4\textwidth}
\begin{center}
\includegraphics[width=0.3\textwidth]{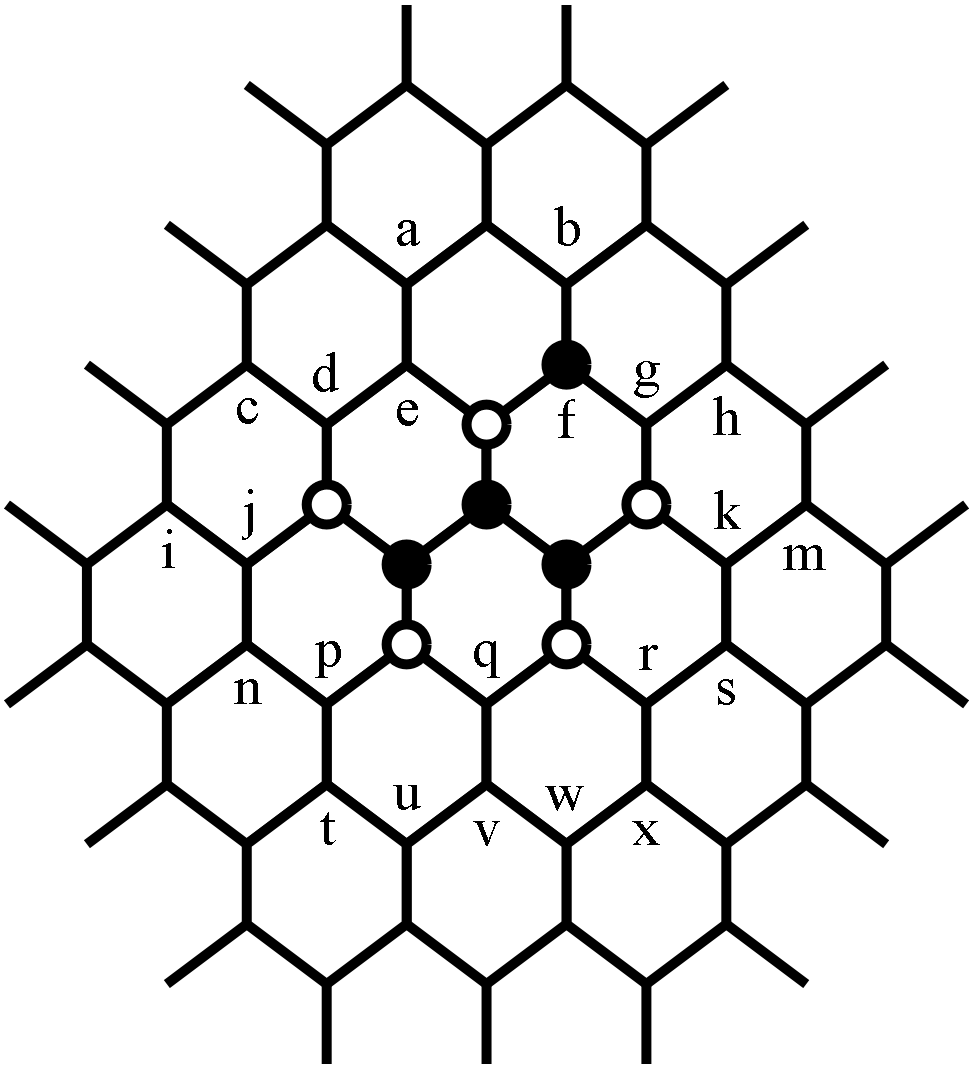}
\end{center}
\caption{Lemma \ref{closed3 lem} and Lemma \ref{closed3 lem2}}
\label{closed 3-cluster}
\end{figure}

\begin{proof}[Proof of Lemma \ref{closed3 lem2}]
\label{closed3 pf2}

Let $C$ be the closed 3-cluster shown in Figure \ref{closed 3-cluster}.  By symmetry, we choose $f \in D$.  There are 5 possible one-third vertices adjacent to $C$, and there are 11 candidates for nearby poor 1-clusters: $a/e$, $c/d$, $f$, $h$, $i/j$, $k/m$, $n$, $p/t$, $q/v$, $r/x$ and $s$.

Suppose $e \in D$.  Then, $a,e,f \not \in \poorone$; therefore, $C$ has at most 9 nearby poor 1-clusters.  Since $P(C) = 4$, each leaf has exactly one distance-2 vertex in $D \setminus  C$.  If $d,g \in D$, then $c,d,h \not \in \poorone$; therefore, $C$ has at most 7 nearby poor 1-clusters.  If $q \in D$, then either $q \in \poorone$ or $q \not \in \poorone$.  If $q \not \in \poorone$, then $C$ has at most 8 nearby poor 1-clusters.  If $q \in \poorone$, then $p,r \not \in D$ (Corollary \ref{poor1 cor}).  If $t \not \in \poorone$ or $x \not \in \poorone$, then $C$ has at most 8 nearby poor 1-clusters.  So assume $t,x \in \poorone$.  Since $q \in \poorone$, we have $u \in D$ or $w \in D$ (Proposition \ref{1clusters prop}); therefore, at least one of $t$ and $x$ is not a poor 1-cluster.  Therefore, $C$ has at most 8 nearby poor 1-clusters.  If $q \not \in D$, then $v \in D_{3^+}$ (Proposition \ref{force3cluster prop}); therefore, $C$ has at most 8 nearby poor 1-clusters.  If $j \in D$ or $p \in D$, then $n \not \in \poorone$; and if $k \in D$ or $r \in D$, then $s \not \in \poorone$.  Therefore, if one foot or arm position is a poor 1-cluster, then $C$ has at most 7 nearby poor 1-clusters; and if 2 foot or arm positions are poor 1-clusters, then $C$ has at most 6 nearby poor 1-clusters. 

Suppose $d,j \in D$.  Then $c,d,i,j,n \not \in \poorone$.  Therefore, $C$ has at most 8 nearby poor 1-clusters.  If $k \in D$ or $r \in D$, then $s \not \in \poorone$; in this case, $C$ has at most 7 nearby poor 1-clusters.

The argument is nearly identical to the one above for the cases in which $g,k \in D$, $p,q \in D$ and $q,r \in D$.
\end{proof}

\begin{proof}[Proof of Lemma \ref{linearopen4 lem}]
\label{linearopen4 pf}

Let $C_1$ be the linear open 4-cluster shown in Figure \ref{linear4 part 2}.  Both leaves of $C_1$ must have at least one distance-2 vertex in $D$ (Proposition \ref{leaves prop}), and, by hypothesis, $P(C_1) = 2$; therefore, $f,g,r,s \not \in D$ and $$ | \{ e,k,q \} \cap D | = | \{ h,m,t\} \cap D|$$


By Proposition \ref{force3cluster prop}, we have $b,w \in D_{3^+}$.  Let $C_b$ and $C_w$ be the \threeplus s at $b$ and $w$, respectively.  There are 10 candidates for nearby poor 1-clusters: $a$, $c/h$, $d/e$, $i$, $j/k$, $m/n$, $p$, $q/v$, $t/u$ and $x$.  Now, at least one of $a$ and $c$ is adjacent to or in $C_b$; therefore, at least one of $a$ and $c$ is not a poor 1-cluster.  A similar argument may be made for $v$ and $x$.  Therefore, there are at most 8 nearby poor 1-clusters.

If both one-turn positions are in $D$, then we have $e,t \in D$.  If $C_1$ has 8 nearby poor 1-clusters, then at most one of $a$ and $c$ is not a poor 1-cluster.  If $a \not \in \poorone$, then $c$ is distance-2 from $C_b$.  If $C_b \in \cK_3$, then $c$ is either in a foot position or arm position.  If $c$ is in an arm position, then $a \in C_b$ and $C_b$ is not paired.  If $c \not \in \poorone$, then a similar argument may be made for $a$.  And a symmetric argument may be made for $v$ and $x$ and $C_w$.  Therefore, at least 2 of the poor 1-clusters at distance-3 are stealable.  If $C_1$ has exactly 7 nearby poor 1-clusters, then at least one of $a$, $c$, $v$ and $x$ is a poor 1-cluster.  Then, the above argument suffices; therefore, at least one of the distance-3 poor 1-clusters is stealable.

If exactly one one-turn position is in $D$, then $e \not \in D$ or $t \not \in D$.  By symmetry we choose $e \not \in D$.  Then, $d \in D_{3^+}$ (Proposition \ref{force3cluster prop}).  Therefore, there are at most 7 nearby poor 1-clusters.  By hypothesis, at least one of $k$ and $q$ is in $D$.  In both cases, $p \not \in \poorone$.  Therefore, there are at most 6 nearby poor 1-clusters.  Now, $a \in D_{3^+}$ (Proposition \ref{force3cluster prop}).  Thus, if $C_1$ has exactly 6 nearby poor 1-clusters, then $c \in \poorone$.  Then $c$ is distance-2 from $C_b$ and $a \in C_b$.  If $C_b \in \cK_3$, then $c$ is in an arm position and $C_b$ is not paired.  Therefore, $c$ is stealable.

If neither one-turn position is in $D$, then $e,t \not \in D$.  Therefore, $a,d,u,x \in D_{3^+}$ (Proposition \ref{force3cluster prop}).  Therefore, $C_1$ has at most 6 nearby poor 1-clusters.  By hypothesis, at least one of $k$ and $q$ and at least one of $h$ and $m$ is in $D$.  If $k \in D$ or $q \in D$, then $p \not \in \poorone$; and if $h \in D$ or $m \in D$, then $i \not \in \poorone$.  Therefore, $C_1$ has at most 4 nearby poor 1-clusters.
\end{proof}

\begin{proof}[Proof of Lemma \ref{linear4 lem2}]
\label{linear4 pf2}

Let $C$ be the linear 4-cluster shown in Figure \ref{linear4 part 2}.  First, suppose $C$ is adjacent to no one-third vertices.  Now, either $g \in D$ or $g \not \in D$.  If $g \in D$, then one of the leaves of $C$ and one of the middle vertices has a distance-2 vertex in $D \setminus C$.  Now, each leaf has at least one distance-2 vertex in $D \setminus C$ (Propostion \ref{leaves prop}) and, by hypothesis, $P(C) = 3$; therefore, $f,h,m,r,s,t \not \in D$ and $| \{e,k,q\} \cap D | = 1$.  Then, $u,w,x \in D_{3^+}$ (Proposition \ref{force3cluster prop}) and there are at most 9 nearby poor 1-clusters: $a$, $c$, $d/e$, $g$, $i$, $j/k$, $n$, $p$ and $q/v$.  Now, suppose $g \not \in D$.  If $r \in D$, then this case can be reduced, by symmetry, to the above case.  So assume $r \not \in D$.  Then, $b, w \in D_{3^+}$ (Propostion \ref{force3cluster prop}).  There are 10 candidates for nearby poor 1-clusters: $a/f$, $c/h$, $d/e$, $i$, $j/k$, $m/n$, $p$, $q/v$, $s/x$ and $t/u$.  Suppose by contradiction that there exist 10 nearby poor 1-clusters.  Then, $i,p \in \poorone$.  Therefore, $h,m \not \in D$ and $n,v \in D$ (Proposition \ref{1clusters prop}).  And we must have $n,v  \in \poorone$; otherwise, $C$ has fewer than 10 nearby poor 1-clusters.  Since $g,h,m \not \in D$, we have $t \in D$ (Proposition \ref{leaves prop}).  And, as above, we must have $t \in \poorone$; therefore, $x \in D$ (Proposition \ref{1clusters prop}).  And, again, we must have $x \in \poorone$.  Therefore, $v,x \in \poorone$.  But $w \in D_{3^+}$ and $r \not \in D$; therefore, at least one of $v$ and $x$ is not a poor 1-cluster, which is a contradiction.

Now, suppose $C$ is adjacent to a one-third vertex, $v_{\frac{1}{3}}$.  Since $P(C) = 3$ and each leaf must have at least one distance-2 vertex in $D \setminus C$ (Proposition \ref{leaves prop}), $v_{\frac{1}{3}}$ must be adjacent to $e$ and $k$ or $m$ and $t$.  By symmetry, we choose $e,k \in D$.  Then, $| \{ h,m,t\} \cap D | = 1$ and $f,g,q,r,s \not \in D$; therefore, $b,w \in D_{3^+}$ (Proposition \ref{force3cluster prop}) and there are at most 7 nearby poor 1-clusters: $a$, $c/h$, $m/n$, $p$, $t/u$, $v$ and $x$.  Suppose by contradiction that $C$ has 7 nearby poor 1-clusters.  Then, $v,x \in \poorone$.  But $w \in D_{3^+}$ and $r \not \in D$; therefore, at least one of $v$ and $x$ is not a poor 1-cluster, which is a contradiction.
\end{proof}

\begin{figure}[h]
\setcaptionwidth{0.3\textwidth}
  \begin{center}
\subfloat[Lemma \ref{linearopen4 lem} and Lemma \ref{linear4 lem2}]{\label{linear4 part 2}\includegraphics[width=0.3\textwidth]{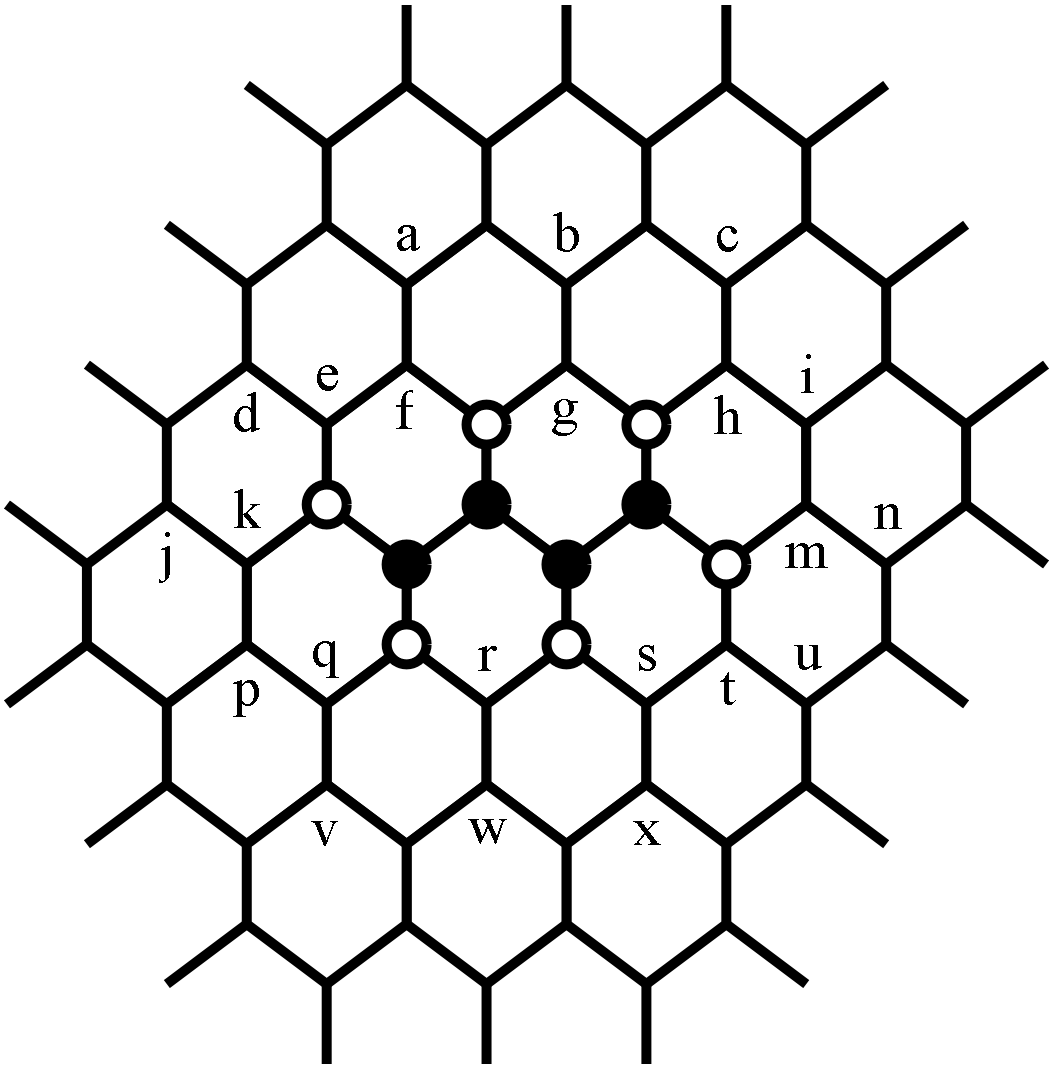}}
\hspace{2.0 cm}
\subfloat[Lemma \ref{curvedopen4 lem} and Lemma \ref{curved4 lem2}]{\label{open4-secondpart2}\includegraphics[width=0.25\textwidth]{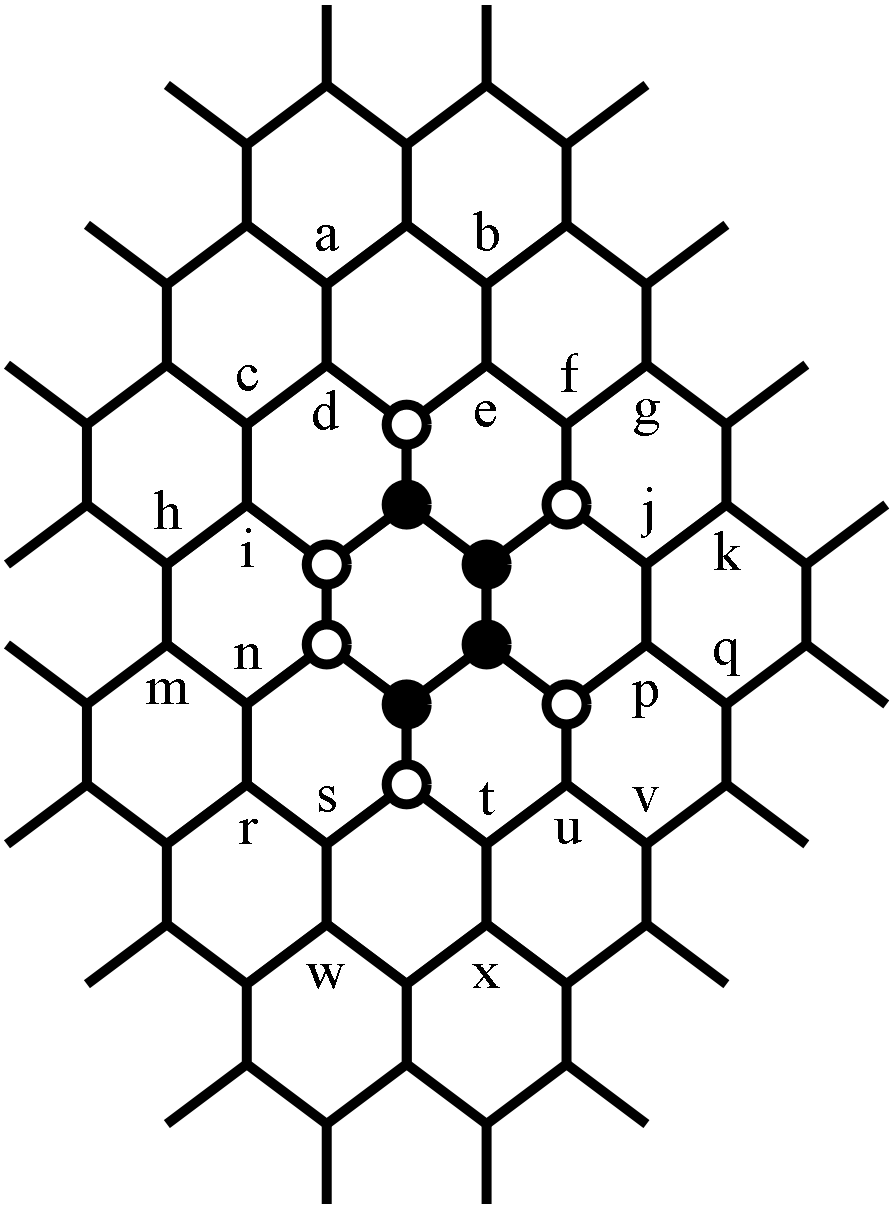}}
  \end{center}
\caption{}
\end{figure}

\begin{proof}[Proof of Lemma \ref{curvedopen4 lem}]
\label{curvedopen4 pf}

Let $C_1$ be the curved open 4-cluster shown in Figure \ref{open4-secondpart2}.  Both leaves of $C_1$ must have at least one distance-2 vertex in $D$ (Proposition \ref{leaves prop}), and, by hypothesis, $P(C_1) = 2$; therefore, $f,j,p,u \not \in D$ and $$ | \{ d,e,i \} \cap D | = | \{ n,s,t\} \cap D | = 1$$
By symmetry, there are only 6 cases to consider: $e,t \in D$; $e,s \in D$; $e,n \in D$; $d,s \in D$; $d,n \in D$; and $i,n \in D$.  Note that $k,q \in D_{3^+}$ in every case (Proposition \ref{force3cluster prop}).  First, we consider the cases with backwards positions.

$e,t \in D$: There are 9 candidates for nearby poor 1-clusters: $a,c,e,g,h,r,s,v$ and $w$.  We could have chosen $m$ instead of $h$, but the proof would be symmetric so we consider only $h$ as a candidate.  Now, at most one of $h$ and $r$ is a poor 1-cluster; therefore, $C_1$ has at most 8 nearby poor 1-clusters.  When $C_1$ has exactly 8 such 1-clusters, all of the candidates other than $h$ and $r$ are poor 1-clusters.  Therefore, $g,v \in \poorone$ and $q$ and $k$ are in the same \fourplus, $C_2$.  Then we have $g$ and $v$ at distance-2 from $C_2$, where $C_2$ is not an open 3-cluster.  When $C_1$ has exactly 7 nearby poor 1-clusters, at most one of $g$ and $v$ is no longer a poor 1-cluster.  Therefore, at least one of $g$ and $v$ is distance-2 from a \threeplus.  If $g \in \poorone$, then $k \in D_{3^+} \setminus D_3^o$ and $g$ is distance-2 from $k$; a symmetric argument can be made for $v$ and $q$.  Therefore, at least one of $g$ and $v$ is distance-2 from a \threeplus, $C_2$, where $C_2$ is not an open 3-cluster.

$e,s \in D$: Since $t \not \in D$, we have $k,q,v,x \in D_{3^+}$.  There are 6 candidates for nearby poor 1-clusters: $a,c,e,g,s$ and $h/m$.  However, $h$ and $s$ cannot both be poor 1-clusters; and $m$ and $c$ cannot both be poor 1-clusters.  Therefore, there are at most 5 nearby poor 1-clusters.

$e,n \in D$: Again, $k,q,v,x \in D_{3^+}$.  There are 7 candidates for nearby poor 1-clusters: $a,c,e,g,h,n$ and $w$.  However, at most one of $n$ and $w$ is a poor 1-cluster.  Therefore, there are at most 6 nearby poor 1-clusters.  If $C_1$ has exactly 6 such 1-clusters, then $g \in \poorone$.  Then, $g$ is distance-2 from the \threepluss at $k$, $C_k$, and $C_k$ is not an open 3-cluster.

$d,s \in D$: Since $e,t \not \in D$, we have $b,g,k,q,v,x \in D_{3^+}$.  There are 3 candidates for nearby poor 1-clusters: $d,h$ and $s$.  We could have chosen $m$ instead of $h$ but the proof would be symmetric.  It cannot be the case that both $h$ and $s$ are poor 1-clusters.  Therefore, there are at most 2 nearby poor 1-clusters.  

$d,n \in D$: Again, $b,g,k,q,v,x \in D_{3^+}$.  There are 4 candidates for nearby poor 1-clusters: $d,h,n$ and $w$.  However, at most one of $d$ and $h$ is a poor 1-cluster; likewise for $n$ and $w$.  Therefore, there are at most 2 nearby poor 1-clusters.

$i,n \in D$: Once again, $b,g,k,q,v,x \in D_{3^+}$.  There are 4 candidates for nearby poor 1-clusters: $a,i,n$ and $w$.  However, at most one of $a$ and $i$ is a poor 1-cluster; likewise for $n$ and $w$.  Therefore, there are at most 2 nearby poor 1-clusters.
\end{proof}

\begin{proof}[Proof of Lemma \ref{curved4 lem2}]
\label{curved4 pf2}

Let $C$ be the curved 4-cluster shown in Figure \ref{open4-secondpart2}.  First, suppose $C$ is adjacent to no one-third vertices and both backwards positions are in $D$; that is, $e,t \in D$.  Then, either $j \in D$ or $j \not \in D$.  Now, $P(C) = 3$; therefore, if $j \in D$ then $d,f,i,n,p,s,u \not \in D$.  Therefore, $C$ has at most 11 nearby poor 1-clusters: $a$, $b$, $c$, $e$, $g$, $h/m$, $j$, $q$, $r$, $t$ and $v$.  Now, consider the case in which $j \not \in D$.  If $p \in D$, then this case can be reduced by symmetry to the previous case.  So we assume $p \not \in D$.  Then, $k,q \in D_{3^+}$ (Proposition \ref{force3cluster prop}), and $C$ has at most 10 nearby poor 1-clusters: $a/d$, $c$, $e$, $g$, $h/i$, $m/n$, $r$, $s/w$, $t$ and $u/v$.

Now, suppose $C$ is adjacent to no one-third vertices and one backwards position is not in $D$.  By symmetry, we choose $e \not \in D$.  Again, either $j \in D$ or $j \not \in D$.  First, assume $j \in D$.  Since $P(C) = 3$ and each leaf has at least one distance-2 vertex in $D \setminus C$ (Proposition \ref{leaves prop}), we must have $f \not \in D$; then, $b, g \in D_{3^+}$ (Proposition \ref{force3cluster prop}), and $C$ has at most 10 nearby poor 1-clusters: $a/d$, $c$, $h/i$, $j$, $m/n$, $p/q$, $r$, $s/w$, $t/x$ and $u/v$.  Now, assume $j \not \in D$.  If $p \in D$, then this case can be reduced by symmetry to the previous case.  So we assume $p \not \in D$; then, $k,q \in D_{3^+}$ (Proposition \ref{force3cluster prop}), and $C$ has at most 10 nearby poor 1-clusters: $a/d$, $b$, $c$, $f/g$, $h/i$, $m/n$, $r$, $s/w$, $t/x$ and $u/v$.

Now, suppose $C$ is adjacent to no one-third vertices and both backwards positions are not in $D$; that is, $e,t \not \in D$.  First, assume $j \in D$.  Since $P(C) = 3$ and each leaf has at least one distance-2 vertex in $D \setminus C$ (Proposition \ref{leaves prop}), we must have $f,p,u \not \in D$; then, $b,g,v,x \in D_{3^+}$ (Proposition \ref{force3cluster prop}), and $C$ has at most 8 nearby poor 1-clusters: $a/d$, $c$, $h/i$, $j$, $m/n$, $q$, $r$ and $s/w$.  Now, assume $j \not \in D$.  If $p \in D$, then this case can be reduced by symmetry to the previous case.  So we assume $p \not \in D$.  Then, $k,q \in D_{3^+}$ (Proposition \ref{force3cluster prop}).  There are 10 candidates for nearby poor 1-clusters: $a/d$, $b$, $c$, $f/g$, $h/i$, $m/n$, $r$, $s/w$, $u/v$ and $x$.  Each leaf of $C$ has at least one distance-2 vertex in $D \setminus C$ (Proposition \ref{leaves prop}).  Therefore, $d \in D$ or $i \in D$; in both cases, $c \not \in \poorone$.  Therefore, $C$ has at most 9 nearby poor 1-clusters.

Finally, suppose $C$ is adjacent to a one-third vertex, $v_{\frac{1}{3}}$.  Since $P(C) = 3$ and each leaf must have at least one distance-2 vertex in $D \setminus C$ (Proposition \ref{leaves prop}), $v_{\frac{1}{3}}$ must be adjacent to a leaf of $C$.  By symmetry, we choose $d,e \in D$.  Then, for the same reasons, we must have $f,i,j,p,u \not \in D$.  Then, $k,q \in D_{3^+}$ (Proposition \ref{force3cluster prop}), and $C$ has at most 6 nearby poor 1-clusters: $g$, $h/m$, $n/r$, $s/w$, $t/x$ and $v$.
\end{proof}

\begin{figure}[h]
\begin{center}
\subfloat[Case 1]{\label{linear5}\includegraphics[width=0.326\textwidth]{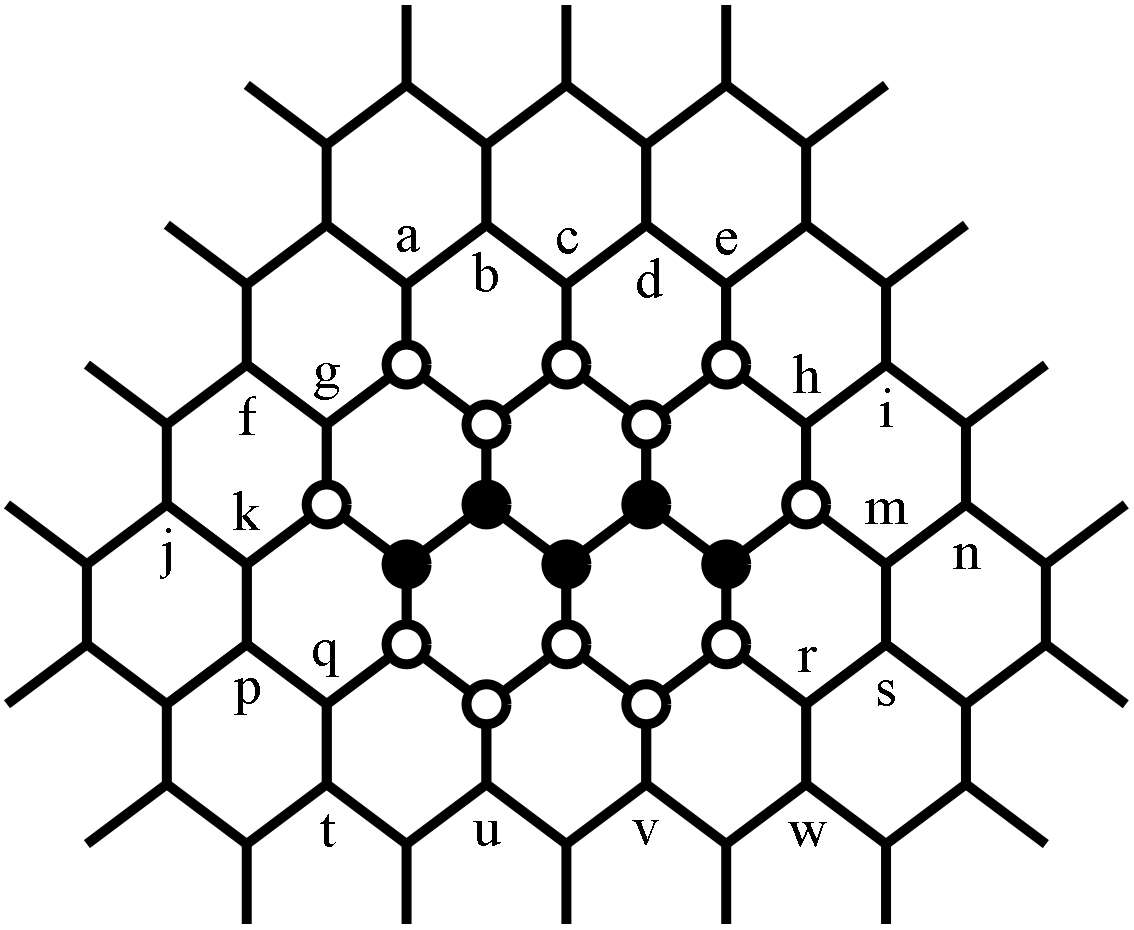}}
\hspace{0.2 cm}
\subfloat[Case 2]{\label{semicurved5}\includegraphics[width=0.302714\textwidth]{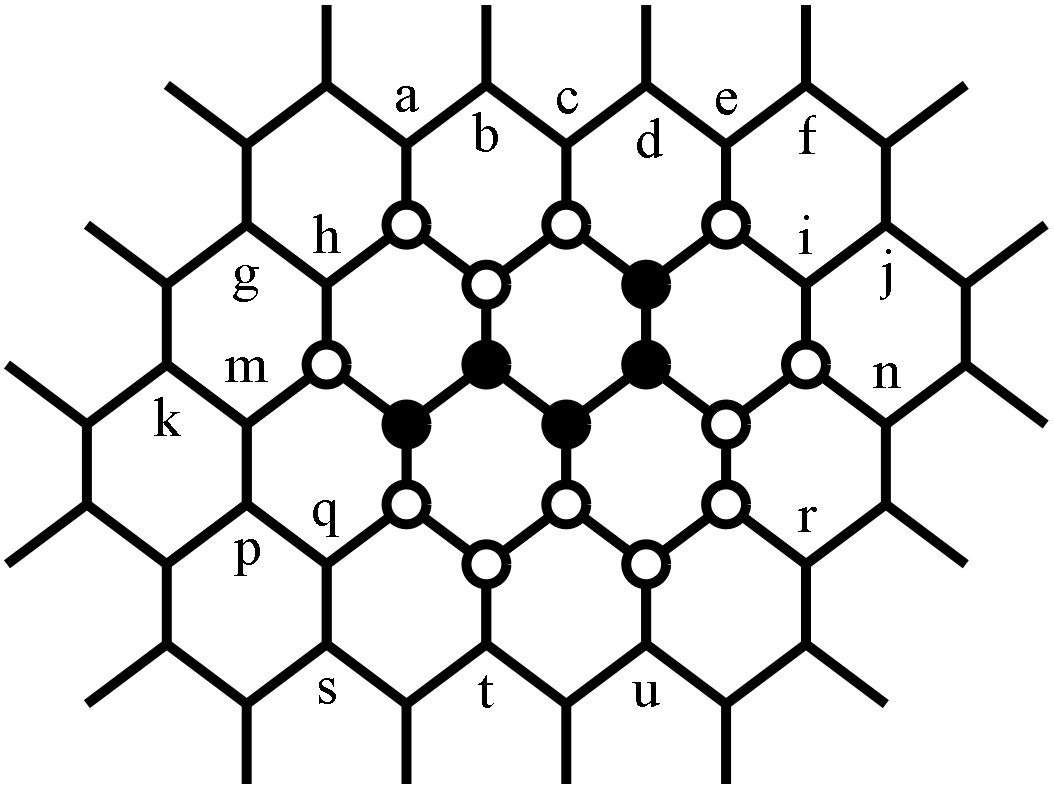}}
\hspace{0.2 cm}
\subfloat[Case 3]{\label{curved5}\includegraphics[width=0.27943\textwidth]{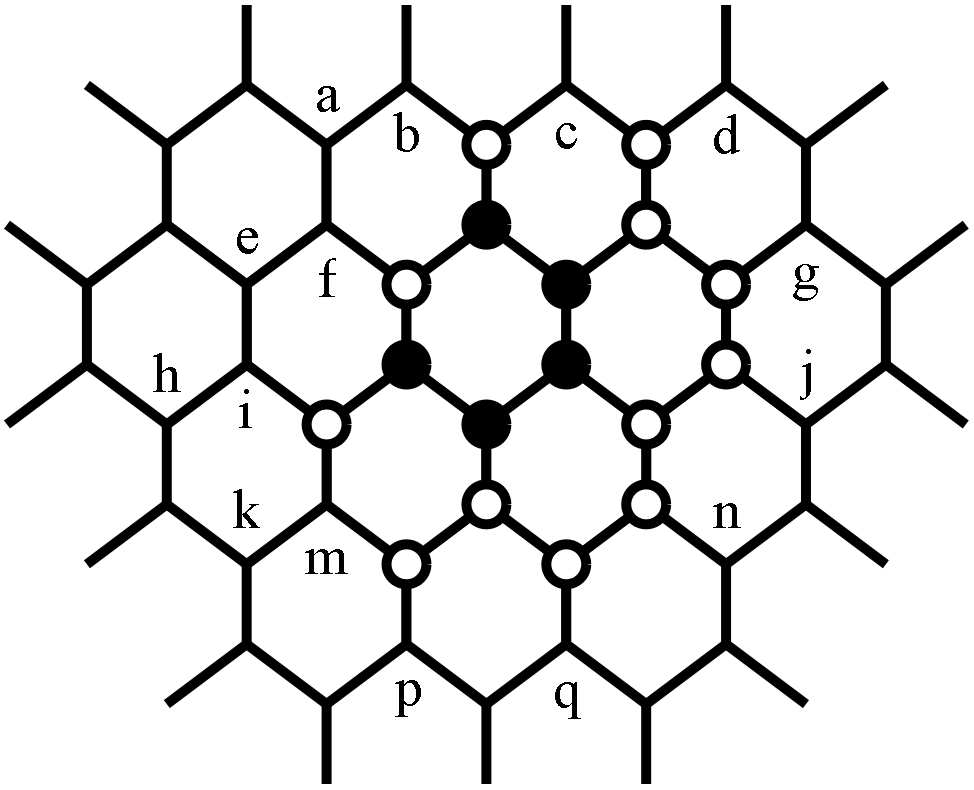}}
\end{center}
\caption{Lemma \ref{5cluster lem}}
\end{figure}

\begin{proof}[Proof of Lemma \ref{5cluster lem}]

By symmetry, there are only 3 cases to consider.

Let $C_1$ be the open 5-cluster shown in Figure \ref{linear5}.  By Proposition \ref{force3cluster prop}, we have $c,u,v \in D_{3^+}$.  Then the candidates for nearby poor 1-clusters are $a$, $e$, $f/g$, $h/i$, $j/k$, $m/n$, $p/q$, $r/s$, $t$ and $w$.  There are 10 candidates in total.  However, $c \in D_{3^+}$.  Let $C_c$ be the \threepluss at $c$.  Now, $b \in C_c$ or $d \in C_c$ or both.  If both, then $a,e \not \in \poorone$ and there are at most 8 nearby poor 1-clusters.  So consider the case in which only one of $b$ and $d$ is in $D$.  By symmetry, we choose $d \in D$; therefore, $e \not \in \poorone$ and there are at most 9 nearby poor 1-clusters.  However, $a$ is distance-2 from $C_c$.  Now, if $e \not \in D$, then $h,i \in D_{3^+}$ (Proposition \ref{force3cluster prop}) and there are at most 8 nearby poor 1-clusters.  Therefore, if $C_c \in \openthree$ and $C_1$ has 9 nearby poor 1-clusters, then $c,d,e \in C_c$.  Then, $a$ is not in a shoulder position and $C_c$ is not paired.

Let $C_2$ be the open 5-cluster shown in Figure \ref{semicurved5}.  By Proposition \ref{force3cluster prop}, we have $r,t,u \in D_{3^+}$.  There are 9 candidates for nearby poor 1-clusters: $a/b$, $c/d$, $e/f$, $g/h$, $i/j$, $k/m$, $n$, $p/q$ and $s$.  However, $s$ is distance-2 from the \threepluss at $t$; let $C_t$ be this \threeplus.  If $C_t \in \openthree$, then $u \in C_t$; furthermore, $s$ is in an arm position but $C_t$ is not paired.

Let $C_3$ be the open 5-cluster shown in Figure \ref{curved5}.  By Proposition \ref{force3cluster prop}, we have $g,j,n,q \in D_{3^+}$.  There are 7 candidates for nearby poor 1-clusters: $a/b$, $c$, $d$, $e/f$, $h/i$, $k/m$ and $p$.
\end{proof}

\begin{proof}[Proof of Lemma \ref{4star lem}]
Let $C$ be the 4-cluster shown in Figure \ref{4star}.  Then $C$ has one degree-3 vertex.  Each of the 3 leaves of $C$ has at least one distance-2 vertex in $D \setminus C$ (Proposition \ref{leaves prop}) and, by hypothesis, $P(C) = 3$; therefore $$ | \{e,f,j,p\} \cap D| = |\{f,g,k,q\} \cap D| = |\{p,q,t,v\} \cap D| = 1$$
By symmetry, we must consider only 2 cases: $f \in D$ and $g \in D$.
There are 12 candidates for nearby poor 1-clusters: $a/e$, $b/f$, $c/g$, $d$, $h$, $i/j$, $k/m$, $n/p$, $q/r$, $s/t$, $u$ and $v/w$.

Now, if $f \in D$, then $e,g,j,k,p,q \not \in D$.  Therefore, $n,r \in D_{3^+}$ (Proposition \ref{force3cluster prop}).  Thus, $C$ has at most 10 nearby poor 1-clusters.  Then, either $t \in D$ or $v \in D$; in both cases, $u \not \in \poorone$.  Therefore, $C$ has at most 9 nearby poor 1-clusters.  By symmetry, we choose $v \in D$ and $t \not \in D$.  If $v \not \in \poorone$, then the lemma holds; so we assume $v \in \poorone$.  Then, $u \not \in D$.  But we also have $t \not \in D$; therefore, $s \in D_{3^+}$ (Proposition \ref{force3cluster prop}).  Therefore, $C$ has at most 8 nearby poor 1-clusters.

If $g \in D$, then $f,k,q \not \in D$.  Therefore, $b,r \in D_{3^+}$ and $h \not \in \poorone$.  Therefore, $C$ has at most 9 nearby poor 1-clusters.  If $g \not \in \poorone$, then the lemma holds; so we assume $g \in \poorone$.  Then, $h \not \in D$.  But we also have $k \not \in D$; therefore, $m \in D_{3^+}$ (Proposition \ref{force3cluster prop}).  Therefore, $C$ has at most 8 nearby poor 1-clusters.
\end{proof}

\begin{figure}[h]
\begin{center}
\subfloat[Lemma \ref{4star lem}]{\label{4star}\includegraphics[width=0.27\textwidth]{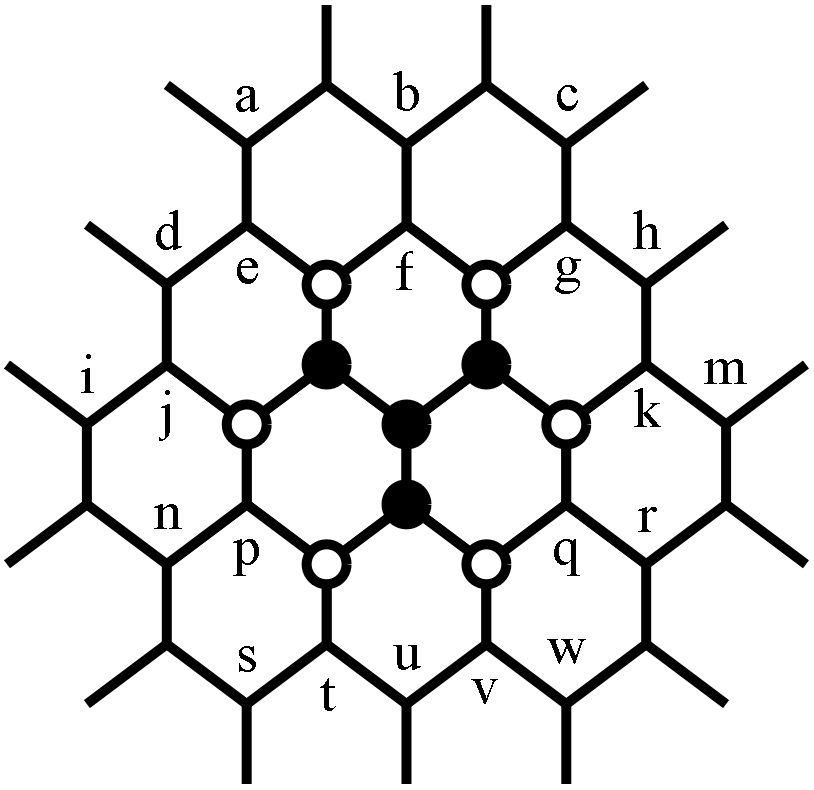}}
\hspace{0.5 cm}
\subfloat[Lemma \ref{5star lem}]{\label{5star}\includegraphics[width=0.275\textwidth]{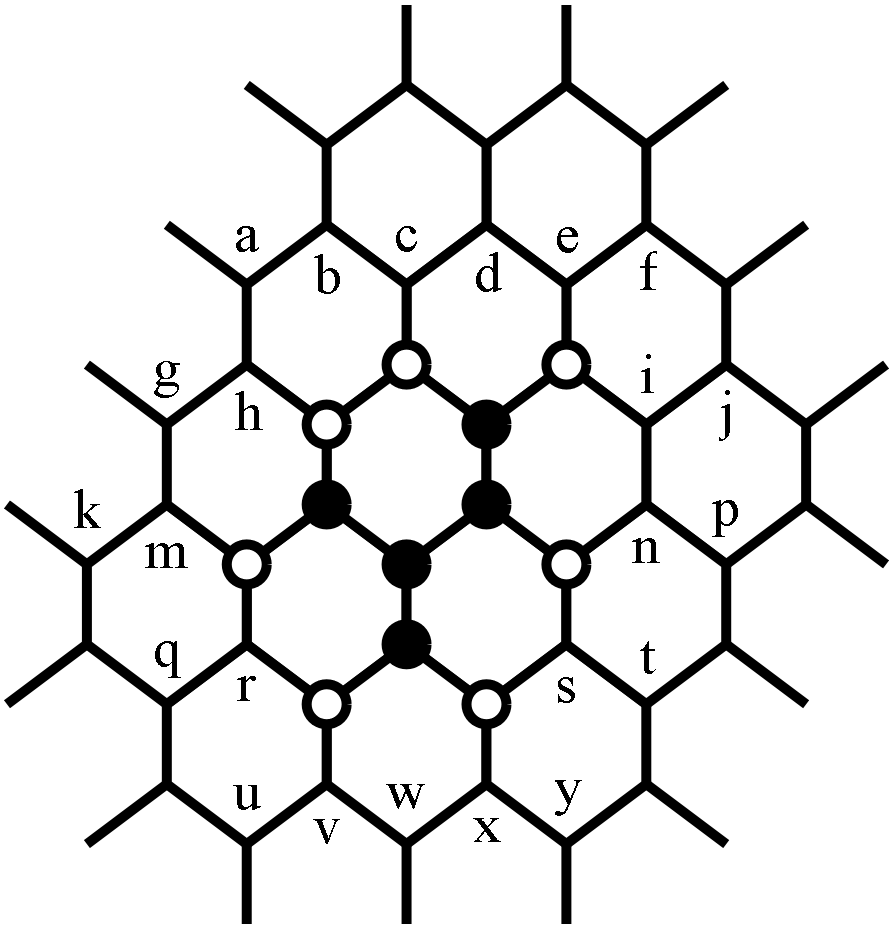}}
\end{center}
\caption{}
\end{figure}

\begin{proof}[Proof of Lemma \ref{5star lem}]

Let $C$ be the 5-cluster shown in Figure \ref{5star}.  Then $C$ has one degree-3 vertex.  There are 13 candidates for nearby poor 1-clusters: $a/h$, $b/c$, $d$, $e/f$, $g$, $i/j$, $k/m$, $n/p$, $q/r$, $s/t$, $u/v$, $w$ and $x/y$.

Now, if $c \in D$, then $d \not \in \poorone$.  Therefore, $C$ has at most 12 nearby poor 1-clusters.

Now we consider the case for which $c \not \in D$.  If $b \not \in \poorone$, then $C$ has at most 12 nearby poor 1-clusters and the lemma holds.  So we assume $b \in \poorone$.  Then we have $a \not \in \poorone$.  If $h \not \in \poorone$, then $C$ has at most 12 nearby poor 1-clusters and the lemma holds.  So we assume $h \in \poorone$.  Then $g \not \in \poorone$; therefore, $C$ has at most 12 nearby poor 1-clusters.
\end{proof}

\begin{figure}[h]
\begin{center}
\subfloat[Case 1]{\label{6cluster1}\includegraphics[width=0.27\textwidth]{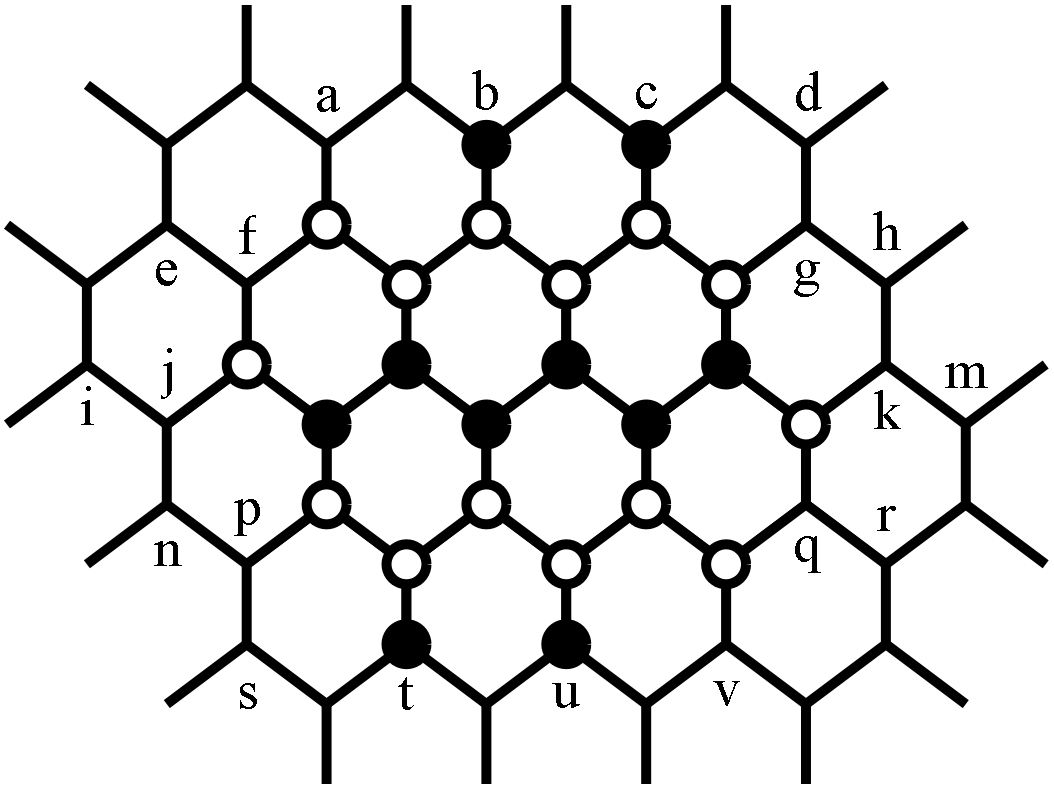}}
\subfloat[Case 2]{\label{6cluster2}\includegraphics[width=0.26\textwidth]{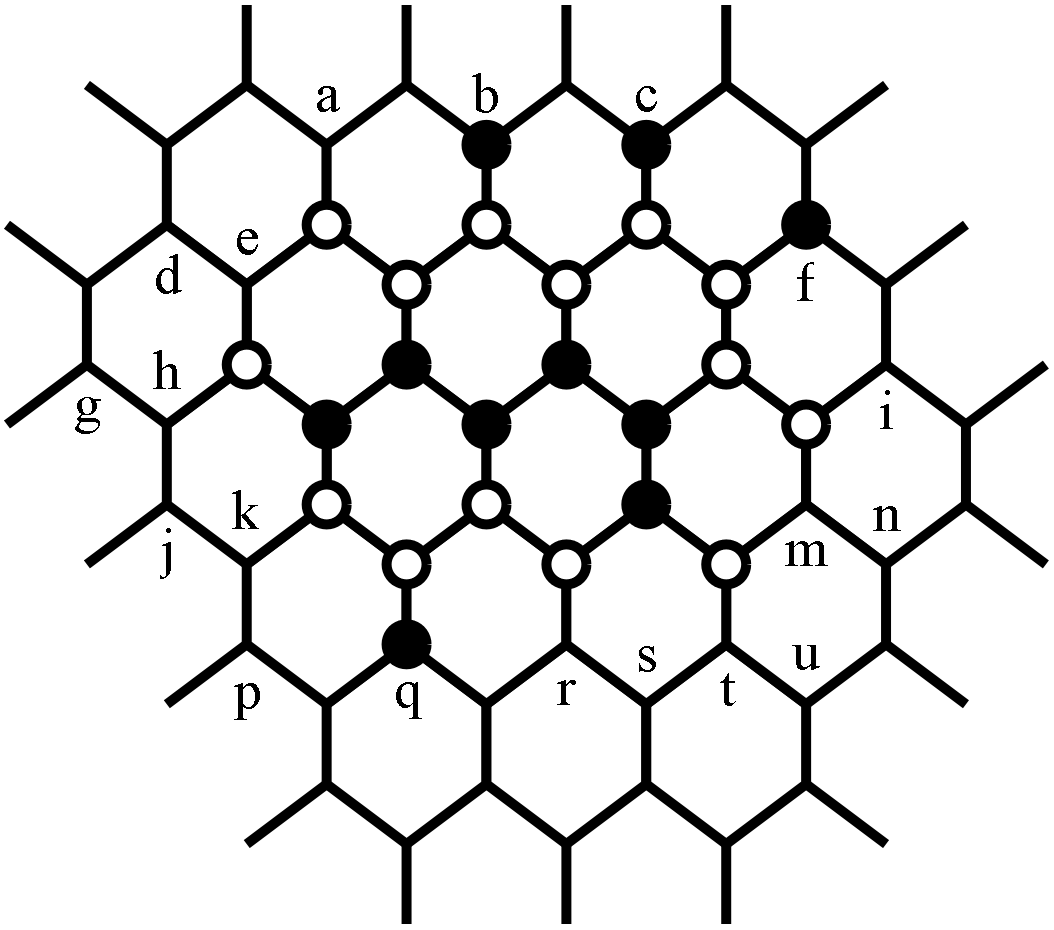}}
\subfloat[Case 3]{\label{6cluster3}\includegraphics[width=0.26\textwidth]{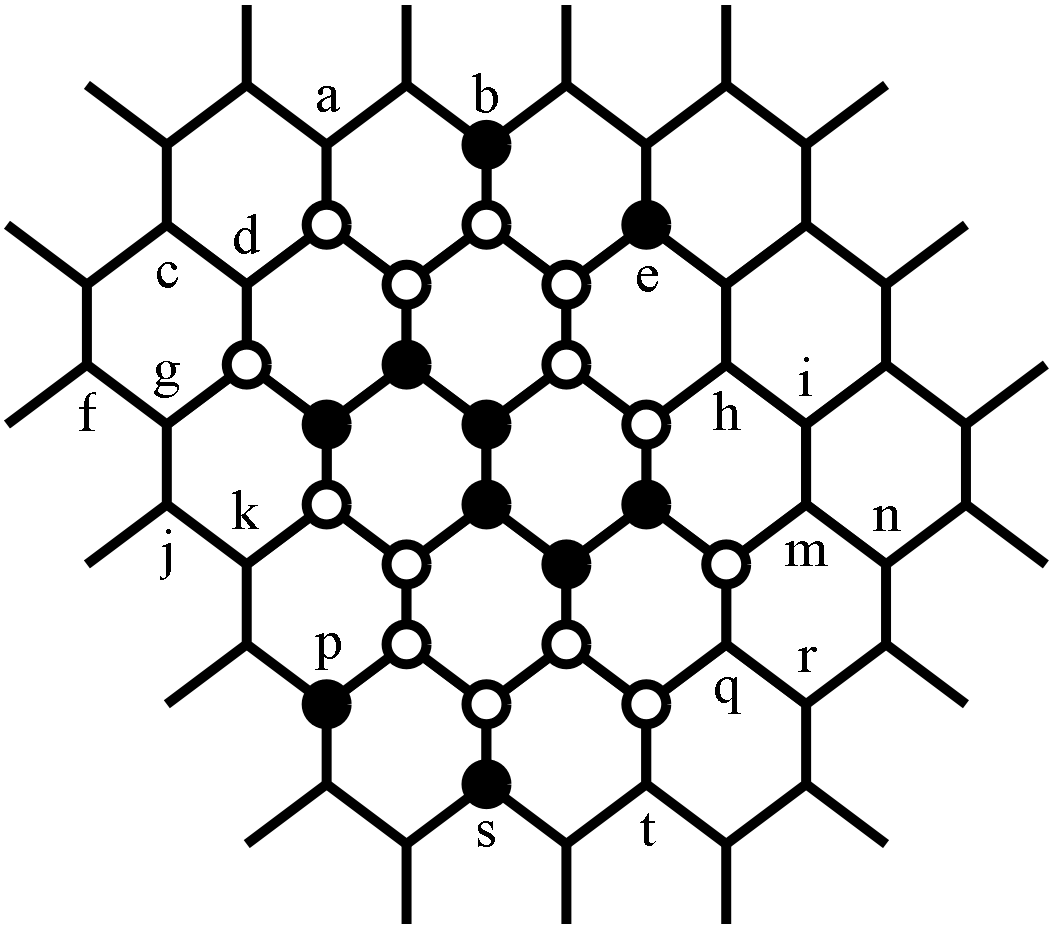}}
\subfloat[Case 4]{\label{6cluster4}\includegraphics[width=0.24\textwidth]{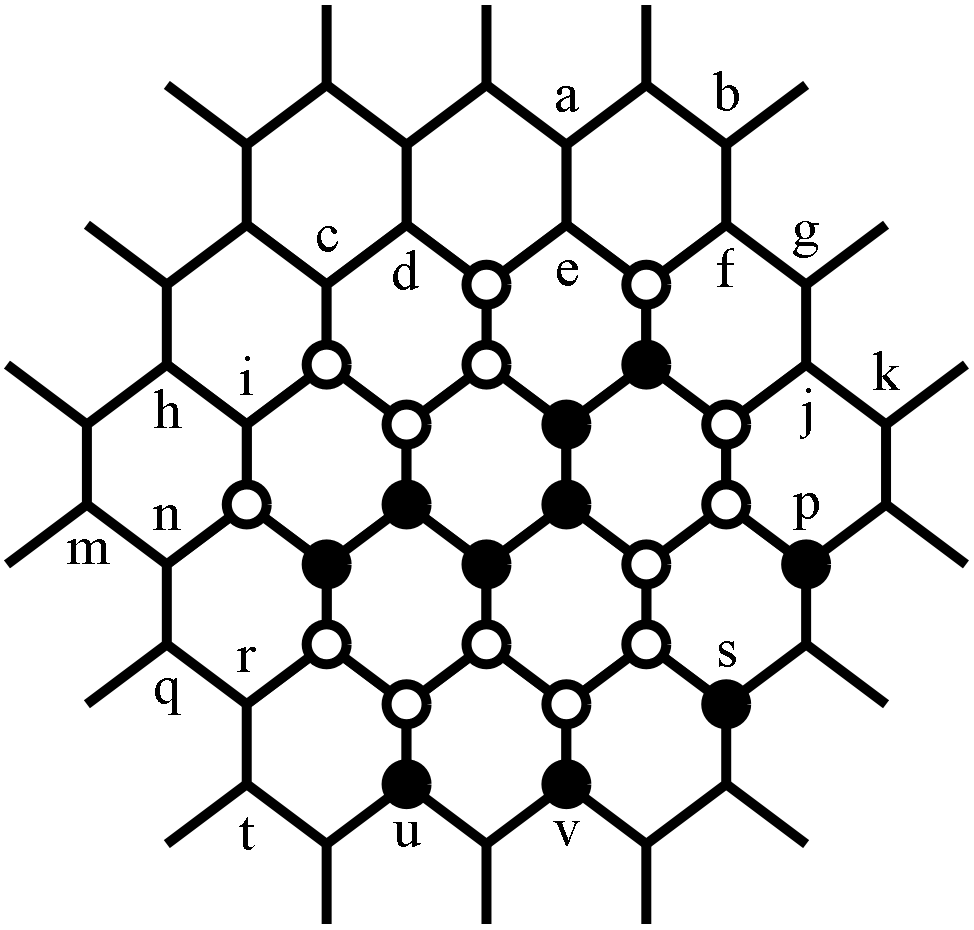}}
\end{center}
\caption{Lemma \ref{6cluster lem}}
\end{figure}

\begin{proof}[Proof of Lemma \ref{6cluster lem}]
By symmetry there are only 4 cases to consider.
Let $C_1$ be the open 6-cluster shown in Figure \ref{6cluster1}.  Then, $b,c,t,u \in D_{3^+}$ (Proposition \ref{force3cluster prop}).  There are 10 candidates for nearby poor 1-clusters: $a$, $d/g$, $e/f$, $h$, $i/j$, $k/m$, $n$, $p/s$, $q/r$ and $v$.  Therefore, $C_1$ has at most 10 nearby poor 1-clusters.
Let $C_2$ be the open 6-cluster shown in Figure \ref{6cluster2}.  Then, $b,c,f,q \in D_{3^+}$.  Therefore, $C_2$ has at most 9 nearby poor 1-clusters: $a$, $d/e$, $g/h$, $i$, $j/k$, $m/n$, $p$, $r/s$ and $t/u$.
Let $C_3$ be the open 6-cluster shown in Figure \ref{6cluster3}.  Then, $b,e,p,s \in D_{3^+}$.  Therefore, $C_3$ has at most 8 nearby poor 1-clusters: $a$, $c/d$, $f/g$, $h/i$, $j/k$, $m/n$, $q/r$ and $t$.
Let $C_4$ be the open 6-cluster shown in Figure \ref{6cluster4}.  Then, $p,s,u,v \in D_{3^+}$.  Therefore, $C_4$ has at most 9 nearby poor 1-clusters: $a/e$, $b/f$, $c/d$, $g$, $h/i$, $j/k$, $m/n$, $q/r$ and $t$.
\end{proof}

\begin{proof}[Proof of Lemma \ref{vpsym lem}]

Let $v,a,b$ and $c$ be as shown in Figure \ref{verypoorsym1 part1}.  Now, $v \in D_1^{vp}$ and $a,b$ and $c$ are distance-2 from $v$; therefore, $a,b,c \in \poorone$.  Therefore, $g,n,p \not \in D$ and $d,m,q \in D$ (Proposition \ref{1clusters prop}).  Then $h,k,s \in D_{3^+}$ (Proposition \ref{force3cluster prop}).  Let $C_h$ be the \threepluss at $h$.  Now, $C_h$ is distance-3 from $v$; therefore, $C_h \in \openthree$.  Since $b,d \in D$, we must have $e,h,i \in C_h$.  Symmetric arguments may be made to show $f,j,k \in D_{3^+}^o$ and $r,s,t \in D_{3^+}^o$.  Therefore, $v$ is in a head position of 3 open 3-clusters and exactly one of $a,b$ and $c$ is in a shoulder position of each of these open 3-clusters.  

\begin{figure}[h]
\begin{center}
\subfloat[]{\label{verypoorsym1 part1}\includegraphics[width=0.3\textwidth]{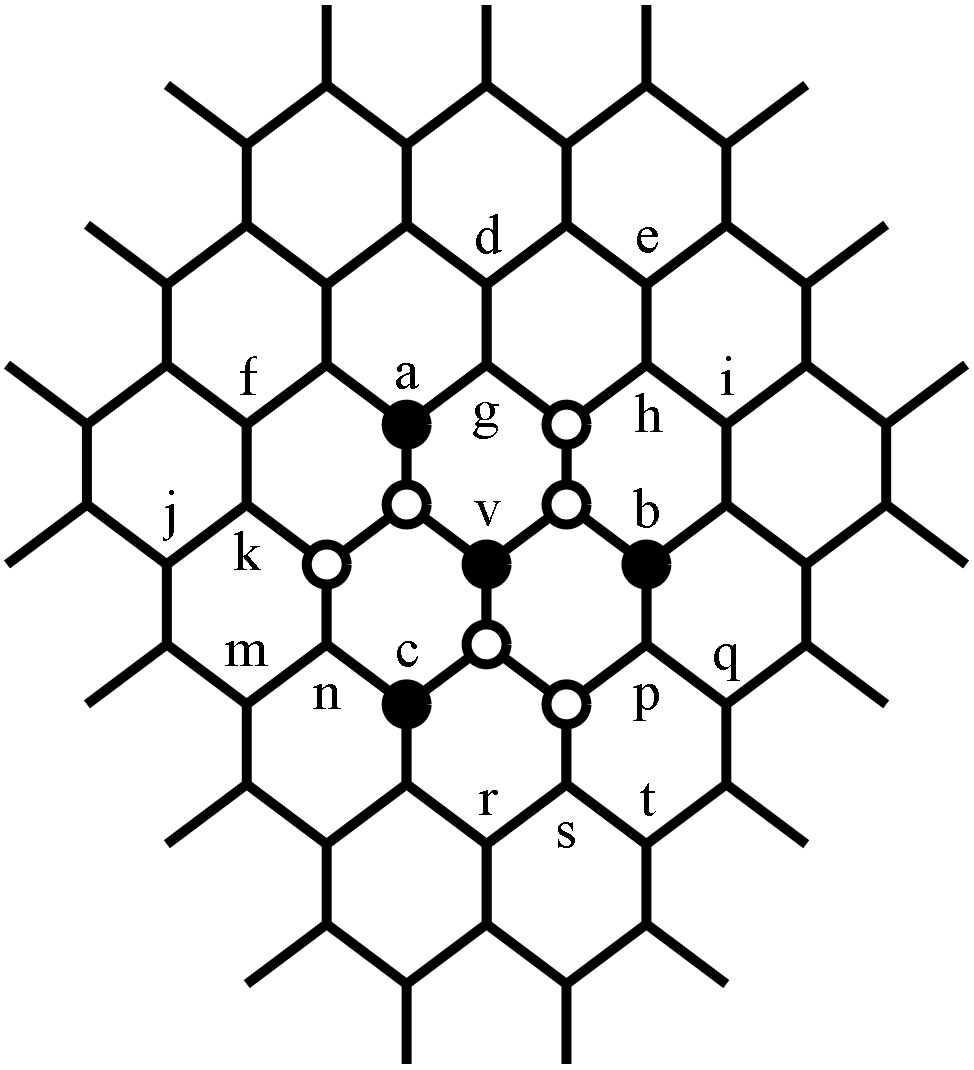}}
\hspace{0.5 cm}
\subfloat[]{\label{verypoorsym1 part2}\includegraphics[width=0.37\textwidth]{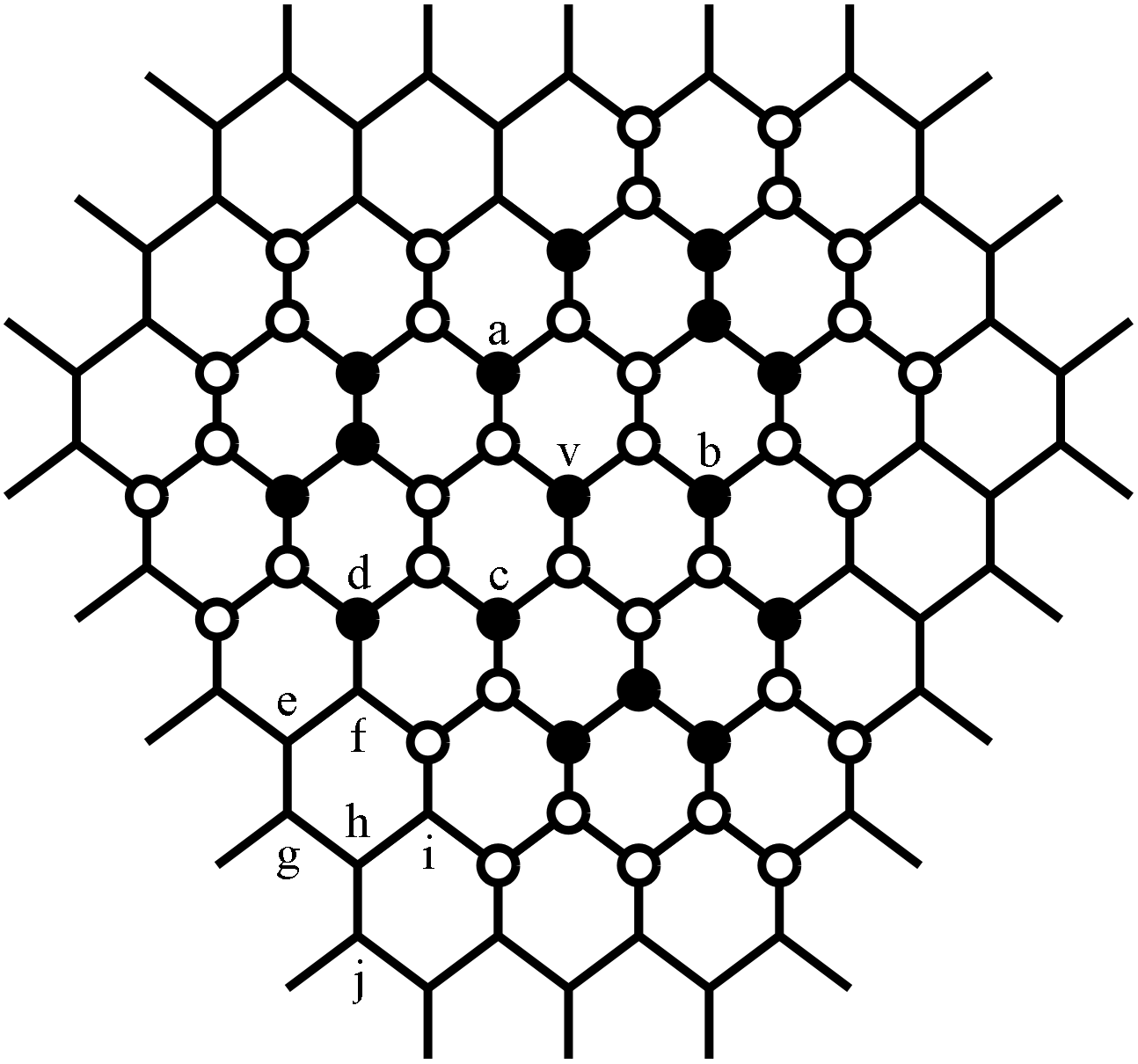}}
\end{center}
\caption{Lemma \ref{vpsym lem}}
\end{figure}

Now suppose each of the open 3-clusters at distance-3 from $v$ is uncrowded as in Figure \ref{verypoorsym1 part2}.  All of the vertices have been relabelled except $v,a,b$ and $c$.  Now, the graph is rotationally symmetric about $v$, so we need only consider one of $a,b$ and $c$.  We choose $c$.  Now, $d$ is in a shoulder position of an open 3-cluster, $C$, which is distance-3 from $v$.  By hypothesis, $C$ is uncrowded; thus, $d \in \poorone$.  Therefore, $f \not \in D$ and $e \in D$ (Proposition \ref{1clusters prop}).  Then we have $i \in D_{3^+}$.  Let $C_i$ be the \threepluss at $i$.  Since 2 of the 3 neighbors of $i$ are not in $D$, we also have $h \in C_i$.  If $g \in D$, then $e,g,h,i \in C_i$.  If $g \not \in D$, then $h,i,j \in C_i$ and $e$ closes $C_i$.  In both cases, $C_i$ is a closed 3-cluster or $4^+$-cluster at distance-3 from $c$.
\end{proof}

\begin{proof}[Proof of Lemma \ref{vp lem}]
\label{vpasym pf}

Let $u,v,w$ and $x$ be as shown in Figure \ref{verypoor1}.  Now, by hypothesis, $v \in D_1^{vp}$; therefore, $u,w,x \in \poorone$.  Then, we have $f \not \in D$; therefore, $n \in D_{3^+}$.  Since $x \in \poorone$, we have $g \in D$ (Proposition \ref{1clusters prop}).  Let $C_0$ be the \threepluss at $n$.  Now, $v$ is distance-3 from $C$, so $C \in \openthree$.  The only possibility is to have $m,n,p \in C$.  Therefore, $v$ and $x$ are in the head positions of an open 3-cluster, and $w$ is in a shoulder position.

\begin{figure}[ht]
\begin{center}
\subfloat[Lemma \ref{vp lem}]{\label{verypoor1}\includegraphics[width=0.3\textwidth]{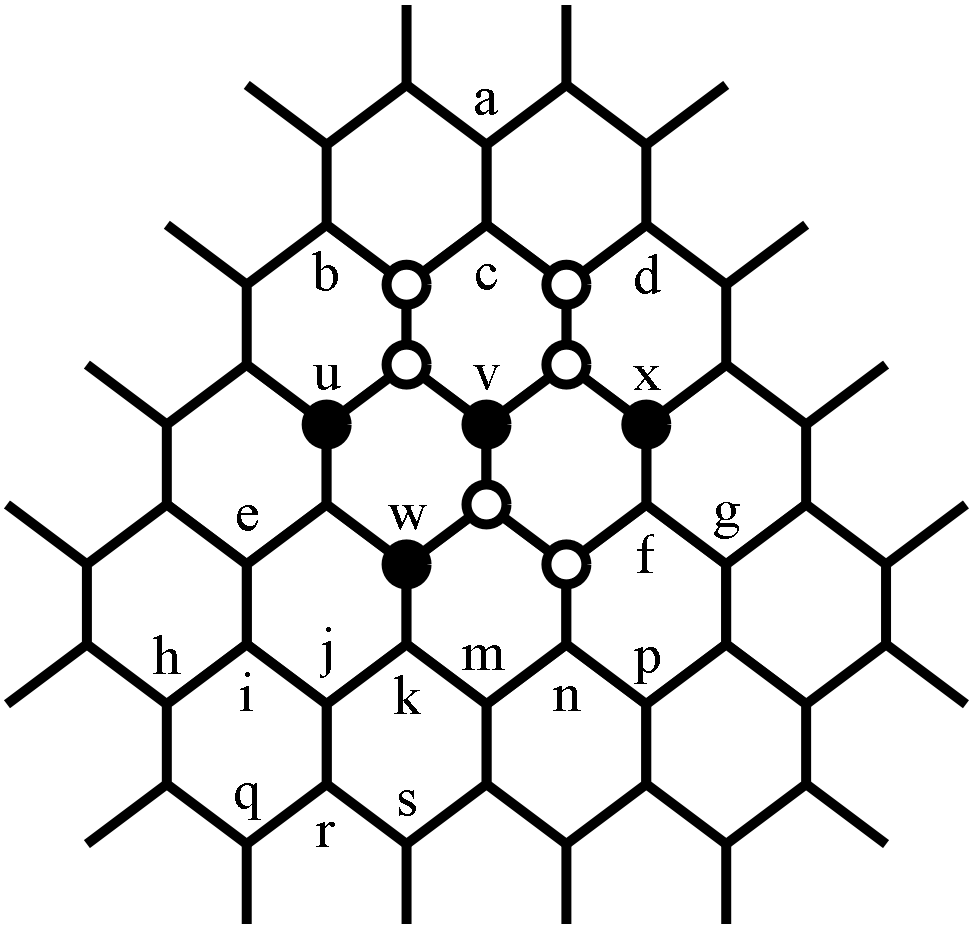}}
\hspace{2cm}
\subfloat[Lemma \ref{vp lem}, where (i) and (ii) are not satisfied.]{\label{verypoor2}\includegraphics[width=0.35\textwidth]{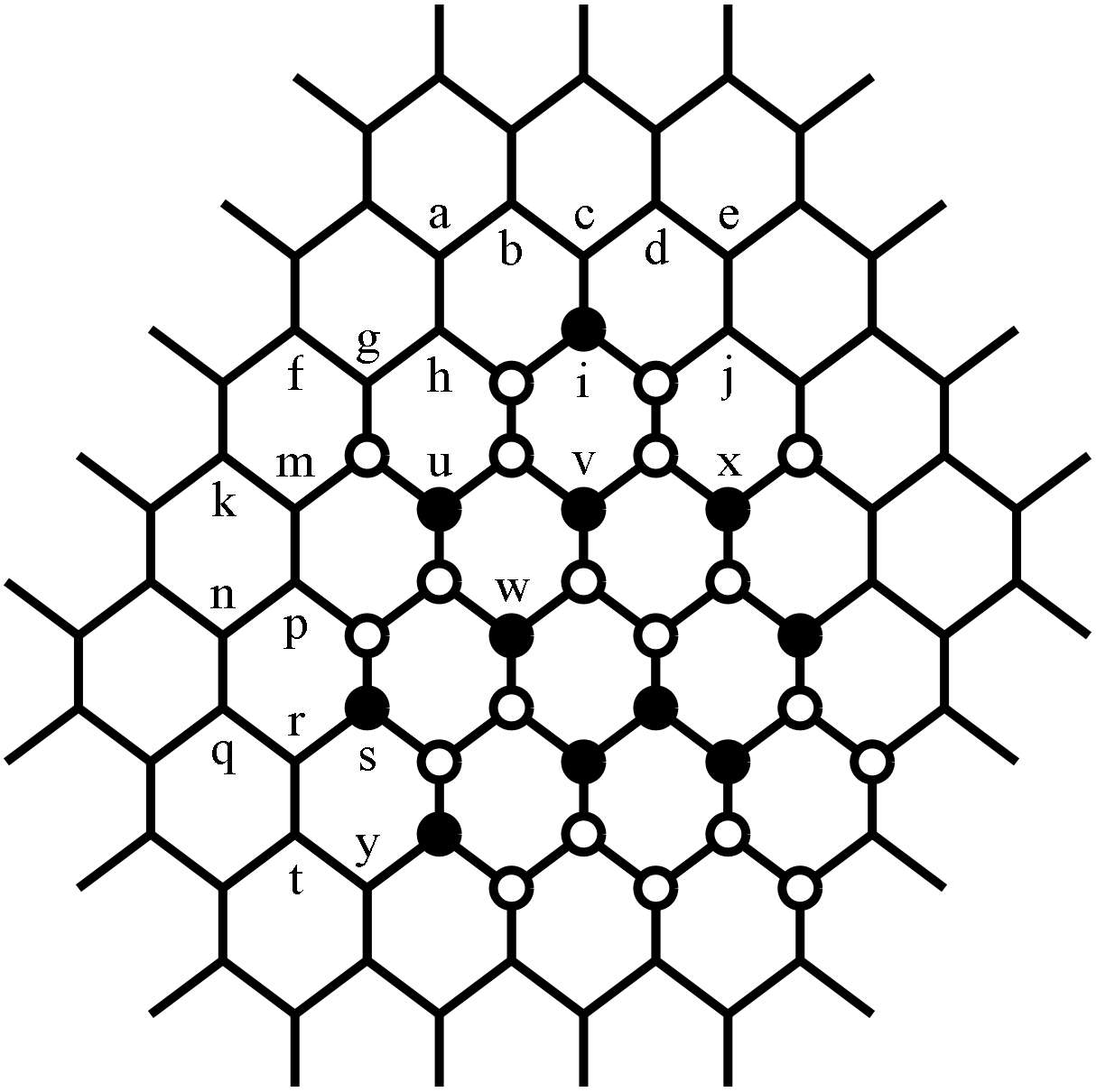}}
\end{center}
\caption{Lemma \ref{vp lem}}
\end{figure}

If $C_0$ is crowded, then (i) is satisfied and the lemma holds.  So assume $C_0$ is uncrowded.  Then, $j,k,s \not \in D$, and since $u \in \poorone$ we have $e \not \in D$ (Corollary \ref{poor1 cor}).  If $i \not \in D$, then $h,r \in D_{3^+}$.  Let $C_r$ be the \threepluss at $r$.  If $h \in C_r$, then $w$ is distance-3 from a \fourplus.  If $h \not \in C_r$, then $h$ closes $C_r$ and $w$ is distance-3 from a closed \threeplus.  In both cases, (ii) is satisfied.  So assume $w$ is not distance-3 from a closed 3-cluster or \fourplus.
Then, we have $i \in D$.  Now, if $r \not \in D$, then $i,q \in D_{3^+}$.  Let $C_i$ be the \threepluss at $i$.  If $q \in C_i$, then $w$ is distance-3 from a \fourplus; and if $q \not \in C_i$, then $q$ closes $C_i$ and $w$ is distance-3 from a closed \threeplus.  But we assumed that (ii) is not satisfied; therefore, $r \in D$.

If $c \not \in D$, then $a,b,d \in D_{3^+}$.  Let $C_b$ be the \threepluss at $b$.  If $b$ is a leaf of $C_b$, then either $u \in C_b$ and $C_b \in \cK_{4^+}$, or $u$ closes $C_b$ and $C_b \in \cK^c_{3^+}$, or $a \in C_b$ and $C_b \in \cK_{4^+}$, or $a$ closes $C_b$ and $C_b \in \cK^c_{3^+}$.  But $v$ is very poor and distance-3 from $b$, so we must have $C_b \in \openthree$.  This is only possible if $b$ is the middle vertex of $C_b$.  Let $C_a$ be the \threepluss at $a$, and let $C_d$ be the \threepluss at $d$.  Now, $C_b \in \openthree$, so $a$ is a leaf of $C_a$ and either $d \in C_a$ or $d$ closes $C_a$.  In the first case, $v$ is distance-3 from a \fourpluss.  But, by hypothesis, $v$ is not distance-3 from a \fourplus.  Therefore, $d$ closes $C_a$.  But then $d$ is a leaf of $C_d$ and $x$ closes $C_d$; therefore, $v$ is distance-3 from a closed \threeplus.  But, by hypothesis, $v$ is not distance-3 from a closed \threeplus.  Therefore, $c \in D$.


Figure \ref{verypoor2} shows the surrounding vertices of $v$ when neither (i) nor (ii) is satisfied.  All the vertices except $u,v,w$ and $x$ have been relabelled.  
Now, $u \in \poorone$; therefore, exactly one of $g$ and $m$ is in $D$ (Corollary \ref{poor1 cor}).

First, we consider the case for which $m \in D$ and $g \not \in D$.  If $p \in D$, then $m,p \in D_{3^+}$.  Let $C_{m,p}$ be the \threepluss at $m$ and $p$.  If $k,m,p \in C_{m,p}$, then $u$ closes $C_{m,p}$.  But $p$ is distance-3 from $w$ and, by assumption, $w$ is not distance-3 from a closed \threeplus.  If $m,n,p \in C_{m,p}$, then $s$ closes $C_{m,p}$.  But, again, by assumption, $w$ is not distance-3 from a closed \threeplus.  Therefore, $p \not \in D$.  Then, by Proposition \ref{force3cluster prop}, we have $s \in D_{3^+}$.  Let $C_s$ be the \threepluss at $s$.  Since 2 of the 3 neighbors of $s$ are not in $D$, we have $r \in C_s$.  Now, $C_s$ is distance-3 from $w$; therefore, by assumption, $C_s \in \openthree$.  Therefore, either $q,r,s \in C_s$ or $r,s,t \in C_s$.  In both cases we have $n,y \not \in D$.  Then, by Proposition \ref{force3cluster prop}, we have $m \in D_{3^+}$.  Let $C_m$ be the \threepluss at $m$.  Since 2 of the 3 neighbors of $m$ are not in $D$, we must have $k \in C_m$.  

\begin{figure}[ht]
\begin{center}
\subfloat[Lemma \ref{vp lem}, where (i) and (ii) are not satisfied and $m \in D$.]{\label{verypoor2m}\includegraphics[width=0.4\textwidth]{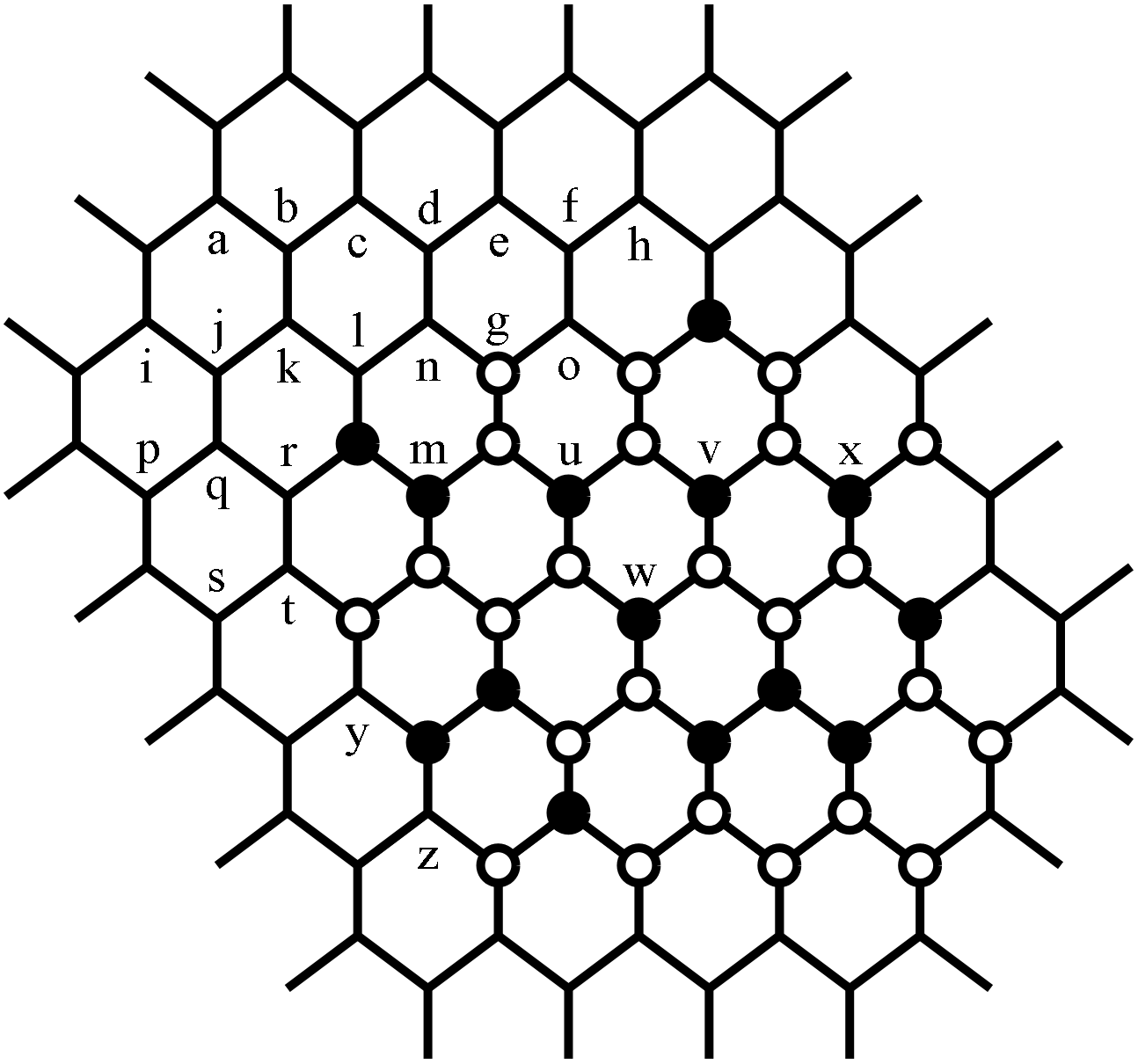}}
\hspace{1.5cm}
\subfloat[Lemma \ref{vp lem}, where (i) and (ii) are not satisfied and $g \in D$.]{\label{verypoor2g}\includegraphics[width=0.4\textwidth]{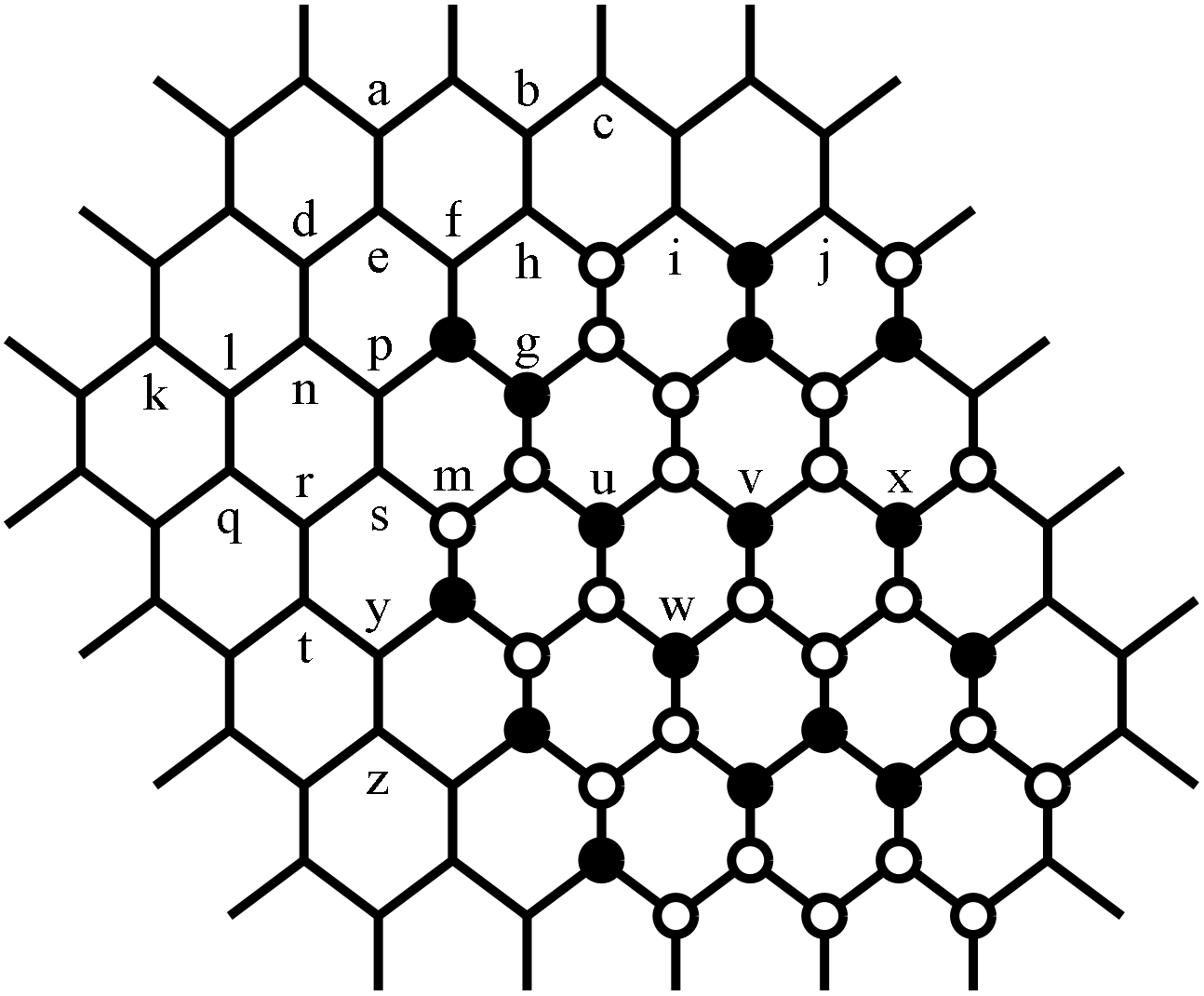}}
\end{center}
\caption{Lemma \ref{vp lem}}
\end{figure}

Figure \ref{verypoor2m} shows the surrounding vertices of $v$ when neither (i) nor (ii) is satisfied and $m \in D$.  All of the vertices except $g,m,u,v,w$ and $x$ have been relabelled.  

Now, if $z \in D$, then $z \in D_{3^+}$.  Let $C_z$ be the \threepluss at $z$.  By assumption, (ii) is not satisfied, so $C_z \in \openthree$.  Then $w$ is in the hand position of an open 3-cluster, $C_z$, such that the tail position is in $D$ and $w$ is on the finless side.  Therefore, (iii) is satisfied.

Now assume (iii) is not satisfied.  Then $z \not \in D$ and $y \in D_{3^+}$.  Let $C_y$ be the \threepluss at $y$.  Since $w$ is distance-3 from $C_y$, we have $C_y \in \openthree$.  Let $C_m$ be the \threepluss at $m$.  Then, $m$ is a leaf of $C_m$ and $u$ is the only vertex in $D \setminus C_m$ at distance-2 from $m$.  If $C_m$ is a linear 4-cluster, then either $q,r \in C_m$ or $l,k \in C_m$; in both cases, the one-turn position at distance-2 from $m$ is not in $D$.  If $C_m$ is a curved 4-cluster, then either $r,t \in C_m$ or $l,n \in C_m$; in both cases, the backwards position at distance-2 from $m$ is not in $D$.  Therefore, if $C_m \in \cK_{4^+}$, then (iv) is satisfied.

Now assume (iv) is not satisfied.  Then, $C_m \in \cK_3$.  Either $l \in C_m$ or $r \in C_m$; in both cases, $u$ is in a foot or arm position.  First, we consider the case in which $r \in C_m$.  Then, $l,q,t \not \in D$.  If $n \not \in D$, then $d,o \in D_{3^+}$ (Proposition \ref{force3cluster prop}).  Since 2 of the 3 neighbors of $o$ are not in $D$, we also have $f \in D_{3^+}$.  Let $C_{f,o}$ be the \threepluss at $f$ and $o$.  If $e \in D$, then $d,e,f,o \in C_{f,o}$ and $C_{f,o} \in \cK_{4^+}$.  If $e \not \in D$, then $h \in C_{f,o}$ and $d$ closes $C_{f,o}$.  But, by hypothesis, $v$ is not within distance-3 of a closed 3-cluster or $4^+$-cluster.  Therefore, $n \in D$ and $C_m \in \cK_3^c$.  Then, $C_m$ is type-2 paired with $C_y$ and $u$ is in the arm position on the closed side of $C_m$.  Then, (v) is satisfied.  Therefore, with $r \in D$, the lemma holds.

Now assume (v) is not satisfied.  So we have $l \in C_m$.  Recall $C_m \in \cK_3$; therefore, $k,n,r \not \in D$.  Then, $f,o \in D_{3^+}$ (Proposition \ref{force3cluster prop}).  Again, let $C_{f,o}$ be the \threepluss at $f$ and $o$.  Now, $C_{f,o}$ is distance-3 from $v$, so we must have $C_{f,o} \in \openthree$.  If $h \in C_{f,o}$, then the tail position of $C_{f,o}$ is in $D$ and $u$ is in the hand position on the finless side.  But, by assumption, (iii) is not satisfied.  Therefore, $e \in C_{f,o}$ and $d,h \not \in D$.  If $C_{f,o} \in \cK^c_3$, then $C_{f,o}$ has at most 6 neary poor 1-clusters: $a/b, c, i/j, p/q, s/t$ and $u$.  But, by assumption, (v) is not satisfied.  So we have $C_{f,o} \in \openthree$.  Then, $q,t \not \in D$; therefore, $s \in D_{3^+}$.  Let $C_s$ be the \threepluss at $s$.  If $C_s$ occupies the arm position of $C_y$, then $w$ is in the hand position of an open 3-cluster satisfying (vi).  

Now assume (vi) is not satisfied.  Then, then arm position of $C_y$ is not in $D$.  Therefore, $u$ is in the foot position of an open 3-cluster which is type-1 paired on top.  Therefore, (vii) is satisfied, and the lemma holds.
\\
\\
Now we return to Figure \ref{verypoor2} and consider the case for which $g \in D$ and $m \not \in D$.

If $h \in D$, then $g,h \in D_{3^+}$ and either $a \in D_{3^+}$ or $f \in D_{3^+}$.  In both cases, $v$ is distance-3 from a closed \threeplus.  Therefore, $h \not \in D$.  Then, by Proposition \ref{force3cluster prop}, we have $i \in D_{3^+}$.  Let $C_i$ be the \threepluss at $i$.  Since 2 of the 3 neighbors of $i$ are not in $D$, we have $c \in C_i$ and $i$ is a leaf of $C_i$.  Therefore, $j \in D$ (Proposition \ref{leaves prop}).  Now, $C_i$ is distance-3 from $v$; therefore, $C_i \in \openthree$.  Either $b \in C_i$ or $d \in C_i$.  In both cases, we have $a,e \not \in D$.  Then, $g \in D_{3^+}$ (Proposition \ref{force3cluster prop}).  Let $C_g$ be the \threepluss at $g$.  Since 2 of the 3 neighbors of $g$ are not in $D$, we have $f \in C_g$.  If $p \not \in D$, then $n,s \in D_{3^+}$ (Proposition \ref{force3cluster prop}).  Let $C_s$ be the \threepluss at $s$.  Since 2 of the 3 neighbors of $s$ are not in $D$, we have $r \in C_s$.  If $q \in D$, then $n,q,r,s \in C_s$ and $C_s \in \cK_{4^+}$.  If $q \not \in D$, then $r,s,t \in C_s$ and $n$ closes $C_s$.  But $s$ is distance-3 from $w$ and we assumed (ii) is not satisfied.  Therefore, $p \in D$.


Figure \ref{verypoor2g} shows the surrounding vertices of $v$ when neither (i) nor (ii) is satisfied and $g \in D$.  All of the vertices except $g,m,u,v,w$ and $x$ have been relabelled.

If $j \in D$, then $j \in D_{3^+}$.  Let $C_j$ be the \threepluss at $j$.  Since, $C_j$ is distance-3 from $v$, we must have $C_j \in \openthree$.  Then, the tail position of $C_j$ is in $D$ and $v$ is in the hand position on the finless side.  Therefore, (iii) is satisfied.

Now assume (iii) is not satisfied.  Then we have $i \in D_3^o$ and $j \not \in D$.  Let $C_i$ be the open 3-cluster at $i$.  Now, $u$ is distance-2 from the \threepluss at $g$; let $C_g$ be this \threeplus.  Then, $g$ is a leaf of $C_g$ and $u$ is the only distance-2 vertex of $g$ in $D \setminus C_g$.  If $C_g$ is a linear 4-cluster, then either $n,p \in C_g$ or $e,f \in C_g$; in both cases, the one-turn position at distance-2 from $g$ is not in $D$.  If $C_g$ is a curved 4-cluster then either $f,h \in C_g$ or $p,s \in C_g$; in both cases, the backwards position at distance-2 from $g$ is not in $D$.  Therefore, if $C_g \in \cK_{4^+}$, then (iv) is satisfied.

Now assume (iv) is not satisfied.  Then, $C_g \in \cK_3$.  Either $f \in C_g$ or $p \in C_g$; in both cases, $u$ is in a foot or arm position.  First we consider the case in which $f \in C_g$.  Then, $e,h,p \not \in D$.  If $s \not \in D$, then $r,y \in D_{3^+}$ (Proposition \ref{force3cluster prop}).  Let $C_y$ be the \threepluss at $y$.  If $t \in D$, then $r,t \in C_y$ and $C_y \in \cK_{4^+}$.  If $t \not \in D$, then $z \in C_y$ and $r$ closes $C_y$.  In both cases, $w$ is distance-3 from a closed 3-cluster or $4^+$-cluster.  But we assumed that (ii) is not satisfied.  Therefore, $s \in D$.  Then, $u$ is in an arm position on the closed side of $C_g$ , and $C_g$ is type-2 paired with $C_i$.  Then, (v) is satisfied.  Therefore, with $f \in D$, the lemma holds.

Now assume (v) is not satisfied.  Then $f \not \in D$.  Therefore, $p \in C_g$ and $f,n,s \not \in D$.  Then, $y \in D_{3^+}$ (Proposition \ref{force3cluster prop}).  Let $C_y$ be the \threepluss at $y$.  Now, $u$ is distance-3 from $C_y$ and we assumed (ii) is not satisfied.  Therefore, $C_y \in \openthree$.  If $z \in C_y$, then the tail position of $C_y$ is in $D$ and $u$ is in the hand position on the finless side of $C_y$.  But we assumed that (iii) is not satisfied.  Therefore, $t \in C_y$.  Now, if $C_g$ is a closed 3-cluster, then there are at most 6 nearby poor 1-clusters: $b/h, a/e, d, k/l, q$ and $u$.  But we assumed (v) is not satisfied.  Therefore, $C_g \in \openthree$.  Then, $h \not \in D$.  By Proposition \ref{force3cluster prop}, we have $b \in D_{3^+}$.  If $c \in D$, then $v$ is in the hand position of $C_i$ and the hand and arm positions on the other side of $C_i$ are both in $D$.  Therefore, (vi) is satisfied.  

Now assume (vi) is not satisfied.  Then we have $c \not \in D$.  Therefore, $u$ is in the foot position of an open 3-cluster which is type-1 paired on top.  Therefore, (vii) is satisfied and the lemma holds.
\end{proof}

\begin{proof}[Proof of Corollary \ref{vpasym cor2}]

Let $x$ be in the $x$-position of $v$.  Now, $v \in D_1^{vp}$; therefore, $u,w,x \in \poorone$.  Additionally, $v$ is in a head position of an open 3-cluster, $C_0$, and $w$ is in a shoulder position (Lemma \ref{vp lem}).  Let $u,v,w,x$ and $C_0$ be as shown in Figure \ref{verypoor1u}.

Suppose by contradiction that $u$ is distance-2 from a very poor 1-cluster other than $v$.  There are 2 possibilities: $c \in D_1^{vp}$ or $h \in D_1^{vp}$.  If $c \in D_1^{vp}$, then $d \not \in D$ and $a \in D$ (Proposition \ref{1clusters prop}).  Then, $e \in D_{3^+}$ (Proposition \ref{force3cluster prop}).  Let $C_e$ be the \threepluss at $e$.  Either $a \in C_e$ or $a$ closes $C_e$; in both cases $c$ is within distance-3 of a closed 3-cluster or \fourplus.  Therefore, $c \not \in D_1^{vp}$.  If $h \in D_1^{vp}$, then $k \not \in D$ and $j \in D$ (Proposition \ref{1clusters prop}).  Then, $m \in D_{3^+}$ (Proposition \ref{force3cluster prop}).  Let $C_m$ be the \threepluss at $m$.  Either $j \in C_m$ or $j$ closes $C_m$; in both cases $h$ is within distance-3 of a closed 3-cluster or \fourplus.  Therefore, $h \not \in D_1^{vp}$.

Now, suppose by contradiction that $w$ is distance-2 from a very poor 1-cluster other than $v$.  The only possibility is $u$.  If $u \in D_1^{vp}$, then $c \in \poorone$ or $h \in \poorone$.  But we already saw that $c \in \poorone$ or $h \in \poorone$ implies $e \in D_3^c \cup D_{4^+}$ or $m \in D_3^c \cup D_{4^+}$.  Since $u$ is distance-3 from both $e$ and $m$, we have $u \not \in D_1^{vp}$.
\end{proof}

\begin{figure}[h]
\setcaptionwidth{0.2\textwidth}
\begin{center}
\subfloat[Corollary \ref{vpasym cor2} and Corollary \ref{vpasym cor3}]{\label{verypoor1u}\includegraphics[width=0.3\textwidth]{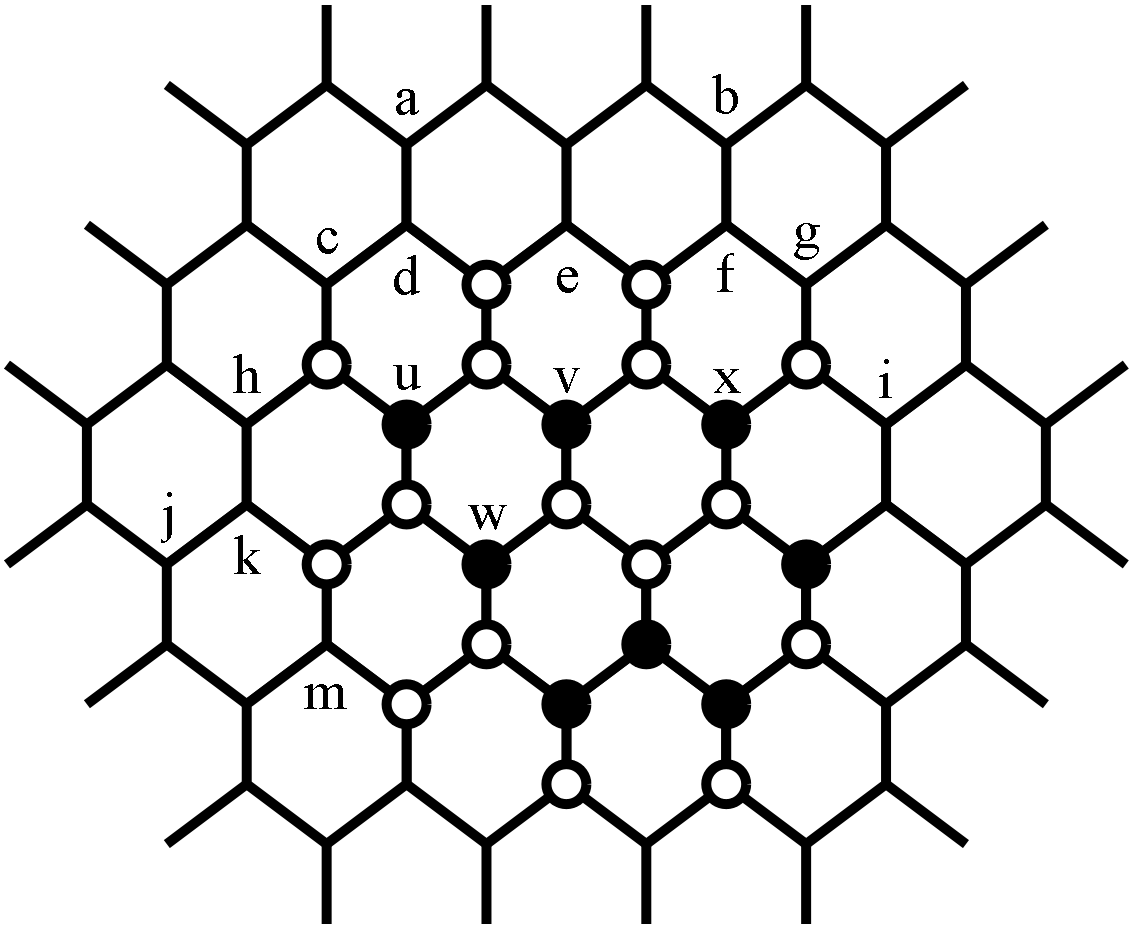}}
\hspace{2 cm}
\subfloat[Lemma \ref{vpshoulders lem}]{\label{verypoorshoulders}\includegraphics[width=0.2\textwidth]{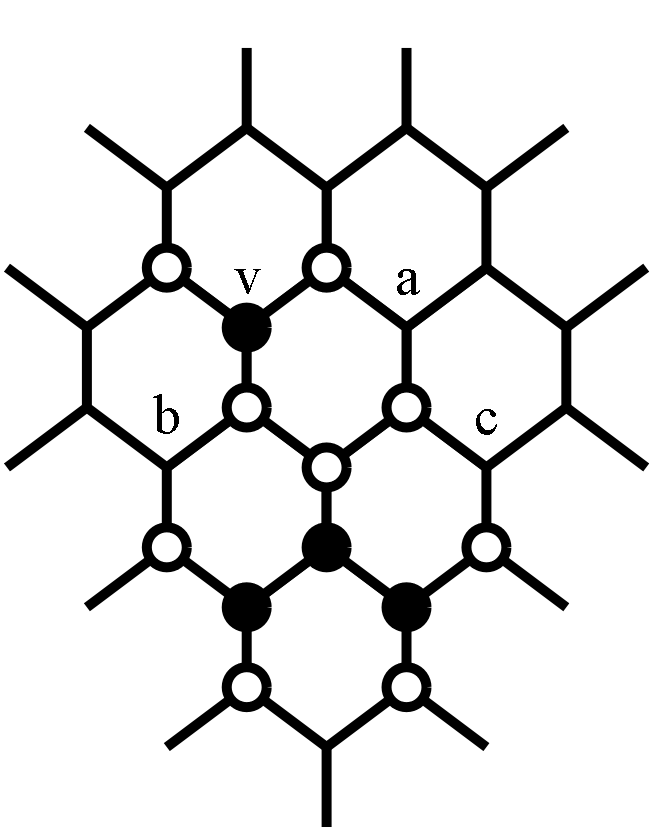}}
\end{center}
\caption{}
\end{figure}

\begin{proof}[Proof of Corollary \ref{vpasym cor3}]

Let $v$ and $x$ be as shown in Figure \ref{verypoor1u}.  If $x \in D_1^{vp}$, then $g \in \poorone$ or $i \in \poorone$.  If $g \in \poorone$, then $f \not \in D$ and $b \in D$ (Proposition \ref{1clusters prop}).  Therefore, $e \in D_{3^+}$ (Proposition \ref{force3cluster prop}).  Let $C_e$ be the \threepluss at $e$.  Either $b \in C_e$ or $b$ closes $C_e$; in both cases, $x$ is distance-3 from a closed 3-cluster or \fourplus.  But, by hypothesis, $v \in D_1^{vp}$.  Therefore, $i \in \poorone$.  Therefore, $x$ is in an asymmetric orientation and $v$ is in the $x$-position of $x$.
\end{proof}

\begin{proof}[Proof of Lemma \ref{vpshoulders lem}]

Let $C$ be the open 3-cluster shown in Figure \ref{verypoorshoulders}, and let $v$ be a very poor 1-cluster.  Then, $v$ is in a head position of $C$.  Since $v \in D_1$, we have $b \in D$ (Proposition \ref{1clusters prop}).  Suppose by contradiction that $c \not \in D$.  Then, $a \in D_{3^+}$ (Proposition \ref{force3cluster prop}).  Then $v \in D_1^{vp}$ and $v$ is distance-2 from a \threeplus, which is a contradiction.  Therefore, $c \in D$.
\end{proof}

\begin{proof}[Proof of Lemma \ref{type1paired lem}]
\label{type1paired pf}
Let $C_1$ be the open 3-cluster described by $j,k$ and $m$ in Figure \ref{type1pairedexpanded}.  Then $C_1$ is type-1 paired on top.  Now, $e \in D_{3^+}$ (Proposition \ref{force3cluster prop}).  Let $C_e$ be the \threepluss at $e$.  Since 2 of the 3 neighbors of $e$ are not in $D$, we have $d \in C_e$.  If $C_1$ has a poor 1-cluster in a shoulder or arm position, then either $h \in \poorone$ or $i \in \poorone$.  

First suppose $h \in \poorone$.  Then $c \in D$ (Proposition \ref{1clusters prop}).  Therefore, $c \in C_e$ and $h$ is distance-2 from $C_e$.  If $C_e \in \openthree$, then $h$ is in a foot position and $C_e$ is not paired.

Now suppose $i \in \poorone$.  Then $h \not \in D$ (Corollary \ref{poor1 cor}).  Therefore, $c,g \in D_{3^+}$ (Proposition \ref{force3cluster prop}).  Let $C_g$ be the \threepluss at $g$.  If $f \in D$, then $i$ is distance-2 from $C_g$.  If $f \not \in D$, then either $a,b,c,g \in C_g$ or $c$ closes $C_g$; in both cases, $i$ is distance-3 from a closed 3-cluster or $4^+$-cluster.  Thus, if $C_g \in \openthree$, then $a,f,g \in C_g$.  Then, $c$ and $i$ are in the shoulder positions of $C_g$ and, hence, $C_g$ is not type-1 paired on top.
\end{proof}

\begin{figure}[h]
\begin{center}
\subfloat[Lemma \ref{type1paired lem}]{\label{type1pairedexpanded}\includegraphics[width=0.29\textwidth]{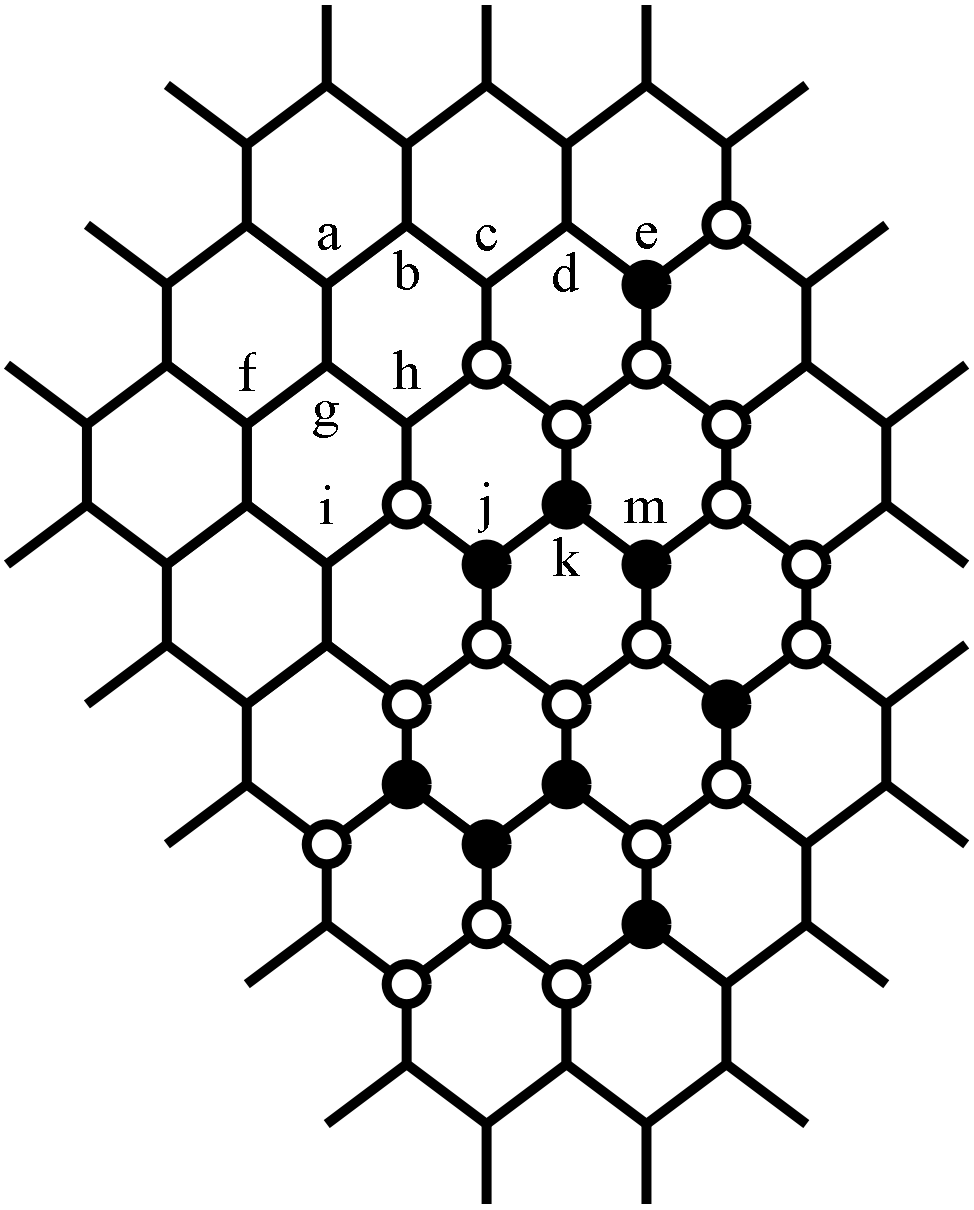}}
\hspace{1cm}
\subfloat[Lemma \ref{type2paired lem}]{\label{type2pairedexpanded}\includegraphics[width=0.3\textwidth]{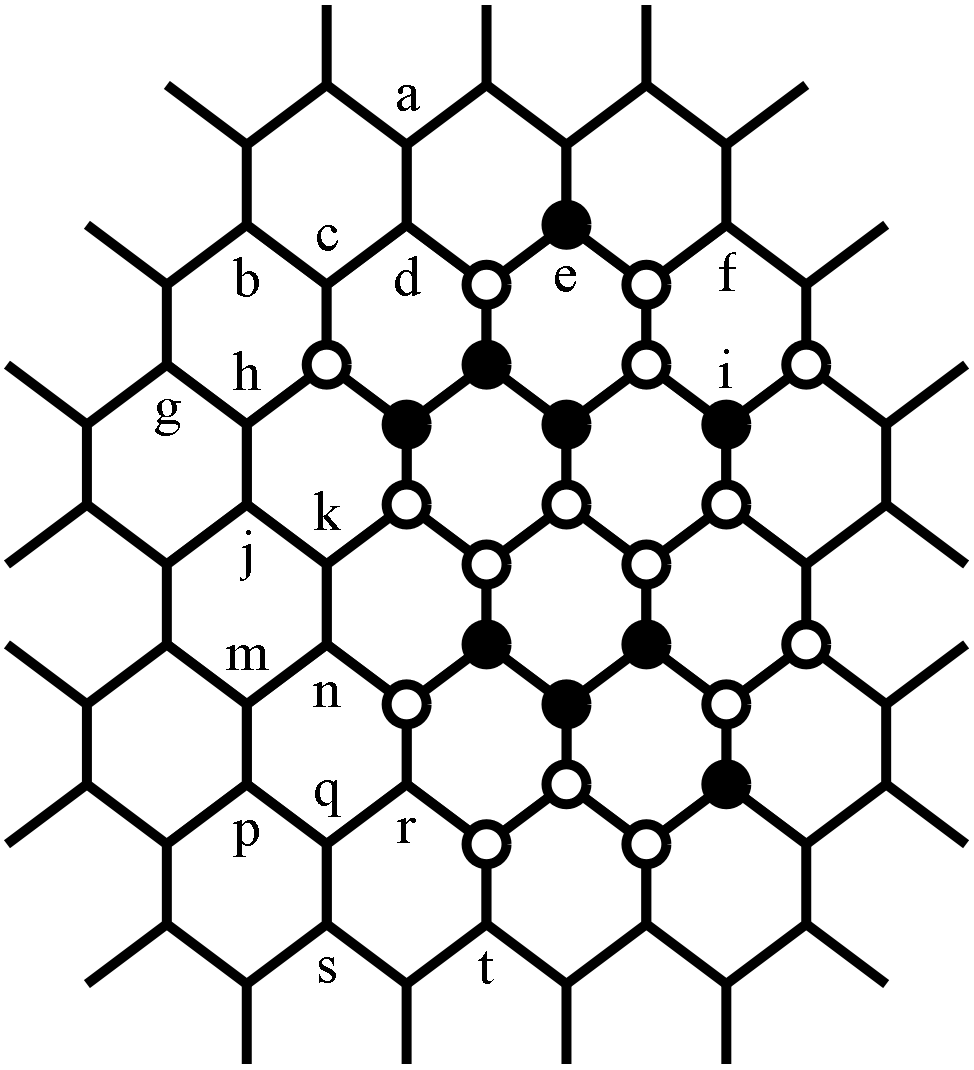}}
\end{center}
\caption{}
\end{figure}

\begin{proof}[Proof of Lemma \ref{type2paired lem}]
\label{type2paired pf}

Let $C_1$ be the type-2 paired closed 3-cluster shown in Figure \ref{type2pairedexpanded}, and let $C_2$ be the type-2 paired open 3-cluster.  Suppose by contrapositive that $n \not \in D$ or $k \in D$.  First, we deal with case in which $n \not \in D$.  Then $C_1$ has 7 candidates for nearby poor 1-clusters: $a/d$, $b/c$, $e$, $f$, $g/h$, $i$ and $j/k$.  It suffices to eliminate one of these candidates.  If $k \not \in D$, then $j \in D_{3^+}$ (Proposition \ref{force3cluster prop}) and the lemma holds.  If $k \not \in \poorone$, then the lemma holds; so assume $k \in \poorone$.  Then, $j \not \in D$.  If $h \not \in D$, then $g \in D_{3^+}$ (Proposition \ref{force3cluster prop}) and the lemma holds.  If $h \not \in \poorone$, then the lemma holds; so assume $h \in \poorone$.  Then, $c \not \in D$ (Corollary \ref{poor1 cor}).  If $b \not \in \poorone$, then the lemma holds; so assume $b \in \poorone$.  Then, $d \in D$ (Proposition \ref{1clusters prop}).  But then $e$ is adjacent to a one-third vertex; therefore, $e \not \in \poorone$.  Therefore, $C_1$ does not have 7 nearby poor 1-clusters.  Now, we deal with the case in which $k \in D$.  Then, $C_1$ has 7 candidates for nearby poor 1-clusters: $a/d$, $b/c$, $e$, $f$, $g/h$, $i$ and $k$.  It suffices to eliminate one of these candidates.  If $k \not \in \poorone$, then the lemma holds; so assume $k \in \poorone$.  Then, $j \not \in D$.  If $h \not \in D$, then $g \in D_{3^+}$ (Proposition \ref{force3cluster prop}) and the lemma holds; so assume $h \in D$.  If $h \not \in \poorone$, then the lemma holds; so assume $h \in \poorone$.  Then, $c \not \in D$ (Corollary \ref{poor1 cor}).  If $b \not \in \poorone$, then the lemma holds; so assume $b \in \poorone$.  Then, $d \in D$ (Proposition \ref{1clusters prop}).  But then $e$ is adjacent to a one-third vertex; therefore, $e \not \in \poorone$.  Therefore, $C_1$ does not have 7 nearby poor 1-clusters.

If $n \in \poorone$, then $m \not \in D$ and $r \not \in D$ (Corollary \ref{poor1 cor}).  Then, $q,t \in D_{3^+}$ (Proposition \ref{force3cluster prop}).  Let $C_q$ be the \threepluss at $q$.  If $C_q \in \openthree$, then $p,q,s \in C_q$.  However, $n$ and $t$ are in shoulder positions; therefore, $C_q$ is not type-1 paired on top.  If $C_q \not \in \openthree$, then $n$ closes $C_q$ or $t$ closes $C_q$ or $t \in C_q$; in each case, $n$ is nearby a closed 3-cluster or \fourplus.
\end{proof}

\section*{Acknowledgement}

We would like to thank Chase Albert for his help in constructing the first two codes shown in Figure \ref{3codes} and David Phillips for introducing us to the integer programming techniques which led to these constructions.  We would also like to thank Jeff Soosiah for his help in constructing the third code in Figure \ref{3codes} and for his support in the early stages of this project.

\end{document}